\def\section{\@startsection{section}{1}%
   \z@{1.2\linespacing\@plus\linespacing}{0.8\linespacing}%
      {\normalfont\bf\boldmath\centering}}
\def\subsection{\@startsection{subsection}{1}%
   \z@{0.8\linespacing\@plus\linespacing}{0.5\linespacing}%
      {\normalfont\bf\boldmath}}
\def\@seccntformat#1{%
  \protect\textup{\protect\@secnumfont
     \ifnum\pdfstrcmp{section}{#1}=0 \bfseries\fi
     \ifnum\pdfstrcmp{subsection}{#1}=0 \bfseries\fi
     \csname the#1\endcsname\protect\@secnumpunct}}
\setlist{itemsep=2.5pt,leftmargin=30pt,labelsep=0.5em,nosep}
\newtheoremstyle{slanted}
  {2ex}{2ex}{\slshape}{\parindent}{\scshape}{.}{0.5em}{}
\theoremstyle{slanted}
\newtheorem{theorem}{Theorem}[subsection]
\newtheorem{corollary}[theorem]{Corollary}
\newtheorem{proposition}[theorem]{Proposition}
\newtheorem{lemma}[theorem]{Lemma}
\newtheorem{universal_property}[theorem]{Universal Property}
\newtheorem{maintheorem}{Theorem}[section]
\newtheorem{maincorollary}[maintheorem]{Corollary}
\newtheoremstyle{upright}
  {2ex}{2ex}{\upshape}{\parindent}{\scshape}{.}{0.5em}{}
\theoremstyle{upright}
\newtheorem{definition}[theorem]{Definition}
\newtheorem{remark}[theorem]{Remark}
\numberwithin{equation}{subsection}
\numberwithin{figure}{subsection}
\numberwithin{table}{subsection}
\let\c@table\c@figure \makeatother
\renewenvironment{proof}[1][\proofname]{\par \pushQED{\qed}%
\normalfont \topsep6\p@\@plus6\p@\relax \trivlist
\item\relax{\slshape#1\@addpunct{.}}\hspace\labelsep\ignorespaces}
{\popQED\endtrivlist\@endpefalse}
\let\textit\textsl
\newcommand{\rmB}{\mathrm{B}}
\newcommand{\rmL}{\mathrm{L}}
\newcommand{\rmR}{\mathrm{R}}
\newcommand{\rmU}{\mathrm{U}}
\newcommand{\bbS}{\mathbb{S}}
\newcommand{\calC}{\mathcal{C}}
\newcommand{\calS}{\mathcal{S}}
\newcommand{\leqs}{\leqslant}
\newcommand{\geqs}{\geqslant}
\newcommand{\TTH}{\mathrm{TKT}}
\newcommand\arxiv[2]{\href{https://arXiv.org/abs/#1}
  {\texttt{arXiv:\allowbreak #1} #2}}
\newcommand\doi[2]{\href{https://doi.org/#1}{#2}}
\newcommand{\dfrmtn}[1]{$#1$-de\-for\-ma\-tion}
\newcommand{\dfrmtns}[1]{$#1$-de\-for\-ma\-tions}
\newcommand{\dmnsnl}[1]{$#1$-di\-men\-sion\-al}
\newcommand{\hndl}[1]{$#1$-han\-dle}
\newcommand{\hndls}[1]{$#1$-han\-dles}
\newcommand{\hndlbd}[1]{$#1$-han\-dle\-bod\-y}
\newcommand{\hndlbds}[1]{$#1$-han\-dle\-bod\-ies}
\newcommand{\mnfld}[1]{$#1$-man\-i\-fold}
\newcommand{\mnflds}[1]{$#1$-man\-i\-folds}
\newcommand{\qvlnc}[1]{$#1$-e\-quiv\-a\-lence}
\newcommand{\qvlnt}[1]{$#1$-e\-quiv\-a\-lent}
\def\(#1){{\upshape(\hbox to 1.2ex{\hss#1\hss})}}
\def\rel#1{\ifmmode\text{\upshape(\rel@{#1}\upshape)}%
           \else{\upshape(\rel@{#1}\upshape)}\fi}
\def\hrel#1{\ifx#1[\hrel@[\else\hrel@@{#1}\fi}
\def\hrel@[#1]#2{{\upshape(\hyperref[E:#1]{\rel@{#2}}\upshape)}}
\def\hrel@@#1{{\upshape(\hyperref[E:#1]{\rel@{#1}}\upshape)}}
\def\rel@#1{{\gdef\n@xtsp{}{\textsl{\rel@@#1@@}\n@xtsp}}}
\def\rel@@#1{\gdef\n@xt{\rel@@}%
   \ifx#1@\gdef\n@xt{\@gobble}%
   \else\ifx#1f\n@xtsp f\gdef\n@xtsp{\kern0.2ex}%
   \else\ifx#1i\n@xtsp\kern0.1ex i\gdef\n@xtsp{\kern0.1ex}%
   \else\ifx#1j\n@xtsp\kern0.2ex j\gdef\n@xtsp{\kern0.1ex}%
   \else\ifx#1l\n@xtsp\kern0.1ex l\gdef\n@xtsp{\kern0.2ex}%
   \else\ifx#1I\n@xtsp I\gdef\n@xtsp{\kern0.2ex}%
   \else\ifx#1J\n@xtsp\kern-0.1ex J\gdef\n@xtsp{\kern0.1ex}%
   \else\ifx#1'\/$\mkern0.5mu'$\gdef\n@xtsp{}%
   \else\ifx#1-{\/\upshape-}\gdef\n@xtsp{}%
   \else\n@xtsp#1\gdef\n@xtsp{}%
   \fi\fi\fi\fi\fi\fi\fi\fi\fi\n@xt}
\def\myput{\@ifnextchar[{\@put}{\@@rput[\z@,\z@][r]}}
\def\@put[#1]{\@ifnextchar[{\@@put[#1]}{\@@@@@put[#1]}}
\def\@@put[#1][{\@ifnextchar{l}{\@@lput[#1][}{\@@@put[#1][}}
\def\@@@put[#1][{\@ifnextchar{c}{\@@cput[#1][}{\@@@@put[#1][}}
\def\@@@@put[#1][{\@ifnextchar{r}{\@@rput[#1][}{\relax}}
\def\@@@@@put[{\@ifnextchar{l}{\@@lput[\z@,\z@][}{\@@@@@@put[}}
\def\@@@@@@put[{\@ifnextchar{c}{\@@cput[\z@,\z@][}{\@@@@@@@put[}}
\def\@@@@@@@put[{\@ifnextchar{r}{\@@rput[\z@,\z@][}{\@@@@@@@@put[}}
\def\@@@@@@@@put[#1]{\@@rput[#1][r]}
\let\hm@d@\leavevmode
\long\def\@@lput[#1,#2][l]#3{\setbox0\hbox{#3}\hm@d@\raise#2\hbox to\z@{\dimen0 #1%
  \advance\dimen0-\wd0\kern\dimen0\dp0\z@\ht0\z@\wd0\z@\box0\hss}\ignorespaces}
\long\def\@@cput[#1,#2][c]#3{\setbox0\hbox{#3}\hm@d@\raise#2\hbox to\z@{\dimen0 #1%
  \advance\dimen0-.5\wd0\kern\dimen0\dp0\z@\ht0\z@\wd0\z@\box0\hss}\ignorespaces}
\long\def\@@rput[#1,#2][r]#3{\setbox0\hbox{\kern#1\raise#2\hbox{#3}}%
  \dp0\z@\ht0\z@\wd0\z@\hm@d@\box0\ignorespaces}
\def\@@bold{bold}
\def\widebar#1{\text{\setbox15\hbox{$#1$}%
  \dimen15 0.45\wd15\advance\dimen15 0.15\ht15%
  \dimen16\ht15\advance\dimen16 0.00em\advance\dimen16 0.3ex%
  \dimen17 0.65\wd15\advance\dimen17 0.05\ht15\advance\dimen17 0.1ex%
  \dimen18 0.035em\advance\dimen18 0.00ex
  \@@cput[\dimen15,\dimen16][c]{\vrule depth 0pt height \dimen18 width \dimen17}}#1}
 \def\boldbar#1{\text{\setbox15\hbox{$#1$}%
   \dimen15 0.45\wd15\advance\dimen15 0.15\ht15%
   \dimen16\ht15\advance\dimen16 0.00em\advance\dimen16 0.26ex%
   \dimen17 0.65\wd15\advance\dimen17 0.05\ht15\advance\dimen17 0.1ex%
   \dimen18 0.05em\advance\dimen18 0.00ex
   \@@cput[\dimen15,\dimen16][c]{\vrule depth 0pt height \dimen18 width \dimen17}}#1}
\let\bar\widebar
\let\hat\widehat
\let\tilde\widetilde
\newcommand{\barGamma}{\mkern-1.5mu\bar{\mkern1.5mu\Gamma\mkern-1.5mu}\mkern1.5mu}
\newcommand{\barPhi}{\mkern-1mu\bar{\mkern1mu\Phi\mkern-1mu}\mkern1mu}
\def\varemptyset{{\text{\raise.21ex\hbox{$\not$}}\mkern.15mu\mathrm{O}\mkern.15mu}}
\let\emptyset\varemptyset
\def\cal#1{\mathcal{#1}}
\newcommand{\G}{{\cal G}}
\definecolor{Acolor}{cmyk}{0, 1, 0, 0.15}
\definecolor{Icolor}{cmyk}{0, 0.35, 1, 0.15}
\definecolor{Mcolor}{cmyk}{1, 0, 0, 0.25}
\definecolor{Rcolor}{cmyk}{0.5, 0, 1, 0.25}
\definecolor{@@color}{cmyk}{0, 1, 1, 0.25}
\definecolor{a@color}{cmyk}{0, 1, 0, 0.15}
\definecolor{i@color}{cmyk}{0, 0.35, 1, 0.15}
\definecolor{m@color}{cmyk}{1, 0, 0, 0.25}
\definecolor{r@color}{cmyk}{0.5, 0, 1, 0.25}
\def\note{\@ifnextchar[{\@note}{\@note[]}}
\long\def\@@note[#1]#2{\def\temp{#1}%
  \ifx\temp\@empty%
    {\color{@@color}{\boldmath$\langle$}#2{\boldmath$\rangle$}}%
  \else\if\temp A%
    {\color{a@color}{\boldmath$\langle$}\textbf{A:} #2{\boldmath$\rangle$}}%
  \else\if\temp I%
    {\color{i@color}{\boldmath$\langle$}\textbf{I:} #2{\boldmath$\rangle$}}%
  \else\if\temp M%
    {\color{m@color}{\boldmath$\langle$}\textbf{M:} #2{\boldmath$\rangle$}}%
  \else\if\temp R%
    {\color{r@color}{\boldmath$\langle$}\textbf{R:} #2{\boldmath$\rangle$}}%
  \else%
    {\color{@@color}{\boldmath$\langle$}\textbf{#1:} #2{\boldmath$\rangle$}}%
  \fi\fi\fi\fi\fi}
\def\hidenotes{\long\def\@note[##1]##2{\ignorespaces}}
\def\shownotes{\let\@note\@@note}
\def\post{\@ifstar{\post@l}{\post@r}}
\def\post@r{\@ifnextchar[{\post@@r}{\post@@r[]}}
\def\post@l{\@ifnextchar[{\post@@l}{\post@@l[]}}
\long\def\post@@@r[#1]#2{\def\temp{#1}%
  \ifx\temp\@empty{\postitcolor{@@color}\postit{#2}}%
  \else\if\temp A{\postitcolor{a@color}\postit{\textbf{A:} #2}}%
  \else\if\temp I{\postitcolor{i@color}\postit{\textbf{I:} #2}}%
  \else\if\temp M{\postitcolor{m@color}\postit{\textbf{M:} #2}}%
  \else\if\temp R{\postitcolor{r@color}\postit{\textbf{R:} #2}}%
  \else{\postitcolor{@@color}\postit{\textbf{#1:} #2}}%
  \fi\fi\fi\fi\fi\ignorespaces}
\long\def\post@@@l[#1]#2{\def\temp{#1}%
  \ifx\temp\@empty{\postitcolor{@@color}\postit*{#2}}%
  \else\if\temp A{\postitcolor{a@color}\postit*{\textbf{A:} #2}}%
  \else\if\temp I{\postitcolor{i@color}\postit*{\textbf{I:} #2}}%
  \else\if\temp M{\postitcolor{m@color}\postit*{\textbf{M:} #2}}%
  \else\if\temp R{\postitcolor{r@color}\postit*{\textbf{R:} #2}}%
  \else{\postitcolor{@@color}\postit*{\textbf{#1:} #2}}%
  \fi\fi\fi\fi\fi\ignorespaces}
\def\hideposts{\long\def\post@@r[##1]##2{\ignorespaces}
               \long\def\post@@l[##1]##2{\ignorespaces}}
\def\showposts{\let\post@@r\post@@@r\let\post@@l\post@@@l}
\def\postitfontsize#1{\let\postitf@ntsize#1}
\def\postitcolor#1{\def\postitc@lor{#1}}
\newdimen\postitm@xwidth
\def\postitmaxwidth#1{\postitm@xwidth=#1}
\def\postit{\@ifstar{\postit@l}{\postit@r}}
\newcommand{\N}{\mathbb{N}}
\newcommand{\R}{\mathbb{R}}
\newcommand{\Bd}{\mathop{\mathrm{Bd}}\nolimits}
\renewcommand{\wr}{\mathop{\mathrm{wr}}\nolimits}
\newcommand{\hrssh}{{\sqsubset}}
\newcommand{\csum}{\mathbin{\#}}
\newcommand{\bcsum}{\mathbin{\natural}}
\newcommand{\one}{{\boldsymbol{\mathbbm 1}}}
\newcommand{\idone}{\id_\one}
\newcommand{\sym}{\mathop{\mathrm{sym}}\nolimits}
\newcommand{\Forget}{\mathcal{F}}
\newcommand{\RHB}{4\mathrm{HB}}
\newcommand{\RCob}{3\mathrm{Cob}}
\newcommand{\Cob}{\mathrm{Cob}}
\newcommand{\Alg}{\mathrm{Alg}}
\newcommand{\Algf}{4\mathrm{Alg}}
\newcommand{\AlgD}{\mathrm{TAlg}}
\newcommand{\AlgL}{\mathrm{MAlg}}
\newcommand{\AlgLfree}{\mathrm{MAlg}^{\mathrm{F}}}
\newcommand{\Algt}{3\mathrm{Alg}}
\newcommand{\AlgH}{3\mathrm{Alg}^{\mathrm{H}}}
\newcommand{\AlgK}{3\mathrm{Alg}^{\mathrm{K}}}
\newcommand{\KT}{\mathrm{KT}}
\newcommand{\KTf}{\mathrm{4KT}}
\newcommand{\KTt}{\mathrm{3KT}}
\newcommand{\id}{\mathrm{id}}
\newcommand{\ad}{\mathrm{ad}}
\newcommand{\ev}{\mathrm{ev}}
\newcommand{\coev}{\mathrm{coev}}
\newcommand{\lev}{\smash{\stackrel{\leftarrow}{\mathrm{ev}}}}
\newcommand{\lcoev}{\smash{\stackrel{\longleftarrow}{\mathrm{coev}}}}
\newcommand{\rev}{\smash{\stackrel{\rightarrow}{\mathrm{ev}}}}
\newcommand{\rcoev}{\smash{\stackrel{\longrightarrow}{\mathrm{coev}}}}
\newcommand{\braid}{c}
\newcommand{\prodH}{\mu}
\newcommand{\unitH}{\eta}
\newcommand{\coprH}{\Delta}
\newcommand{\counH}{\varepsilon}
\newcommand{\antipH}{S}
\newcommand{\intfH}{\lambda}
\newcommand{\inteH}{\Lambda}
\newcommand{\ribmorH}{\tau}
\newcommand{\copairH}{w}
\newcommand{\pairH}{\bar w}
\newcommand{\ribelH}{v}
\newcommand{\adjH}{\ad}
\newcommand{\cadjH}{\mathcal{A}\kern-0.3em\mathcal{d}}
\newcommand{\idfunct}{\mathcal{I}\kern-0.45em\mathcal{d}}
\newcommand{\monH}{\Omega}
\newcommand{\barmonH}{\mkern-1.2mu\bar{\mkern1.2mu\Omega\mkern-1.2mu}\mkern1.2mu}
\newcommand{\Sou}{W}
\newcommand{\Tar}{\bar W}
\newcommand{\checkTheta}{\myput[3.8pt,15.5pt]{\scalebox{-1}{$\widehat{}$}}\Theta}
\newlength{\dhatheight}
\newcommand{\doublehat}[1]{%
  \settoheight{\dhatheight}{\ensuremath{#1}}%
  \addtolength{\dhatheight}{0.35ex}%
  \mathchoice%
  {\hat{\phantom{#1\rule{-0.055em}{\dhatheight}}}\mathllap{\hat{#1}}}%
  {\hat{\phantom{#1\rule{-0.055em}{\dhatheight}}}\mathllap{\hat{#1}}}%
  {\hat{\phantom{#1\rule{-0.055em}{0.7\dhatheight}}}\mathllap{\hat{#1}}}%
  {\hat{\phantom{#1\rule{-0.055em}{0.5\dhatheight}}}\mathllap{\hat{#1}}}}
\newcommand{\balpha}{\hat{\alpha}}
\newcommand{\bbalpha}{\doublehat{\alpha}}
\newcommand{\bn}{$\bar{\text{\sl n}}$}
\newcommand{\hatK}{$\mkern2mu\hat{\mkern-2mu\text{\sl K}}\mkern2mu$}
\begin{document}

\title[Algebraic Presentation of $4$-Dimensional $2$-Handlebodies and $3$-Dimensional Cobordisms]{Algebraic Presentation of $4$-Dimensional $2$-Handlebodies\\ and $3$-Dimensional Cobordisms}

\author[A. Beliakova]{Anna Beliakova} 
\address{Institute of Mathematics, University of Zurich, Winterthurerstrasse 190, CH-8057 Zurich, Switzerland} 
\email{anna@math.uzh.ch}

\author[I. Bobtcheva]{Ivelina Bobtcheva} 
\address{Institute of Mathematics, University of Zurich, Winterthurerstrasse 190, CH-8057 Zurich, Switzerland}
\email{ivelina.bobtcheva@math.uzh.ch}

\author[M. De Renzi]{Marco De Renzi} 
\address{Institut Monpelliéran Alexander Grothendieck, Université de Montpellier, Place Eugène Bataillon, 34090 Montpellier, France}
\email{marco.de-renzi@umontpellier.fr}

\author[R. Piergallini]{Riccardo Piergallini} 
\address{Scuola di Scienze e Tecnologie, Università di Camerino, Italy} 
\email{riccardo.piergallini@unicam.it}

\begin{abstract}
In this paper, we give a new direct proof of a result by Bobtcheva and Piergallini that provides finite algebraic presentations of two categories, denoted $\RCob$ and $\RHB$, whose morphisms are manifolds of dimension $3$ and $4$, respectively. More precisely, $\RCob$ is the category of connected {oriented} \dmnsnl{3} cobordisms between connected surfaces with connected boundary, while $\RHB$ is the category of connected {oriented} \dmnsnl{4} \hndlbds{2} up to \dfrmtns{2}. For this purpose, we explicitly construct the inverse of the functor $\Phi: \Algf \to \RHB$, where $\Algf$ denotes the free monoidal category generated by a Bobtcheva--Piergallini Hopf algebra. As an application, we deduce an algebraic presentation of $\RCob$ and show that it is equivalent to the one conjectured by Habiro.
\end{abstract}

\keywords{3-Manifolds, 4-Manifolds, Handlebodies, Kirby Calculus, Hopf Algebras.}
\subjclass{57K40, 57K16, 57R56, 57R65, 16T05, 18C40, 18M15.}

\maketitle

\kern-6pt\vfill

\tableofcontents

\section{Introduction}
\label{introduction/sec}

Categories of \dmnsnl{n} cobordisms play a central role in low-dimensional topology, and have been the subject of extensive study. The category $2\Cob$ of \dmnsnl{2} cobordisms is known to be freely generated, as a symmetric monoidal category, by a commutative \textit{Frobenius} algebra: the circle. This algebraic presentation yields the classification of all \textit{Topological Quantum Field Theories} (\textit{TQFTs}) in dimension~$2$. This paper focuses on an extension of this result to dimensions~$3$ and $4$. More precisely, we discuss complete algebraic presentations ({with finitely many} generators and relations) of certain topological categories generated, as braided monoidal categories, by a single object: the punctured torus, in dimension~$3$, and the solid torus, in dimension~$4$\footnote{The categories we focus on are actually skeleta of larger categories whose objects are required to carry some very mild extra structures. More precisely, in these larger categories, the punctured torus is equipped with an identification between $S^1$ and its boundary, and the solid torus is equipped with an embedding of $D^2$ into its boundary, see Remarks~\ref{3Cob-skeleton/rmk} and \ref{4HB-skeleton/rmk}. We simply do not need to specify these extra structures here, because there always exist canonical ones in the specific skeleta we are considering.}. In both cases, these objects admit structures of braided \textit{Hopf} algebras that can be further enriched, thus leading to the notion of Bobtcheva--Piergallini Hopf algebras, or simply BP Hopf algebras, see Subsections~\ref{BP Hopf algebra/sec} and \ref{HabiroHalgebra/sec} for a definition. 

A nice and simple algebraic presentation, such as the one for $2\Cob$, cannot be expected for the standard categories of cobordisms in dimension~$3$ and $4$, since both admit infinitely many non-isomorphic connected objects. Indeed, a complete algebraic presentation of the standard category of \dmnsnl{n} cobordisms was given, for every $n \geqs 3$, by Juhász in terms of surgery operations \cite{Ju14}, but his lists of generating objects, generating morphisms, and relations between morphisms are all infinite. There is, however, a natural category of \dmnsnl{3} cobordisms that admits a single generating object: it is the category $\RCob$ of connected oriented (relative) $3$-dimensional cobordisms between connected surfaces with connected boundary, whose tensor product 
is given by boundary connected sum. This category is a PROB, meaning that it is a braided monoidal category whose set of objects can be identified with $\N$, and whose tensor product adds up natural numbers. Hence, $\RCob$ is monoidally generated by a single object, the once-punctured torus. The fact that the punctured torus admits the structure of a braided Hopf algebra in $\RCob$ was first discovered by Crane and Yetter \cite{CY94}. 

Building on this observation, Kerler provided a finite set of generating morphisms for $\RCob$, and exhibited a finite list of beautiful and conceptual relations between them \cite{Ke01}, although he was not able to prove that his list was complete, and that he had an algebraic presentation. Since finding one would also yield a classification of all TQFTs with source $\RCob$, this was recognized as one of the central problems in quantum topology, and included in Ohtsuki's list \cite[Problem~8.16.(1)]{Oh02}. A few years later, Habiro announced a solution to the problem, and his presentation appeared in \cite{As11}. Unfortunately, a proof of his claim was never written down.

Kerler's question was answered by two of the authors of the present paper, who first gave a complete algebraic presentation of $\RCob$ in \cite{BP11}. Surprisingly, the solution follows from an algebraic presentation of a category whose morphisms are manifolds one dimension higher.

In order to explain this, we need to turn our attention to \dmnsnl{4} \hndlbds{2}, which are smooth manifolds obtained from the $4$-ball by attaching finitely many \hndls{1} and \hndls{2}. Up to considering a natural equivalence relation on them, discussed here below, connected oriented \dmnsnl{4} \hndlbds{2} can be organized as the morphisms of a category $\RHB$ whose objects are connected oriented \dmnsnl{3} \hndlbds{1}\footnote{For the sake of simplicity, in the rest of the paper we will write \dmnsnl{4} \hndlbds{2} to mean connected oriented ones, and \dmnsnl{3} handlebodies to mean connected oriented \dmnsnl{3} \hndlbds{1}.}. As for $\RCob$, this is a close relative of the standard category of (smooth) connected oriented \dmnsnl{4} cobordisms, whose objects have boundary, and whose tensor product 
is induced by boundary connected sum. By contrast with $\RCob$, however, or with any other category of cobordisms, the vertical boundary of morphisms in $\RHB$ is not required to be trivial, in the sense that it is not necessarily the cylinder over a surface. 

The natural equivalence relation appearing in the definition of morphisms in $\RHB$ is called \qvlnc{2}, and it is induced by \dfrmtns{2}, which are diffeomorphisms that can be implemented by finite sequences of handle moves that never step outside of the class of \dmnsnl{4} \hndlbds{2}. In other words, when considering \dmnsnl{4} \hndlbds{2} up to \dfrmtns{2}, creation and removal of canceling pairs of handles of index $2/3$ and $3/4$ is forbidden. Whether \dfrmtns{2} form a proper subclass of diffeomorphisms is still an open question, which is closely related to a fundamental open problem in combinatorial group theory: the Andrews--Curtis conjecture.

A standard way of representing \dmnsnl{4} \hndlbds{2} is through Kirby tangles, which are obtained by drawing the attaching maps of \hndls{2} on the boundary of a single \hndl{0} with \hndls{1} glued to it, and then considering a generic planar projection. It is convenient to represent \hndls{1} as dotted unknots bounding Seifert disks in the plane. Under this convention, a \hndl{2} running over a \hndl{1} will appear as a knot that pierces the corresponding Seifert disk. Such tangles, modulo isotopy, \hndl{2} slides, and \hndl{1/2} cancellations, form a category $4\KT$ which is equivalent to $\RHB$ \cite{Ki89, GS99, Ke98, BP11}.

The algebraic counterpart of $\RHB$ is the category $\Algf$, which is a PROB that is freely generated by a Bobtcheva--Piergallini (or BP) Hopf algebra. The approach of \cite{BP11} consists in defining a functor $\Phi: \Algf \to 4\KT$ and showing that it is an equivalence by factoring it through an equivalence functor from the category of labeled ribbon surfaces to $\Algf$. A labeled ribbon surface serves as a branching set in the description of a \dmnsnl{4} \hndlbd{2} as a branched cover of the $4$-ball.

In the present paper we provide a simpler direct proof of the same result.

\begin{maintheorem}\label{thm:main}
The functor $\Phi : \Algf \to \RHB$ sending the generating BP Hopf algebra of $\Algf$ to the solid torus\footnote{As mentioned before, the generating object of $\RHB$ is a standard solid torus that naturally carries a canonical embedding of $D^2$ into its boundary.}
is an equivalence of braided monoidal categories. 
\end{maintheorem} 

The idea of our new proof is to construct the inverse functor $\barPhi: 4\KT \to \Algf$ directly and explicitly, without any reference to branched coverings. The assignment of a morphism in $\Algf$ to a Kirby tangle depends, in our approach, on many auxiliary choices. The main body of the proof deals with the independence on these choices.

An immediate application of Theorem~\ref{thm:main} is the following detection result. If $T$ and $T'$ are Kirby tangles such that $\barPhi(T) = \barPhi(T')$, then $T$ is isomorphic to $T'$ in $4\KT$ and the corresponding \dmnsnl{4} \hndlbds{2} in $\RHB$ can be 2-deformed into each other.

A further important consequence of Theorem~\ref{thm:main} is an algebraic presentation of $\RCob$. Indeed, there exists a natural boundary functor $\partial_+ : \RHB \to \RCob$ making the diagram
\[
\begin{tikzpicture}
 \node (P0) at (0,0) {$\Algf$};
 \node (P1) at (2.25,0) {$\Algt$};
 \node (P2) at (0,-1.5) {$\RHB$};
 \node (P3) at (2.25,-1.5) {$\RCob$};
 \draw
 (P0) edge[->] node[above] {$\textstyle \pi$} (P1)
 (P0) edge[->] node[left, xshift=-0.25ex] {$\textstyle \Phi$} (P2)
 (P1) edge[->] node[right, xshift=0.25ex] {$\textstyle \partial_+ \Phi$} (P3)
 (P2) edge[->] node[below] {$\textstyle \partial_+$} (P3);
\end{tikzpicture}
\]
into a commutative one. Here, $\Algt$ is a certain quotient of $\Algf$ obtained by adding two additional relations (which make the generating object into a factorizable and anomaly-free BP Hopf algebra). In order to represent morphisms in $\RCob$, we use top tangles in handlebodies, which are an adaptation to our conventions of Habiro's bottom tangles in handlebodies (since Habiro reads diagrams from top to bottom, while we do the opposite). Thus, we can deduce the following.

\begin{maincorollary} \label{cor:main}
The functor $\partial_+ \Phi : \Algt \to \RCob$ sending the generating factorizable and anomaly-free BP Hopf algebra of $\Algt$ to the punctured torus\footnote{As mentioned before, the generating object of $\RCob$ is a standard punctured torus that naturally carries a canonical identification between $S^1$ and its boundary.} is an equivalence of braided monoidal categories.
\end{maincorollary}

\begin{proof}[Proof $($assuming Theorem~\ref{thm:main}$)$]
We will show in Section~\ref{topology/sec} that $\RCob\cong \KTt$ is the quotient of $\RHB\cong\KTf$ by the two relations depicted in Table~\ref{table-Kirby-moves/fig}. Written algebraically, these relations correspond exactly to relations~\hrel{f} and \hrel{n} introduced in Subsection~\ref{HabiroHalgebra/sec}. Moreover, $\Algt$ is defined precisely as the quotient of $\Algf$ by these relations. The claim follows now from Theorem~\ref{thm:main} and Proposition~\ref{3KT/thm}.
\end{proof}

This algebraic presentation, which first appeared in \cite{BP11}, does not coincide with the one announced by Habiro (see \cite{As11}). Indeed, the latter identifies $\RCob$ with the free monoidal category $\Algt^{\rm H}$ generated by a Habiro Hopf algebra, which features a different set of generating morphisms, and a different list of relations (see Subsection~\ref{HabiroHalgebra/sec} for a definition). However, we prove that $\Algt$ and $\Algt^{\rm H}$ are equivalent as braided monoidal categories, thus establishing the Kerler--Habiro conjecture.

\begin{samepage}
\begin{maintheorem}[Kerler--Habiro Conjecture]\label{thm:KHC}
The functor $\Gamma: \AlgH \to \Algt$ sending the generating Habiro Hopf algebra of $\AlgH$ to the generating factorizable anomaly-free BP Hopf algebra of $\Algt$ is an equivalence of braided monoidal categories. Hence, the functor $\partial_+ \Phi \circ \Gamma : \AlgH \to \RCob$ sending the generating Habiro Hopf algebra of $\AlgH$ to the punctured torus is an equivalence of braided monoidal categories.
\end{maintheorem}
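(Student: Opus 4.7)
The plan is to prove that $\Gamma$ is an equivalence of braided monoidal categories by exhibiting an explicit inverse, and to obtain the second statement as a formal consequence of the first together with Corollary~\ref{cor:main}. Since both $\AlgH$ and $\Algt$ are PROBs (in particular, both have object set $\N$ with tensor product given by addition, and both are monoidally generated by the object~$1$), essential surjectivity of $\Gamma$ is automatic: it is forced on objects by the PROB structure, and on morphisms it suffices to check that every generating morphism of $\Algt$ lies in the image of $\Gamma$ (up to equality, since $\Gamma$ is the identity on objects). The real content is therefore full-faithfulness, which I would prove by constructing a braided monoidal functor $\Gamma' : \Algt \to \AlgH$ and verifying that $\Gamma \circ \Gamma'$ and $\Gamma' \circ \Gamma$ are the identity functors on generating morphisms.

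Concretely, I would first write down $\Gamma$ on generators: for each generating morphism of $\AlgH$ (the Habiro-style product, unit, coproduct, counit, antipode, ribbon element, integrals, and whatever pairing/copairing Habiro uses to package factorizability), specify a morphism in $\Algt$ built from the BP generators $\prodH, \unitH, \coprH, \counH, \antipH, \ribelH, \intfH, \inteH, \copairH, \pairH, \ldots$ referenced in Subsections~\ref{BP Hopf algebra/sec} and \ref{HabiroHalgebra/sec}. Then for each of Habiro's relations (as listed in \cite{As11}), I would verify that its image holds in $\Algt$; this reduces to a finite algebraic check using the BP Hopf algebra axioms plus the two additional relations~\hrel{f} and \hrel{n} encoding factorizability and anomaly-freeness. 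Going back, I would define $\Gamma'$ on the generators of $\Algt$ by writing each one as a composition/tensor of Habiro generators, and verify all BP relations (including~\hrel{f} and~\hrel{n}) follow from Habiro's relations. Once both functors are known to be well-defined braided monoidal functors, checking $\Gamma \circ \Gamma' = \id$ and $\Gamma' \circ \Gamma = \id$ reduces to checking compositions on finitely many generators, again by the universal property of the free (quotient) PROBs involved.

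The main obstacle, in my view, is the algebraic translation in both directions: Habiro's presentation is organized around a different packaging of the Hopf algebra data (notably, his treatment of the integral and of the pairing realizing factorizability differs from the BP presentation, and his antipode/ribbon relations are stated in a different normalization). Consequently, the non-trivial step is to show that the additional relations~\hrel{f} and~\hrel{n} of $\Algt$ are derivable from Habiro's list, and that the subtler Habiro-specific relations (e.g. those expressing compatibility between the integral, the copairing, and the ribbon element) follow from the BP axioms augmented by~\hrel{f} and~\hrel{n}. I expect these derivations to proceed by the standard graphical calculus for ribbon Hopf algebras, using factorizability to move structure morphisms past one another and anomaly-freeness to normalize the ribbon element, as in the calculations underlying Kerler's conceptual relations~\cite{Ke01}.

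Finally, the second assertion of the theorem is immediate: once $\Gamma : \AlgH \to \Algt$ has been shown to be an equivalence of braided monoidal categories, composing with the equivalence $\partial_+\Phi : \Algt \to \RCob$ provided by Corollary~\ref{cor:main} yields an equivalence $\partial_+\Phi \circ \Gamma : \AlgH \to \RCob$, and its value on the generating Habiro Hopf algebra of $\AlgH$ is, by construction and by the definition of $\partial_+\Phi$ on the generating factorizable anomaly-free BP Hopf algebra of $\Algt$, the punctured torus.
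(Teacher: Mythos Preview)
Your overall architecture matches the paper's: construct $\Gamma$ and an inverse $\barGamma$ on generators, verify relations in both directions, and conclude by composing with Corollary~\ref{cor:main}. However, your description of $\AlgH$ contains a factual error that hides the actual difficulty. Habiro's presentation has \emph{no} integral form or integral element among its generators: the generators are $\prodH,\unitH,\coprH,\counH,\antipH^{\pm1}$, a copairing $\copairH$, a pairing $\pairH$, and ribbon \emph{elements} $\ribelH_\pm:\one\to H$ (not ribbon morphisms), together with the braided cocommutativity axiom for the adjoint action. So when you go from $\Algt$ to $\AlgH$, you cannot simply ``send the integral to the integral''; you must \emph{define} $\intfH$ and $\inteH$ in $\AlgH$ from the available data. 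The paper does this (Proposition~\ref{H/thm}, attributed to Habiro) by setting $\intfH=\pairH\circ(\prodH\otimes\id)\circ(\id\otimes\ribelH_+\otimes\ribelH_+)$ and $\inteH=\intfH^\vee$, and the genuinely hard step is proving these satisfy the integral axioms \hrel{i1}--\hrel{i5}; this is a multi-figure computation that your proposal does not anticipate.

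There is a second point you do not identify. The BP axioms \hrel{r8} and \hrel{r9} are not directly checkable in $\AlgH$ as stated; the paper first shows (Proposition~\ref{adjoint/thm}) that, modulo the other BP axioms, they are equivalent to identities \hrel{d12} and \hrel{d13} expressed through the adjoint action, and then verifies \hrel{d12}, \hrel{d13} in $\AlgH$ using the braided cocommutativity axiom \hrel{h0} (Lemma~\ref{Hadj3/thm}). Your proposal treats this as a routine ``standard graphical calculus'' step, but without the reformulation via the adjoint action and the explicit role of \hrel{h0}, the verification would not go through. In short: right strategy, but you have misidentified both the generator set of $\AlgH$ and where the work lies.
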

\end{samepage}

The braided monoidal functor $\Gamma: \AlgH \to \Algt$ was first constructed by the second author in \cite{Bo20}. In this paper, we define its inverse, thus proving that the algebraic presentations of $\RCob$ given in \cite{BP11} and \cite{As11} are equivalent. In addition, we provide a third algebraic presentation $\AlgK$ by adding to Kerler's original list of axioms the braided cocommutativity relation for the adjoint action (a crucial relation appearing in Habiro's presentation), and show that $\AlgK$ is equivalent to $\AlgH$. Clearly, also in dimension~$3$ the equality $\pi (\barPhi(T)) = \pi (\barPhi(T'))$ implies an isomorphism between $T$ and $T'$ in $3\KT$, and an equivalence of the corresponding cobordisms in $\RCob$.

Besides giving a complete algebraic presentation of $\RHB$ and $\RCob$, Theorems~\ref{thm:main} and \ref{thm:KHC} also classify braided monoidal functors on them. For what concerns existence of examples, in \cite{BD21} it is shown that every unimodular ribbon Hopf algebra, and more generally every unimodular ribbon category,\footnote{A ribbon category is unimodular if it is finite and if 
the projective cover of its tensor unit is self-dual.} gives rise to such a functor (a TQFT) on the category of \dmnsnl{4} \hndlbds{2} up to \dfrmtns{2}. We point out that the notion of \qvlnc{2} between \dmnsnl{4} \hndlbds{2} is conjectured {by Gompf in \cite{Go91}} to be different from the one of diffeomorpism, which in this context is equivalent to \qvlnc{3}. In order to prove Gompf's conjecture, we can look for a unimodular ribbon Hopf algebra whose corresponding quantum invariant distinguishes diffeomorphic handlebodies that are not \qvlnt{2}. The search for such Hopf algebras is a non-trivial challenge, since they have to combine several properties: at the very least, they should be unimodular, non-factorizable, and non-semisimple (see \cite[Subsection~1.1]{BD21} and \cite[Section~2]{BM02}). 
Quantum groups satisfying all these properties do not seem to lead to interesting invariants of \mnflds{4}, but rather to homological refinements of known quantum invariants of their \dmnsnl{3} boundaries \cite{BD22}.
On the other hand, if the conjecture is false, every unimodular ribbon Hopf algebra, and more generally every unimodular ribbon category, gives rise to a quantum invariant of \dmnsnl{4} \hndlbds{2} up to diffeomorphisms, and may be useful for detecting exotic structures on \mnflds{4}.

Apart from $\Algt$, there is another interesting quotient of $\Algf$, defined in \cite{Bo23} as the symmetric monoidal category freely generated by a BP Hopf algebra with trivial ribbon element (in particular, such a Hopf algebra is cocommutative, and its braiding is symmetric). Topologically, this quotient describes the category of \dmnsnl{2} CW-complexes up to \qvlnc{2}, and hence it is designed to study the Andrews--Curtis conjecture. Let us recall that the Andrews--Curtis conjecture states that every balanced\footnote{A presentation of a group is balanced if it has the same number of generators and relators.} presentation of the trivial group can be reduced to the trivial presentation trough balanced presentations (that is, by a sequence of Nielsen transformations on relators and conjugations of relators by generators). This conjecture is open since 1965, and expected to be false. To test potential counterexamples, new cocommutative BP Hopf algebras with symmetric braiding and trivial ribbon element need to be constructed.

The one-to-one correspondence between algebraic and topological structures established in this paper might also be useful for understanding quantum groups or ribbon Hopf algebras, since it provides new graphical methods for establishing identities or constructing central elements. Indeed, every time we happen to know that a complicated tangle can be trivialized, then it follows that the associated morphism in $\Algf$ is the identity. 

\subsection{Strategy of the proof of Theorem \ref{thm:main}}

Let us explain the main ideas behind the proof of Theorem~\ref{thm:main}. A Hopf algebra $H$ in a braided monoidal category $\calC$ comes equipped with the following structure morphisms:
\begin{itemize}
\item a product $\prodH: H \otimes H \to H$ and a unit $\unitH: \one \to H$;
\item a coproduct $\coprH: H \to H \otimes H$ and a counit $\counH : H \to \one$;
\item an invertible antipode $\antipH: H \to H$.
\end{itemize}
These structure morphisms are required to satisfy the standard axioms depicted in Table~\ref{table-Hopf/fig}. A Bobtcheva--Piergallini Hopf algebra (or BP Hopf algebra for short) is a Hopf algebra in $\mathcal C$ equipped with the following additional morphisms:
\begin{samepage}
\begin{itemize}
\item an integral form $\intfH : H \to \one$ and an integral element $\inteH: \one \to H$;
\item an invertible ribbon morphism $\ribmorH : H \to H$;
\item a copairing $w: \one\to H\otimes H$.
\end{itemize}
\end{samepage}
\begin{figure}[htb]
 \centering
 \includegraphics{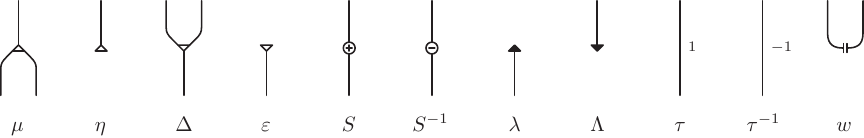}
 \caption{}
 \label{intro-hopf/fig}
\end{figure}
These morphisms are required to satisfy a set of axioms, which can be found in Subsection~\ref{BP Hopf algebra/sec}. To present the generating morphisms and relations between them we will use the graphical notation shown in Figure~\ref{intro-hopf/fig}. We define $\Algf$ as the PROB freely generated by a BP Hopf algebra.

In order to construct the functor $\Phi: \Algf \to 4\KT$, we need to assign Kirby tangles to generating morphisms,
and to check all relations. The images of the structure morphisms under $\Phi$ are given in Figure~\ref{Phi-intro/fig} and the relations are checked in Subsection~\ref{Phi/sec}.

\begin{figure}[htb]
 \centering
 \includegraphics{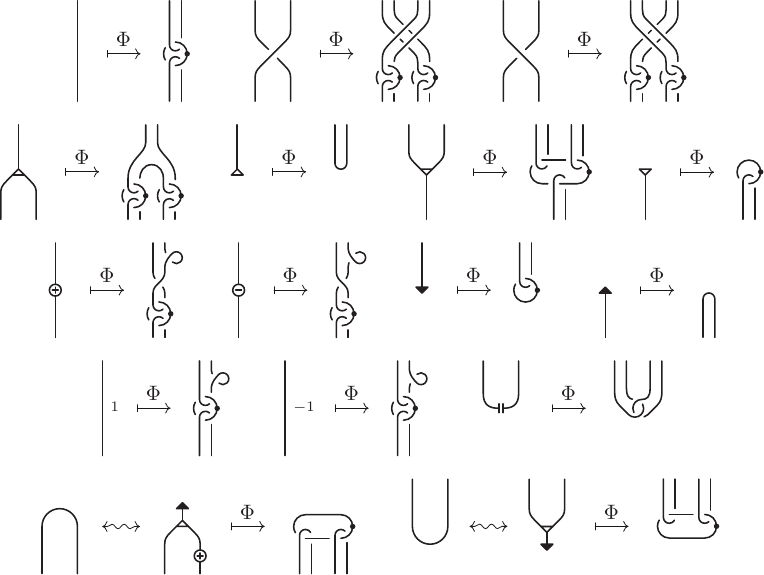}
 \caption{Definition of the functor $\Phi$ for the generating morphisms and the evaluation and coevaluation in $\Algf$.}
 \label{Phi-intro/fig}
\end{figure}

Notice that the assignment defined in Figure~\ref{Phi-intro/fig} replaces each strand representing a copy of the BP Hopf algebra $H$ with two undotted parallel strands representing (a portion of) a \hndl{2} in $4\KT$. The claim that the functor $\Phi$ is full might then be surprising, since a generic tangle in $4\KT$ does not have this property. However, for any diagram $D$ of a Kirby tangle $T$, we can choose a so-called \textsl{bi-ascending} state for all undotted components. This reduces to the choice of a collection of crossings that need to be reversed in order to trivialize the {undotted link representing the} 2-handles. Then, we can build a connected sum of each undotted component with its trivialization along chosen bands. The resulting diagram still represents $T$, and has the property that each undotted component is doubled {by a trivial copy which lies below it}. An example is given by the first and the last diagrams in Figure~\ref{MainExample/fig}, where the doubling is drawn in gray for convenience.

An algebra morphism $\barPhi(T)$ with the property that $\Phi(\barPhi(T)) = T$ is constructed as follows. Given a diagram of a Kirby tangle $T$, we specify a bi-ascending state by marking (with gray disks) those crossings that should be changed in order to trivialize the undotted link. Then, we pick a family of bands $\alpha$ connecting the undotted link to the bottom base of the projection plane, and we call the resulting diagram $T_\alpha$. Next, we decompose $T_\alpha$ into elementary pieces and assign algebra morphisms to each piece as prescribed in Figures~\ref{kirby-hopf03-1/fig}, \ref{kirby-hopf03-2/fig}, and \ref{kirby-hopf03-3/fig}. Finally, we tensor and compose all these morphisms together. This process is illustrated in Figure~\ref{MainExample/fig}. Notice that the algebra morphism we assign to a crossing depends on whether this crossing is affected by the trivialization or not. By applying the functor $\Phi$\break to the resulting algebra morphism $\barPhi(T)$, we can verify that $\Phi(\barPhi(T))$ is isotopic to the original tangle $T$.

\begin{figure}[htb]
 \centering
 \includegraphics{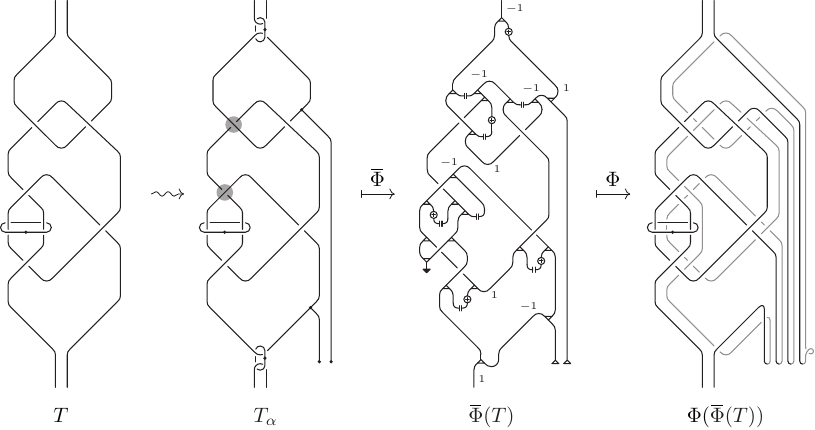}
 \caption{Example of assignment of the algebraic morphism $\barPhi(T)$ to a Kirby tangle $T$ satisfying $\Phi(\barPhi(T))=T$.}
 \label{MainExample/fig}
\end{figure}

The main body of the proof consists in checking that our assignment actually extends to a well-defined functor $\barPhi: 4\KT \to \Algf$ that is inverse to $\Phi$. For this purpose, we need to prove that $\barPhi(T)$ does not depend on the various choices we made, meaning that it is independent of the bi-ascending state, of the set of bands, and of the diagram we picked.

Moreover, we need to check that our assignment is invariant under isotopies and \dfrmtns{2} of $T$, and that it is compatible with identities, compositions, tensor products, and braidings. 
The main tool in the proof of these properties will be provided by some recursively constructed collection of morphisms $\Theta = \{ \Theta_k : H^{\otimes k+1} \to H^{\otimes k} \mid k \in \N \}$ that intertwines all morphisms in a natural subcategory $\AlgD$ of $\Algf$ generated (under tensor products and compositions)
by some morphisms in the image of $\barPhi$ (shown in Figure~\ref{kirby-hopf03-1/fig}). More precisely, if $\iota : \AlgD \hookrightarrow \Algf$ denotes the inclusion functor, then $\Theta: \iota \otimes H \Rightarrow \iota$ defines a natural transformation, meaning that, for a morphism $F : H^{\otimes s} \to H^{\otimes t}$ in $\AlgD$, we have
\[
\Theta_{t} \circ (F \otimes \id) = F \circ \Theta_{s}.
\]
Geometrically, $\Theta$ implements a $1$-handle embracing all the strands of $\Phi(\barPhi(T))$ corresponding to the trivialized copy of $T$ in gray. To check independence of the bi-ascending state, we will also need to implement algebraically a $1$-handle embracing the trivialized copy of a single component of $T$, which will require the construction of a family of labeled versions of $\Theta$.

\subsection{Organization}

We start our paper with some algebraic background, in Section~\ref{algebra/sec}. After recalling the notion of braided monoidal and ribbon categories, we introduce BP Hopf algebras, and define $\Algf$ as the braided monoidal category freely generated by a BP Hopf algebra. For each of these algebraic structures we give a diagrammatic presentation of the defining set of axioms. We prove that $\Algf$ admits the structure of a ribbon category, and that its generating object also admits the structure of a Frobenius algebra.

We introduce the notions of factorizable and anomaly-free BP Hopf algebras, which lead to the definition of the quotient category $\Algt$ of $\Algf$. Then, after recalling the definition of $\AlgH$, we construct a functor $\Gamma : \AlgH \to \Algt$, and prove that it is an equivalence. Furthermore, we deduce another presentation $\AlgK$ of $\Algt$, which is obtained from the list of axioms found by Kerler in \cite{Ke01} by adding the braided cocommutativity relation for the adjoint action. 

In Section~\ref{topology/sec}, we collect some topological background. First, we recall the definition of the categories $\RHB$ and $\RCob$, which are equivalent to the categories of Kirby tangles $\KTf$ and $\KTt$, respectively. They are naturally related by a functor $\partial_+ : \RHB \to \RCob$ that maps each \dmnsnl{4} \hndlbd{2} to its front boundary. Finally, we recall (an upside-down version of) Habiro's graphical notation for morphisms in $\RCob$ as top tangles in handlebodies.

Section~\ref{equivalence/sec} is devoted to the proof of Theorem~\ref{thm:main}. After defining the functor $\Phi : \Algf \to \KTf$, we proceed with the construction of its inverse. In order to do this, we start by introducing a certain subcategory $\AlgD$ of $\Algf$ whose image under $\Phi$ consist of Kirby tangles whose $2$-handles are separated by the projection plane in two levels. We describe generators of $\AlgD$ explicitly in terms of decorated crossings, and show that $\AlgD$ admits two different ribbon structures. Next, we define two natural transformations $\Theta$ and $\Theta'$ that will be extensively used in the proof of our main result.

In Subsection~\ref{bias/sec}, we introduce bi-ascending states of link diagrams, and we describe a complete set of moves relating any two bi-ascending states of the same link diagram. Subsection~\ref{FK/sec} is devoted to the construction of the inverse functor $\barPhi : \KTf \to \Algf$. In the following one, we define yet another pair of natural transformations $\Theta^{\rm L}_j$ and $\hat \Theta^{\rm L}_j$ on a labeled version of $\Algf$. In the last subsection, we prove independence of $\barPhi$ on the choice of bands, of the bi-ascending state, and of the representative of $T$ within its \qvlnc{2} class. Finally, we show that $\barPhi$ preserves compositions, identities, tensor 
products, and braidings, and that it is the inverse of $\Phi$.

For convenience of the reader, we collect all relations and their consequences in Appendix~\ref{tables/app}, and we recall (and sometimes establish) their proof in Appendix~\ref{proofs/app}. 

\subsection{Acknowledgments}
 
The authors would like to thank Kazuo Habiro for explaining them how to define integral form and element in $\AlgH$. AB, IB, and MD were supported by the NCCR SwissMAP and Grant $200020\_207374$ of the Swiss National Science Foundation. IB and RP thank the Institut für Mathematik at UZH for its hospitality during the initial conception of this article. The authors also gratefully acknowledge the support of the Simons Collaboration “New Structures in Low-Dimensional Topology”.
\section{Preliminaries}
\label{preliminaries/sec}


In this paper, we will study the relation between several monoidal categories (whose objects and morphisms have either a topological or an algebraic flavor) that have (in some cases more than one) braided and even ribbon structures. All these categories are going to be \textit{PROBs}, that is, braided strict monoidal categories whose objects are all monoidal powers of a single generating object. 

In this section, we will introduce the necessary definitions and recall Penrose graphical notation, which will be adapted to the specific type of categories we are interested in. The reader is referred to \cite{Ma71, EGNO15, Tu94, TV17} for more general and detailed discussions.

\subsection{Braided monoidal categories}
\label{categories/sec}

We list here some basic definitions from the general theory of monoidal categories that are used
repeatedly in the paper, following \cite[Sections 2.1, 2.8, 8.1]{EGNO15}.

\begin{definition}
\label{monoidal-cat/def}
A \textit{strict monoidal category} is a category $\calC$ equipped with a functor of the form $\otimes : \calC \times \calC \to \calC$, called the \textit{tensor product}, and an object $\one \in \calC$, called the \textit{tensor unit}, satisfying:
\begin{gather*}
 (X \otimes Y) \otimes Z = X \otimes (Y \otimes Z) \text{ for all } X,Y,Z \in \calC; \\
 \one \otimes X = X = X \otimes \one \text{ for every } X \in \calC.
\end{gather*}
Notice that, thanks to the associativity axiom, bracketing can be ignored in tensor products.
\end{definition}

In what follows, we will tacitly assume that all the monoidal structures we consider are strict. Notice that this will simplify some definitions.

Morphisms in a monoidal category $\calC$ can be efficiently represented using Penrose graphical notation, which is based on diagrams of planar graphs. Edges are labeled by objects of $\calC$ and are required to be nowhere-horizontal, while vertices are labeled by morphisms of $\calC$ and are represented as boxes (called \textit{coupons}) with distinguished opposite bases (an incoming one, on the bottom, and an outgoing one, on the top). Composition of diagrams is given by vertical stacking (and is read from bottom to top), while tensor product is given by horizontal juxtaposition (and is read from left to right). 

In the following, we will restrict our attention to a special kind of monoidal categories known as \textit{PROs} (short for \textit{product categories}). By definition, a PRO is a strict monoidal category $\calC$ whose objects are all tensor powers of a single one, the \textit{generating object}, which we will typically denote by $H \in \calC$. For a PRO $\calC$, we will adopt the following notations:
\begin{gather*}
 H^0 = \one \text{ and } H^1 = H; \\*
 H^n = H^{\otimes n} \text{ for every } n \geqs 2; \\*
 \id_n = \id_{H^n} \text{ for every } n \geqs 0; \\*
 \id = \id_1.
\end{gather*}
When working inside a PRO, up to replacing each edge labeled by $H^n$ with $n$ parallel edges labeled by $H$, we will always assume that all the edges share the same label $H$, so we will drop labels for edges altogether. Furthermore, we will usually replace vertices by special symbols encoding their label.

We point out that diagrams are considered up to planar isotopies through diagrams with nowhere-horizontal edges. In particular, the diagrams depicted in Figure~\ref{commutation-move/fig} represent the same morphism.

\begin{figure}[hbt]
 \includegraphics{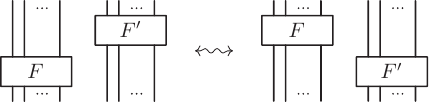}
 \caption{Example of planar isotopy, with $F$ and $F'$ arbitrary morphisms.}
 \label{commutation-move/fig}
\end{figure}




\begin{definition}
\label{braided-cat/def} 
A \textit{braided monoidal category} is a monoidal category $\calC$ equipped with a natural isomorphism of components $c_{X,Y} : X \otimes Y \to Y \otimes X$ for all $X,Y \in \calC$, called the \textit{braiding}, satisfying:
\begin{gather*}
 c_{X \otimes Y,Z} = (c_{X,Z} \otimes \id_Y) \circ (\id_X \otimes c_{Y,Z}) \text{ for all } X,Y,Z \in \calC;\\
 c_{X,Y \otimes Z} = (\id_Y \otimes c_{X,Z}) \circ (c_{X,Y} \otimes \id_Z) \text{ for all } X,Y,Z \in \calC.
\end{gather*}
\end{definition}

\begin{table}[ht]
 \includegraphics{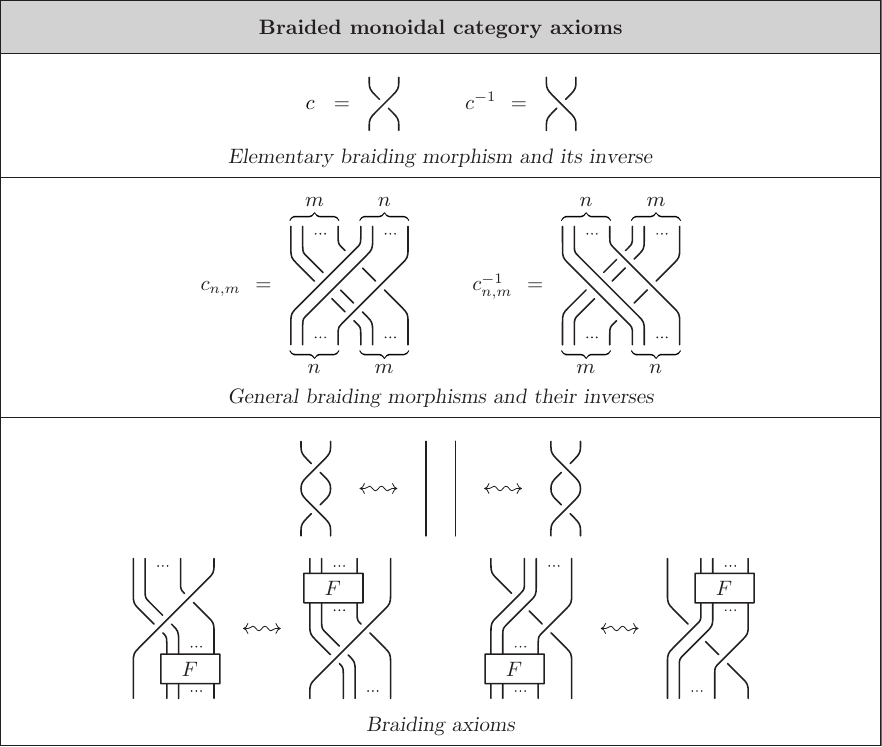}
 \caption{}
 \label{table-braided/fig}
\end{table}


In Penrose graphical notation, braidings are represented as crossings, so that diagrams can be understood as representing graphs embedded in \dmnsnl{3} space. Naturality of braidings translates to moves of planar diagrams  which correspond to the isotopies of these graphs in \dmnsnl{3} space shown in Table~\ref{table-braided/fig}, where $F$ denotes any morphism, including braidings themselves.

As we mentioned above, we will restrict our attention to \textit{PROBs} (short for \textit{braided PROs}). For a PROB $\calC$, whose objects are all tensor powers of a generating object $H \in \calC$, we will adopt the following short notations:
\begin{gather*}
 \braid_{n,m} = \braid_{H^n,H^m} \text{ for all } n,m \geqs 0; \\*
 \braid = \braid_{1,1}.
\end{gather*}

\FloatBarrier

\subsection{Ribbon categories}
\label{ribbon/sec}

We list here some basic definitions from the general theory of ribbon categories that are used repeatedly in the paper, following \cite[Sections 2.10, 4.7, 8.10]{EGNO15}.

\begin{definition}
\label{rigid-cat/def}
A monoidal category $\calC$ is \textit{left rigid} if every object $X \in \calC$ admits a \textit{left dual} $X^* \in \calC$ and two morphisms $\lev_X : X^* \otimes X \to \one$ and $\lcoev_X : \one \to X \otimes X^*$, called the \textit{left evaluation} and \textit{coevaluation}, satisfying
\[
(\id_X \otimes \lev_X) \circ (\lcoev_X \otimes \id_X) = \id_X
\quad \text{and} \quad
(\lev_X \otimes \id_{X^*}) \circ (\id_{X^*} \otimes \lcoev_X) = \id_{X^*}.
\]
Given any morphism $F \in \calC(X,Y)$, its \textit{left dual} $F^* \in \calC(Y^*,X^*)$ is defined as
\[
F^* = (\lev_Y \otimes \id_{X^*}) \circ (\id_{Y^*} \otimes F \otimes \id_{X^*}) \circ (\id_{Y^*} \otimes \lcoev_X).
\]
\end{definition}



\begin{definition}
\label{ribbon-cat/def}
A \textit{ribbon category} is a left rigid braided monoidal category $\calC$ equipped with a natural isomorphism of components $\theta_X : X \to X$ for every $X \in \calC$, called the \textit{twist}, satisfying
\begin{gather*}
 \theta_{X \otimes Y} = c_{Y,X} \circ c_{X,Y} \circ (\theta_X \otimes \theta_Y) \text{ for all } X,Y \in \calC; \\
 (\theta_X)^* = \theta_{X^*} \text{ for every } X \in \calC.
\end{gather*}
\end{definition}

Notice that, in a ribbon category, all duals are two-sided. Indeed, for every object $X \in \calC$, there exist morphisms $\rev_X: X \otimes X^* \to \one$ and $\rcoev_X: \one \to X^* \otimes X$, called the \textit{right evaluation} and \textit{coevaluation}, defined as
\[
\rev_X = \lev_X \circ c_{X,X^*} \circ (\theta_X \otimes \id_{X^*})
\quad \text{and} \quad
\rcoev_X = (\id_{X^*} \otimes \theta_X^{-1}) \circ c_{X^*,X}^{-1} \circ \lcoev_X,
\]
and satisfying
\[
(\rev_X \otimes \id_X) \circ (\id_X \otimes \rcoev_X) = \id_X
\quad \text{and} \quad
(\id_{X^*} \otimes \rev_X) \circ (\rcoev_X \otimes \id_{X^*}) = \id_{X^*}.
\]
In particular, the twist can be written as
\begin{gather*}
 \theta_X = (\id_X \otimes \rev_X) \circ (c_{X,X} \otimes \id_{X^*}) \circ (\id_X \otimes \lcoev_X) \text{ for every } X \in \calC.
\end{gather*}
Furthermore, for every morphism $F \in \calC(X,Y)$, the left dual $F^* \in \calC(Y^*,X^*)$ satisfies
\[
F^* = (\id_{X^*} \otimes \rev_Y) \circ (\id_{X^*} \otimes F \otimes \id_{Y^*}) \circ (\rcoev_X \otimes \id_{Y^*}).
\]

In Penrose graphical notation, by dropping the requirement on nowhere-horizontal edges at the level of diagrams, duality morphisms (evaluations and coevaluations) can be represented as maxima and minima (caps and cups), and twists can be represented by kinks. Their properties translate to moves of diagrams which correspond to all framing-preserving isotopies of ribbon graphs in \dmnsnl{3} space. 

In general, duals are encoded by orientations on edges, which allow for the distinction between left and right duality morphisms. However, we never actually need to orient edges in our setting. Indeed, we always work in the context of a PROB $\calC$ whose generating object $H \in \calC$ is by construction self-dual, in the sense that it coincides with its own dual. Since all edges are understood as being labeled by $H$, no orientation is needed. Therefore, we adopt the following short notations:
\begin{gather*}
 \ev_n = \lev_{H^n} = \rev_{H^n} \text{ for every } n \geqs 0; \\
 \coev_n = \lcoev_{H^n} = \rcoev_{H^n} \text{ for every } n \geqs 0; \\
 \theta_n = \theta_{H^n} \text{ for every } n \geqs 0; \\
 \ev = \ev_1, \quad
 \coev = \coev_1, \quad
 \theta = \theta_1.
\end{gather*}

Observe that, when they exist, left duals are unique up to unique isomorphisms, see \cite[Proposition~2.10.5.]{EGNO15}.


\begin{proposition}\label{Prob-ribbon/thm}
Let $\calC$ be a PROB whose generating object $H$ is self-dual through two-sided evaluation and coevaluation morphisms $\ev : H \otimes H \to \one$ and $\coev : \one \to H \otimes H$ satisfying
\begin{gather*}
 (\id \otimes \ev) \circ (\coev \otimes \id) = \id = (\ev \otimes \id) \circ (\id \otimes \coev), \\
 (\ev \otimes \id) \circ (\id \otimes \braid) \circ (\coev \otimes \id) = (\id \otimes \ev) \circ (\braid \otimes \id) \circ (\id \otimes \coev).
\end{gather*}
Then $\calC$ admits the structure of a ribbon category, with dual $(H^n)^* = H^n$ for every $n \geqs 0$, with two-sided evaluation $\ev_n : H^n \otimes H^n \to \one$ and coevaluation $\coev_n : \one \to H^n \otimes H^n$ defined inductively by $\ev_0 = \idone = \coev_0$ and by
\begin{gather*}
 \ev_n = \ev \circ (\id \otimes \ev_{n-1} \otimes \id), \\
 \coev_n = (\id \otimes \coev_{n-1} \otimes \id) \circ \coev
\end{gather*}
for every $n \geqs 1$, and with twist $\theta_n: H^n \to H^n$ defined by
\[
 \theta_n = (\ev_n \otimes \id_n) \circ (\id_n \otimes \braid_{n,n}) \circ (\coev_n \otimes \id_n)
\]
for every $n \geqs 0$. Moreover, $( \_ )^* : \calC \to \calC$ defines a contravariant braided antimonoidal functor called the \textit{duality functor}.
\end{proposition}

In Penrose graphical notation, the rigid and ribbon structures defined in Proposition~\ref{Prob-ribbon/thm} are represented in Tables~\ref{table-rigid/fig} and \ref{table-ribbon/fig}. Notice that, using the moves in Table~\ref{table-braided/fig} corresponding to the braiding axioms, the ribbon axiom in Table~\ref{table-ribbon/fig} is equivalent to the condition $(\theta_H)^* = \theta_{H^*}$.

\begin{table}[htb]
 \includegraphics{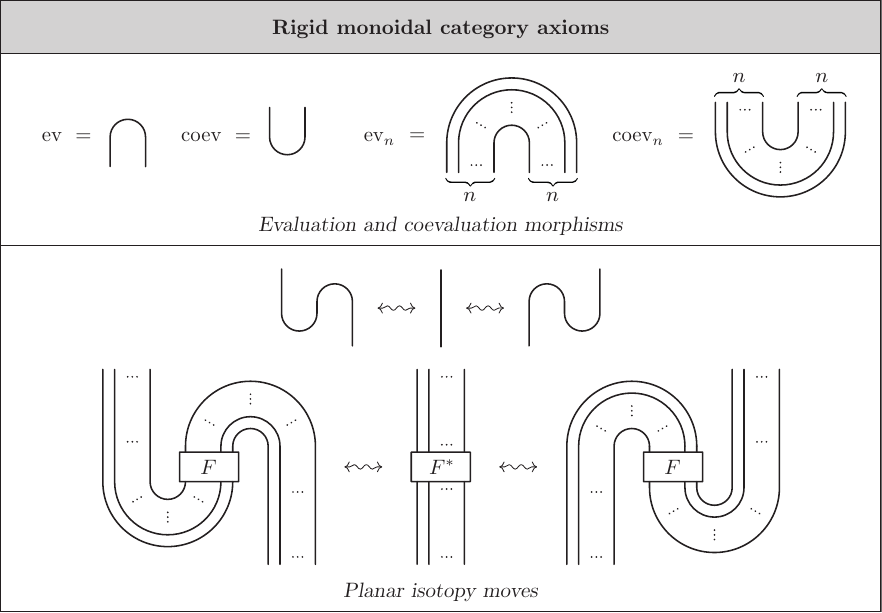}
 \caption{}
 \label{table-rigid/fig}
\end{table}

\begin{table}[htb]
 \includegraphics{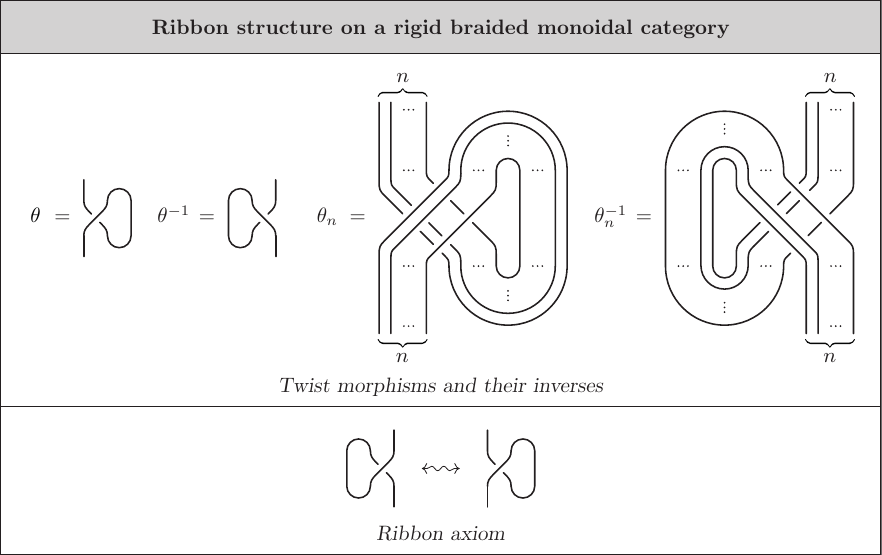}
 \caption{}
 \label{table-ribbon/fig}
\end{table}

\begin{proof}
By an inductive argument starting with $n=1$, it is easy to see that $\ev_n$ and $\coev_n$ satisfy the zig-zag relation for every $n \geqs 1$. Analogously, the proof that the family of morphisms $\theta_n : H^n \to H^n$ for $n \geqs 0$ define a ribbon structure on $\calC$, in the sense that they satisfy the relations in Definition~\ref{ribbon-cat/def}, proceeds by induction starting with $n = 1$, and uses the moves in Table~\ref{table-braided/fig} corresponding to the braiding axioms. The fact that $(\_)^* : \calC \to \calC$ defines a contravariant antimonoidal functor is a straightforward consequence of the definition, while, using the braided axioms, it can be easily seen that it preserves the braided structure.
\end{proof}




\section{Algebraic categories}
\label{algebra/sec}

\subsection{Braided Hopf algebras and the category \texorpdfstring{$\Alg$}{Alg}}\label{Halgebra/sec}

Let $\calC$ be a braided monoidal category with tensor product~$\otimes$, tensor unit~$\one$, and braiding~$\braid$. A \textit{braided Hopf algebra} in $\calC$, or simply a \textit{Hopf algebra} in $\calC$, is an object $H \in \calC$ equipped with the following structure morphisms:
\begin{itemize}
 \item a \textit{product} $\prodH : H \otimes H \to H$ and a \textit{unit} $\unitH : \one \to H$;
 \item a \textit{coproduct} $\coprH : H \to H \otimes H$ and a \textit{counit} $\counH : H \to \one$;
 \item an \textit{antipode} $\antipH : H \to H$ and its inverse $\antipH^{-1}: H \to H$.
\end{itemize}
These structure morphisms are subject to the following axioms:
\begin{gather*}
 \prodH \circ (\prodH \otimes \id) = \prodH \circ (\id \otimes \prodH),
 \tag*{\hrel{a1}}
 \\
 \prodH \circ (\unitH \otimes \id) = \id = \prodH \circ (\id \otimes \unitH),
 \tag*{\hrel{a2-2'}}
 \\
 (\coprH \otimes \id) \circ \coprH = (\id \otimes \coprH) \circ \coprH,
 \tag*{\hrel{a3}}
 \\
 (\counH \otimes \id) \circ \coprH = \id = (\id \otimes \counH) \circ \coprH, 
 \tag*{\hrel{a4-4'}}
 \\
 (\prodH \otimes \prodH) \circ (\id \otimes \braid \otimes \id) \circ (\coprH \otimes \coprH) = \coprH \circ \prodH, 
 \tag*{\hrel{a5}} 
 \\
 \counH \circ \prodH = \counH \otimes \counH, 
 \tag*{\hrel{a6}}
 \\
 \coprH \circ \unitH = \unitH \otimes \unitH, 
 \tag*{\hrel{a7}}
 \\
 \counH \circ \unitH = \idone, 
 \tag*{\hrel{a8}} 
 \\
 \prodH \circ (\antipH \otimes \id) \circ \coprH = \unitH \circ \counH = \prodH \circ(\id \otimes \antipH) \circ \coprH, 
 \tag*{\hrel{s1-1'}}
 \\
 \antipH \circ \antipH^{-1} = \id = \antipH^{-1} \circ \antipH.
 \tag*{\hrel{s2-3}}
\end{gather*}

\begin{table}[b]
 \centering
 \includegraphics{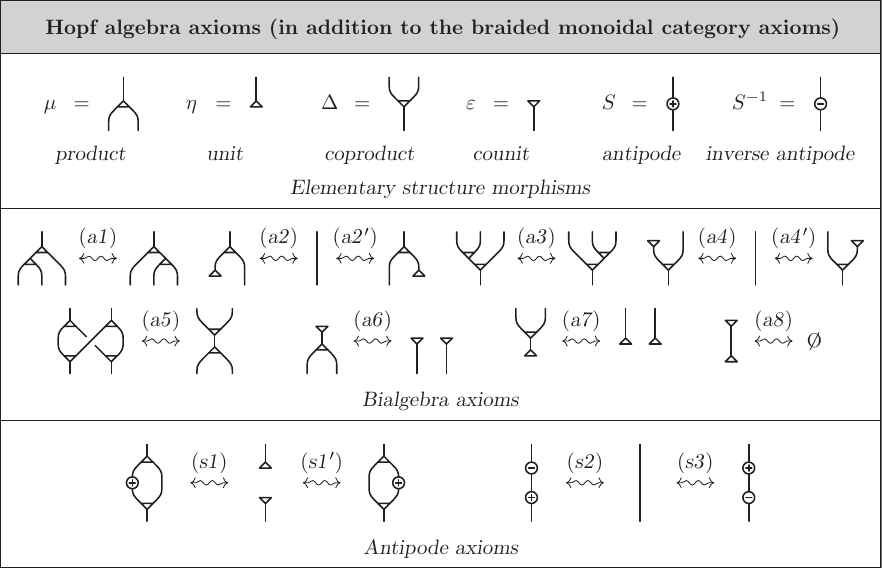}
 \caption{}
 \label{table-Hopf/fig}
 \label{E:a1} \label{E:a2-2'} \label{E:a2} \label{E:a2'} \label{E:a3} \label{E:a4-4'} \label{E:a4} \label{E:a4'} \label{E:a5} \label{E:a6} \label{E:a7} \label{E:a8} \label{E:s1-1'} \label{E:s1} \label{E:s1'} \label{E:s2-3} \label{E:s2} \label{E:s3}
\end{table}

A graphical representation of the generators and the defining axioms of a Hopf algebra can be found in Table~\ref{table-Hopf/fig}, where all edges are assumed to carry the label $H$ (compare with \cite[Tables~4.7.12 \& 4.7.13]{BP11}).

Notice that all these structure morphisms, except for the antipode, feature triangles that point either up or down. This choice is not arbitrary. Indeed, as we will see in Subsection~\ref{Phi/sec}, triangles pointing up correspond to $2$-handles, while those pointing down correspond to $1$-handles in the category of Kirby tangles $\KTf$ introduced in Subsection~\ref{4KT/sec}.

\begin{definition}\label{Alg/def} 
We denote by $\Alg$ the braided monoidal category freely generated by a Hopf algebra object $H$. In other words, objects of $\Alg$ are tensor powers of $H$, while morphisms of $\Alg$ are compositions of tensor products of identities, braidings, and structure morphisms $\prodH$, $\unitH$, $\coprH$, $\counH$, $\antipH$, $\antipH^{-1}$, modulo the defining axioms in Table~\ref{table-Hopf/fig}. 
\end{definition}

By definition, the category $\Alg$ satisfies the following universal property.

\begin{universal_property}\label{univ_prop/thm}
If $\calC'$ is a braided monoidal category and $H' \in \calC'$ is a Hopf algebra, then there exists a unique 
braided monoidal functor $\Xi_{H'} : \Alg \to \calC'$ sending $H$ to $H'$.
\end{universal_property}

The category $\Alg$ is the first of a series of braided monoidal categories that will be studied in the present work, so let us introduce some terminology and notations that apply to all of them. Every category $\calC$ we will consider is generated, as a braided monoidal category, by a single object $H \in \calC$ (in other words, $\calC$ is a PROB) together with a set of structure morphisms between tensor powers of $H$ (in the case of $\Alg$, these are $\prodH$, $\coprH$, $\unitH$, $\counH$, $\antipH$, and $\antipH^{-1}$). We will refer to the braiding morphism $c$, to its inverse $c^{-1}$, and to the structure morphisms as the \textit{elementary} or \textit{generating morphisms} of $\calC$, since all the other morphisms of $\calC$ are compositions of tensor products of identities and structure morphisms. Arbitrary morphisms in $\calC$ are represented by planar diagrams, and two diagrams $F_1$ and $F_2$ are said to be \textit{equivalent} if they represent the same morphism in $\calC$. In this case, we will write $F_1=F_2$, while in our pictures such equivalence will be denoted by a curly arrow.

In general, proving that two diagrams are equivalent consists in showing that one of them can be obtained from the other by applying a sequence of relations (or moves) which correspond either to the axioms or to the properties that have already been established. We outline the main steps in this procedure by drawing a sequence of intermediate diagrams, and for each step we denote (in the appropriate order, starting from the top) the main moves needed to transform the diagram on the left into the one on the right. Notice that some steps can actually be understood more easily by starting from the diagram on the right and by reading the moves in the reverse order. 
Obviously, the equivalence of two diagrams can be proven in many different ways, and our indications merely exhibit one possible proof. Moreover, the lists of moves we provide are not always complete, as sometimes only those moves that are considered necessary to guide the reader in reconstructing the equivalence are displayed. In particular, a single label may refer to multiple applications of the same type of move. 
As an example, see Figure~\ref{S/fig} below.
Observe that a label of type \hrel[a2]{a2-4} stands for relations~\hrel{a2} and \hrel{a4}, not for all the relations in the interval from \hrel{a2} to \hrel{a4}.     


In order to reduce the excessive proliferation of labelings, and to facilitate the reading of diagrammatic proofs, we will group certain elementary moves (or sequences thereof) into families. We will refer to each family as a single \textit{generalized move} denoted by a capital letter. The first such move is introduced in the lemma below and it is a direct consequence of the algebra axioms.


\begin{lemma}\label{s0/thm} We say that two diagrams are related by an \hrel{S} move if they can be obtained from each other by inserting or deleting a pair of parallel edges, only one of which carries a single antipode morphism, and which are attached to another pair of (possibly coinciding) edges by pairs of product and coproduct vertices, as represented in Figure~\ref{S/fig}.
Two diagrams related by an \hrel{S} move are equivalent. 
\end{lemma}

\begin{figure}[hbt]
 \centering
 \includegraphics{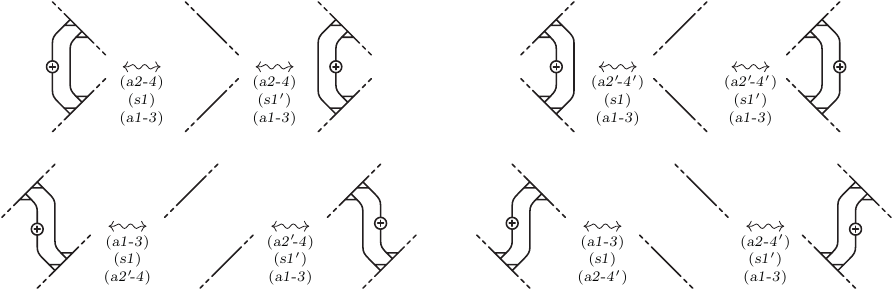}
 \caption{Generalized move \rel{S}.}
 \label{S/fig} \label{E:S}
\end{figure}

\begin{proof}
As indicated in Figure \ref{S/fig}, the statement is a direct consequence of axioms \hrel{s1-1'}, \hrel{a1}, \hrel{a2-2'}, \hrel{a3} and \hrel{a4-4'}.
\end{proof}

The category $\Alg$ has some important and well-known properties, which we collect here below. 

\begin{table}[htb]
 \centering
 \includegraphics{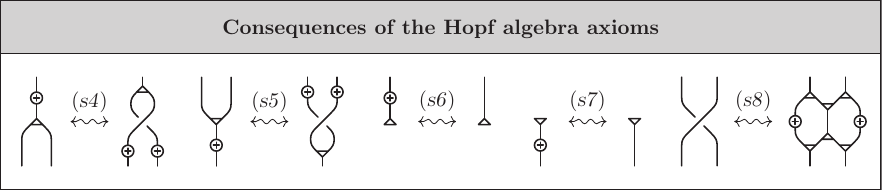}
 \caption{}
 \label{table-Hopf-prop/fig}
 \label{E:s4} \label{E:s5} \label{E:s6} \label{E:s7} \label{E:s8}
\end{table}

\begin{proposition}\label{hopf-prop/thm}
The antipode satisfies relations~\hrel{s4} to \hrel{s8} presented in Table~\ref{table-Hopf-prop/fig}.
\end{proposition}

For convenience of the reader, the proof of this proposition, together with the (rather technical) proofs of other known properties of the algebras we work with, are collected in Appendix~\ref{proofs/app}.

Next, we are going to introduce an anti-monoidal functor that is naturally isomorphic to the identity functor.
Variants of such functor can be introduced in more general contexts, for any braided monoidal category, but we restrict our definition to a specific form that is designed for the type of categories we are interested in.

\begin{definition}\label{sym/def} 
If $\calC$ is a PROB with generating object $H \in \calC$, and if $\antipH : H \to H$ is an invertible morphism, then we denote by $\calS_n : H^n \to H^n$ the invertible morphism given by
\[
 \calS_n = T_n^{\frac{1}{2}} \circ \antipH^{\otimes n},
\]
where $T_n^{\frac{1}{2}} : H^n \to H^n$ is the positive half-twist on $H^n$, defined inductively by $T_0^{\frac{1}{2}} = \id_H$ and by
\[
 T^{\frac{1}{2}}_{n} = (T^{\frac{1}{2}}_{n-1} \otimes \id_H) \circ c_{n-1,1}
\]
for every $n \geqs 1$. Then, the \textit{symmetry functor} $\sym :\calC \to \calC$ is the functor that sends every object of $\calC$ to itself and every morphism $F : H^s \to H^t$ of $\cal C$ to
\begin{gather*}
 \sym(F) = \calS_t \circ F \circ \calS_s^{-1}. 
\end{gather*}
\end{definition}


\begin{proposition}\label{funt-sym/thm}
If $\calC$ is a PROB with generating object $H \in \calC$, and if $\antipH : H \to H$ is an invertible morphism, then the functor $\sym : \calC \to \calC$ is a braided anti-monoidal equivalence functor, where anti-monoidality means
\[
 \sym(F \otimes F') = \sym(F') \otimes \sym(F)
\]
for all morphisms $F : H^s \to H^t$ and $F' : H^{s'} \to H^{t'}$, and the collection $\calS = \{ \calS_n \mid n \in \N \}$ of morphisms of $\calC$ defines a natural isomorphism between the identity functor of $\calC$ and $\sym$. In particular, for every morphism $F : H^s \to H^t$ of $\calC$, we have
\begin{gather*}
 \sym(F) \circ \calS_s = \calS_t \circ F,
\tag*{\hrel{s9}}
\end{gather*}
see Figure \ref{s9/fig}.
\end{proposition}


\begin{figure}[htb]
 \centering
 \includegraphics{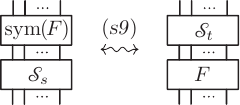}
 \caption{The natural isomorphism $\calS$.}
 \label{s9/fig} \label{E:s9}
\end{figure}

Notice that, in Definition~\ref{sym/def} and in Proposition~\ref{funt-sym/thm}, $H \in \calC$ is simply required to be the generating object of a PROB, and $\antipH : H \to H$ is simply required to be an invertible morphism. However, as our notation suggests, we will always apply this result to PROBs whose generating object $H$ is a Hopf algebra with antipode $\antipH : H \to H$.

\begin{proof}  
The fact that $\calS$ provides a natural isomorphisms between $\sym$ and the identity functor follows directly from the definition.
To establish anti-monoidality of $\sym$, notice that, by the naturality of the braiding,
for all $m,n \geqs 0$ we have
\[
 \calS_{m+n} = c_{m,n} \circ (\calS_m \otimes \calS_n).
\]
Therefore, for all morphisms $F : H^s \to H^t$ and $F' : H^{s'} \to H^{t'}$ of $\calC$, we have
\begin{align*}
 \sym(F \otimes F')
 &= c_{t,t'} \circ (\calS_t \otimes \calS_{t'}) \circ (F \otimes F') \circ (\calS_s^{-1} \otimes \calS_{s'}^{-1}) \circ c_{s,s'}^{-1}\\
 &= c_{t,t'} \circ (\sym(F) \otimes \sym(F')) \circ c_{s,s'}^{-1} = \sym(F') \otimes \sym(F). \qedhere
\end{align*}
\end{proof}

Proposition~\ref{funt-sym/thm} provides a powerful tool for deducing relations from other identities. Indeed, it implies that a relation $F_1=F_2$, denoted \rel{x}, holds in $\calC$ if and only if the relation $\sym(F_1) = \sym(F_2)$, denoted \rel{x'}, holds as well. Together with the properties of the antipode established in Proposition~\ref{hopf-prop/thm} it has the following important consequence. 



\begin{proposition}\label{symmetry-alg/thm}
There is an involutive braided anti-monoidal equivalence functor
\[
 \sym : \Alg \to \Alg,
\]
called the \textit{symmetry functor}, that sends every object and every elementary morphism to itself. 
\end{proposition}

\begin{proof}  
The statement follows from Proposition~\ref{funt-sym/thm}, provided we check that $\sym$ sends all elementary morphisms of $\Alg$ to themselves. For the elementary structure morphisms, this reduces to relations~\hrel{s4} to \hrel{s7} in Table~\ref{table-Hopf-prop/fig}, while for the elementary braiding morphisms it is a consequence of \hrel{s2-3} and of the braiding axioms in Table~\ref{table-braided/fig}.
\end{proof}

According to Proposition~\ref{symmetry-alg/thm}, if we think of the diagrammatic presentation of morphisms in $\Alg$ as representing graphs embedded in $\R^3$, then applying the symmetry functor corresponds to rotating a graph of an angle $\pi$ around a vertical axis lying in the diagram plane. In particular, $\sym$ reverses the order of factors in tensor products, but preserves all crossings.
In Figure~\ref{sym-example/fig} we give an example of a morphism in $\Alg$ and its image under the functor $\sym$. Here, the curly equivalence arrow corresponds to a sequence of applications of the braiding axioms and relations~\hrel{s4} to \hrel{s7} in Table~\ref{table-Hopf-prop/fig} as prescribed in the proofs of Proposition~\ref{funt-sym/thm} and Proposition~\ref{symmetry-alg/thm}.


\begin{table}[htb]
 \centering
 \includegraphics{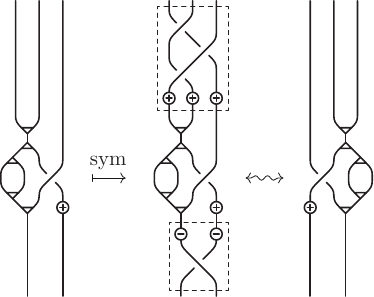}
 \caption{An example of application of the functor $\sym$.}
 \label{sym-example/fig}
\end{table}

\subsection{Adjoint action}
\label{adjoint/sec}

Let $\calC$ be a braided monoidal category, let $(H, \prodH_H, \unitH_H, \coprH_H, \counH_H, \antipH_H)$ be a braided Hopf algebra in $\calC$, and let $(A,\prodH_A,\unitH_A)$ be an algebra in $\calC$. We recall that a morphism $\alpha : H \otimes A \to A$ defines a left action of $H$ on $A$ if the following holds:
\begin{gather*}
 \alpha \circ (\unitH_H \otimes \id_A) = \id_A,\\ 
 \alpha \circ (\prodH_H \otimes \id_A) = \alpha \circ (\id_H \otimes \alpha),\\ 
 \alpha \circ (\id_H \otimes \unitH_A) = \unitH_A \circ \counH_H,\\ 
 \alpha \circ (\id_H \otimes \prodH_A) = \prodH_A \circ (\alpha \otimes \alpha) \circ (\id_H \otimes \braid_{H,A} \otimes \id_A) \circ (\coprH_H \otimes \id_{A \otimes A}).
\end{gather*}

The first two conditions express the fact that $A$ is a left $H$-module, while the last two conditions express the fact that the action intertwines the product and the unit of $A$. The notion of right action is symmetric, and corresponds to a right $H$-algebra structure on $A$.

\begin{definition}\label{alpha/def}
The left adjoint action $\adjH_n : H \otimes H^n \to H^n$ is inductively defined by $\adjH_0 =\counH$ and by
\begin{gather*}
 \adjH_1 = \adjH = \prodH \circ (\prodH \otimes \antipH) \circ (\id \otimes \braid) \circ (\coprH \otimes \id), 
 \tag*{\hrel{d1}}
 \\*
 \adjH_n = (\adjH \otimes \adjH_{n-1}) \circ (\id \otimes \braid \otimes 
 \id_{n-1}) \circ (\coprH \otimes \id_n)
 \tag*{\hrel{d2}}
\end{gather*}
for every $n \geqs 1$, see Table~\ref{table-adjoint/fig}. The symmetric right adjoint action $\adjH'_n : H^n \otimes H \to H^n$ is defined by
\[
\adjH'_n = \sym(\adjH_n).
\tag*{\hrel{d1'-2'}}
\]
for every $n \geqs 0$, see Table~\ref{table-adjoint/fig}.
\end{definition}

\begin{table}[htb]
 \centering
 \includegraphics{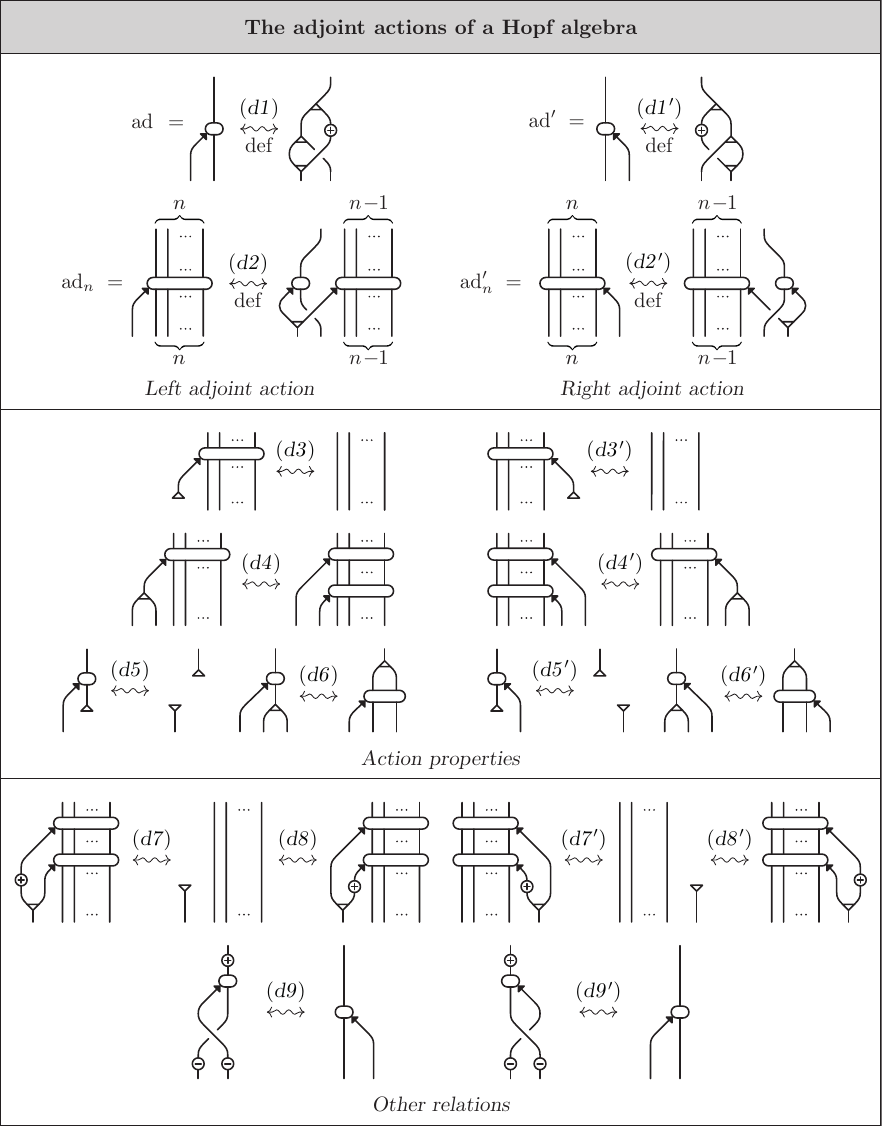}
 \caption{}
 \label{table-adjoint/fig}
 \label{E:d1-2} \label{E:d1} \label{E:d2} \label{E:d1'-2'} \label{E:d1'} \label{E:d2'} \label{E:d3-3'} \label{E:d3} \label{E:d3'} \label{E:d4-4'} \label{E:d4} \label{E:d4'} \label{E:d5-5'} \label{E:d5} \label{E:d5'} \label{E:d6-6'} \label{E:d6} \label{E:d6'} \label{E:d7-7'} \label{E:d7} \label{E:d7'} \label{E:d8-8'} \label{E:d8} \label{E:d8'} \label{E:d9-9'} \label{E:d9} \label{E:d9'}
\end{table}

We denote by $\cadjH$ the collection of morphisms $\{ \adjH_n \mid n \in \N \}$, and similarly by $\cadjH'$ the collection $\{ \adjH'_n \mid n \in \N \}$. The fact that these are indeed left and right actions is a classical result in the theory of Hopf algebras, and the reader can find the proof in Proposition~\ref{alpha/thm} below. In particular, the adjoint action intertwines the product and the unit. 

\begin{proposition}\label{alpha/thm}
If $H$ is a Hopf algebra in $\calC$, then its structure morphisms satisfy the identities appearing in Table~\ref{table-adjoint/fig}. In particular, for every integer $n \geqs 0$, the adjoint morphisms $\adjH_n$ and $\adjH'_n$ define a left and a right action of $H$ on $H^n$ respectively.
\end{proposition}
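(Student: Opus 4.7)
The plan is to prove the statement by induction on $n$, with the base case $n=1$ handled via direct diagrammatic computation using the Hopf algebra axioms, and the right-action statement obtained for free by applying the symmetry functor $\sym$ from Proposition~\ref{symmetry-alg/thm}, since $\adjH'_n = \sym(\adjH_n)$ by definition \hrel{d1'-2'} and the action axioms for a right action on $H^n$ are the $\sym$-images of the left action axioms on $H^n$.

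For the base case, I would verify directly that $\adjH$ satisfies the four action axioms (module axioms plus compatibility with the unit and the product of $H$). The unit axiom $\adjH \circ (\unitH \otimes \id) = \id$ follows by expanding the definition \hrel{d1}, applying \hrel{a7} to replace $\coprH \circ \unitH$ by $\unitH \otimes \unitH$, using naturality of the braiding across a unit, and simplifying with \hrel{s4} (i.e.\ $\antipH \circ \unitH = \unitH$, which is proved in Appendix~\ref{proofs/app}) together with \hrel{a2-2'}. The compatibility with the unit of $H$, namely $\adjH \circ (\id \otimes \unitH) = \unitH \circ \counH$, is a direct application of \hrel{s1-1'} after sliding the unit through the braiding. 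The associativity $\adjH \circ (\prodH \otimes \id) = \adjH \circ (\id \otimes \adjH)$ is the classical computation: on the right-hand side one expands $\adjH$ twice, uses coassociativity \hrel{a3} and the fact that $\antipH$ is an anti-algebra morphism (property~\hrel{s7}) to merge the two copies of the antipode into a single $\antipH$ applied to $\prodH$, then applies \hrel{a5} to reconstruct the left-hand side. Finally, the compatibility with $\prodH$ uses \hrel{a5} (so that $\coprH$ is an algebra morphism) together with the naturality of the braiding and repeated applications of \hrel{a1} and \hrel{s1-1'}.

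For the inductive step, I would isolate the standard lemma: if $\alpha_A : H \otimes A \to A$ and $\alpha_B : H \otimes B \to B$ are left actions on algebras $A,B$ intertwining their products and units, then the morphism
\[
\alpha_{A \otimes B} = (\alpha_A \otimes \alpha_B) \circ (\id \otimes \braid_{H,A} \otimes \id_B) \circ (\coprH \otimes \id_A \otimes \id_B)
\]
is a left action on $A \otimes B$ with its tensor product algebra structure. This is proved using coassociativity~\hrel{a3}, the counit axiom~\hrel{a4-4'}, and the fact that $\coprH$ is an algebra morphism~\hrel{a5}. Applied with $A = H$ and $B = H^{n-1}$, this lemma together with the inductive hypothesis on $\adjH_{n-1}$ and the base case on $\adjH$ yields that $\adjH_n$ defined by \hrel{d2} is a left action on $H^n$. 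The additional identities appearing in Table~\ref{table-adjoint/fig} (beyond the four action axioms) can all be read off as consequences of this recursive structure together with the Hopf algebra axioms; I would handle them in the same induction by invoking the appropriate axiom at each step.

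The main obstacle is the compatibility of $\adjH$ with the product of $H$ in the base case: one must keep track of three copies of $\coprH$, several braidings, and an antipode, and reduce the diagram using \hrel{a5} and \hrel{s1-1'} without getting lost in the bookkeeping. Once this identity is in hand, the rest of the proof is organizational: the inductive step is essentially the naturality of the diagonal action construction, and the right-action statement is a formal consequence of the symmetry functor.
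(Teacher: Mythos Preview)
Your proposal is correct and follows essentially the same route as the paper: reduce to $\adjH_n$ via $\sym$, handle the base case $n=1$ by direct diagrammatic computation, and obtain the general case by the tensor-product-of-actions induction (which is exactly what the paper does for \hrel{d4} in Figures~\ref{adjoint01/fig}--\ref{adjoint02/fig}). Two minor points: your references to \hrel{s4} and \hrel{s7} are swapped relative to the paper's numbering (the identity $\antipH \circ \unitH = \unitH$ is \hrel{s6}, and the anti-multiplicativity of $\antipH$ is \hrel{s4}), and your treatment of the remaining identities \hrel{d7}--\hrel{d9} is only sketched---the paper handles these by separate short diagrammatic arguments (Figures~\ref{adjoint04/fig} and \ref{adjoint06/fig}) rather than by the same induction, so you should expect to prove them individually in the base case rather than fold them into the recursive scheme.
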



\begin{proof}
Notice that it is enough to prove the statements for $\adjH_n$, since the ones for $\adjH'_n$ follow by applying the functor $\sym$, thanks to Proposition~\ref{symmetry-alg/thm}.

Identity~\hrel{d3} is an immediate consequence of relations~\hrel{a2-2'}, \hrel{a7}, and \hrel{s6} in Tables~\ref{table-Hopf/fig} and \ref{table-Hopf-prop/fig}. In order to show \hrel{d4}, we first prove the special case $n=1$ in Figure~\ref{adjoint01/fig},
and then the general case follows by the inductive argument shown in Figure~\ref{adjoint02/fig}. Identity~\hrel{d5} follows from axioms~\hrel{a2'} and \hrel{s1'} in Table~\ref{table-Hopf/fig}. Identity~\hrel{d6} is proved in Figure~\ref{adjoint03/fig}, identity~\hrel{d7} is verified in Figure~\ref{adjoint04/fig}, while \hrel{d8} can be proved in a similar way, by using \hrel{s1'} instead of \hrel{s1}. 
%
%
Finally, identity~\hrel{d9} and its symmetric \hrel{d9'} state that $\sym(\adjH)=\adjH'$ and $\sym(\adjH')=\adjH$, and therefore they follow from Proposition~\ref{symmetry-alg/thm}.
\end{proof}

\begin{figure}[htb]
 \centering
 \includegraphics{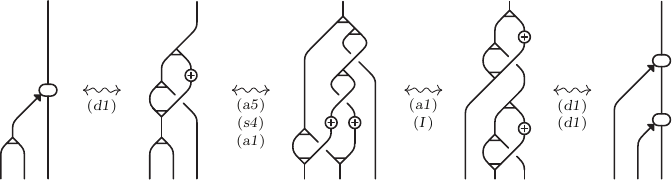}
 \caption{Proof of \hrel{d4}: case $n=1$.}
 \label{adjoint01/fig}
\end{figure}

 \begin{figure}[htb]
 \centering
 \includegraphics{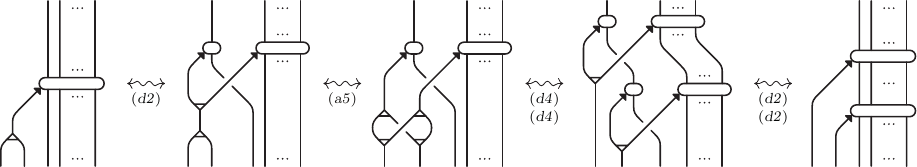}
 \caption{Proof of \hrel{d4}: inductive step.}
 \label{adjoint02/fig}
\end{figure}

\begin{figure}[htb]
 \centering
 \includegraphics{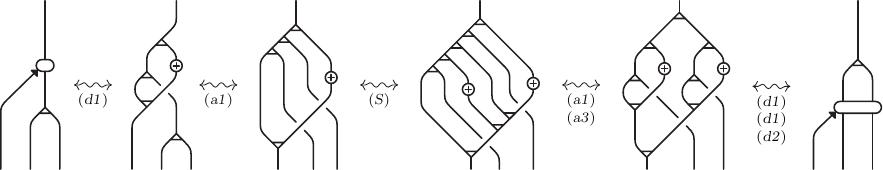}
 \caption{Proof of \hrel{d6}.}
 \label{adjoint03/fig}
\end{figure}

\begin{figure}[htb]
 \centering
 \includegraphics{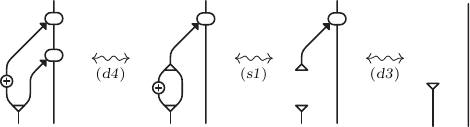}
 \caption{Proof of \hrel{d7}.}
 \label{adjoint04/fig}
\end{figure}


\FloatBarrier

If $\calC$ is the category of left modules over a ring $R$, equipped with its standard symmetric braiding, then the adjoint action is only known to intertwine the coproduct and the antipode when the Hopf algebra is \textit{cocommutative}, that is, when $\braid \circ \coprH = \coprH$, see \cite[Lemma~5.7.2]{Mo93}. The following definition provides a weaker condition on the Hopf algebra that ensures this intertwining property in the case of an arbitrary braided category. This condition was first introduced by Majid under the name \textit{$\calC$-cocommutative} action, see \cite[Definition~2.3]{Ma93}, or \textit{braided cocommutative} action, see \cite[Definition~2.9]{Ma94}. 

\begin{definition}\label{cocommutative/def}
The left adjoint action $\adjH: H \otimes H \to H$ of a Hopf algebra $H$ in a braided monoidal category $\calC$ is \textit{braided cocommutative} if the following holds:
\begin{equation*}
 (\adjH \otimes \id) \circ (\id \otimes \braid) \circ (\coprH \otimes \id) = \braid^{-1} \circ (\id \otimes \adjH) \circ (\coprH \otimes \id). \tag*{\hrel{h0}}
\end{equation*}
Analogously, the right adjoint action $\adjH : H \otimes H \to H$ of a Hopf algebra $H$ in a braided monoidal category $\calC$ is \textit{braided cocommutative} if the following holds:
\begin{equation*}
 (\id \otimes \adjH') \circ (\braid \otimes \id) \circ (\id \otimes \coprH) = \braid^{-1} \circ (\adjH' \otimes \id) \circ (\id \otimes \coprH). \tag*{\hrel{h0'}}
\end{equation*}
\end{definition}

A graphical representation of the braided cocommutativity axiom for adjoint actions is given in Table~\ref{table-cocommutative/fig}. Notice that, when the braiding of $\calC$ is symmetric, relations~\hrel{h0} and \hrel{h0'} are implied by the cocommutativity condition $\braid \circ \coprH = \coprH$, although they are not equivalent to it. 

\begin{table}[htb]
 \centering
 \includegraphics{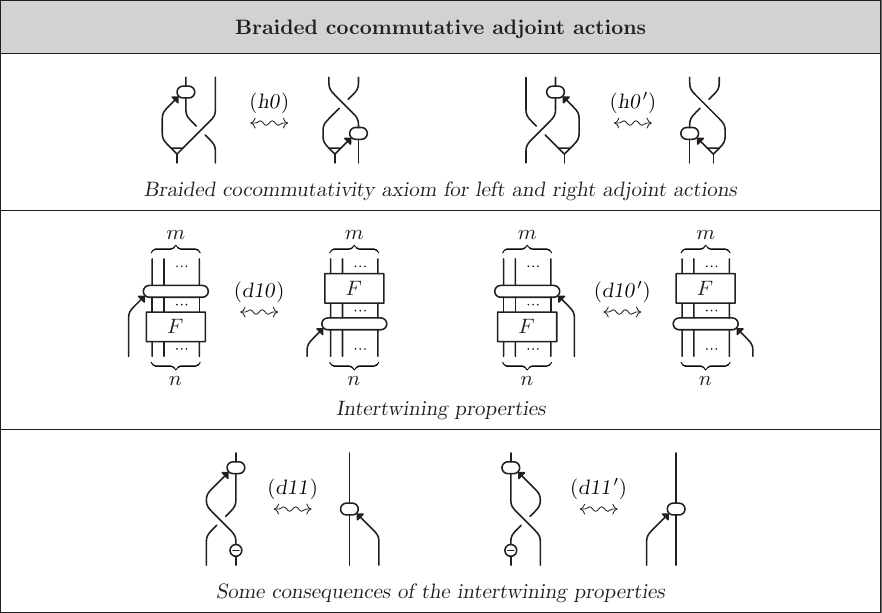}
 \caption{}
 \label{table-cocommutative/fig}
 \label{E:h0-0'} \label{E:h0} \label{E:h0'} \label{E:d10-10'} \label{E:d10} \label{E:d10'} \label{E:d11-11'} \label{E:d11} \label{E:d11'}
\end{table}

\begin{lemma}\label{cocommutative/thm}
If $\Alg^\rmB$ denotes the braided monoidal category freely generated by a Hopf algebra $H$ with braided cocomutative left  adjoint action, then the latter defines natural transformations $\cadjH : H \otimes \idfunct \Rightarrow \idfunct$ and $\cadjH' : \idfunct \otimes H \Rightarrow \idfunct$, where $\idfunct$ denotes the identity functor, meaning that, for every morphism $F : H^n \to H^m$ in $\Alg^\rmB$, we have 
\begin{gather*}
 \adjH_m \circ (\id \otimes F) = F \circ \adjH_n,
 \tag*{\hrel{d10}}
\\
\adjH'_m \circ (F \otimes \id) = F \circ \adjH'_n.
 \tag*{\hrel{d10'}}
\end{gather*}
Moreover, identities \hrel{h0'} and \hrel{d11-11'} in Table~\ref{table-cocommutative/fig} hold in $\Alg^\rmB$. 
\end{lemma}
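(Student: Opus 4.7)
The plan is to prove only the left case, since the right case then follows by applying the anti-monoidal symmetry functor $\sym$ of Proposition~\ref{symmetry-alg/thm}, which sends the braided cocommutativity relation \hrel{h0} to \hrel{h0'} and $\cadjH$ to $\cadjH'$. Throughout, the identity \hrel{d10} should be read as the commutation of $\adjH_{m}$ past $F$, when the extra $H$-strand is placed to the left.

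First I would reduce naturality to checking it on a generating set of morphisms. Since $\Alg^{\rmL}$ is freely generated by $H$ as a braided monoidal category with braided cocommutative adjoint action, every morphism $F : H^{n} \to H^{m}$ is a composition of tensor products of identities, braidings $\braid^{\pm 1}$, and the Hopf algebra structure morphisms $\prodH, \unitH, \coprH, \counH, \antipH, \antipH^{-1}$. Standard abstract nonsense then shows that the class of morphisms $F$ satisfying the naturality identity
\[
\adjH_{m} \circ (\id \otimes F) = F \circ \adjH_{n}
\]
is closed under composition, and one checks closure under tensor product by unfolding the recursive definition \hrel{d2}: writing $F \otimes G$ as first applying $F$ on the left factors and $G$ on the right factors and using \hrel{d2} to split $\adjH_{n+n'}$ via the coproduct, the naturality for $F$ and $G$ separately combines with \hrel{a5} and \hrel{a7} to give naturality for $F \otimes G$. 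So it suffices to verify the identity when $F$ is one of the generators.

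Next I would verify the naturality case by case on generators. For $F = \prodH$, $\unitH$, $\coprH$, $\counH$, $\antipH$, $\antipH^{-1}$, the required equations are precisely the intertwining relations \hrel{d3}, \hrel{d4}, \hrel{d5}, \hrel{d6}, \hrel{d7}, \hrel{d8}, \hrel{d9} already established in Proposition~\ref{alpha/thm} (with \hrel{d7} and \hrel{d8} giving naturality against $\antipH^{\pm 1}$). For $F$ equal to an identity strand the statement is trivial, and for $F = \braid^{\pm 1}$ it is exactly the braided cocommutativity axiom \hrel{h0} (applied once directly and once after pre- and post-composing with a braiding to flip its sign), which is where the defining relation of $\Alg^{\rmL}$ is genuinely used.

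The hard part will be the braiding case, and more specifically the bookkeeping needed to handle the inductive step in the tensor product closure, where one must carefully track how $\adjH$ travels across strands via \hrel{h0} and via the coproduct splitting \hrel{d2}. Once \hrel{d10} is established, the identities \hrel{d11-11'} in Table~\ref{table-cocommutative/fig} follow immediately by specializing the natural transformation $\cadjH$ to concrete morphisms $F$: \hrel{d11} is naturality applied to the coproduct iterated onto $H^{n}$ combined with \hrel{d5}, and the twin identity in the right-handed setting is again obtained via $\sym$.
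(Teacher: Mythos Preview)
There is a genuine gap. You claim that naturality of $\cadjH$ against $\coprH$ and $\antipH^{\pm 1}$ is already contained in Proposition~\ref{alpha/thm} via relations \hrel{d7}, \hrel{d8}, \hrel{d9}, but this is not so: Proposition~\ref{alpha/thm} holds in any braided Hopf algebra, without the braided cocommutativity axiom \hrel{h0}, and the paper itself remarks just before Definition~\ref{cocommutative/def} that the adjoint action is \emph{not} known to intertwine the coproduct and the antipode without a cocommutativity hypothesis. Relations \hrel{d3}--\hrel{d9} record only the module-algebra axioms and auxiliary identities; none of them is \hrel{d10} for $F=\coprH$ or $F=\antipH$. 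In the paper, those two cases are precisely where \hrel{h0} enters, and each requires its own diagrammatic argument (Figures~\ref{ad-inv-Delta-Alg/fig} and \ref{ad-inv-S-Alg/fig}). So the heart of the proof is missing from your proposal.

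Conversely, your placement of \hrel{h0} is wrong: \hrel{d10} for $F=\braid$ is \emph{not} the statement \hrel{h0}. The two equations have different shapes (the former involves $\adjH_2$ on three input strands, the latter a single $\adjH$ and a coproduct on two). The paper does not verify the braiding case directly at all; instead it invokes \hrel{s8}, which expresses $\braid$ in terms of $\prodH,\coprH,\antipH$, so once \hrel{d10} is known for those structure morphisms it follows for $\braid$ automatically. Finally, your derivation of \hrel{d11-11'} is also off: they are obtained from \hrel{d9} and \hrel{d9'} by using \hrel{d10} for $\antipH$ (which you have not yet established), not by specializing naturality to an iterated coproduct together with \hrel{d5}. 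The overall architecture you sketch (reduce to generators, handle tensor products via the recursive definition \hrel{d2} and coassociativity, then apply $\sym$ for the right case) is sound, but the case analysis must be redone with \hrel{h0} deployed at $\coprH$ and $\antipH$, not at $\braid$.
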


\begin{proof}
Observe that the category $\Alg^\rmB$  is the quotient of $\Alg$ by the braided cocommutativity axiom~\hrel{h0}  and that the functor $\sym: \Alg \to \Alg$ induces an equivalence of categories $\sym: \Alg^\rmB \to \Alg^\rmB$. Therefore \hrel{h0'}  follows from \hrel{h0} by applying $\sym$. Analogously, \hrel{d10'} and \hrel{d11'} follow respectively from \hrel{d10} and \hrel{d11} which are proved below.

In order to show \hrel{d10}, it is enough to consider the case when $F$ is a structure morphism of $H$. For $F = \prodH$ and for $F = \unitH$ it was already established in Proposition~\ref{alpha/thm} (see relations~\hrel{d5} and \hrel{d6}), while for $F = \counH$ the statement follows from \hrel{a6} in Table~\ref{table-Hopf/fig}. Moreover, since relation~\hrel{s8} in Table~\ref{table-Hopf-prop/fig} allows us to express $\braid$ in terms of the rest of the generating morphisms, and since, whenever $F$ is invertible, the identity~\hrel{d10} for $F^{-1}$ is implied by the one for $F$, we only need to prove \hrel{d10} for $F = \coprH, \antipH$. This is done in Figures~\ref{ad-inv-Delta-Alg/fig} and \ref{ad-inv-S-Alg/fig}. 

Now \hrel{d11} and \hrel{d11'} follow directly from \hrel{d9} and \hrel{d9'}, respectively, by intertwining the adjoint action and the antipode.
\end{proof}

\begin{figure}[htb]
\centering
 \includegraphics{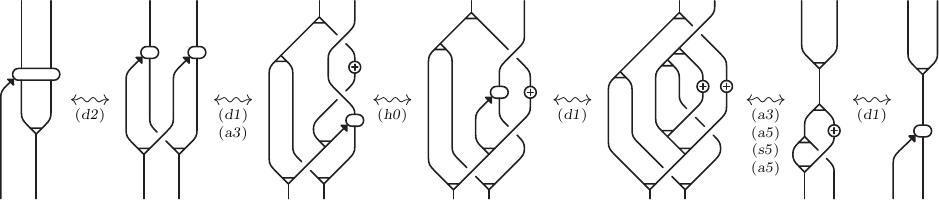}
 \caption{Proof of \hrel{d10} for $F = \coprH$.}
 \label{ad-inv-Delta-Alg/fig}
\end{figure}

\begin{figure}[htb]
 \centering
 \includegraphics{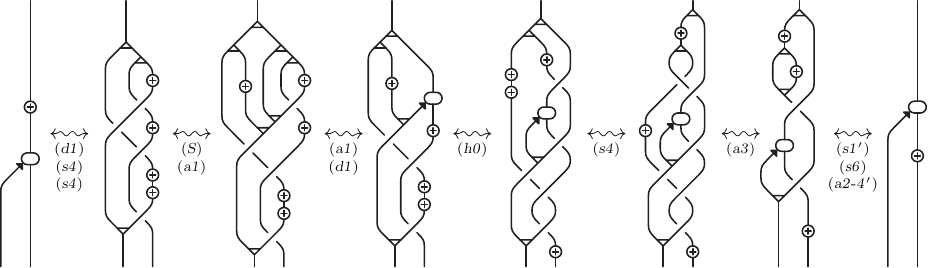}
 \caption{Proof of \hrel{d10} for $F = \antipH$.}
 \label{ad-inv-S-Alg/fig}
\end{figure}

\FloatBarrier

\subsection{BP Hopf algebras and the category \texorpdfstring{$\Algf$}{4Alg}} 
\label{BP Hopf algebra/sec}

In this subsection, we recall the definition and the properties of BP Hopf algebras. These algebraic structures were first defined and studied in \cite{BP11} in the general context of \textit{groupoid Hopf algebras}, where all the edges of the diagrams representing the structure morphisms of the algebra are labeled by elements of a groupoid $\G$. The notion of a BP Hopf algebra was introduced in \cite{BD21} and corresponds to the special case of the trivial groupoid $\G = \{1\}$. 

\begin{definition}\label{Hr/def}
If $\calC$ is a braided monoidal category with tensor product $\otimes$, tensor unit $\one$, and braiding $\braid$, a \textit{Bobtcheva--Piergallini Hopf algebra}, or \textit{BP Hopf algebra}, is a Hopf algebra $H$ in $\calC$ equipped with the following structure morphisms:
\begin{itemize}
 \item an \textit{integral form} $\intfH : H \to \one$ and an \textit{integral element} $\inteH : \one \to H$;
 \item a \textit{ribbon morphism} $\ribmorH : H \to H$ and its inverse $\ribmorH^{-1} : H \to H$;
 \item a \textit{copairing} $\copairH : \one \to H \otimes H $.
\end{itemize}
These structure morphisms are subject to the following axioms:
\begin{gather*}
 (\id \otimes \intfH) \circ \coprH = \unitH \circ \intfH,
 \tag*{\hrel{i1}}
 \\
 \prodH \circ (\inteH \otimes \id) = \inteH \circ \counH,
 \tag*{\hrel{i2}}
 \\
 \intfH \circ \inteH = \idone,
 \tag*{\hrel{i3}}
 \\
 \antipH \circ \inteH = \inteH,
 \tag*{\hrel{i4}}
 \\
 \intfH \circ \antipH = \intfH,
 \tag*{\hrel{i5}}
 \\
 \antipH \circ \ribmorH = \ribmorH \circ \antipH, 
 \tag*{\hrel{r3}}
 \\
 \counH \circ \ribmorH = \counH, 
 \tag*{\hrel{r4}}
 \\
 \prodH \circ (\ribmorH \otimes \id) = \ribmorH \circ \prodH, 
 \tag*{\hrel{r5}} 
 \\
 \copairH = (\ribmorH \otimes \ribmorH) \circ \coprH \circ \ribmorH^{-1} \circ \unitH
 \tag*{\hrel{r6}}
 \\
 (\id \otimes \coprH) \circ \copairH = (\prodH \otimes \id_2) \circ (\id \otimes \copairH \otimes \id) \circ \copairH,
 \tag*{\hrel{r7}}
 \\
 \coprH \circ \ribmorH^{-1} =(\ribmorH^{-1} \otimes \ribmorH^{-1}) \circ \monH \circ c^{-1} \circ \coprH,
 \tag*{\hrel{r8}}
 \\
 (\prodH \otimes \prodH) \circ (\antipH \otimes (\monH \circ c^{-1} \circ \monH) \otimes \antipH) \circ (\rho_L \otimes \rho_R) = \braid, 
 \tag*{\hrel{r9}}
\end{gather*}
where
\[
 \monH = (\prodH \otimes \prodH) \circ (\id \otimes \copairH \otimes \id) : H \otimes H \to H \otimes H
\]
is called the \textit{monodromy}\label{monodromy/def}, while the morphisms 
\[
 \rho_L = (\id \otimes \prodH) \circ (\copairH \otimes \id) : H \to H \otimes H 
 \quad \text{and} \quad
 \rho_R = (\prodH \otimes \id) \circ (\id \otimes \copairH) : H \to H \otimes H
 \]
define a left and a right $H$-comodule structure on $H$, respectively. 
\end{definition}

A graphical representation of the additional generators and defining axioms of a BP Hopf algebra can be found in Table~\ref{table-BPHopf/fig}, to be added to the list of generators and defining axioms of Hopf algebras given in Table~\ref{table-Hopf/fig} (compare with \cite[Tables~4.7.12 \& 4.7.13]{BP11}).

\begin{table}[htb]
 \centering
 \includegraphics{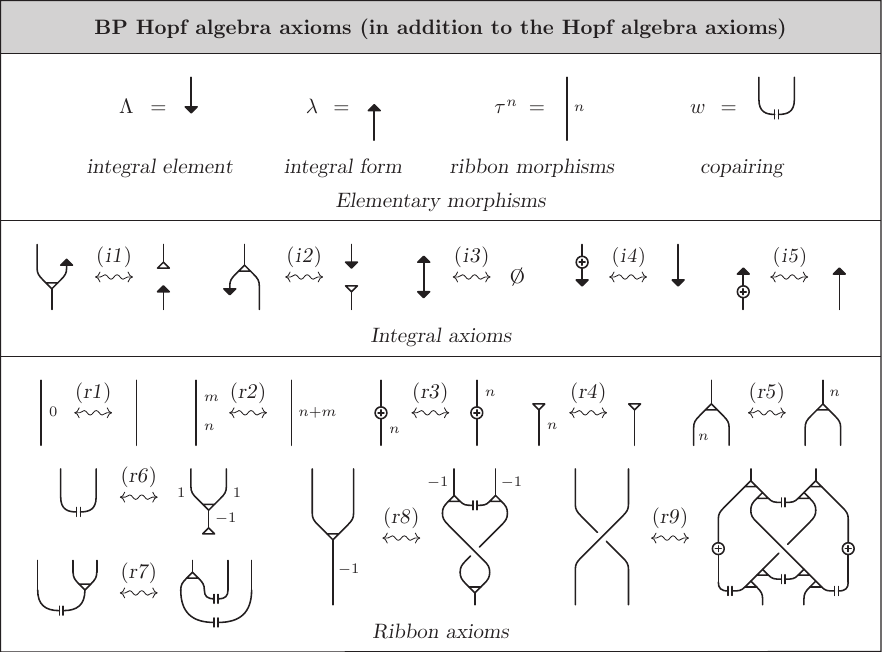}
 \caption{}
 \label{table-BPHopf/fig}
 \label{E:i1} \label{E:i2} \label{E:i3} \label{E:i4} \label{E:i5} \label{E:r1} \label{E:r2} \label{E:r3} \label{E:r4} \label{E:r5} \label{E:r6} \label{E:r7} \label{E:r8} \label{E:r9}
\end{table}

\begin{definition}\label{Algf/def} 
We denote by $\Algf$ the braided monoidal category freely generated by a BP Hopf algebra $H$. In other words, objects of $\Algf$ are tensor powers of $H$, while morphisms of $\Algf$ are compositions of tensor products of identities, braidings, and structure morphisms $\prodH$, $\unitH$, $\coprH$, $\counH$, $\antipH$, $\antipH^{-1}$, $\intfH$, $\inteH$, $\ribmorH$, $\ribmorH^{-1}$, $\copairH$, modulo the defining axioms listed in Definition~\ref{Hr/def}. 
\end{definition}

Observe that, since $H$ is a Hopf algebra in $\Algf$, then, according to the Universal Property \ref{univ_prop/thm}, there exists a unique functor $4\Xi : \Alg\to \Algf$ that sends $H$ to itself. Moreover, by definition, we have the following universal property.

\begin{universal_property}\label{univ_prop_BP/thm}
If $\calC'$ is a braided monoidal category and $H' \in \calC'$ is a BP Hopf algebra, then the braided monoidal functor $\Xi_{H'} : \Alg \to \calC'$ given by the universal property of $\Alg$ factors through $4\Xi : \Alg \to \Algf$.
\end{universal_property}

\begin{remark} \label{BPaxioms/rmk} 
As it is shown in Figure~\ref{proof-r10/fig}, relation~\hrel{r6} is not an independent axiom, but it is a consequence of \hrel{r8} and the Hopf algebra axioms. We present it as an axiom, first of all, because it gives an explicit expression for the copairing in terms of the ribbon morphism and the coproduct, and in second place since, as it will be shown in Proposition~\ref{adjoint/thm}, in the presence of \hrel{r6} axiom \hrel{r8} can be expressed in terms of the adjoint action by its equivalent forms \hrel{d12} or \hrel{d12'}.
 
Moreover, the original definition of BP Hopf algebra in \cite[Table~4.7.13]{BP11} uses as an axiom relation \hrel{p4} in Table~\ref{table-BPHopf-prop21/fig} in place of \hrel{r6}. As it is shown by Kerler in \cite[Lemma 4]{Ke01} (see also Figure~\ref{proof-eq-r10-r6/fig} in Appendix~\ref{proofs/app}), those two relations are equivalent modulo the axioms of braided Hopf algebra and the ribbon axioms~\hrel{r1} to \hrel{r5} and \hrel{r7}. Therefore Definition~\ref{Hr/def} is equivalent to the one in \cite{BP11}. 

\begin{figure}[hbt]
 \centering
 \includegraphics{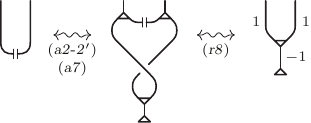}
 \caption{Proof of \hrel{r6} using the rest of the axioms of a BP Hopf algebra.}
 \label{proof-r10/fig}
\end{figure}

\end{remark}

\begin{proposition}\label{symmetry/thm}
 The functor $\sym : \Algf \to \Algf$ is an involutive anti-monoidal equivalence functor that sends every object and every elementary morphism to itself. Moreover, $\sym$ fits into the commutative diagram of functors
\[
\begin{tikzpicture}
 \node (P0) at (0,0) {$\Alg$};
 \node (P1) at (2.25,0) {$\Alg$};
 \node (P2) at (0,-1.5) {$\Algf$};
 \node (P3) at (2.25,-1.5) {$\Algf$};
 \draw
 (P0) edge[->] node[above] {$\textstyle \sym$} (P1)
 (P0) edge[->] node[left, xshift=-0.25ex] {$\textstyle 4\Xi$} (P2)
 (P1) edge[->] node[right, xshift=0.25ex] {$\textstyle 4\Xi$} (P3)
 (P2) edge[->] node[below, yshift=-0.5ex] {$\textstyle \sym$} (P3);
\end{tikzpicture}
\]
\end{proposition}

\begin{proof}
The statement follows from Proposition \ref{funt-sym/thm}, provided we check that $\sym$ sends all elementary morphisms of $\Algf$ to themselves. For the Hopf algebra structure morphisms this was already proved in Proposition~\ref{symmetry-alg/thm}. For the integral element, the integral form, and the ribbon morphism this fact reduces to axioms~\hrel{i4}, \hrel{i5} and \hrel{r3}, while for the copairing it follows from its definition in terms of a diagram which is invariant under the functor $\sym$, which is given by \hrel{r6} (see Figure~\ref{proof-inv-copairing/fig}).
\end{proof}

\begin{figure}[hbt]
 \centering
 \includegraphics{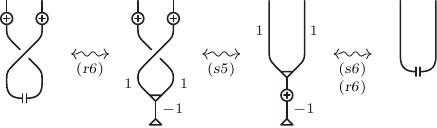}
 \caption{Proof that $\sym(\copairH) = \copairH$.}
 \label{proof-inv-copairing/fig}
\end{figure}

The reader can find the diagrammatic proofs of the following propositions in Appendix~\ref{proofs/app} (see also \cite[Propositions~4.1.4, 4.1.5, 4.1.6, 4.1.9, 4.1.10, Lemmas~4.2.5, 4.2.6, Propositions~4.2.7, 4.2.11, 4.2.13]{BP11} and \cite[Lemmas~1--8]{Ke01}). 

\clearpage

\begin{proposition}\label{BP-prop1/thm}
The identities in Table~\ref{table-BPHopf-prop1/fig} hold for every Hopf algebra $H$ in any braided monoidal category $\calC$ equipped with an integral form $\intfH : H \to \one$ and an integral element $\inteH : \one \to H$ satisfying axioms~\hrel{i1}--\hrel{i5} in Table~\ref{table-BPHopf/fig}. In particular, they hold in $\Algf$. 
\end{proposition}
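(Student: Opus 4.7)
The strategy is a purely diagrammatic verification, handled one identity at a time, that relies on the interplay between the five integral axioms \hrel{i1}--\hrel{i5} and the Hopf algebra axioms already recorded in Tables~\ref{table-Hopf/fig} and \ref{table-Hopf-prop/fig}. Since the category $\Algf$ is freely generated, I only need to check each identity in an arbitrary braided monoidal category equipped with such structure morphisms; no universal property argument is required beyond this. The workhorses will be the antipode identities \hrel{s1-1'} (used both to introduce and to cancel a $\unitH \circ \counH$ pair), the compatibility \hrel{a5} of product with coproduct, and the antipode-invariance axioms \hrel{i4}, \hrel{i5} which let me slide an $\antipH$ past $\inteH$ or $\intfH$ without cost.

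The plan is to derive the identities in a dependency-respecting order. First I would establish the \emph{symmetric} variants of \hrel{i1} and \hrel{i2}: namely that $(\intfH \otimes \id)\circ \coprH = \unitH \circ \intfH$ and $\prodH \circ (\id \otimes \inteH) = \inteH \circ \counH$. Each follows by conjugating the corresponding stated axiom by $\antipH$ on both sides, using \hrel{i4}, \hrel{i5}, together with the coalgebra-antialgebra properties \hrel{s6} and \hrel{s4} of the antipode in Table~\ref{table-Hopf-prop/fig}, and finally \hrel{s2-3} to cancel $\antipH \antipH^{-1}$. Next I would prove that the composition $\inteH \circ \intfH : H \to H$ is an idempotent that equals its own double, and in combination with \hrel{i3} behaves like a rank-one projection onto the image of $\inteH$. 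This is a direct consequence of \hrel{i1}, \hrel{a4-4'}, and \hrel{i3}, read off the diagram.

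With these in hand, I would then tackle any identities in Table~\ref{table-BPHopf-prop1/fig} that combine an integral with a product or coproduct of two factors, such as $\intfH \circ \prodH \circ (\id \otimes \antipH) = \intfH \circ \prodH \circ (\antipH \otimes \id)\circ \braid$ and analogous coproduct versions. The standard trick is to insert a $\unitH \circ \counH$ via \hrel{s1-1'} along one of the strands, expand it as $\prodH \circ (\antipH \otimes \id)\circ \coprH$, then use \hrel{a5} and the integral absorption \hrel{i1} to collapse half of the resulting diagram to $\intfH$. Identities of "uniqueness" type (e.g.\ that a morphism satisfying the defining property of $\intfH$ up to a scalar must factor through $\intfH$) would be proved by bracketing the candidate morphism between $\coprH$ and $(\id \otimes \intfH)$ and applying \hrel{i1} to extract the scalar.

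The main obstacle I anticipate is \emph{bookkeeping} rather than conceptual depth: each identity requires choosing the correct order in which to apply \hrel{i1}, \hrel{i2}, and the antipode axioms, and it is easy to produce a chain of diagrams whose final simplification requires a symmetric form of \hrel{i1} or \hrel{i2} that has not yet been established. To avoid circularity I would first list all the symmetric/antipodal consequences of \hrel{i1}--\hrel{i5} obtainable without invoking any axiom involving $\ribmorH$ or $\copairH$, fix that list as a working lemma, and only then attack the identities in Table~\ref{table-BPHopf-prop1/fig} in order of increasing diagrammatic complexity. A secondary concern is that some identities in the table may be purely Hopf-algebraic consequences that do not use the integral at all, in which case they reduce to statements already proved in Table~\ref{table-Hopf-prop/fig} and should be dispatched first as a warm-up.
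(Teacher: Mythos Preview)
Your proposal is correct and follows essentially the same approach as the paper: both begin by deriving the symmetric integral identities \hrel{i1'} and \hrel{i2'} from \hrel{i1}, \hrel{i2} via conjugation by the antipode (using \hrel{i4}, \hrel{i5}, \hrel{s4}, \hrel{s5}), and then work through the remaining identities \hrel{e3}--\hrel{e8} by direct diagrammatic manipulation with the Hopf algebra axioms. The only minor packaging difference is that the paper explicitly invokes the symmetry functor $\sym$ once \hrel{i1'} and \hrel{i2'} are in hand, so that each primed identity \hrel{e3'}, \hrel{e4'}, \hrel{e5'}, \hrel{e8'} is obtained for free from its unprimed counterpart rather than being proved separately; your dependency-ordering plan amounts to the same thing.
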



\begin{table}[htb]
 \centering
 \includegraphics{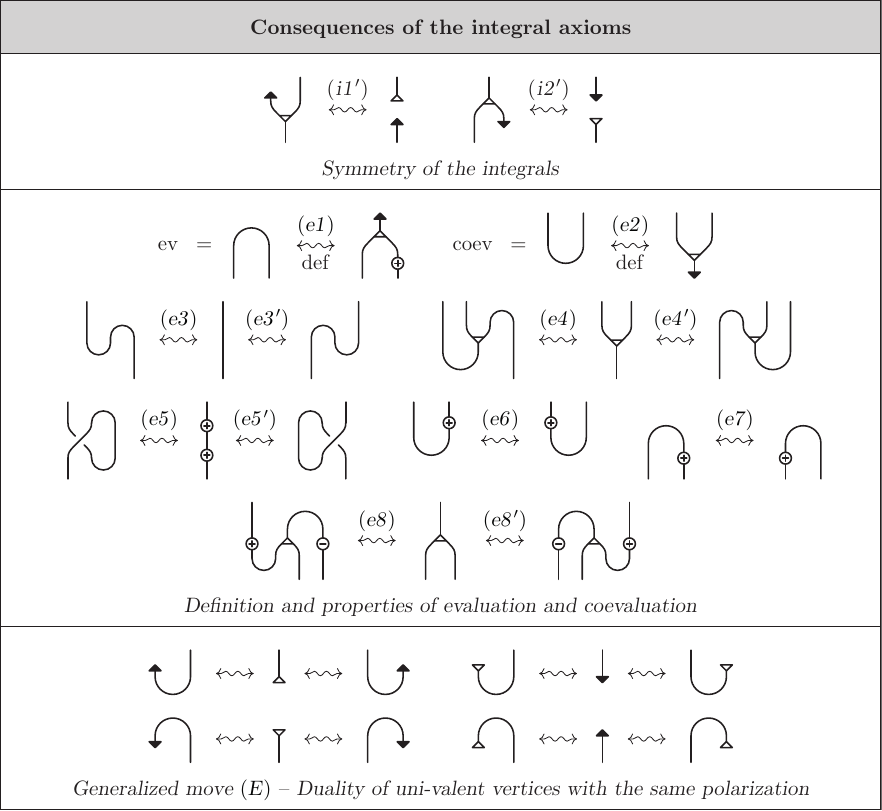}
 \caption{}
 \label{table-BPHopf-prop1/fig}
 \label{E:i1'} \label{E:i2'} \label{E:e1-2} \label{E:e1} \label{E:e2} \label{E:e3-3'} \label{E:e3} \label{E:e3'} \label{E:e4-4'} \label{E:e4} \label{E:e4'} \label{E:e5-5'} \label{E:e5} \label{E:e5'} \label{E:e6} \label{E:e7} \label{E:e8-8'} \label{E:e8} \label{E:e8'} \label{E:E}
\end{table}

\FloatBarrier

\clearpage

\begin{proposition}\label{BP-prop21/thm}
The identities in Table~\ref{table-BPHopf-prop21/fig} hold in any braided monoidal category with Hopf algebra $H$ and a family of ribbon morphisms $\ribmorH^n: H\to H$ satisfying axioms \hrel{r1} to \hrel{r7} in Table \ref{table-BPHopf/fig}. In particular, they hold in $\Algf$.
\end{proposition}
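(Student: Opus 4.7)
The plan is to verify each identity in Table~\ref{table-BPHopf-prop21/fig} by diagrammatic manipulation, using only the Hopf algebra axioms of Table~\ref{table-Hopf/fig} together with the ribbon axioms \hrel{r1}--\hrel{r7}. Following the pattern already established by Proposition~\ref{alpha/thm}, I would defer the bulk of the string-diagram calculations to Appendix~\ref{proofs/app}, leaving in the body of the proof only a sentence per identity indicating which axioms enter where.

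First I would group the identities in Table~\ref{table-BPHopf-prop21/fig} according to the structure morphism they primarily constrain. Statements describing the interaction of $\ribmorH$ with iterated products and coproducts follow inductively from the multiplicativity axiom \hrel{r5}, together with its compatibility with the antipode \hrel{r3} and the counit \hrel{r4}; the normalization axioms \hrel{r1} and \hrel{r2} handle the base cases, and the axioms \hrel{a4-4'} and \hrel{s1-1'} handle the reduction to generating morphisms. Statements expressing $\copairH$ in alternative forms all reduce, via axiom \hrel{r6}, to manipulations of $\coprH$ and $\ribmorH$ alone, after which one folds the copairing back in by a reverse application of \hrel{r6}.

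The main obstacle will be the identities that couple $\copairH$, $\ribmorH$, and $\antipH$ simultaneously, the most delicate being the relation \hrel{p4} highlighted in Remark~\ref{BPaxioms/rmk}. For these, my strategy would be: first expand $\copairH$ via \hrel{r6}; then push the ribbon morphisms outward using \hrel{r5} and \hrel{r3}; next apply the bialgebra axiom \hrel{a5} to reshape the resulting tensor of coproducts into a coproduct of products; and finally close off with the antipode axioms \hrel{s1-1'} to produce the required cancellations. The equivalence between \hrel{p4} and \hrel{r6} modulo the remaining axioms, established in Figure~\ref{proof-eq-r10-r6/fig}, provides the template. I expect the other copairing--ribbon identities to follow by minor variants of the same argument, while the purely ribbon--product relations will reduce to short inductions on the number of tensor factors.
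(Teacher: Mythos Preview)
Your outline does not match the content of Table~\ref{table-BPHopf-prop21/fig}. None of the identities there involves iterated products or coproducts on $H^{\otimes n}$, so the ``inductions on the number of tensor factors'' you propose have nothing to act on: every statement is a relation among morphisms with source and target $H$, $H\otimes H$, or~$\one$. Likewise the grouping into ``ribbon--product'' versus ``copairing'' statements is too coarse; the actual dependency graph is more specific, and getting it right is most of the work.

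The substantive gap is relation~\hrel{p1}, which asserts $(\antipH\otimes\id)\circ\copairH=(\id\otimes\antipH)\circ\copairH$. This is not a variant of the \hrel{p4} argument. The paper's proof proceeds in a fixed order: first \hrel{p2-2'} (compose \hrel{r6} with the counit), then \hrel{p5} and \hrel{p6} (from \hrel{r7} and \hrel{p2}, using \hrel{s1} versus \hrel{s1'}), and only then \hrel{p1}, by the following trick. The morphisms
\[
\barmonH=(\prodH\otimes\prodH)\circ(\id\otimes((\id\otimes\antipH)\circ\copairH)\otimes\id),
\qquad
\barmonH'=(\prodH\otimes\prodH)\circ(\id\otimes((\antipH\otimes\id)\circ\copairH)\otimes\id)
\]
are, by \hrel{p5}--\hrel{p6} and their symmetric versions, both two-sided inverses of the monodromy~$\monH$; hence $\barmonH=\barmonH'$, and composing with units yields \hrel{p1}. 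This uniqueness-of-inverse argument is the one genuine idea in the proof and is absent from your plan. Note also that \hrel{p1} is what makes the symmetry functor $\sym$ well-defined on the copairing, so you cannot invoke symmetry to shorten earlier steps before \hrel{p1} is in hand. The remaining identities (\hrel{r5'}, \hrel{r7'}, \hrel{p3}, \hrel{p7}--\hrel{p9'}) are indeed short diagram chases along the lines you sketch, but they are not the crux.
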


\begin{table}[htb]
 \centering
 \includegraphics{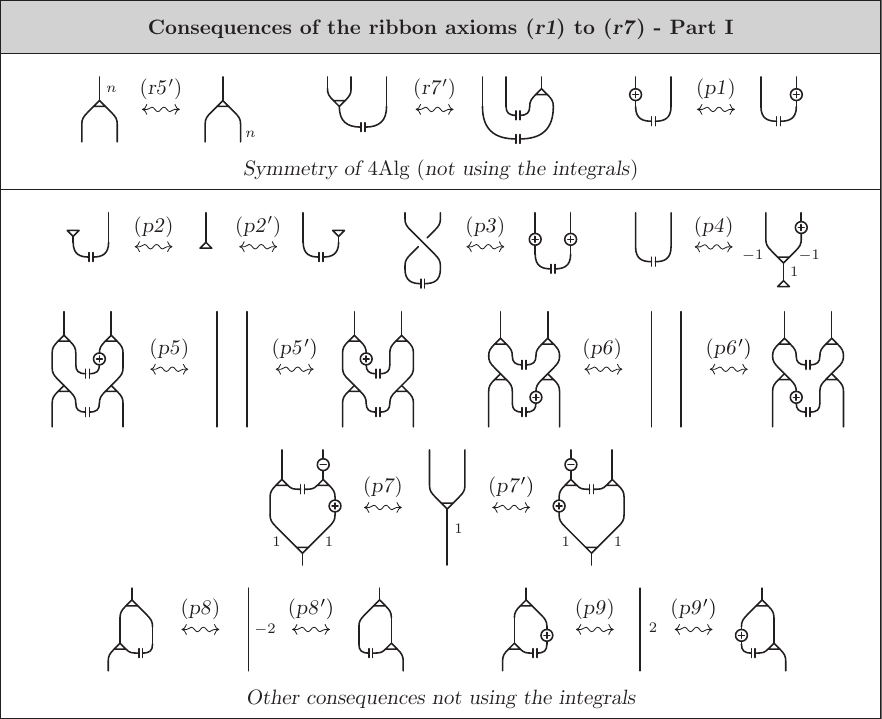}
 \caption{}
 \label{table-BPHopf-prop21/fig}
 \label{E:r5'} \label{E:r7'} \label{E:p1} \label{E:p2-2'} \label{E:p2} \label{E:p2'} \label{E:p3} \label{E:p4} \label{E:p5-5'} \label{E:p5} \label{E:p5'} \label{E:p6-6'} \label{E:p6} \label{E:p6'} \label{E:p7-7'} \label{E:p7} \label{E:p7'} \label{E:p8-8'} \label{E:p8} \label{E:p8'} \label{E:p9-9'} \label{E:p9} \label{E:p9'}
\end{table}

\FloatBarrier

\begin{proposition}\label{BP-prop22/thm}
 The identities in Table~\ref{table-BPHopf-prop22/fig} hold in any braided monoidal category with Hopf algebra $H$, a family of ribbon morphisms $\ribmorH^n: H\to H$ satisfying axioms \hrel{r1} to \hrel{r7} in Table \ref{table-BPHopf/fig}, and an integral form $\intfH: H\to \one$ and an integral element $\inteH: \one\to H$ satisfying axioms \hrel{i1} to \hrel{i5} in the same table. In particular, the identities in Table~ \ref{table-BPHopf-prop22/fig} hold in $\Algf$.
\end{proposition}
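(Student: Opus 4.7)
The plan is to prove each identity in Table~\ref{table-BPHopf-prop22/fig} diagrammatically, working within the hypotheses that provide a Hopf algebra together with a ribbon-like family $\ribmorH^n$ satisfying \hrel{r1}--\hrel{r7} and an integral pair $(\intfH, \inteH)$ satisfying \hrel{i1}--\hrel{i5}. Since this proposition sits at the intersection of the two previous ones, the natural strategy is to treat the identities listed in Propositions~\ref{BP-prop1/thm} and \ref{BP-prop21/thm} as a toolkit and combine them. In practice, for each new identity I would first rewrite one side using only the ribbon axioms (invoking \hrel{r3}--\hrel{r5}, \hrel{r6}, \hrel{r7} or the consequences \hrel{p1}--\hrel{p9'} from Table~\ref{table-BPHopf-prop21/fig}), transporting every occurrence of $\ribmorH$ past a product, coproduct, or antipode until the ribbon morphisms are concentrated on a single strand or absorbed into an integral; then I would apply the integral identities \hrel{i1}, \hrel{i2}, \hrel{i1'}, \hrel{i2'}, and the collapse relations \hrel{e1-2}, \hrel{e3-3'}, \hrel{e4-4'}, \hrel{e5-5'} to simplify diagrams that contain $\intfH$ or $\inteH$ attached to large subdiagrams.

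More concretely, the expected pattern is the following. Identities of the form ``$\intfH$ absorbs a ribbon morphism'' or ``$\ribmorH$ acting on the integral element is trivial'' should follow by writing $\intfH \circ \ribmorH$ (respectively $\ribmorH \circ \inteH$) as the composition with the coproduct/product and then using the bialgebra axiom \hrel{a5} together with \hrel{r4}, \hrel{r5}, \hrel{r6} to separate the two sides of $\coprH$, after which \hrel{i1}, \hrel{i1'} pushes everything past $\intfH$. Identities that involve the copairing $\copairH$ interacting with $\intfH$ or $\inteH$ should follow by rewriting $\copairH$ via \hrel{r6} as $(\ribmorH \otimes \ribmorH) \circ \coprH \circ \ribmorH^{-1} \circ \unitH$, then sliding the integral form/element through the coproduct using \hrel{i1}, and collapsing one strand to the unit. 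In several cases, the symmetry functor $\sym$ of Proposition~\ref{symmetry-alg/thm} halves the work, so I would identify primed versus unprimed identities and only prove one from each pair.

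The main obstacle will be those identities that mix the ribbon morphism with both the adjoint action and the integral in a single diagram, because there one has to carefully choose the order of manipulations to avoid introducing factors of $\ribmorH$ that cannot be reabsorbed. In such cases, my plan is to first isolate a sub-identity expressible purely in terms of Propositions~\ref{BP-prop1/thm} or \ref{BP-prop21/thm}, substitute it, and only at the end invoke \hrel{i1}--\hrel{i5}. A secondary subtlety is that $\copairH$ is, by \hrel{r6}, determined by $\ribmorH$ and $\coprH$, so whenever a relation appears asymmetric between the ``$\ribmorH$ side'' and the ``$\copairH$ side,'' I would first unify notation by substituting \hrel{r6} before attempting any reduction.

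Finally, to keep the argument organized and checkable, I would present each identity as a short chain of diagrammatic equalities in a dedicated figure, labeling every step by the relation number invoked, in the same style as the proofs already given in Figures~\ref{adjoint01/fig}--\ref{adjoint06/fig}. The bookkeeping is routine; the creative step at each identity is to pick the correct initial rewrite, so I would group identities into families (integral-ribbon interactions; copairing-integral interactions; ribbon-antipode-integral interactions) and treat each family with a uniform opening move. The details are deferred to Appendix~\ref{proofs/app} as announced in the statement.
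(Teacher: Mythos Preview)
Your high-level plan is sound and matches the paper's approach: each identity in Table~\ref{table-BPHopf-prop22/fig} is indeed obtained by combining the ribbon consequences of Proposition~\ref{BP-prop21/thm} with the integral consequences of Proposition~\ref{BP-prop1/thm}, and the symmetry functor does cut the work in half for the primed/unprimed pairs.

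That said, the concrete tactics you sketch are in places more elaborate than what is actually needed, and in one case slightly off. For the identity $\ribmorH \circ \inteH = \inteH$ (\hrel{p10}), you propose to route through the bialgebra axiom \hrel{a5} and the coproduct; the paper's argument is shorter and does not use \hrel{a5} at all: write $\inteH = \prodH \circ (\inteH \otimes \unitH)$ by \hrel{a2'}, apply $\ribmorH$, push it to the right factor via \hrel{r5'}, then collapse using \hrel{i2} and \hrel{r4}. Similarly, \hrel{e9} is immediate from \hrel{r5} and \hrel{r5'} (no coproduct manipulation needed), and \hrel{e11-11'} follow directly from \hrel{p1}, \hrel{p3}, and \hrel{e5-5'}. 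Only \hrel{p11} and \hrel{e10} require a nontrivial diagrammatic chain of the kind you describe. So your strategy is correct but you should expect most of these to be one- or two-line arguments once you identify the right opening move; the ``families'' you propose to organize are smaller than you anticipate.
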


\begin{table}[htb]
 \centering
 \includegraphics{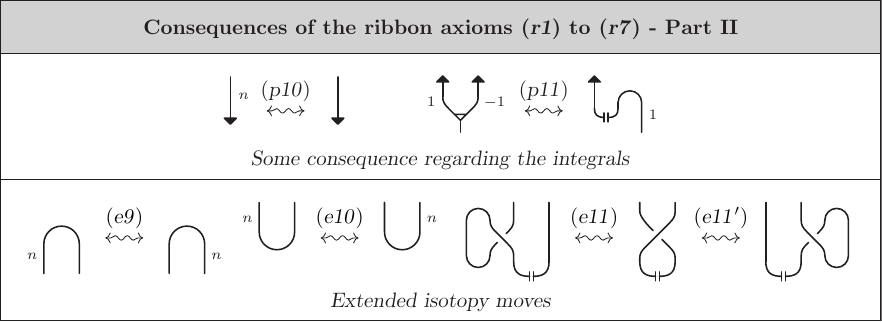}
 \caption{}
 \label{table-BPHopf-prop22/fig}
 \label{E:p10} \label{E:p11} \label{E:e9} \label{E:e10} \label{E:e11-11'} \label{E:e11} \label{E:e11'}
\end{table}


\begin{proposition}\label{BP-prop3/thm}
The identities in Table~\ref{table-BPHopf-prop3/fig} are satisfied in $\Algf$. Moreover, modulo the rest of the defining axioms, relation~\hrel{p12} is an equivalent reformulation of axiom~\hrel{r8}, while relation~\hrel{p13} is an equivalent reformulation of axiom~\hrel{r9}. 
\end{proposition}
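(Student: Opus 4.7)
My plan is to follow the same diagrammatic strategy used for Propositions~\ref{BP-prop1/thm}--\ref{BP-prop22/thm}: derive each identity of Table~\ref{table-BPHopf-prop3/fig} by manipulating string diagrams, invoking only the defining axioms of a BP Hopf algebra together with the consequences collected in Tables~\ref{table-Hopf-prop/fig}, \ref{table-adjoint/fig}, \ref{table-BPHopf-prop1/fig}, \ref{table-BPHopf-prop21/fig}, and \ref{table-BPHopf-prop22/fig}. Since $\adjH$ is built from $\prodH$, $\coprH$, $\antipH$, and $c$, most of the identities in Table~\ref{table-BPHopf-prop3/fig} should follow by expanding the adjoint action using \hrel{d1} and then applying a combination of coassociativity \hrel{a3}, the bialgebra relation \hrel{a5}, the antipode axiom \hrel{s1-1'}, and the ribbon axioms \hrel{r3}--\hrel{r7} together with the consequences \hrel{p1}--\hrel{p9-9'}.

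For the equivalence of \hrel{p12} with \hrel{r8}, the plan is to start from \hrel{r8} and pre-compose with $\unitH$, then use axiom \hrel{r6} to rewrite $(\ribmorH \otimes \ribmorH) \circ \coprH \circ \ribmorH^{-1} \circ \unitH$ as the copairing $\copairH$, obtaining a version of \hrel{r8} involving the monodromy applied to $\copairH$; rewriting this via \hrel{d1} and the definition of $\rho_L$ (or $\rho_R$) should yield \hrel{p12}. For the converse direction, I would start with \hrel{p12} tensored with an identity, pre-compose with $\coprH$, and then use \hrel{a5}, \hrel{s1-1'}, and \hrel{p4} to remove the adjoint action and recover the original form of \hrel{r8}. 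The equivalence of \hrel{p13} with \hrel{r9} would be handled analogously: expand the morphisms $\rho_L$ and $\rho_R$ appearing in \hrel{r9} using the defining formulas and \hrel{r7}, rewrite the double monodromy $\monH \circ c^{-1} \circ \monH$ via \hrel{r8} (or \hrel{p12}) to convert certain copairings into ribbon morphisms and coproducts, and reorganize using \hrel{d1}--\hrel{d2} to produce the adjoint-action form \hrel{p13}. The converse is obtained by reversing these steps.

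I expect the main obstacle to be the equivalence \hrel{p13}$\iff$\hrel{r9}, since \hrel{r9} is the most complicated of the BP axioms, involving three products, two antipodes, and a composite $\monH \circ c^{-1} \circ \monH$. The bookkeeping required to convert this diagram into one expressed via the adjoint action (and back) is delicate, and several applications of the consequences \hrel{p7-7'}--\hrel{p9-9'}, \hrel{e11-11'}, as well as the cocommutativity-like relations that hold in the presence of \hrel{r6}, will be needed to move strands past each other correctly. The identities of Table~\ref{table-BPHopf-prop3/fig} themselves, by contrast, should reduce to short chains of known rewrites once the adjoint action is expanded, so the bulk of the argument will be devoted to the two equivalence statements, and in particular to \hrel{p13}$\iff$\hrel{r9}.
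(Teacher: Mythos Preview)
Your plan rests on a misidentification of what Table~\ref{table-BPHopf-prop3/fig} actually contains. Relations \hrel{p12}--\hrel{p15'} do \emph{not} involve the adjoint action; you appear to be conflating them with \hrel{d12}--\hrel{d13} of Table~\ref{table-adjoint-prop/fig}, which are the adjoint-action reformulations handled separately in Proposition~\ref{adjoint/thm}. In particular, \hrel{p12} is a morphism $H \to H \otimes H$ (like \hrel{r8} itself), so your proposed step of pre-composing \hrel{r8} with $\unitH$ is fatal: it collapses both sides to the same morphism $(\ribmorH^{-1}\otimes\ribmorH^{-1})\circ\copairH$ (use \hrel{a7}, naturality of $c$, \hrel{a2-2'}, and \hrel{r6}), yielding a tautology rather than \hrel{p12}. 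Likewise the talk of ``removing the adjoint action'' in the converse direction does not match the actual shape of \hrel{p12}.

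The paper's argument is both simpler and structurally different. For \hrel{p12}$\iff$\hrel{r8}, one observes that \hrel{p12} is obtained from \hrel{r8} by composing on the bottom with $\ribmorH$ and on the top with $\braid\circ\barmonH\circ(\ribmorH\otimes\ribmorH)$; since all of these are invertible (the inverse of the monodromy $\monH$ being $\barmonH$ by \hrel{p5}--\hrel{p6}), the equivalence is immediate and no separate converse argument is needed. For \hrel{p13}$\iff$\hrel{r9}, the paper does not pass through \hrel{r8} at all: the right-hand side of \hrel{p13} contains a crossing to which one applies \hrel{r9} directly, after which one move \hrel{p1} and four moves \hrel{p5}--\hrel{p6} cancel all the resulting copairings, leaving the single crossing on the left-hand side; the same manipulation in reverse gives the other implication. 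Finally, \hrel{p14} has its own short diagrammatic proof, \hrel{p14'} follows by $\sym$, and \hrel{p15}--\hrel{p15'} are deduced from \hrel{p14}--\hrel{p14'} together with \hrel{r7} and \hrel{s1-1'}. None of this requires expanding $\adjH$ via \hrel{d1}.
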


\begin{table}[htb]
 \centering
 \includegraphics{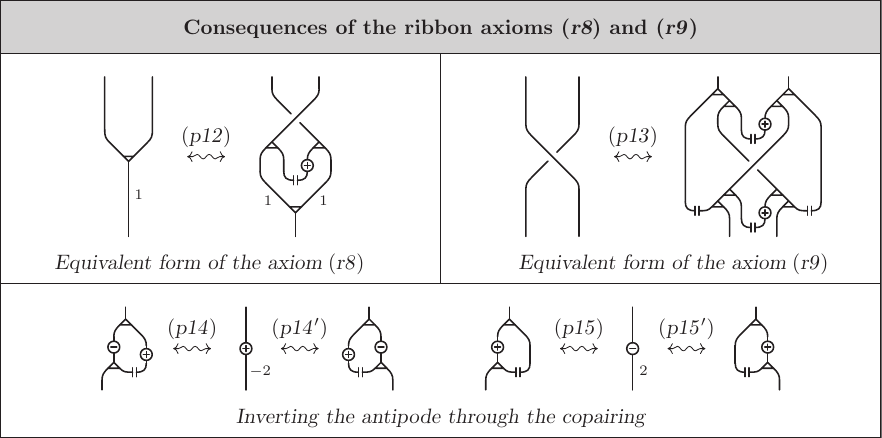}
 \caption{}
 \label{table-BPHopf-prop3/fig}
 \label{E:p12} \label{E:p13} \label{E:p14-14'} \label{E:p14} \label{E:p14'} \label{E:p15-15'} \label{E:p15} \label{E:p15'}
\end{table} 

\FloatBarrier

The propositions above have the following implications.

\begin{proposition}\label{autonomous/thm}
$\Algf$ is a ribbon category (see Definition \ref{rigid-cat/def}) with dual $(H^n)^* = H^n$ for every $n \geqs 0$, with two-sided evaluation $\ev_{n} : H^n \otimes H^n \to \one$ and coevaluation $\coev_{n} : \one \to H^n \otimes H^n$ defined inductively by $\ev_0 = \idone = \coev_0$, by
\begin{gather*}
 \ev_1 = \ev = \intfH \circ \prodH \circ (\id \otimes \antipH),
 \tag*{\hrel{e1}}
 \\
 \coev_1 = \coev = \coprH \circ \inteH,
 \tag*{\hrel{e2}}
\end{gather*}
and by
\begin{gather*}
 \ev_n  = \ev \circ (\id \otimes \ev_{n-1} \otimes \id), \\
 \coev_n  = (\id \otimes \coev_{n-1} \otimes \id) \circ \coev,
\end{gather*}
for every $n>1$, and with twist $\theta_n: H^n \to H^n$ defined by
\[
\theta_n  = (\ev_n \otimes \id_n) \circ (\id_n \otimes \braid_{n,n}) \circ (\coev_n \otimes \id_n)
\]
for every $n \geqs 0$.
\end{proposition}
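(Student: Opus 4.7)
The strategy is to check, in order, the axioms of Definition~\ref{rigid-cat/def} (rigidity and duals of morphisms), the properties of the pivotal structure, and finally the twist axioms of Definition~\ref{ribbon-cat/def}. Since $\Algf$ is braided strict monoidal by construction, these are the only pieces that remain to be verified.

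For the base case $n = 1$, the two zig-zag equations, upon substituting the formulas $\ev = \intfH \circ \prodH \circ (\id \otimes \antipH)$ and $\coev = \coprH \circ \inteH$, reduce to identities that can be obtained by combining the integral/counit push of axiom~\hrel{i1} with the antipode identity~\hrel{s1-1'} and the normalization~\hrel{i3}; the mirror zig-zag goes through symmetrically using the second half of \hrel{s1-1'} together with \hrel{i1} applied to the other tensor factor. These reductions are precisely the diagrammatic content of identities~\hrel{e3-3'} in Table~\ref{table-BPHopf-prop1/fig}, already at our disposal by Proposition~\ref{BP-prop1/thm}. The higher cases $n > 1$ then follow by a routine induction on the nested definitions of $\ev_n$ and $\coev_n$: the innermost cup/cap is resolved via the $n = 1$ case, while the outer layers are handled by the inductive hypothesis.

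For the pivotal structure we take $\psi_{H^n} = \id_{H^n}$, and must verify that the right duality morphisms $\rev_{H^n}$ and $\rcoev_{H^n}$ derived from this trivial pivot coincide with $\lev_{H^n}$ and $\lcoev_{H^n}$. At $n = 1$ this boils down to the invariances \hrel{i4} and \hrel{i5} of $\inteH$ and $\intfH$ under the antipode; it then propagates to all $n$ inductively, and the monoidality identity $\psi_{H^n \otimes H^m} = \psi_{H^n} \otimes \psi_{H^m}$ is tautological.

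Finally, the twist $\theta_n$ is defined by the standard formula coming from the pivotal structure, compatible with Remark~\ref{ribbon/rmk}. Its naturality, invertibility, and the multiplicativity $\theta_{H^n \otimes H^m} = \braid_{m,n} \circ \braid_{n,m} \circ (\theta_n \otimes \theta_m)$ follow from the usual graphical manipulations with the braiding and duality morphisms, while $(\theta_n)^* = \theta_n$ follows from the reflection symmetry of the defining diagram once left and right duals are known to coincide. I expect the main obstacle to lie in the zig-zag equations at $n = 1$, which is the only place where the specific interplay between the integral data, the antipode, and the coproduct is essentially invoked; once self-duality of $H$ has been established, the remaining verifications are formal graphical consequences.
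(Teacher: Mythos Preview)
Your approach is essentially the paper's: its proof is a one-liner citing the definitions of $\ev$ and $\coev$ together with identities~\hrel{e3-3'} and \hrel{e5-5'} from Table~\ref{table-BPHopf-prop1/fig} (all supplied by Proposition~\ref{BP-prop1/thm}), and you have simply unpacked what each of those identities is doing. One small correction: the check that $\psi_{H} = \id$ is a valid pivotal structure does not really rest on \hrel{i4}--\hrel{i5} directly; once the two zig-zags \hrel{e3-3'} hold for the \emph{same} pair $(\ev,\coev)$, the object $H$ is already two-sided self-dual with trivial pivot, so nothing further is needed there. The antipode-invariance axioms \hrel{i4}--\hrel{i5} enter instead through the proof of \hrel{e5-5'}, which is what the paper invokes (together with \hrel{e3-3'}) to cover the remaining ribbon verifications you sketch at the end.
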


\begin{proof}
The statement follows from Proposition~\ref{Prob-ribbon/thm} once we observe that $\ev$ and $\coev$ defined in Table~\ref{table-BPHopf-prop1/fig} satisfy the zig-zag relation~\hrel{e3-3'} and the ribbon axiom~\hrel{e5-5'}.
\end{proof}

Notice that the invertibility of the antipode, together with \hrel[e6]{e6-7}, implies that we can move also its inverse from one side of the evaluation and coevaluation morphisms to the other. Analogously, relations~\hrel{e5-5'} imply that a negative right or left kink is equivalent to $S^{-2}$. 

\begin{definition}\label{gen-istop/def}\label{E:I} 
We will say that two diagrams representing morphisms in $\Algf$ are related by a \textit{general isotopy} move~\rel{I} if they can be obtained from each other by a sequence of the following moves: 
\begin{itemize}
 \item braiding axioms in Table~\ref{table-braided/fig};
 \item relation~\hrel{e3-3'}, which implements planar isotopies, relation~\hrel{e5} paired with \hrel{e5'}, which implements the ribbon axiom, and relation~\hrel[e6]{e6-7} in Table~\ref{table-BPHopf-prop1/fig};
 \item relation~\hrel[e9]{e9-10} in Table~\ref{table-BPHopf-prop22/fig}.
\end{itemize}
 In particular, if we think of such graphs as ribbon graphs with coupons, these moves generate any arbitrary isotopy, see \cite[Lemma~3.4]{Tu94}. 
\end{definition}




The following lemma introduces an auxiliary morphism $J : H \to H$ that can be understood as a categorical analog of the Drinfeld map, whose properties will allow us to extend the notion of isotopy moves to strands that contain copairing morphisms. 


\begin{figure}[htb]
 \centering
 \includegraphics{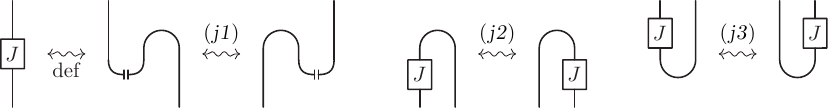}
 \caption{}
 \label{J-moves/fig}
 \label{E:j1} \label{E:j2} \label{E:j3}
\end{figure}

\begin{lemma}\label{copair-slide/thm}\label{E:J}
The morphism $J : H \to H$ of $\Algf$ defined as (see Figure~\ref{J-moves/fig})
\begin{gather*}
 J = (\id \otimes \ev) \circ (\copairH\otimes\id ) = (\ev \otimes \id) \circ ( \id\otimes \copairH)
 \tag*{\hrel{j1}}
\end{gather*}
satisfies the following identities:
\begin{gather*}
  (J \otimes \id)\circ\coev = (\id \otimes J)\circ\coev,
 \tag*{\hrel{j2}}
 \\
 \ev\circ(J \otimes \id)= \ev\circ(\id \otimes J).
 \tag*{\hrel{j3}}
\end{gather*}
As a consequence, copairing and coevaluation morphisms lying on the same strand can always be exchanged and we will refer to such modification as generalized move \rel{J}. 
\end{lemma}

\begin{figure}[htb]
 \centering
 \includegraphics{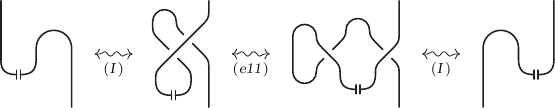}
 \caption{Proof of identity \hrel{j1}.}
 \label{J-def/fig}
\end{figure}

\begin{proof}
Relation \hrel{j1} is proved in Figure~\ref{J-def/fig}. 
Then, relations \hrel{j2} and \hrel{j3} reduce to \hrel{j1} modulo \hrel{e3-3'}.

Relations~\hrel[j2]{j2-3}, together with the naturality of the braiding, imply that we can slide the morphisms $J$ along any strand. Suppose now that a strand contains both a copairing and a coevaluation morphism (an example is shown in the left-hand side of Figure~\ref{isot-copairing/fig}). Then, we can incorporate the copairing in a copy of the morphism $J$, by inserting evaluation and coevaluation morphisms (using moves~\hrel{e3-3'}). We can then slide $J$ along the strand until it reaches the coevaluation morphism, and cancel the latter against the evaluation morphism in $J$ (using moves~\hrel{e3-3'}) after possibly applying \hrel{j1} (see Figure~\ref{isot-copairing/fig} for an example). In the resulting configuration, copairing and coevaluation morphisms have been exchanged.
\end{proof}

\begin{figure}[htb]
 \centering
 \includegraphics{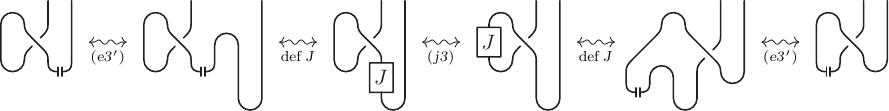}
 \caption{Exchanging copairing and coevaluation morphisms that lie on the same strand.}
 \label{isot-copairing/fig}
\end{figure}



 
\FloatBarrier

\subsection{Frobenius structure and braided cocommutativity in \texorpdfstring{$\Algf$}{4Alg}}
\label{4Alg-adj/sec}

Let us sidetrack for a moment, and introduce a modified product $\tilde{\prodH}$ which, together with the modified unit $\tilde{\unitH} = \inteH$, and with the standard coproduct $\coprH$ and counit $\counH$, provides every BP Hopf algebra $H$ with a Frobenius algebra structure.

\begin{table}[htb]
 \centering
 \includegraphics{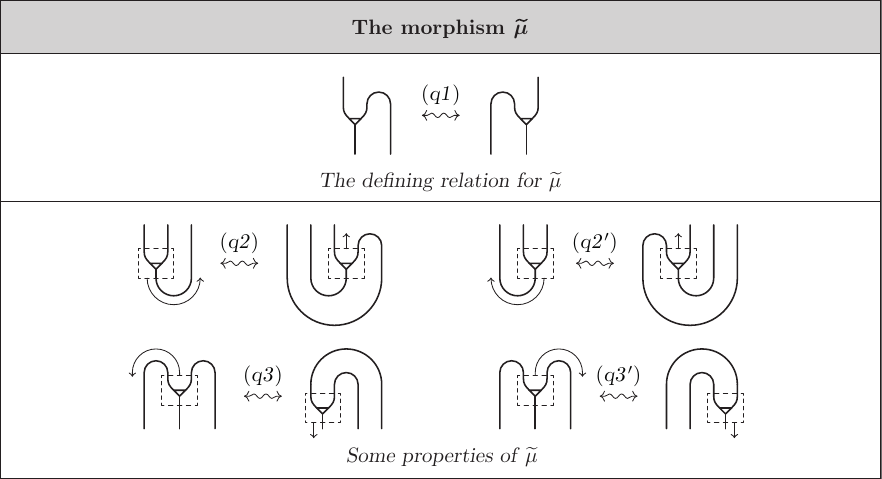}
 \caption{}
 \label{table-mu/fig}
 \label{E:q1} \label{E:q2-2'} \label{E:q2} \label{E:q2'} \label{E:q3-3'} \label{E:q3} \label{E:q3'} \label{E:q4-4'} 
\end{table}

\begin{proposition}\label{mu/thm}
If we set $\tilde{\prodH} = (\id \otimes \ev) \circ (\coprH \otimes \id)$ and $\tilde{\prodH}' = (\ev \otimes \id) \circ (\id \otimes \coprH)$, then the following identities hold in $\Algf$:
\begin{gather*}
 \tilde{\prodH} = \tilde{\prodH}', 
 \tag*{\hrel{q1}}
 \\
 (\coprH \otimes \id) \circ \coev = (\id_2 \otimes \tilde{\prodH}) \circ \coev_2,
 \tag*{\hrel{q2}}
 \\
 (\tilde{\prodH} \otimes \id_2) \circ \coev_2 = (\id \otimes \coprH) \circ \coev,
 \tag*{\hrel{q2'}}
 \\
 \ev_2 \circ (\coprH \otimes \id_2) = \ev \circ (\id \otimes \tilde{\prodH}), 
 \tag*{\hrel{q3}}
 \\
 \ev_2 \circ (\id_2 \otimes \coprH) = \ev \circ(\tilde{\prodH} \otimes \id), 
 \tag*{\hrel{q3'}}
\end{gather*}
\end{proposition}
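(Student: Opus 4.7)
My plan is to verify the seven identities diagrammatically, exploiting the anti-monoidal symmetry functor $\sym : \Algf \to \Algf$ of Proposition~\ref{symmetry/thm} to deduce \hrel{q2'}, \hrel{q3'}, and \hrel{q4'} from \hrel{q2}, \hrel{q3}, and \hrel{q4} respectively. The crux of the argument is \hrel{q1}: once it is established, the remaining identities will follow from it together with rigidity (Proposition~\ref{autonomous/thm}) and the bialgebra axiom \hrel{a5}.

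For \hrel{q1}, I would expand the definition \hrel{e1} of $\ev$ on both sides, so that the desired equality becomes a statement about compositions of $\coprH$, $\prodH$, $\antipH$, and $\intfH$. I would then rewrite the two resulting expressions in a common form using the anti-multiplicativity \hrel{s4} of the antipode, coassociativity \hrel{a3}, the integral identities \hrel{i1} and \hrel{i1'} (the latter available from Table~\ref{table-BPHopf-prop1/fig} via Proposition~\ref{BP-prop1/thm}), and the invariance \hrel{i5} of $\intfH$ under $\antipH$. A more conceptual alternative is available through Proposition~\ref{autonomous/thm}: since $\Algf$ is a pivotal category in which $H$ is self-dual with trivial pivotal isomorphism, the dual morphism $\coprH^* : H \otimes H \to H$ of $\coprH : H \to H \otimes H$ can be computed by either its left- or right-rotated diagram, and a direct snake-identity manipulation identifies these two presentations with $\tilde{\prodH}$ and $\tilde{\prodH}'$, respectively. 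I expect this step to be the main technical obstacle of the proof, as it requires the integral and antipode axioms to cooperate precisely.

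Once \hrel{q1} is in hand, the rest is routine. For \hrel{q2}, I would expand $\coev = \coprH \circ \inteH$ and $\coev_2 = (\id \otimes \coev \otimes \id) \circ \coev$ via their definitions, apply coassociativity \hrel{a3} on the left, and recognize a snake identity on the right (from the rigid structure of Proposition~\ref{autonomous/thm}) that collapses the inner $\coev$ against the $\ev$ hidden inside $\tilde{\prodH}$. Identity \hrel{q3} is essentially \hrel{q1} composed with a single $\ev$: unpacking $\ev_2 = \ev \circ (\id \otimes \ev \otimes \id)$ and $\tilde{\prodH} = (\id \otimes \ev) \circ (\coprH \otimes \id)$ shows that the two sides of \hrel{q3} differ precisely by the substitution allowed by \hrel{q1}, so the latter implies the former.

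Finally, \hrel{q4} encodes the Frobenius compatibility between $\prodH$ and $\tilde{\prodH}$. Expanding $\tilde{\prodH}$ on the left turns the left-hand side into $\prodH \circ (\id \otimes \id \otimes \ev) \circ (\id \otimes \coprH \otimes \id)$; applying the bialgebra axiom \hrel{a5} to push $\coprH$ past $\prodH$, at the cost of a braiding and a doubling of the product, produces exactly the right-hand side after one final application of coassociativity. Identity \hrel{q4'} then follows from \hrel{q4} by applying $\sym$, completing the proof.
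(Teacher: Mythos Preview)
Your proposal is correct and follows essentially the same route as the paper: derive \hrel{q1} from the integral/antipode axioms (the paper does this in one line by invoking the already-established identities \hrel{e3} and \hrel{e4'} from Table~\ref{table-BPHopf-prop1/fig}, which is exactly your ``conceptual alternative'' via pivotality), prove \hrel{q2}, \hrel{q3}, \hrel{q4} by unpacking definitions and using coassociativity, the snake identities, and the bialgebra axiom \hrel{a5}, and then obtain the primed versions by applying $\sym$. Your sketch for \hrel{q3} is accurate once one notes that unpacking $\ev_2$ rewrites the left-hand side as $\ev \circ (\tilde{\prodH} \otimes \id)$, after which \hrel{q1} (replacing $\tilde{\prodH}$ by $\tilde{\prodH}'$) yields the right-hand side; the same mechanism handles \hrel{q4} after one application of \hrel{a5}.
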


A graphical representation of relations~\hrel{q1} to \hrel{q3'} can be found in Table~\ref{table-mu/fig} (where the reader should ignore for now the dashed boxes and arrows). Notice that relations~\hrel{q2}, \hrel{q2'}, \hrel{q3}, and \hrel{q3'} imply that $\tilde{\prodH}$ and $\coprH$ are dual to each other with respect to the coevaluation. Furthermore, as mentioned above, $H$ admits the structure of a Frobenius algebra in $\Algf$, determined by the product $\tilde{\prodH}$, the unit $\tilde{\unitH} = \inteH$, the coproduct $\coprH$, and the counit $\counH$ (see \cite[Appendix~A.2]{FS10}).

\begin{proof}
Relation~\hrel{q1} follows directly from \hrel{e3'} and \hrel{e4}. Relations~\hrel{q2} and \hrel{q3} are proved in Figure~\ref{sliding-thm1A/fig}, while relations~\hrel{q2'} and \hrel{q3'} follow by applying the symmetry functor, thanks to Proposition~\ref{symmetry/thm}.
\end{proof}

\begin{figure}[htb]
 \centering
 \includegraphics{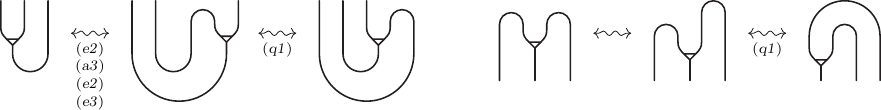}
 \caption{Proof of \hrel{q2}, \hrel{q3}.}
 \label{sliding-thm1A/fig}
\end{figure}



Next, let us establish some properties of the adjoint actions of a BP Hopf algebra. Such properties have already been proved in \cite[Subsection 4.4]{BP11}, but we present here an alternative argument, based on the fact that the left and right adjoint actions of a BP Hopf algebra are braided cocommutative.

\clearpage

\begin{proposition}\label{adjoint/thm}
In a BP Hopf algebra, modulo the other axioms, \hrel{r8} and \hrel{r9} admit the equivalent forms presented in Table~\ref{table-adjoint-prop/fig}. Namely, \hrel{d12-12'} are equivalent to \hrel{r8}, while \hrel{d13-13'} and \hrel{d14-14'} are equivalent to \hrel{r9}.
\end{proposition}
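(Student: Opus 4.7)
The plan is to establish the equivalences \hrel{r8} $\Leftrightarrow$ \hrel{d12}, \hrel{r9} $\Leftrightarrow$ \hrel{d13}, and \hrel{r9} $\Leftrightarrow$ \hrel{d14}, working modulo the remaining axioms of a BP Hopf algebra. By Proposition~\ref{symmetry/thm}, the symmetry functor $\sym : \Algf \to \Algf$ exchanges each relation with its primed counterpart, so it suffices to treat the unprimed versions; the primed equivalences then follow by applying $\sym$.

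To prove \hrel{r8} $\Leftrightarrow$ \hrel{d12}, I would first unfold \hrel{r8} diagrammatically by expanding the monodromy $\monH = (\prodH \otimes \prodH) \circ (\id \otimes \copairH \otimes \id)$ and then substituting axiom~\hrel{r6}, which expresses the copairing in terms of the coproduct and the ribbon morphism. Absorbing the outer factor $(\ribmorH^{-1} \otimes \ribmorH^{-1})$ by propagating the ribbon morphism through the product via axiom~\hrel{r5} and its consequences \hrel{p1}, \hrel{p2-2'}, one can isolate the combination $\prodH \circ (\prodH \otimes \antipH) \circ (\id \otimes \braid) \circ (\coprH \otimes \id)$ which, by definition~\hrel{d1}, equals $\adjH$. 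The resulting identity should match \hrel{d12}. The reverse implication is obtained by running this reduction in the opposite direction, using~\hrel{r6} again to reintroduce $\copairH$ in the expected position, and the antipode identity~\hrel{s1-1'} to undo the $\antipH$-coproduct manipulations.

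For the triple equivalence \hrel{r9} $\Leftrightarrow$ \hrel{d13} $\Leftrightarrow$ \hrel{d14}, the strategy is parallel but involves one additional layer. The central factor $\monH \circ \braid^{-1} \circ \monH$ in \hrel{r9}, flanked by the two antipodes, can be rewritten via \hrel{r6} and \hrel{d1} so as to produce one (or two) nested copies of $\adjH$; the comodule morphisms $\rho_L$ and $\rho_R$ then recombine with the remaining products to yield the adjoint formulation \hrel{d13}. To pass from \hrel{d13} to \hrel{d14}, I would use the intertwining identities \hrel{d11} and \hrel{d11'} of Lemma~\ref{cocommutative/thm}, which transport the adjoint action across the antipode, together with the comodule axiom \hrel{r7} to redistribute the copairing. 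Each step is reversible, which yields the reverse implications.

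The main obstacle will be the bookkeeping of crossings, antipodes, and ribbon morphisms during each translation, which must be organized so that no axiom outside those allowed by the statement is invoked. In particular, recovering \hrel{r9} from \hrel{d14} will require reintroducing a canceling pair of ribbon morphisms via \hrel{p1} and rearranging the resulting braidings by naturality, while the direction \hrel{d12} $\Rightarrow$ \hrel{r8} will rely on relation~\hrel{p4}, which is available since it is derived from the axioms of a Hopf algebra together with the ribbon axioms \hrel{r1}--\hrel{r7}. All these manipulations remain within the axioms of Tables~\ref{table-Hopf/fig} and~\ref{table-BPHopf/fig} other than the specific axiom being replaced, so the equivalences hold modulo precisely the remaining axioms, as claimed.
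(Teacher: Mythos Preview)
Your overall plan matches the paper's: reduce to the unprimed relations via the symmetry functor $\sym$, establish \hrel{r8} $\Leftrightarrow$ \hrel{d12} and \hrel{r9} $\Leftrightarrow$ \hrel{d13} by diagrammatic unfolding of the monodromy and the copairing through \hrel{r6}, and then deal with \hrel{d14} separately. The paper carries out the first two equivalences in Figures~\ref{adjoint05/fig} and~\ref{adjoint08/fig} along essentially the lines you describe, and it is careful to note that \hrel{r8} $\Leftrightarrow$ \hrel{d12} avoids \hrel{r9}, while \hrel{r9} $\Leftrightarrow$ \hrel{d13} avoids \hrel{r8}.

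Where your proposal diverges, and where there is a genuine gap, is in the passage \hrel{d13} $\Leftrightarrow$ \hrel{d14}. You propose to use the intertwining identities \hrel{d11} and \hrel{d11'} from Lemma~\ref{cocommutative/thm}. But that lemma is stated for the categories $\Alg^\rmL$ and $\Alg^\rmR$, and its conclusions depend on the braided cocommutativity axiom~\hrel{h0}. For a BP Hopf algebra, \hrel{h0} is only established \emph{after} the present proposition, in Proposition~\ref{intertwining/thm}; invoking \hrel{d11}--\hrel{d11'} here would at best disturb the logical order and at worst be circular, since you have not verified that the proof of \hrel{h0} is independent of \hrel{r9}. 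The paper avoids this entirely: it observes that the two sides of \hrel{d14} are precisely the \emph{inverses} of the two sides of \hrel{d13}, and this is checked using only \hrel{d7}--\hrel{d7'} and \hrel{d8}--\hrel{d8'}, which are pure Hopf-algebra identities from Table~\ref{table-adjoint/fig} requiring neither \hrel{r8} nor \hrel{r9}. Hence \hrel{d13} and \hrel{d14} are trivially equivalent. You should replace your \hrel{d11}-based argument with this inversion observation; it is both shorter and logically clean.
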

 
\begin{table}[htb]
 \centering
 \includegraphics{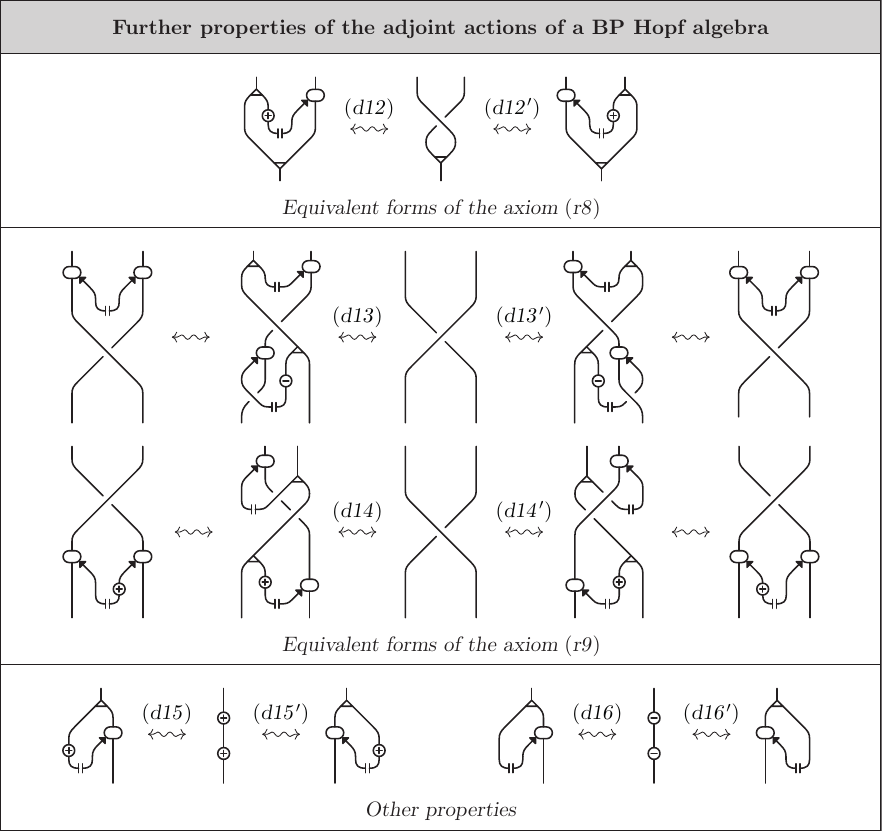}
 \caption{}
 \label{table-adjoint-prop/fig}
 \label{E:d12-12'} \label{E:d12} \label{E:d12'} \label{E:d13-13'} \label{E:d13} \label{E:d13'} \label{E:d14-14'} \label{E:d14} \label{E:d14'} \label{E:d15-15'} \label{E:d15} \label{E:d15'} \label{E:d16-16'} \label{E:d16} \label{E:d16'}
\end{table}

\FloatBarrier

\begin{proof}
In Figure~\ref{adjoint05/fig} we prove that, modulo the rest of the BP Hopf algebra axioms, excluded \hrel{r9}, axiom \hrel{r8} implies \hrel{d12} and the other way around. Therefore, \hrel{d12} is an equivalent reformulation of \hrel{r8}. Analogously, we show in Figure~\ref{adjoint08/fig} that \hrel{r9} is equivalent to \hrel{d13} modulo the rest of the BP Hopf algebra axioms, except \hrel{r8}. Then a straightforward application of \hrel{d7-7'} and \hrel{d8-8'} shows that the diagrams in \hrel{d14} represent the inverse morphisms of those represented by the diagrams in \hrel{d13}, which gives the equivalence between \hrel{r8} and \hrel{d14}. Then the statements for \hrel{d12'}, \hrel{d13'} and \hrel{d14'} are obtained by applying the functor $\sym$.
\end{proof}

\begin{figure}[htb]
 \centering
 \includegraphics{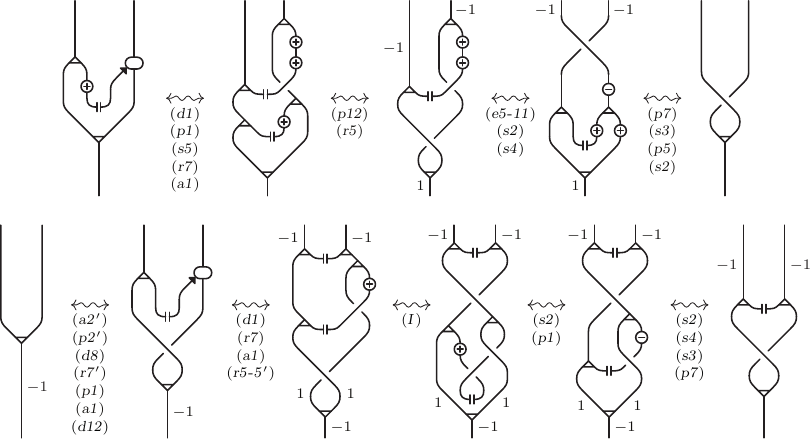}
 \caption{Equivalence between \hrel{r8} and \hrel{d12}.}
 \label{adjoint05/fig}
\end{figure}

\begin{figure}[htb]
 \centering
 \includegraphics{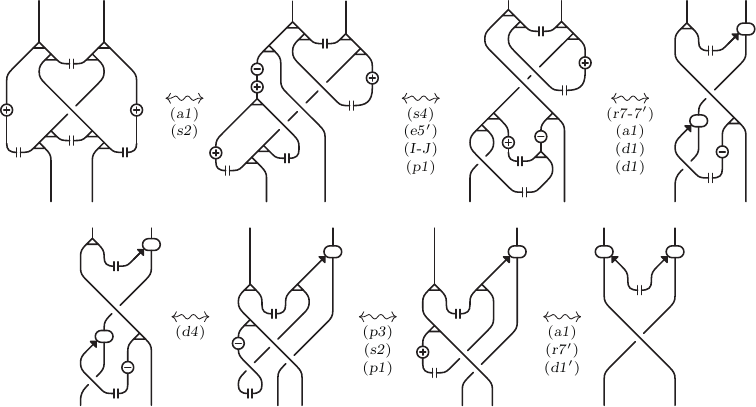}
 \caption{Equivalence between \hrel{r9} and \hrel{d13}.}
 \label{adjoint08/fig}
\end{figure}

\FloatBarrier

\begin{proposition}\label{intertwining/thm}
The left and right adjoint actions of a BP Hopf algebra satisfy the braided co\-commutativity axiom~\hrel{h0-0'}, which implies the intertwining properties~\hrel{d10-10'} and relations~\hrel{d11-11'} in Table~\ref{table-cocommutative/fig}. In particular, left and right adjoint actions intertwine all morphisms in $\Algf$. Moreover, they satisfy relations~\hrel{d15-15'} and \hrel{d16-16'} in Table~\ref{table-adjoint-prop/fig}.
 \end{proposition}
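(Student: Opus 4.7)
The plan is to reduce everything to proving the braided cocommutativity axiom \hrel{h0} for a BP Hopf algebra, after which the remaining claims essentially unwind from Lemma~\ref{cocommutative/thm} and the BP axioms. By Proposition~\ref{symmetry/thm}, the symmetry functor $\sym : \Algf \to \Algf$ exchanges left and right adjoint actions, so it suffices to establish \hrel{h0}, \hrel{d10}, \hrel{d11}, \hrel{d15}, and \hrel{d16}: the primed versions then follow by applying $\sym$.

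The heart of the matter is the verification of \hrel{h0}. I would start from the right-hand side $\braid^{-1} \circ (\id \otimes \adjH) \circ (\coprH \otimes \id)$ and substitute for $\braid^{-1}$ using the equivalent reformulation \hrel{d13} (or \hrel{d14}) of axiom \hrel{r9} provided by Proposition~\ref{adjoint/thm}, which expresses the braiding purely in terms of the coproduct, antipode, product, copairing, and adjoint action. After this substitution the copairing on one of the branches can be absorbed into a coproduct using axiom \hrel{r8} in its reformulated version \hrel{d12}, and the resulting diagram is simplified by standard Hopf algebra moves: compatibility of $\coprH$ with $\prodH$ via \hrel{a5}, the antipode relations \hrel{s1-1'}, and the counit axioms \hrel{a4-4'}. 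Recollecting the surviving terms in the final diagram reveals the left-hand side of \hrel{h0}.

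Once \hrel{h0} holds, Lemma~\ref{cocommutative/thm} immediately yields the intertwining property \hrel{d10} against every morphism built from the Hopf algebra generators of $\Alg$. To upgrade this to a statement valid for all morphisms in $\Algf$, I would check naturality separately against each of the remaining BP generators: for $\intfH$ via \hrel{i1'}, for $\inteH$ via \hrel{i2'}, for $\ribmorH^{\pm 1}$ via \hrel{r3}--\hrel{r5} combined with the already known naturality against $\coprH$ and $\antipH$, and for $\copairH$ via axiom \hrel{r6} combined with naturality against $\ribmorH^{-1}$, $\unitH$, and $\coprH$. Identities \hrel{d11-11'} then follow from \hrel{d9-9'} by intertwining $\adjH$ with $\antipH^{\pm 1}$, while relations \hrel{d15-15'} and \hrel{d16-16'} follow by specializing the now established naturality to morphisms involving the integrals and the ribbon element, together with the identities collected in Tables~\ref{table-BPHopf-prop1/fig} and \ref{table-BPHopf-prop21/fig}, notably \hrel{e1}, \hrel{e2}, and the ribbon axioms.

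The main obstacle is the direct diagrammatic derivation of \hrel{h0}. Even after the reformulations \hrel{d12} and \hrel{d13} of the BP axioms are in hand, the reduction involves several strands interacting through the copairing, the adjoint action, and the braiding, and the intermediate diagrams are genuinely cluttered; the key non-obvious ingredient is recognizing that the combined use of \hrel{d12} and \hrel{d13} produces a term that collapses via the antipode axiom \hrel{s1-1'}, leaving the desired form. The subsequent naturality check in Step~3 is mechanical but must be performed case by case over the BP generators.
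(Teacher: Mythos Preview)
Your proposal is correct and follows essentially the same strategy as the paper: prove \hrel{h0} diagrammatically from the BP axioms in their adjoint-action reformulations \hrel{d12}--\hrel{d14}, invoke Lemma~\ref{cocommutative/thm} to get \hrel{d10} and \hrel{d11-11'} over the Hopf generators, extend \hrel{d10} to the remaining BP generators case by case, and then derive \hrel{d15}--\hrel{d16}, with all primed versions obtained via $\sym$. The only minor discrepancies are in the specific relations cited for individual cases (the paper uses \hrel{r5} alone for $\ribmorH$, \hrel{p4} for $\copairH$, and a short diagrammatic argument rather than just \hrel{i1'} for $\intfH$; for \hrel{d15}--\hrel{d16} the key input is \hrel{d12} together with \hrel{d10}, not the evaluation/coevaluation identities), but your overall outline matches.
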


\begin{proof} 
Concerning the left adjoint action, relation~\hrel{h0} is proven in Figure~\ref{proof-h0/fig}. Then, Lemma~\ref{cocommutative/thm} implies that \hrel{d10} holds for the product, the coproduct, the unit, the counit, the antipode and its inverse, and also that \hrel{d11-11'} are satisfied. We have to show that \hrel{d10} holds for the integrals, for the ribbon morphism, and for the copairing. For the integral element it follows from \hrel{i2}, \hrel{i2'} and \hrel{s7}, while for the integral form it is shown in Figure~\ref{natural-xi01/fig}. For the ribbon morphism it follows from \hrel{r5}, and \hrel{p4} implies that it holds for the copairing as well. Then, by applying the functor $\sym$, we get the analogous properties for the right adjoint action. Finally, the proofs of \hrel{d15} and \hrel{d16} are shown in Figure~\ref{adjoint07/fig}, while \hrel{d15'} and \hrel{d16'} are obtained by applying $\sym$ once again.
\end{proof}

\begin{figure}[htb]
 \centering
 \includegraphics{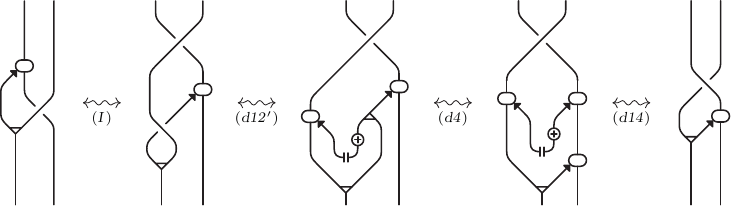}
 \caption{Proof of \hrel{h0}.}
 \label{proof-h0/fig}
\end{figure}

\begin{figure}[htb]
 \centering
 \includegraphics{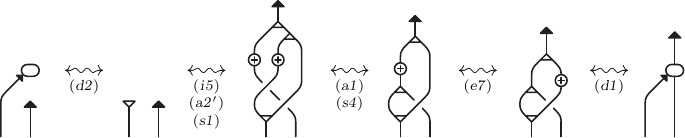}
 \caption{Proof of \hrel{d10} for $F = \intfH$.}
 \label{natural-xi01/fig}
\end{figure}

\begin{figure}[htb]
 \centering
 \includegraphics{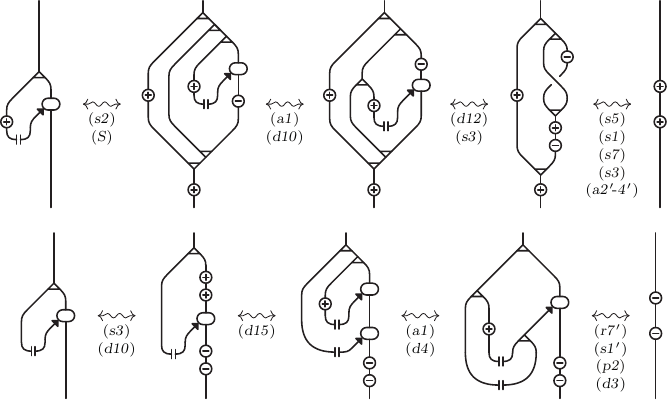}
 \caption{Proof of \hrel{d15} and \hrel{d16}.}
 \label{adjoint07/fig}
\end{figure}

\begin{remark}\label{adjointprop/rem}
Notice that the proofs of \hrel{d15} and \hrel{d16} presented above only use identities \hrel{d10}, \hrel{d12}, the Hopf algebra axioms, and their consequences for the adjoint action presented in Table~\ref{table-adjoint/fig}. This fact is going to be important later, when we will prove that $\Algt$ is equivalent to the category $\AlgH$ introduced in Subsection~\ref{HabiroHalgebra/sec}. 
\end{remark}

\FloatBarrier

\subsection{Factorizable BP Hopf algebras and the categories \texorpdfstring{$\Algt$}{3Alg} and \texorpdfstring{$\AlgH$}{AlgH}}
\label{HabiroHalgebra/sec}

In this subsection, we will introduce an important non-degeneracy condition for BP Hopf algebras, called \textit{factorizability}. We will prove that factorizable anomaly-free BP Hopf algebras\footnote{Factorizable anomaly-free BP Hopf algebras were introduced \cite{BP11} under the name \textit{boundary ribbon Hopf algebras.}} are equivalent to \textit{Habiro Hopf algebras}, a notion due to Habiro that was first defined in \cite{As11}.

\begin{definition}\label{factorizable/def}
A BP Hopf algebra $H$ in $\calC$ is \textit{factorizable} if it satisfies
\begin{equation*}
 (\intfH \otimes \id) \circ \copairH = \inteH, 
 \tag*{\hrel{f}}
\end{equation*}
and it is \textit{anomaly-free} if it satisfies
\begin{equation*}
 \intfH \circ \ribmorH \circ \unitH = \idone. 
 \tag*{\hrel{n}}
\end{equation*}
\end{definition}

The axioms of anomaly-free factorizable BP Hopf algebras are presented in Table~\ref{table-BPHopf3/fig}. These axioms imply the relations in Table~\ref{table-BPHopf3-prop/fig}, as it is shown in Section~\ref{BPHopf3-proofs/sec} of Appendix~\ref{proofs/app} (see also \cite[Propositions~5.4.2 \& 5.4.3]{BP11}). In particular, axiom~\hrel{f} implies the existence of a Hopf pairing $\pairH : H \otimes H \to \one$ which, together with the copairing $\copairH$, satisfies the zigzag identities \hrel{f2-2'} in Table~\ref{table-BPHopf3-prop/fig}. Therefore, both $\pairH$ and $\copairH$ are non-degenerate. By analogy with the standard theory of ribbon Hopf algebras, we use the term factorizability to denote this property.

\begin{table}[htb]
 \centering
 \includegraphics{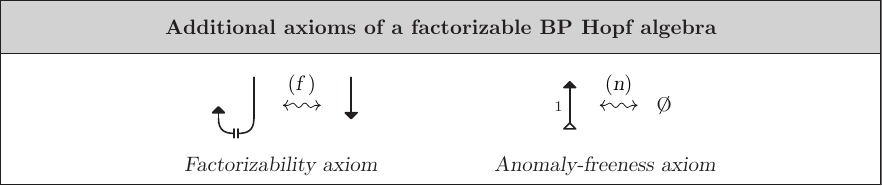}
 \caption{}
 \label{table-BPHopf3/fig}
 \label{E:f} \label{E:n}
\end{table}

\begin{table}[htb]
 \centering
 \includegraphics{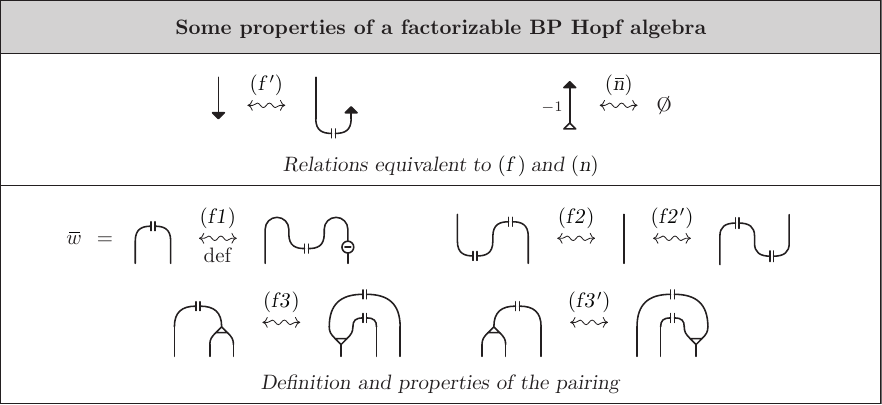}
 \caption{}
 \label{table-BPHopf3-prop/fig}
 \label{E:f'} \label{E:n'} \label{E:f1} \label{E:f2-2'} \label{E:f2} \label{E:f2'} \label{E:f3-3'} \label{E:f3} \label{E:f3'}
\end{table}

\begin{definition}\label{Algt/def} 
We denote by $\Algt$ the braided monoidal category freely generated by an anomaly-free factorizable BP Hopf algebra $H$. In other words, $\Algt$ is the quotient of $\Algf$ by the relations~\hrel{f} and \hrel{n}.
\end{definition}

\begin{definition}\label{AlgH/defn}
Let $\calC$ be a braided monoidal category with tensor product $\otimes$, tensor unit $\one$, and braiding $\braid$. A \textit{Habiro Hopf algebra} is a Hopf algebra $H$ in $\calC$ with braided cocommutative left adjoint action, equipped with the following structure morphisms:
\begin{itemize}
 \item a \textit{copairing} $\copairH : \one \to H \otimes H$ and a pairing $\pairH : H \otimes H \to \one$;
 \item a \textit{ribbon element} $\ribelH_+ : \one \to H$ and its multiplicative inverse $\ribelH_- : \one \to H$.
\end{itemize}
These structure morphisms are subject to the following axioms:
\begin{gather*}
 \prodH \circ (\ribelH_+ \otimes \id) = \prodH \circ (\id \otimes \ribelH_+),
 \tag*{\hrel{h1}}
 \\
 \prodH \circ (\ribelH_+ \otimes \ribelH_-) = \unitH, 
 \tag*{\hrel{h2}}
 \\
 \counH \circ \ribelH_+ = \idone, 
 \tag*{\hrel{h3}}
 \\
 \antipH \circ \ribelH_+ = \ribelH_+, 
 \tag*{\hrel{h4}}
 \\
\copairH=(\prodH \otimes \prodH) \circ (\ribelH_- \otimes \id_2 \otimes \ribelH_-)\circ \coprH \circ \ribelH_+, 
 \tag*{\hrel{h5}}
 \\
 (\id \otimes \coprH) \circ \copairH = (\prodH \otimes \id_2) \circ (\id \otimes \copairH \otimes \id) \circ \copairH, 
 \tag*{\hrel{h6}}
 \\
 (\id \otimes \pairH) \circ (\copairH \otimes \id) = \id , 
 \tag*{\hrel{h7}}
 \\
 \pairH \circ (\prodH \otimes \ribelH_+) \circ (\ribelH_+ \otimes \ribelH_+) = \idone, 
 \tag*{\hrel{h8}}
\end{gather*}
We denote by $\AlgH$ the braided monoidal category freely generated by a~Habiro Hopf algebra $H$. 
\end{definition}

\begin{table}[htb]
 \centering
 \includegraphics{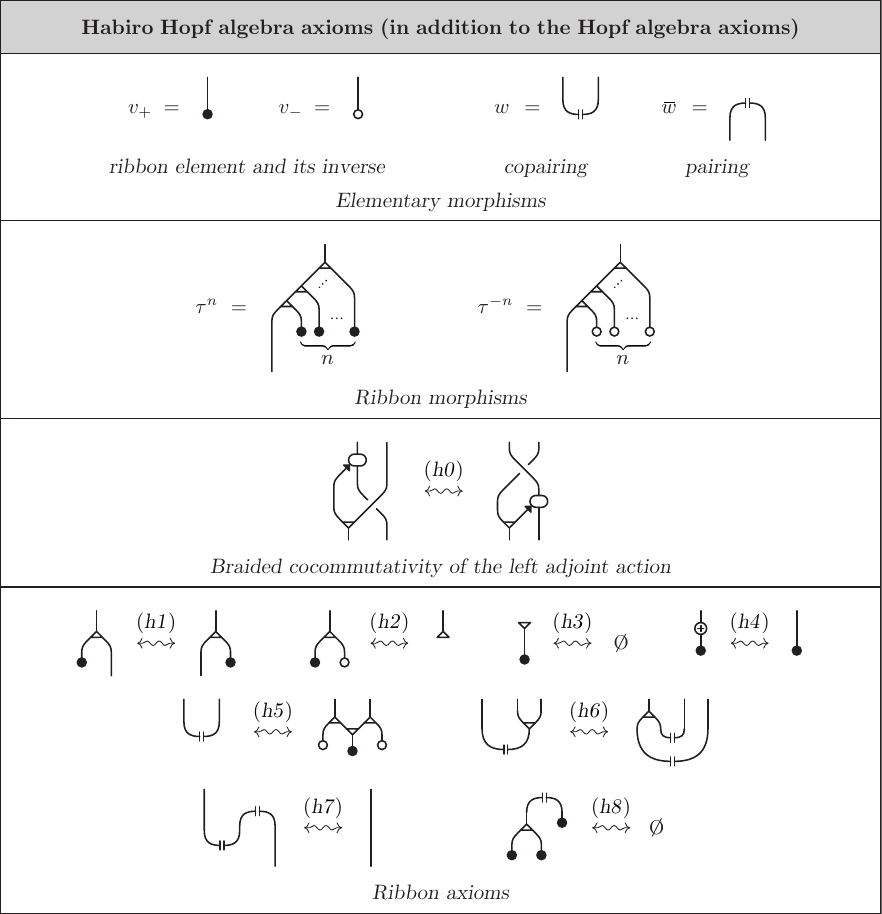}
 \caption{}
 \label{table-Habiro/fig}
 \label{E:h0H} \label{E:h1} \label{E:h2} \label{E:h3} \label{E:h4} \label{E:h5} \label{E:h6} \label{E:h7} \label{E:h8}
\end{table}

\begin{table}[htb]
 \centering
 \includegraphics{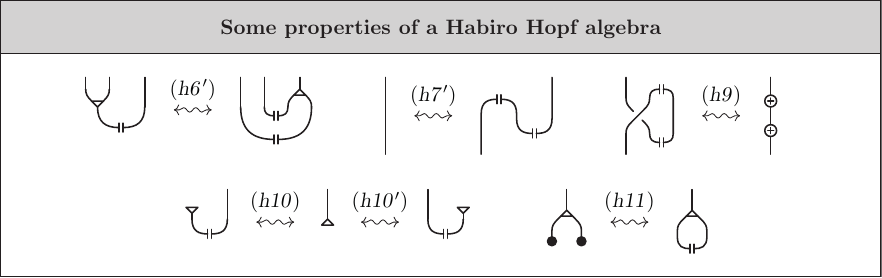}
 \caption{}
 \label{table-Habiro-prop/fig}
 \label{E:h6'} \label{E:h7'} \label{E:h9} \label{E:h10-10'} \label{E:h10} \label{E:h10'} \label{E:h11} 
\end{table}

A diagrammatic representation of the generators and the axioms of a Habiro Hopf algebra can be found in Table~\ref{table-Habiro/fig}. Notice that the notation adopted for all structure morphisms, with the exception of the ribbon elements, is the same as the one used for the analogous structure morphisms of BP Hopf algebras in Tables~\ref{table-BPHopf/fig} and \ref{table-BPHopf3/fig}. This should not cause any confusion since, as we will see below, the equivalence functor from $\AlgH$ to $\Algt$ matches the corresponding structure morphisms. 

\begin{lemma}\label{Halg-prop/thm}
The ribbon morphisms and the copairing of $\AlgH$, defined in Table \ref{table-Habiro/fig}, satisfy the ribbon axioms \hrel{r1} to \hrel{r7} of a BP Hopf algebra in Table \ref{table-BPHopf/fig}. Therefore the relations in Table \ref{table-BPHopf-prop21/fig} are satisfied in $\AlgH$.
\end{lemma}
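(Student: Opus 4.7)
The ribbon morphism of $\AlgH$, although not a primitive generator of a Habiro Hopf algebra, is defined from Table \ref{table-Habiro/fig} as left multiplication by the ribbon element: $\ribmorH = \prodH \circ (\ribelH_+ \otimes \id)$, with inverse $\ribmorH^{-1} = \prodH \circ (\ribelH_- \otimes \id)$. By the centrality axiom \hrel{h1}, these coincide with the corresponding right multiplications. The plan is first to verify each of the five BP Hopf algebra ribbon axioms \hrel{r1}--\hrel{r5} for this constructed morphism by direct diagrammatic calculations, and then to reduce the second assertion to Proposition \ref{BP-prop21/thm} by establishing \hrel{r6} and \hrel{r7} as well.

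For the invertibility axioms \hrel{r1}--\hrel{r2}, I would unfold $\ribmorH \circ \ribmorH^{-1}$ using associativity \hrel{a1}, so that the check collapses to $\prodH \circ (\ribelH_+ \otimes \ribelH_-) = \unitH$, which is exactly \hrel{h2}; the reverse composition is symmetric. Axiom \hrel{r5} is a direct restatement of \hrel{a1}. Axiom \hrel{r4} follows by pushing $\counH$ across the product using the algebra morphism property \hrel{a6} and then applying \hrel{h3}. For axiom \hrel{r3}, I would apply the antihomomorphism property \hrel{s4} of the antipode to commute $\antipH$ through $\prodH$, use \hrel{h4} to replace $\antipH \circ \ribelH_+$ by $\ribelH_+$, and finally invoke centrality \hrel{h1} to move the ribbon element back to the expected factor.

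For the second part, the strategy is to invoke Proposition \ref{BP-prop21/thm}, which derives every identity of Table \ref{table-BPHopf-prop21/fig} from \hrel{r1}--\hrel{r7} alone. Axiom \hrel{r7} coincides verbatim with \hrel{h6}, so nothing needs to be shown. For \hrel{r6}, I would simplify the composition $\ribmorH^{-1} \circ \unitH$ to $\ribelH_-$ via \hrel{a2-2'}, so that the identity reduces to matching $(\ribmorH \otimes \ribmorH) \circ \coprH \circ \ribelH_-$ with the explicit expression for $\copairH$ furnished by \hrel{h5}. To relate $\coprH \circ \ribelH_-$ to $\coprH \circ \ribelH_+$, I would apply the braided multiplicativity \hrel{a5} of $\coprH$ to both sides of $\prodH \circ (\ribelH_+ \otimes \ribelH_-) = \unitH$ from \hrel{h2}, and then use centrality \hrel{h1} of $\ribelH_\pm$ together with the Habiro axioms to cancel the braidings that appear.

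The technical heart of the argument will be this last derivation of \hrel{r6}. The main difficulty is that the braided product $\prodH_{H \otimes H}$ on $H \otimes H$ involved in \hrel{a5} introduces auxiliary braidings that must all be absorbed using the centrality of $\ribelH_+$ and $\ribelH_-$ before one can match the resulting formula with \hrel{h5}. All other checks are routine diagrammatic manipulations. Once \hrel{r1}--\hrel{r7} have been verified in $\AlgH$, Proposition \ref{BP-prop21/thm} yields at once every identity of Table \ref{table-BPHopf-prop21/fig}, completing the proof of the lemma.
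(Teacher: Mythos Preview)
Your overall strategy coincides with the paper's: check \hrel{r1}--\hrel{r7} from the Habiro axioms, then invoke Proposition~\ref{BP-prop21/thm}. However, you have the convention for $\ribmorH$ reversed. Consistently with the functor $\Gamma$ of Proposition~\ref{functor-Hab/thm} (which sends $\ribelH_+$ to $\ribmorH^{-1}\circ\unitH$), Table~\ref{table-Habiro/fig} sets $\ribmorH = \prodH\circ(\ribelH_-\otimes\id)$ and $\ribmorH^{-1} = \prodH\circ(\ribelH_+\otimes\id)$. With this convention $\ribmorH^{-1}\circ\unitH = \ribelH_+$, and substituting the definitions into \hrel{r6} yields precisely \hrel{h5} after one use of centrality to slide a single $\ribelH_-$ across its product. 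The paper accordingly just says that \hrel{h5} and \hrel{h6} \emph{are} \hrel{r6} and \hrel{r7}; there is no ``technical heart''.

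Your reversed sign turns the target identity into the opposite expression for the copairing (this is \hrel{p4} rather than \hrel{r6}), and the route you propose for it --- apply \hrel{a5} to \hrel{h2} and then cancel the braidings by centrality --- is not clearly valid as stated: centrality of $\ribelH_\pm$ controls products with $\ribelH_\pm$ themselves, not with the Sweedler components of $\coprH(\ribelH_\pm)$, and it is the latter that get entangled by the braiding in the product on $H\otimes H$. The identity you are after does eventually hold (it sits in Table~\ref{table-BPHopf-prop21/fig}), but deriving it first in order to invoke Proposition~\ref{BP-prop21/thm} would be circular. Correcting the sign removes the difficulty entirely.
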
 

\begin{proof}
The fact that the ribbon morphisms of $\AlgH$ satisfy axioms \hrel{r1} to \hrel{r5} in Table \ref{table-BPHopf/fig} is a straightforward consequence of axioms \hrel{h1} to \hrel{h4} and the associativity of the product. Moreover, axioms \hrel{r6} and \hrel{r7} are equal correspondingly to \hrel{h5} and \hrel{h6}. Therefore, according to Proposition \ref{BP-prop1/thm}, the relations in Table \ref{table-BPHopf-prop21/fig} hold in $\AlgH$. 
\end{proof}

\begin{proposition}\label{symmetryH/thm}
 The functor $\sym : \AlgH \to \AlgH$ is an involutive anti-monoidal equivalence functor that sends every object and every elementary morphism to itself. 
\end{proposition}

\begin{proof}
The statement follows from Proposition \ref{funt-sym/thm}, provided we check that $\sym$ sends all elementary morphisms of $\AlgH$ to themselves. For the Hopf algebra structure morphisms this was already proved in Proposition~\ref{symmetry-alg/thm}. For the ribbon element and its inverse this fact reduces to axiom~\hrel{h4}, while for the copairing it follows from its definition in terms of a diagram which is invariant under the functor $\sym$, which is given by \hrel{h5}. The proof for the pairing morphism is shown in Figure~\ref{proof-inv-pairingH/fig}, where in the second step we have used the invariance of the copairing under $\sym$.
\end{proof}

\begin{figure}[hbt]
 \centering
 \includegraphics{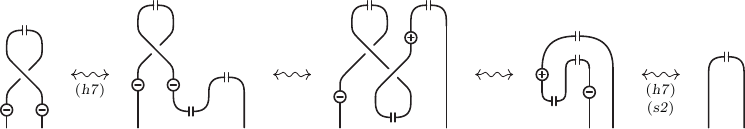}
 \caption{Proof that $\sym(\pairH) = \pairH$, where, in the second step, $\sym(\copairH)=\copairH$ is used.}
 \label{proof-inv-pairingH/fig}
\end{figure}

The set of axioms of a Habiro Hopf algebra, as originally presented in \cite{As11}, also contains the relations in Table~\ref{table-Habiro-prop/fig}, but, as it is shown in the proposition below, these relations are actually a consequence of the axioms in Table~\ref{table-Habiro/fig}.

\begin{proposition}\label{Hprop/thm}
The relations in Table~\ref{table-Habiro-prop/fig} are satisfied in $\AlgH$.
\end{proposition}

\begin{proof}
Relations~\hrel{h6'} and \hrel{h7'} follow from \hrel{h6} and \hrel{h7} by applying the functor $\sym$ (Proposition~\ref{symmetryH/thm}). On the other hand, \hrel{h10-10'} and \hrel{h11} follow from Proposition~\ref{Halg-prop/thm}, since \hrel{h10-10'} are equal to \hrel{p2-2'} in Table~\ref{table-BPHopf-prop21/fig},  while \hrel{h11} is obtained by composing \hrel{p8} with the unit morphism. Finally, \hrel{h9} is proved in Figure~\ref{proof-h9H/fig} using \hrel{p1} in Table~\ref{table-BPHopf-prop21/fig}.
\end{proof}

\begin{figure}[hbt]
 \centering
 \includegraphics{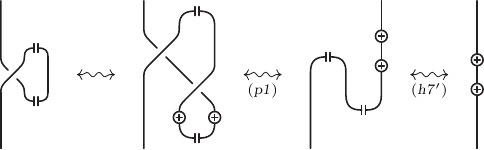}
 \caption{Proof of \hrel{h9}, where, in the second step, $\sym(\pairH)=\pairH$ is used.}
 \label{proof-h9H/fig}
\end{figure}

\begin{proposition} \label{functor-Hab/thm}
There exists a braided monoidal functor $\Gamma: \AlgH \to \Algt$ which preserves the Hopf algebra structure morphisms, sends the pairing and the copairing in $\AlgH$ to the corresponding ones in $\Algt$ (see Table~\ref{table-BPHopf3/fig}), and sends the ribbon elements to the morphisms represented in Figure~\ref{defn-gamma/fig}, meaning
\[
 \Gamma(\ribelH_+) = \ribmorH^{-1} \circ \unitH 
 \quad \text{and} \quad 
 \Gamma(\ribelH_-) = \ribmorH \circ \unitH.
\]
\end{proposition}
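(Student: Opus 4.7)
The plan is to invoke the universal property of $\AlgH$ as the strict braided monoidal category freely generated by a Habiro Hopf algebra. To construct $\Gamma$, it suffices to exhibit a Habiro Hopf algebra in $\Algt$ by specifying images for the generating structure morphisms and verifying that the axioms \hrel{h0}, \hrel{h1}--\hrel{h9} hold there. Concretely, I send the Hopf algebra data, the pairing, and the copairing of $\AlgH$ to the like-named morphisms of $\Algt$, and declare $\Gamma(\ribelH_+) = \ribmorH^{-1} \circ \unitH$ and $\Gamma(\ribelH_-) = \ribmorH \circ \unitH$. The entire proof is then a matter of tracing each Habiro axiom through these assignments and reducing it to results already established for $\Algf$, plus the extra relations \hrel{f} and \hrel{n} that define $\Algt$.

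First I would dispose of the axioms that are direct rewrites. The braided cocommutativity \hrel{h0} of the adjoint action is exactly the content of Proposition~\ref{intertwining/thm}. Axiom \hrel{h1} follows from \hrel{r5} together with its symmetric form \hrel{r5'}, since both sides equal $\ribmorH^{-1} \circ \prodH \circ (\unitH \otimes \id) = \ribmorH^{-1}$ and $\prodH \circ (\id \otimes \ribmorH^{-1} \circ \unitH) = \ribmorH^{-1}$ respectively, after using \hrel{a2-2'}. Axiom \hrel{h2} reduces to $\ribmorH^{-1} \cdot \ribmorH = \id$ at the level of the unit via \hrel{r5} and \hrel{a7}. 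Axiom \hrel{h3} is immediate from \hrel{r4} and \hrel{a8}, and \hrel{h4} is \hrel{r3} applied to $\unitH$ combined with the fact that $\antipH \circ \unitH = \unitH$ (relation \hrel{s6}). Axioms \hrel{h5} and \hrel{h6} are literally \hrel{r6} and \hrel{r7}. The zigzag identities \hrel{h7-7'} are precisely the factorizability consequences \hrel{f2-2'} of Table~\ref{table-BPHopf3-prop/fig}, available in $\Algt$ because of \hrel{f}.

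The two interesting axioms are \hrel{h8} and \hrel{h9}. For \hrel{h8}, one expands $\ribelH_+$ as $\ribmorH^{-1} \circ \unitH$ and uses that $\ribmorH^{-1}$ is an algebra morphism to collapse the product of three ribbon elements into $\ribmorH^{-1} \circ \unitH$ twice over; then one appeals to the explicit form of the pairing $\pairH$ in $\Algt$ (see \hrel{f2-2'} and its interplay with \hrel{e1}), and the whole expression reduces to $\intfH \circ \ribmorH \circ \unitH$, which equals $\idone$ by \hrel{n}. For \hrel{h9}, the right-hand side, written out in $\Algt$, is the composition of the copairing $\copairH$ and the pairing $\pairH$ with a braiding, which by the factorizability-induced identification $\pairH \circ (\id \otimes \copairH) = \id$ and by \hrel{r9} (or its equivalent formulations \hrel{d13-13'}, \hrel{d14-14'} from Proposition~\ref{adjoint/thm}) can be manipulated into $\antipH^2$. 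The expected main obstacle is precisely \hrel{h9}: it is the only axiom that mixes the antipode with the monodromy, and extracting $\antipH^2$ requires carefully deploying the equivalent form of axiom \hrel{r9} together with the zigzag relations for $\copairH$ and $\pairH$.

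Once all axioms are verified, the universal property of $\AlgH$ produces a unique braided monoidal functor $\Gamma : \AlgH \to \Algt$ realizing the prescribed assignment on generators, which is the content of the proposition. The diagrammatic verifications for each axiom would be collected into a single figure or short sequence of figures, paralleling the style of the proofs in Appendix~\ref{proofs/app}; only \hrel{h9} is likely to require more than a few steps, and it is there that the full strength of the BP Hopf algebra axioms of $\Algt$ (as opposed to just $\Algf$) enters essentially via \hrel{f} and \hrel{n}.
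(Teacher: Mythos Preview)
Your approach is essentially the same as the paper's: both invoke the universal property of $\AlgH$, observe that all Habiro axioms except \hrel{h8} and \hrel{h9} follow directly from BP Hopf algebra axioms and their consequences (with \hrel{h7-7'} coming from the factorizability relations \hrel{f2-2'}), and then isolate \hrel{h8} and \hrel{h9} as the two nontrivial verifications requiring separate diagrammatic arguments. One small inaccuracy in your closing remarks: the relations \hrel{f} and \hrel{n} do not enter only at \hrel{h9}; \hrel{f} already enters at \hrel{h7-7'} (via \hrel{f2-2'}) and \hrel{n} enters at \hrel{h8} (as you yourself noted earlier), so the dependence on the quotient $\Algt$ rather than $\Algf$ is spread across \hrel{h7}--\hrel{h9} rather than concentrated in \hrel{h9}.
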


\begin{figure}[htb]
 \centering
 \includegraphics{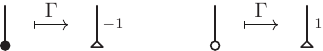}
 \caption{Images under $\Gamma: \AlgH \to \Algt$ of the ribbon element and its inverse.}
 \label{defn-gamma/fig}
\end{figure}

\begin{proof}
Since $\Algt$ and $\AlgH$ are both generated by braided Hopf algebras with braided cocommutative left actions (see Proposition~\ref{intertwining/thm}), it is enough to show that the defining ribbon axioms of $\AlgH$ in Table~\ref{table-Habiro/fig} are satisfied in $\Algt$, once each elementary morphism has been replaced by its image under $\Gamma$. Indeed, ribbon axioms~\hrel{h1} to \hrel{h6} follow directly from axioms~\hrel{r1} to \hrel{r7} in Table~\ref{table-BPHopf/fig}, while \hrel{h7} coincides with \hrel{f2} in Table~\ref{table-BPHopf3-prop/fig}.
The proof of \hrel{h8} is presented in Figure~\ref{proof-h10/fig}. 
\end{proof}

\begin{figure}[htb]
 \centering
 \includegraphics{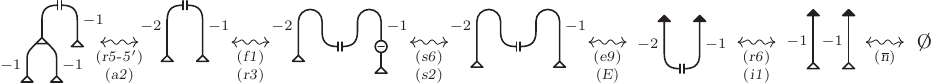}
 \caption{Proof of \hrel{h8}.}
 \label{proof-h10/fig}
\end{figure}


In order to prove that $\Gamma$ is an equivalence of categories, we need some preliminary results. 

\begin{lemma}\label{Hadj2/thm} 
Identities~\hrel{h0'}, \hrel{d10-10'} and \hrel{d11-11'} in Table~\ref{table-cocommutative/fig} are satisfied in $\AlgH$. In particular, the left and right adjoint actions intertwine all morphisms in $\AlgH$.
\end{lemma}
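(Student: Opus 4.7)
The plan is to derive the lemma from Lemma~\ref{cocommutative/thm} applied to the Hopf-algebra part of $\AlgH$, and then extend \hrel{d10} to the remaining generators. Since $\AlgH$ is built on a Hopf algebra with braided cocommutative left adjoint action, the universal property of $\Alg^\rmL$ gives a canonical braided monoidal functor $\Alg^\rmL \to \AlgH$ sending the generating Hopf algebra to $H$. By Lemma~\ref{cocommutative/thm}, both \hrel{d10} and \hrel{d11-11'} hold throughout $\Alg^\rmL$, and hence they hold in $\AlgH$ for every morphism in the image of this functor, that is, for every morphism generated by $\prodH, \unitH, \coprH, \counH, \antipH^{\pm 1}$, and the braidings. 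The residual task is to upgrade \hrel{d10} from this sub-collection to all morphisms of $\AlgH$.

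To that end, I would first observe that the class of morphisms satisfying \hrel{d10} is closed under composition (from $\adjH_u \circ (\id \otimes G F) = G \circ \adjH_t \circ (\id \otimes F) = G F \circ \adjH_s$) and under tensor product (using coassociativity and naturality of the braiding to identify $\adjH_{m+n}$ with the diagonal action built from $\adjH_m$ and $\adjH_n$). It therefore suffices to verify \hrel{d10} on each of the remaining generators $\ribelH_+, \ribelH_-, \copairH, \pairH$. The centrality axiom \hrel{h1}, together with the antipode axiom \hrel{s1-1'}, yields $\adjH \circ (\id \otimes \ribelH_+) = \ribelH_+ \circ \counH$ directly; the same argument handles $\ribelH_-$, whose centrality follows from \hrel{h1} and \hrel{h2}. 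For $\copairH$, axiom \hrel{h5} presents it as a composition of tensor products of morphisms already known to satisfy \hrel{d10}, so the closure properties close the case.

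The main obstacle is the pairing $\pairH$, which is not given by any explicit formula in terms of already-validated generators but is fixed only implicitly by the zigzag \hrel{h7-7'} and by \hrel{h9}. My plan for this case is to use axiom \hrel{h9}, which expresses $\antipH^2$ as a composition built from $\pairH$, $\copairH$, and the braiding. Since $\antipH^2$, $\copairH$, and the braiding already satisfy \hrel{d10}, inserting this equation on one side of the desired identity and then using the zigzag \hrel{h7-7'} to cap off a copy of $\copairH$ against the relevant factor of $\pairH$ should isolate \hrel{d10} for $\pairH$ by a diagrammatic chase. Once \hrel{d10} is known on every generator, it propagates to all of $\AlgH$ by the closure properties; relations \hrel{d11-11'} then follow by the same short argument as in the proof of Lemma~\ref{cocommutative/thm}, namely by combining \hrel{d10} for $F = \antipH$ with \hrel{d9-9'}, the latter being pure Hopf-algebra consequences via Proposition~\ref{alpha/thm}.
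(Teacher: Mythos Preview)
Your overall structure matches the paper's proof almost exactly: invoke Lemma~\ref{cocommutative/thm} for the Hopf-algebra generators and for \hrel{d11-11'}, handle $\ribelH_\pm$ via centrality \hrel{h1}--\hrel{h2}, and reduce $\copairH$ to previously treated generators through \hrel{h5}. The only point of divergence is the pairing. The paper does not go through \hrel{h9} at all; it observes that, once \hrel{d10} is known for $\copairH$, the zigzag identities \hrel{h7-7'} alone force \hrel{d10} for $\pairH$ (this is the content of Figure~\ref{ad-inv-pairing/fig}). The reason is purely formal: a pairing that satisfies zigzag with a given copairing is uniquely determined by that copairing, so any naturality statement for $\copairH$ transfers to $\pairH$ by capping with the zigzag.

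Your proposed detour through \hrel{h9} is therefore unnecessary, and as written it is also underspecified: applying \hrel{d10} to both sides of \hrel{h9} and pushing through $\copairH$ and the braiding yields an identity in which $\pairH$ still appears composed with $(\id \otimes \copairH)$ and a braiding, and it is not clear how the subsequent ``cap off via \hrel{h7-7'}'' step isolates \hrel{d10} for $\pairH$ itself rather than merely recovering \hrel{d10} for $\antipH^2$. If you carry it out carefully you will find yourself using the zigzag to eliminate $\copairH$, which is exactly the paper's direct argument with an extra layer of bookkeeping. So the route works in the end, but only because it collapses to the simpler one; drop the appeal to \hrel{h9} and argue straight from \hrel{h7-7'} and \hrel{d10} for $\copairH$.
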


\begin{figure}[htb]
 \centering
 \includegraphics{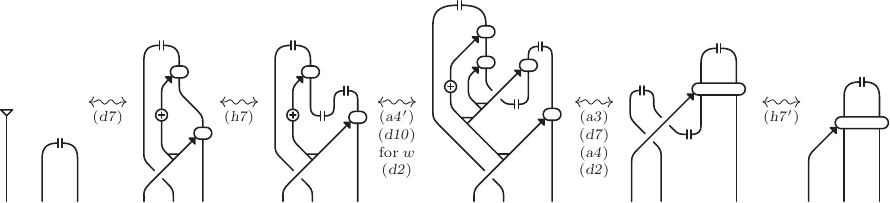}
 \caption{Proof of \hrel{d10} for $\pairH$.}
 \label{ad-inv-pairing/fig}
\end{figure}

\begin{proof} 
Since the left adjoint action in $\AlgH$ is braided cocommutative, Lemma~\ref{cocommutative/thm} implies that relations~\hrel{h0'}, \hrel{d10-10'} for $F=\braid, \prodH,\unitH,\coprH,\counH,\antipH$ and \hrel{d11-11'} hold in $\AlgH$. On the other hand, \hrel{d10} for $\ribelH_\pm$ follows directly from axioms~\hrel{h1}, \hrel{h2}, \hrel{a1} and \hrel{s1'}. Moreover, axiom~\hrel{h5} implies that \hrel{d10} for $F=\copairH$ follows from \hrel{d10} for $\ribelH_\pm$, $\coprH$, and $\prodH$. Finally, as it is shown in Figure~\ref{ad-inv-pairing/fig}, \hrel{d10} for $F = \pairH$ follows from \hrel{d10} for $\copairH$ and from \hrel[h7]{h7-7'}.
\end{proof}

\begin{lemma}\label{Hadj3/thm}
Identities~\hrel{d12-12'}, \hrel{d13-13'}, \hrel{d15-15'} and \hrel{d16-16'} in Table~\ref{table-adjoint-prop/fig} hold in $\AlgH$.
\end{lemma}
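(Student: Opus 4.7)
The plan is to prove \hrel{d12}, \hrel{d13}, \hrel{d15}, and \hrel{d16} in $\AlgH$ in that order, working directly from the Habiro axioms in Table~\ref{table-Habiro/fig}. The tools already available are the full Hopf algebra axioms, braided cocommutativity \hrel{h0}, the ribbon relations \hrel{r1}--\hrel{r7} recovered by Lemma~\ref{Halg-prop/thm}, and the naturality identities \hrel{d10} and \hrel{d11-11'} for the adjoint action supplied by Lemma~\ref{Hadj2/thm}. We cannot appeal to the BP axioms \hrel{r8} or \hrel{r9}, since proving them is essentially the point of the present lemma; consequently Proposition~\ref{adjoint/thm} is not available either.

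I would begin with \hrel{d12}. Since $\ribmorH^{-1} = \prodH \circ (\ribelH_+ \otimes \id)$ in $\AlgH$, the coproduct $\coprH \circ \ribmorH^{-1}$ unfolds via the bialgebra axiom \hrel{a5} and \hrel{a7} into $(\prodH \otimes \prodH) \circ (\id \otimes \braid \otimes \id) \circ ((\coprH \circ \ribelH_+) \otimes \coprH)$. Axiom \hrel{h5} rewrites $\coprH \circ \ribelH_+$ in terms of the copairing $\copairH$ flanked by factors of $\ribelH_+$, and the centrality \hrel{h1} lets those factors slide through products and regroup, via the cancellation \hrel{h2}, into the two $\ribmorH^{-1}$'s appearing on the right-hand side of \hrel{d12}. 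The central $\copairH$-block, together with the surrounding coproducts, should then assemble into the monodromy $\monH$ and the adjoint-action expression required by \hrel{d12}. For \hrel{d13}, I would invoke axiom \hrel{h9} and the zigzag identities \hrel{h7-7'} to rewrite the braiding $\braid$ in terms of $\pairH$, $\copairH$, and $\antipH$, and then use \hrel{h0} together with the definition of $\monH$ to regroup into the form of \hrel{d13}. A more economical alternative is to recycle the diagrammatic manipulations of Figure~\ref{adjoint08/fig}, which derive the equivalence \hrel{r9}$\,\Leftrightarrow\,$\hrel{d13} using only Hopf algebra axioms and the adjoint-action consequences of Table~\ref{table-adjoint/fig}; once \hrel{d12} is in hand and \hrel{r8} has been recovered as its equivalent reformulation, that argument applies in $\AlgH$ as well.

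Finally, \hrel{d15} and \hrel{d16} follow by invoking Remark~\ref{adjointprop/rem}: the diagrammatic proofs displayed in Figure~\ref{adjoint07/fig} rely only on \hrel{d10}, \hrel{d12}, the Hopf algebra axioms, and the consequences of those axioms collected in Table~\ref{table-adjoint/fig} (Proposition~\ref{alpha/thm}). All of these are available in $\AlgH$ after the previous two steps, so the same diagram chases transport verbatim. The main obstacle is \hrel{d12}: in $\Algt$ it is essentially the content of axiom \hrel{r8}, whereas in $\AlgH$ it must be extracted purely from the coproduct of the ribbon element $\ribelH_+$. The delicate calculation is to verify that the $\ribelH_+$-factors produced by applying \hrel{h5} recombine cleanly into the two inverse ribbon morphisms on the right-hand side of \hrel{d12}, while the remaining copairing-block organizes into the adjoint-action monodromy in the required shape; once this is done, \hrel{d13}, \hrel{d15}, and \hrel{d16} follow by relatively standard diagrammatic manipulations.
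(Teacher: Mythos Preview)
Your plan for \hrel{d12}, \hrel{d15}, and \hrel{d16} matches the paper's approach: the paper proves \hrel{d12} directly by unfolding $\coprH\circ\ribmorH^{-1}$ through \hrel{a5} and \hrel{h5} and regrouping the ribbon elements (Figure~\ref{proof-r8-alg/fig}), and then obtains \hrel{d15} and \hrel{d16} exactly as you describe, by invoking Remark~\ref{adjointprop/rem} and the argument of Figure~\ref{adjoint07/fig}.

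For \hrel{d13} your first approach, via \hrel{h9} and the zigzag identities \hrel{h7-7'}, is the right idea and is essentially what the paper carries out in Figure~\ref{proof-r9-alg/fig}. Your ``economical alternative'', however, does not work. Figure~\ref{adjoint08/fig} proves the equivalence \hrel{r9}$\,\Leftrightarrow\,$\hrel{d13} (modulo the remaining BP axioms except \hrel{r8}); it does \emph{not} give an implication from \hrel{r8} to \hrel{d13}. So having \hrel{d12} in hand, and hence \hrel{r8}, buys you nothing here: to extract \hrel{d13} from that equivalence you would need \hrel{r9} itself, which is exactly what remains to be shown. The shortcut is circular, and you must carry out the direct computation from \hrel{h9}.
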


\begin{proof}
The proofs of \hrel{d12} and \hrel{d13} are presented in Figures~\ref{proof-r8-alg/fig} and \ref{proof-r9-alg/fig}, while \hrel{d15} and \hrel{d16} follow from \hrel{d10} and \hrel{d12}, as shown in Figure~\ref{adjoint07/fig} (see Remark~\ref{adjointprop/rem}). Then, the symmetric relations follow from the ones above by applying the functor $\sym$.
\end{proof}

\begin{figure}[ht]
 \centering
 \includegraphics{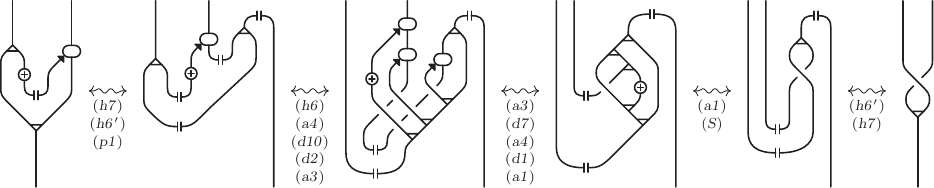}
 \caption{Proof of \hrel{d12} in $\AlgH$.}
 \label{proof-r8-alg/fig}
\end{figure}

\begin{figure}[ht]
 \centering
 \includegraphics{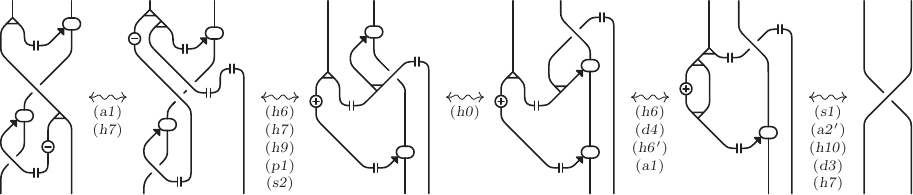}
 \caption{Proof of \hrel{d13} in $\AlgH$.}
 \label{proof-r9-alg/fig}
\end{figure}

Recall that $\Algf$, and hence $\Algt$, are ribbon categories whose evaluation and coevaluation are constructed using the integral form and element. On the other hand, due to relations~\hrel[h7]{h7-7'}, $\AlgH$ admits another rigid structure, with evaluation and coevaluation given by the Hopf pairing and copairing. In the next proposition we will show that such rigid structure can be extended to a ribbon structure, and we will use the notation $(\_)^\vee$ to denote the corresponding duality functor defined in Proposition \ref{Prob-ribbon/thm}.

\begin{proposition}\label{autonomous_Habiro/thm}
$\AlgH$ is a ribbon category (see Definition~\ref{rigid-cat/def}), with dual $(H^n)^\vee = H^n$ for every $n \geqs 0$, with two-sided evaluation $\pairH_n : H^n \otimes H^n \to \one$ and coevaluation $\copairH_n : \one \to H^n \otimes H^n$ inductively defined by $\pairH_0 = \idone = \copairH_0$, by
\begin{gather*}
 \pairH_1 = \pairH, \\
 \copairH_1 = \copairH,
\end{gather*}
and by
\begin{gather*}
 \pairH_n = \pairH \circ (\id \otimes \pairH_{n-1} \otimes \id), \\
 \copairH_n = (\id \otimes \copairH_{n-1} \otimes \id) \circ \copairH,
\end{gather*}
for every $n>1$, and with twist $\vartheta_n : H^n \to H^n$ is defined by
\[
\vartheta_n = (\pairH_n \otimes \id_n) \circ (\id_n \otimes \braid_{n,n}) \circ (\copairH_n \otimes \id_n)
\]
for every $n \geqs 0$. Moreover, in $\AlgH$ we have
\[
\prodH^\vee = \coprH, \quad 
\unitH^\vee = \counH, \quad
\antipH^\vee = \antipH, \quad
\copairH^\vee = \pairH.
\]
\end{proposition}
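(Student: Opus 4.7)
The plan is to verify, in order, (i) that $H^\vee = H$ via $\pairH$ and $\copairH$, (ii) that this self-duality extends to all tensor powers $H^n$, (iii) that the stated formula for $\vartheta_n$ defines a ribbon twist, and (iv) the listed duality identities for the structure morphisms.

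For (i), axiom \hrel{h7-7'} provides \emph{both} the left and the right zigzag identities for $H$ with $\lev_H = \rev_H = \pairH$ and $\lcoev_H = \rcoev_H = \copairH$; hence $H$ is self-dual and the pivotal isomorphism can be taken as $\psi_H = \id_H$. For (ii), the claim that $(H^n)^\vee = H^n$ with $\pairH_n$ and $\copairH_n$ is proved by induction on $n$: the level-$n$ zigzag reduces to the zigzags at levels $1$ and $n-1$ by a straightforward planar isotopy of Penrose diagrams, and the resulting pivotal structure on $H^n$ is again the identity.

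For (iii), we observe that the stated formula for $\vartheta_n$ is precisely the analogue, for the present pivotal structure, of the twist formula \[\theta_n = (\ev_n \otimes \id_n) \circ (\id_n \otimes \braid_{n,n}) \circ (\coev_n \otimes \id_n)\] used in Proposition~\ref{autonomous/thm}, and it coincides, up to standard planar manipulations, with the canonical twist expression recalled in Remark~\ref{ribbon/rmk} specialized to $\psi_{H^n} = \id_{H^n}$. Naturality of $\vartheta_n$ in morphisms of $\AlgH$ and the monoidality relation $\vartheta_{n+m} = \braid_{m,n} \circ \braid_{n,m} \circ (\vartheta_n \otimes \vartheta_m)$ then follow from standard diagrammatic calculations in a pivotal braided category. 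The remaining ribbon axiom $(\vartheta_n)^\vee = \vartheta_n$ reduces, via the twist formula together with the dual computations in step (iv), to axiom \hrel{h9}, which is exactly the identity expressing $\antipH^2$ in terms of $\pairH$, $\copairH$, and $\braid$ required to make the twist self-dual.

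For (iv), the equality $\copairH^\vee = \pairH$ is immediate from the defining formula of $\copairH^\vee$ and the zigzag \hrel{h7-7'}. The remaining identities $\prodH^\vee = \coprH$, $\unitH^\vee = \counH$, and $\antipH^\vee = \antipH$ reduce, via the dual-morphism formula of Definition~\ref{rigid-cat/def} and the zigzag, to Hopf-pairing compatibilities of the form $\pairH \circ (\prodH \otimes \id) = \pairH_2 \circ (\id_2 \otimes \coprH)$, $\pairH \circ (\unitH \otimes \id) = \counH$, and $\pairH \circ (\antipH \otimes \id) = \pairH \circ (\id \otimes \antipH)$. These compatibilities are \emph{not} stated as axioms in Definition~\ref{AlgH/defn}, so the main obstacle in the whole argument is precisely their derivation: they must be obtained from axioms \hrel{h5} and \hrel{h6} (which govern how $\copairH$ interacts with $\coprH$, $\prodH$, and the ribbon elements) together with axiom \hrel{h9} and the braided cocommutativity of the adjoint action from Lemma~\ref{Hadj2/thm}, by transporting each identity from $\copairH$ to $\pairH$ through a graphical manipulation of the zigzag in Penrose notation.
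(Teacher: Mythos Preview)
Your steps (i)--(iii) match the paper's argument: the zigzag \hrel{h7-7'} gives the self-duality of $H$ and its tensor powers, and the ribbon twist self-duality comes down to \hrel{h9} together with $\antipH^\vee=\antipH$.

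In step (iv), however, you take an unnecessary detour. You frame the ``main obstacle'' as deriving \emph{pairing} compatibilities such as $\pairH\circ(\prodH\otimes\id)=\pairH_2\circ(\id_2\otimes\coprH)$, $\pairH\circ(\unitH\otimes\id)=\counH$, and $\pairH\circ(\antipH\otimes\id)=\pairH\circ(\id\otimes\antipH)$, and you propose to obtain them from \hrel{h5}, \hrel{h6}, \hrel{h9}, and the cocommutativity of Lemma~\ref{Hadj2/thm}. But the paper's route is much shorter: the needed relations are already available as \emph{copairing} identities, namely \hrel{p1}, \hrel{h6}--\hrel{h6'} ($=$ \hrel{r7}--\hrel{r7'}), and \hrel{h10-10'} ($=$ \hrel{p2-2'}), all of which hold in $\AlgH$ by Lemma~\ref{Halg-prop/thm} (which establishes Table~\ref{table-BPHopf-prop21/fig}). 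For instance, $\antipH^\vee=(\id\otimes\pairH)\circ(\id\otimes\antipH\otimes\id)\circ(\copairH\otimes\id)$ becomes $\antipH$ immediately after one application of \hrel{p1} (moving $\antipH$ to the other leg of $\copairH$) followed by the zigzag \hrel{h7}. Similarly, $\prodH^\vee=\coprH$ and $\unitH^\vee=\counH$ follow directly from \hrel{h6}--\hrel{h6'} and \hrel{h10-10'} via \hrel{h7-7'}. Neither \hrel{h9} nor Lemma~\ref{Hadj2/thm} is needed for the dual-morphism identities; \hrel{h9} enters only for the twist. Your approach would work, but it re-derives from scratch what Lemma~\ref{Halg-prop/thm} has already provided.
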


\begin{proof}
Observe that $\antipH^\vee = (\id \otimes \pairH) \circ (\id \otimes \antipH \otimes \id) \circ (\copairH \otimes \id)$ is equal to $\antipH$, thanks to \hrel{p1} in Table~\ref{table-BPHopf-prop21/fig} and \hrel{h7} which, together with \hrel{h9} in Table~\ref{table-Habiro-prop/fig}, imply that $(\theta_H)^\vee=\theta_{H^\vee}$. Then, the statement concerning the ribbon structure follows from relations~\hrel[h7]{h7-7'} and Proposition \ref{Prob-ribbon/thm}. The identities concerning duals of $\prodH$, $\unitH$, and $\copairH$ follow directly from relations~\hrel[h6]{h6-6'} and \hrel[h7]{h7-7'} in Table~\ref{table-Habiro/fig} and \hrel{h10-10'}\break in Table~\ref{table-Habiro-prop/fig}.
\end{proof}


Proposition \ref{autonomous_Habiro/thm} implies that, if a morphism $F$ is a composition of tensor products of structure morphisms other than ribbon elements, then the diagram representing $F^\vee$ is obtained from the diagram representing $F$ by rotation of an angle $\pi$ with respect to an axis that is perpendicular to the plane. Moreover, by dualizing each side of a given relation between morphisms of $\AlgH$, we obtain another relation between the corresponding dual morphisms, to which we will refer as the \textit{dual} relation, or property. For example, the dual of relation~\hrel{p1} states that $\pairH \circ (\antipH \otimes \id) = \pairH \circ (\id \otimes \antipH)$, and we will refer to it as \rel{p1{$\kern1pt{}^\vee$}}.

The following result is due to Habiro.

\begin{proposition}[Habiro] \label{H/thm} 
In $\AlgH$, the morphisms $\intfH = \pairH \circ (\prodH \otimes \id) \circ (\id \otimes \ribelH_+ \otimes \ribelH_+)$ and $\inteH = \intfH^\vee$ are an $\antipH$-invariant integral form and integral element satisfying relations~\hrel{i1} to \hrel{i5} in Table~\ref{table-BPHopf/fig}.
\end{proposition}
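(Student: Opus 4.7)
The plan is to establish the five axioms \hrel{i1}--\hrel{i5} separately, exploiting the ribbon category structure of $\AlgH$ (Proposition~\ref{autonomous_Habiro/thm}) together with the Hopf pairing/copairing identities and the properties of $\ribelH_\pm$ collected in Table~\ref{table-Habiro/fig}. First, I would record an explicit diagrammatic expression for $\inteH = \intfH^\vee$ by applying the duality formulas of the ribbon structure of $\AlgH$ to the definition of $\intfH$; this yields $\inteH = (\id \otimes \intfH) \circ \copairH = (\intfH \otimes \id) \circ \copairH$, the equality of the two forms following from the $\antipH$-self-duality $\antipH^\vee = \antipH$ proved in Proposition~\ref{autonomous_Habiro/thm} and from the symmetry established in~\hrel{i5} below.

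Next I would prove \hrel{i5} ($\intfH \circ \antipH = \intfH$) by combining three ingredients: the antipode-invariance of the ribbon element $\antipH \circ \ribelH_+ = \ribelH_+$ (axiom~\hrel{h4}), the anti-multiplicativity of $\antipH$ given by \hrel{s8}, and the pairing identity $\pairH \circ (\antipH \otimes \id) = \pairH \circ (\id \otimes \antipH)$ coming from $\antipH^\vee = \antipH$. Then \hrel{i4} follows by dualizing the resulting equation through the ribbon structure. For \hrel{i3} ($\intfH \circ \inteH = \idone$), I would compute $\intfH \circ \inteH = (\intfH \otimes \intfH) \circ \copairH$, expand $\copairH$ using axiom~\hrel{h5}, cancel the factors $\ribelH_+ \ribelH_- = \unitH$ via \hrel{h2} together with the centrality of $\ribelH_+$ from \hrel{h1}, and conclude using the normalization axiom~\hrel{h8}.

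I would then tackle \hrel{i1} ($(\id \otimes \intfH) \circ \coprH = \unitH \circ \intfH$), which I expect to be the main obstacle. The strategy is to expand the left-hand side diagrammatically, move the coproduct of the input strand past the two copies of $\ribelH_+$ using the centrality axiom~\hrel{h1}, apply the Hopf pairing property dual to~\hrel{h6} (that is, $\pairH \circ (\id \otimes \prodH) = \pairH \circ (\id \otimes \pairH \otimes \id) \circ (\coprH \otimes \id_2)$), and use the zigzag identities~\hrel{h7-7'} together with axiom~\hrel{h5} expressing $\copairH$ in terms of the coproduct of $\ribelH_+$. The delicate point is that both $x$ and one of the $\ribelH_+$ factors get coproducted by this manipulation, and one must carefully arrange the resulting terms so that the $\ribelH_\pm$ contributions contract via \hrel{h1}--\hrel{h2} to the identity, leaving behind precisely $\unitH \circ \intfH$ on the remaining strand. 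Axiom~\hrel{i2} then follows by dualizing~\hrel{i1} in the ribbon category $\AlgH$.

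The chief difficulty lies in \hrel{i1}: the interplay between the coproduct applied to the input, the coproduct hidden inside the definition of $\intfH$ (via the Hopf pairing expansion), and the coproducts of the two ribbon elements produces a dense diagram in which it is not a priori obvious how to route strands. I expect the proof to require some auxiliary manipulations, perhaps isolating a subdiagram that can be recognized as $\unitH \otimes \intfH$ using the braided cocommutativity of the adjoint action (Lemma~\ref{Hadj2/thm}) to move structure morphisms past one another, before the final reduction via~\hrel{h7-7'}.
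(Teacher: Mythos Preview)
Your overall architecture matches the paper's exactly: prove \hrel{i5} directly from $\antipH$-invariance of $\ribelH_+$ and the compatibility of the pairing with the antipode, deduce \hrel{i4} by dualizing, prove \hrel{i1} by a longer diagrammatic computation, deduce \hrel{i2} by dualizing, and verify \hrel{i3} separately via \hrel{h8}. Two small points: the anti-multiplicativity of the antipode is relation \hrel{s4}, not \hrel{s8} (the latter expresses the braiding in terms of the other structure morphisms); and for \hrel{i1}, the paper's proof (spread over two figures) makes essential use of the adjoint-action identities \hrel{d12}--\hrel{d16} established just before in Lemma~\ref{Hadj3/thm}, so you should expect to invoke those rather than only the raw axioms \hrel{h1}--\hrel{h7} and braided cocommutativity --- this is likely the missing organizing principle in the ``dense diagram'' you anticipate.
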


\begin{table}[htb]
 \centering
 \includegraphics{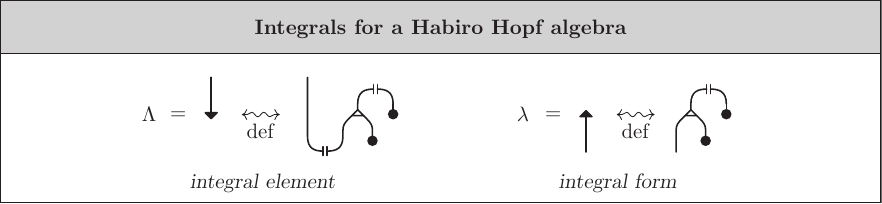}
 \caption{}
 \label{table-Habiro-int/fig}
\end{table}

\begin{proof}
Observe that axioms~\hrel{h1} and \hrel{h4} and relations~\hrel{s4} and \hrel[p1]{p1{$\kern1pt{}^\vee$}} imply that the integral form is $\antipH$-invariant, meaning that $\intfH \circ \antipH = \intfH$. Therefore, if we show that $\intfH$ is a right integral form, meaning that it satisfies relations~\hrel{i1} and \hrel{i5}, then, by considering the dual relation, we will get that $\inteH = \intfH^\vee$ is an $\antipH$-invariant integral element. 

The proof that $\intfH$ is a right integral form is shown in Figure~\ref{cointegral-alg/fig}. 

Finally, the relation $\intfH \circ \inteH = \idone$ is proved in Figure~\ref{normalization/fig}.
\end{proof}

\begin{figure}[htb]
 \centering
 \includegraphics{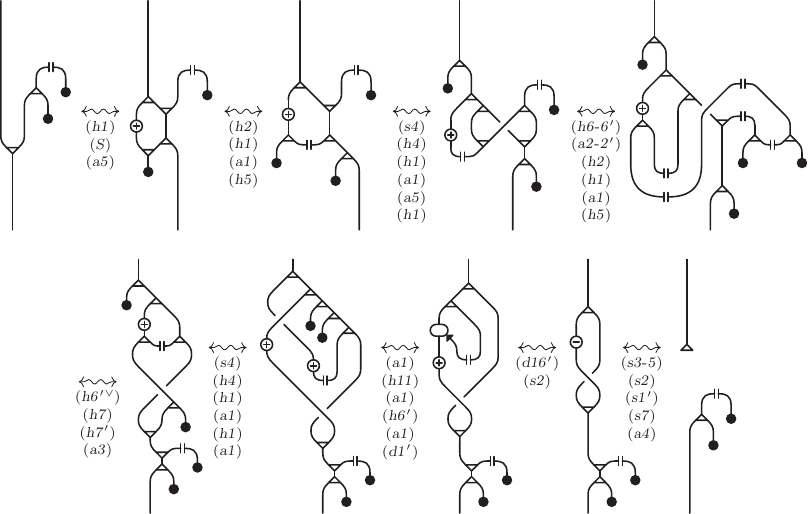}
 \caption{Proof of \hrel{i1} in $\AlgH$.}
 \label{cointegral-alg/fig}
\end{figure}



\begin{figure}[htb]
 \centering
 \includegraphics{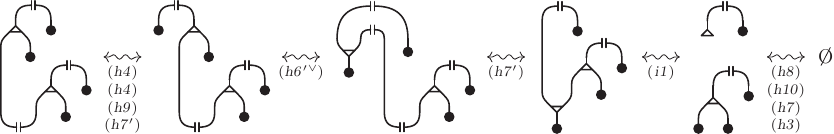}
 \caption{Proof of \hrel{i3} in $\AlgH$, that is, $\intfH \circ \inteH = \idone$.}
 \label{normalization/fig}
\end{figure}

\begin{theorem}\label{3equivalence/thm}
The braided monoidal functor $\Gamma: \AlgH \to \Algt$ is an equivalence of categories.
\end{theorem}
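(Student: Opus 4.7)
Both $\AlgH$ and $\Algt$ are PROBs (free braided monoidal categories on one object), and $\Gamma$ is the identity on the object set $\mathbb{N}$, so it suffices to construct an inverse braided monoidal functor $\Gamma' \colon \Algt \to \AlgH$. The plan is to exhibit on the generating object $H$ of $\AlgH$ the structure of an anomaly-free factorizable BP Hopf algebra; the universal property of $\Algt$ implied by Definition~\ref{Algt/def} then produces $\Gamma'$.

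For the BP data on $H \in \AlgH$, I keep the given Hopf algebra and the copairing $\copairH$, define the ribbon morphism by $\ribmorH = \prodH \circ (\ribelH_- \otimes \id)$ with inverse $\ribmorH^{-1} = \prodH \circ (\ribelH_+ \otimes \id)$, and take $\intfH$, $\inteH$ from Proposition~\ref{H/thm}. Most axioms of Definition~\ref{Hr/def} are then immediate: the Hopf axioms are shared; \hrel{i1}--\hrel{i5} are Proposition~\ref{H/thm}; \hrel{r1}--\hrel{r5} come from Lemma~\ref{Halg-prop/thm}; \hrel{r6} and \hrel{r7} coincide with the Habiro axioms \hrel{h5} and \hrel{h6}; and \hrel{r8}, \hrel{r9} follow, via the reformulations in Proposition~\ref{adjoint/thm}, from \hrel{d12} and \hrel{d13}, which are Lemma~\ref{Hadj3/thm}. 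Condition \hrel{n} reduces to $\intfH \circ \ribelH_- = \pairH \circ (\unitH \otimes \ribelH_+) = \counH \circ \ribelH_+ = \idone$ via \hrel{h2}, \hrel{p8}, and \hrel{h3}. Factorizability \hrel{f} follows, given $\inteH = \intfH^\vee$ and the zigzag \hrel{h7-7'} of Proposition~\ref{autonomous_Habiro/thm}, from a short diagrammatic computation exploiting \hrel{h5} and the $\antipH$-invariance of $\intfH$.

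It remains to verify $\Gamma' \circ \Gamma = \id_{\AlgH}$ and $\Gamma \circ \Gamma' = \id_{\Algt}$, which, by braided monoidality, can be checked on generating morphisms. The Hopf generators, the copairing, and the pairing are preserved tautologically. For the ribbon element, $(\Gamma' \circ \Gamma)(\ribelH_+) = \prodH \circ (\ribelH_+ \otimes \unitH) = \ribelH_+$ by \hrel{a2'}, and symmetrically for $\ribelH_-$. For the ribbon morphism of $\Algt$, $(\Gamma \circ \Gamma')(\ribmorH) = \prodH \circ ((\ribmorH \circ \unitH) \otimes \id) = \ribmorH \circ \prodH \circ (\unitH \otimes \id) = \ribmorH$ by \hrel{r5} and \hrel{a2}. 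The genuinely delicate identity, which I expect to be the main obstacle, is that $\Gamma$ must send the Habiro integral form back to the integral form of $\Algt$; equivalently, one must prove in every anomaly-free factorizable BP Hopf algebra the identity
\[
\intfH = \pairH \circ (\prodH \otimes \id) \circ (\id \otimes (\ribmorH^{-1} \circ \unitH) \otimes (\ribmorH^{-1} \circ \unitH)).
\]
I plan to establish this diagrammatically in $\Algt$ by substituting the defining formula \hrel{f1} for $\pairH$, using the commutation \hrel{r3} of $\antipH$ with $\ribmorH^{\pm 1}$, the centrality \hrel{r5} of the ribbon morphism, the anomaly-free relation \hrel{n}, and the auxiliary identities relating $\intfH$ and $\ribmorH$ collected in Table~\ref{table-BPHopf-prop22/fig}.
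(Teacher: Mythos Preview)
Your approach is essentially the same as the paper's: construct the inverse $\barGamma$ (your $\Gamma'$) by endowing $H \in \AlgH$ with an anomaly-free factorizable BP Hopf algebra structure, using Proposition~\ref{H/thm} for the integrals, Lemma~\ref{Halg-prop/thm} for \hrel{r1}--\hrel{r7}, Lemma~\ref{Hadj3/thm} for \hrel{d12}--\hrel{d13}, and the reformulation of \hrel{r8}--\hrel{r9} from Proposition~\ref{adjoint/thm}, and then check the two compositions on generators. The paper is terser: it does not single out \hrel{f} (which, as you observe, is essentially the definition $\inteH = \intfH^\vee$) and simply asserts that ``it is left to observe that $\barGamma \circ \Gamma = \id_{\AlgH}$ and $\Gamma \circ \barGamma = \id_{\Algt}$'', whereas you correctly isolate the only genuinely nontrivial step in that verification, the identity $\Gamma(\Gamma'(\intfH)) = \intfH$ in $\Algt$, and propose a direct diagrammatic proof.
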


\begin{proof}
We recall that, modulo the other axioms of $\Algf$, \hrel{r8} and \hrel{r9} are equivalent to \hrel{d12} and \hrel{d13} in Table~\ref{table-adjoint-prop/fig}, respectively. Therefore, the quotient $\Algt$ is equivalent to the category freely generated by the elementary morphisms and relations presented in Tables~\ref{table-Hopf/fig}, \ref{table-BPHopf/fig} and \ref{table-BPHopf3/fig}, where axioms~\hrel{r8} and \hrel{r9} have been replaced by \hrel{d12} and \hrel{d13} in Table~\ref{table-adjoint-prop/fig}.

We define now a braided monoidal functor $\barGamma: \Algt\to\AlgH$ that sends all elementary morphisms of $\Algt$ to the corresponding morphisms of $\AlgH$. In order to see that the functor is well-defined, we have to check that all axioms of $\Algt$ are satisfied in the image of $\barGamma$. For the integral axioms and for relations~\hrel{d12} and \hrel{d13}, this follows from Proposition~\ref{H/thm} and from Lemma~\ref{Hadj3/thm}, respectively. The ribbon axioms~\hrel{r1} to \hrel{r7} are equivalent to \hrel{h1} to \hrel{h6}, while axiom~\hrel{n} in Table~\ref{table-BPHopf3/fig} follows from \hrel{h2}, \hrel{h3}, and \hrel{h10}. Now, we only need to observe that $\barGamma \circ \Gamma = \id_{\AlgH}$ and $\Gamma \circ \barGamma = \id_{\Algt}$.
\end{proof}

Let us finish this subsection by introducing yet another equivalent presentation of $\Algt$. More precisely, we will show that, by adding the braided cocommutativity relation for the adjoint action to Kerler's original list of axioms, we obtain a category that is equivalent to $\Algt$.

\begin{definition}\label{AlgK/defn}
Let $\calC$ be a braided monoidal category with tensor product $\otimes$, tensor unit $\one$, and braiding $\braid$. A \textit{Kerler Hopf algebra} is a Hopf algebra $H$ in $\calC$ with braided cocommutative left adjoint action, equipped with the following structure morphisms:
\begin{itemize}
\item an \textit{integral form} $\intfH : H \to \one$ and an \textit{integral element} $\inteH : \one \to H$;
\item a \textit{copairing} $\copairH : \one \to H \otimes H$;
\item a \textit{ribbon element} $\ribelH_+ : \one \to H$ and its multiplicative inverse $\ribelH_- : \one \to H$.
\end{itemize}
These structure morphisms are subject to the following axioms:
\begin{gather*}
 (\id \otimes \intfH) \circ \coprH = \unitH \circ \intfH,
 \tag*{\hrel[i1K]{i1}}
 \\
 \prodH \circ (\inteH \otimes \id) = \inteH \circ \counH,
 \tag*{\hrel[i2K]{i2}}
 \\
 \intfH \circ \inteH = \idone,
 \tag*{\hrel[i3K]{i3}}
 \\
 \antipH \circ \inteH = \inteH,
 \tag*{\hrel[i4K]{i4}}
 \\
 \intfH \circ \antipH = \intfH,
 \tag*{\hrel[i5K]{i5}}
 \\
 \prodH \circ (\ribelH_+ \otimes \id) = \prodH \circ (\id \otimes \ribelH_+),
 \tag*{\hrel[h1K]{h1}}
 \\
 \prodH \circ (\ribelH_+ \otimes \ribelH_-) = \unitH, 
 \tag*{\hrel[h2K]{h2}}
 \\
 \counH \circ \ribelH_+ = \idone, 
 \tag*{\hrel[h3K]{h3}}
 \\
 \antipH \circ \ribelH_+ = \ribelH_+, 
 \tag*{\hrel[h4K]{h4}}
 \\
 \copairH=(\prodH \otimes \prodH) \circ (\ribelH_- \otimes \id_2 \otimes \ribelH_-)\circ \coprH \circ \ribelH_+, 
 \tag*{\hrel[h5K]{h5}}
 \\
 (\id \otimes \coprH) \circ \copairH = (\prodH \otimes \id_2) \circ (\id \otimes \copairH \otimes \id) \circ \copairH, 
 \tag*{\hrel[h6K]{h6}}
 \\
 (\intfH \otimes \id) \circ \copairH = \inteH, 
 \tag*{\hrel[fK]{f}}
 \\
 \intfH \circ \ribelH_+ = \idone. 
 \tag*{\hrel[nK]{n}}
\end{gather*}
We denote by $\AlgK$ the braided monoidal category freely generated by a Kerler Hopf algebra $H$. 
\end{definition}

A diagrammatic representation of the generators and the axioms of a Kerler Hopf algebra are presented in Table~\ref{table-Kerler/fig}.

\begin{table}[htb]
 \centering
 \includegraphics{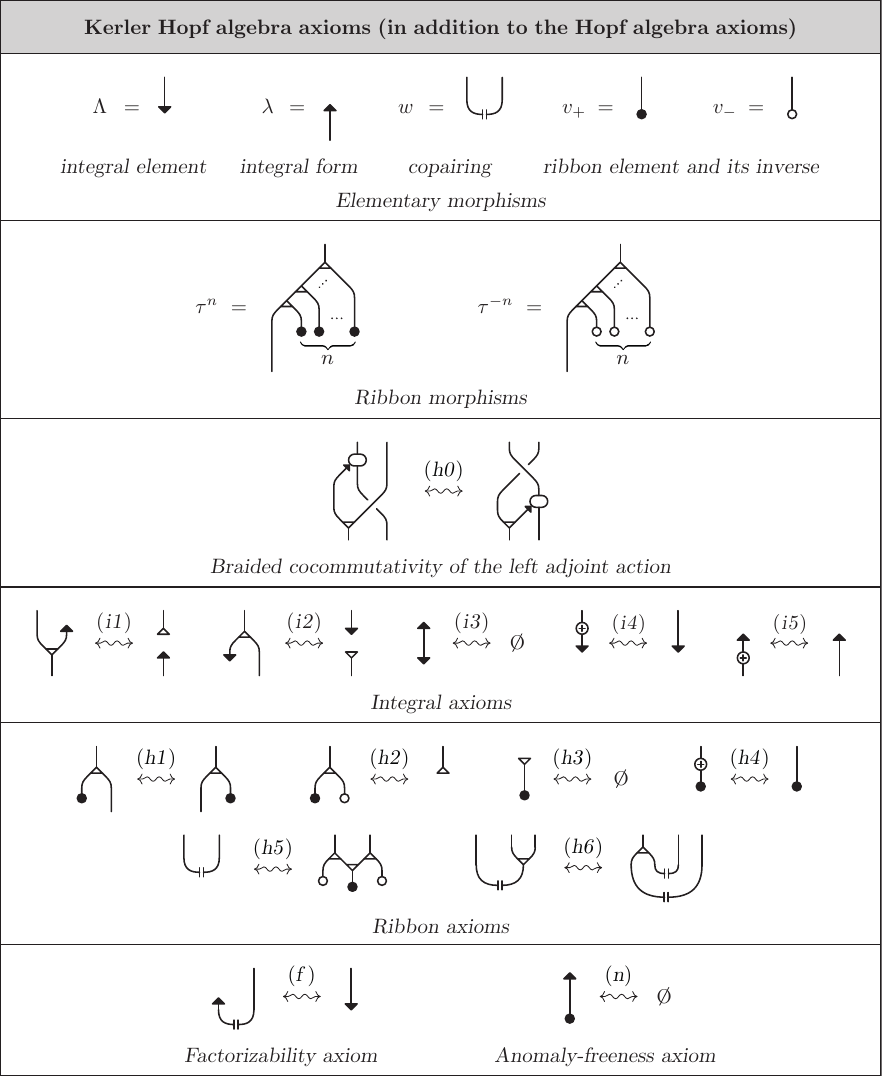}
 \caption{}
 \label{table-Kerler/fig}
 \label{E:h0K} \label{E:i1K} \label{E:i2K} \label{E:i3K} \label{E:i4K} \label{E:i5K} \label{E:h1K} \label{E:h2K} \label{E:h3K} \label{E:h4K} \label{E:h5K} \label{E:h6K} \label{E:fK} \label{E:nK} 
\end{table}

\begin{proposition}\label{symmetryK/thm}
 The functor $\sym : \AlgK \to \AlgK$ is an involutive anti-monoidal equivalence functor that sends every object and every elementary morphism to itself. Moreover, the properties in Tables~\ref{table-BPHopf-prop1/fig}, \ref{table-BPHopf-prop21/fig}, and \ref{table-BPHopf-prop22/fig} hold in $\AlgK$. 
\end{proposition}

\begin{proof}
Since $\AlgK$ has the same elementary morphisms as $\Algf$, the statement regarding the symmetry functor follows from Propositions~\ref{funt-sym/thm} and \ref{symmetry/thm}. 

Furthemore, according to Propositions~\ref{BP-prop1/thm}, \ref{BP-prop21/thm}, and \ref{BP-prop22/thm}, the properties in Tables~\ref{table-BPHopf-prop1/fig}, \ref{table-BPHopf-prop21/fig}, and \ref{table-BPHopf-prop22/fig} hold in any category in which the integral axioms \hrel{i1}--\hrel{i5} and ribbon axioms~\hrel{r1}--\hrel{r7} hold. Now, it is enough to observe that the copairing and the ribbon morphisms of $\AlgK$, defined in Table~\ref{table-Kerler/fig}, satisfy axioms~\hrel{r1}--\hrel{r7} in Table~\ref{table-BPHopf/fig}.
\end{proof}

\begin{corollary}\label{3equivalence/cor}
 There exists a braided monoidal equivalence between the categories $\AlgK$ and $\AlgH$ that preserves the corresponding Hopf algebra structures. 
\end{corollary}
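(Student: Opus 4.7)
The plan is to deduce the equivalence $\AlgK\simeq\AlgH$ from Theorem~\ref{3equivalence/thm} by constructing a braided monoidal equivalence $\AlgK\simeq\Algt$ and composing with $\Gamma^{-1}:\Algt\to\AlgH$. Both $\AlgK$ and $\Algt$ share the generators $\prodH,\unitH,\coprH,\counH,\antipH,\antipH^{-1},\intfH,\inteH,\copairH$; they differ in that $\Algt$ has a ribbon morphism $\ribmorH^{\pm 1}:H\to H$ while $\AlgK$ has ribbon elements $\ribelH_\pm:\one\to H$. Define $F:\AlgK\to\Algt$ to act as the identity on the shared generators and by $F(\ribelH_\pm)=\ribmorH^{\mp 1}\circ\unitH$, and $G:\Algt\to\AlgK$ to act as the identity on the shared generators with $G(\ribmorH^{\pm 1})=\prodH\circ(\ribelH_\mp\otimes\id)$, matching the conventions of Proposition~\ref{functor-Hab/thm} and Lemma~\ref{Halg-prop/thm}.

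To verify that $F$ is well-defined, all Hopf axioms, the integral axioms \hrel[i1K]{i1}--\hrel[i5K]{i5}, the factorizability $(fK)$, and the braided cocommutativity $(h0K)$ are directly axioms of $\Algt$ (the latter by Proposition~\ref{intertwining/thm}). The ribbon-element axioms \hrel[h1K]{h1}--\hrel[h5K]{h5} reduce to the BP ribbon axioms \hrel{r1}--\hrel{r6} of $\Algt$ under the substitution, exactly as in the proof of Proposition~\ref{functor-Hab/thm}: \hrel[h1K]{h1} uses centrality of $\ribmorH^{-1}\circ\unitH$ (from \hrel{r5} and \hrel{r5'}); \hrel[h2K]{h2} is just $\ribmorH\circ\ribmorH^{-1}=\id$ composed with $\unitH$; \hrel[h3K]{h3}--\hrel[h4K]{h4} follow from \hrel{r4}, \hrel{r3} and the antipode property $\antipH\circ\unitH=\unitH$; \hrel[h5K]{h5} is \hrel{r6}; and \hrel[h6K]{h6} coincides with \hrel{r7}. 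The delicate point is $(nK)$, which becomes $\intfH\circ\ribmorH^{-1}\circ\unitH=\idone$ in $\Algt$; this is established by substituting axiom \hrel{r6} into $(f')$ from Table~\ref{table-BPHopf3-prop/fig} and using the integral property \hrel{i1} to relate the result to $\intfH\circ\ribmorH\circ\unitH=\idone$ (axiom \hrel{n}).

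For $G$, the ribbon axioms \hrel{r1}--\hrel{r5} for $G(\ribmorH)=\prodH\circ(\ribelH_-\otimes\id)$ follow from axioms \hrel[h1K]{h1}--\hrel[h4K]{h4} together with associativity, while \hrel{r6} is \hrel[h5K]{h5} and \hrel{r7} is \hrel[h6K]{h6}. The key non-trivial axioms \hrel{r8} and \hrel{r9} are, by Proposition~\ref{adjoint/thm}, equivalent modulo the remaining ones to \hrel{d12} and \hrel{d13}; these can be established in $\AlgK$ by adapting the diagrammatic proofs of Lemma~\ref{Hadj3/thm}, which rely only on braided cocommutativity of the adjoint action and on the ribbon-element axioms common to $\AlgK$ and $\AlgH$. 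Finally, $(f)$ is $(fK)$, and $(n)$ becomes $\intfH\circ\ribelH_-=\idone$ in $\AlgK$, which is derived from $(nK)$ by the same argument used for $(nK)$ in $\Algt$, transported through the substitution.

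A routine check then shows that $F\circ G$ and $G\circ F$ act as the identity on shared generators, while $G(F(\ribelH_+))=\prodH\circ(\ribelH_+\otimes\unitH)=\ribelH_+$ by \hrel{a2} and $F(G(\ribmorH))=\prodH\circ(\ribmorH\circ\unitH\otimes\id)=\ribmorH$ by \hrel{r5} and \hrel{a2}, so that $F$ and $G$ are mutually inverse. Composing $G$ with $\barGamma:\Algt\to\AlgH$ (the inverse constructed in the proof of Theorem~\ref{3equivalence/thm}) yields the desired Hopf-structure-preserving equivalence $\AlgK\simeq\AlgH$. The main obstacle is the verification of the two normalization identities $\intfH\circ\ribmorH^{-1}\circ\unitH=\idone$ in $\Algt$ and $\intfH\circ\ribelH_-=\idone$ in $\AlgK$, since neither follows directly from the stated normalization axioms and both require combining the copairing expression \hrel{r6} (resp.\ \hrel[h5K]{h5}) with factorizability and the integral relations; a secondary difficulty is ensuring that the derivations of \hrel{d12} and \hrel{d13} from Lemma~\ref{Hadj3/thm} can be transcribed inside $\AlgK$ without invoking the pairing $\pairH$ or axioms \hrel{h7-7'}--\hrel{h9}.
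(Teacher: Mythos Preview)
Your route through $\Algt$ is different from the paper's, which constructs mutually inverse functors directly between $\AlgH$ and $\AlgK$. The paper first observes that, since $\AlgK$ carries $\antipH$-invariant integrals and a ribbon morphism $\ribmorH=\prodH\circ(\ribelH_-\otimes\id)$ satisfying \hrel{r1}--\hrel{r7}, Propositions~\ref{BP-prop1/thm}--\ref{BP-prop22/thm} apply in $\AlgK$; in particular one can \emph{define} a pairing in $\AlgK$ by the expansion \hrel{f1}. The paper then checks that the remaining Habiro axioms \hrel{h7-7'}, \hrel{h8}, \hrel{h9} (and \hrel[n']{\bn}) hold in $\AlgK$, pointing out that the diagrammatic proofs in Figures~\ref{proof-d2-app/fig}, \ref{proof-h10/fig}, \ref{proof-h11/fig}, \ref{proof-nbar-app/fig} use only axioms available in $\AlgK$. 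This gives the functor $\AlgH\to\AlgK$ at once. For the inverse $\AlgK\to\AlgH$, the integrals are sent to Habiro's integrals of Proposition~\ref{H/thm}, and the only nontrivial relations to verify are the integral axioms, already handled there. No appeal to \hrel{r8} or \hrel{r9} in $\AlgK$ is ever needed.

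Your flagged ``secondary difficulty'' is therefore the crux of the comparison. To run your functor $G:\Algt\to\AlgK$ you must establish \hrel{d12} and \hrel{d13} (equivalently \hrel{r8}, \hrel{r9}) inside $\AlgK$, and you are right to worry that the proof of \hrel{d13} in $\AlgH$ from Lemma~\ref{Hadj3/thm} is not automatically available: that argument uses the pairing and \hrel{h9}. The way to close this gap is precisely to define the pairing in $\AlgK$ via \hrel{f1} and prove \hrel{h7-7'}, \hrel{h9} there---but that is exactly the paper's key step, after which the direct functor $\AlgH\to\AlgK$ is already in hand and the detour through $\Algt$ becomes redundant. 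So your plan can be made to work, but completing it forces you through the paper's argument anyway; the direct $\AlgH\leftrightarrow\AlgK$ construction is both shorter and avoids having to verify the BP axioms \hrel{r8}, \hrel{r9} in $\AlgK$ altogether.
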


\begin{proof} 
According to Proposition~\ref{symmetryK/thm}, we can define evaluation and coevaluation morphisms by relations \hrel{e1-2} in Table~\ref{table-BPHopf-prop1/fig} and a pairing by relation \hrel{f1} in Table \ref{table-BPHopf3-prop/fig}. Then, it is easy to see that relations~\hrel{h7} and \hrel{h8}  in Table~\ref{table-Habiro/fig} and \hrel[n']{\bn} in Table~\ref{table-BPHopf3-prop/fig} hold in $\AlgK$ as well. Indeed, the proofs of \hrel{h7}, \hrel{h8} and \hrel[n']{\bn}, which are presented in Figures~\ref{proof-d2-app/fig}, \ref{proof-h10/fig} and \ref{proof-nbar-app/fig}, respectively, only use axioms and relations that are satisfied in $\AlgK$. Therefore, there exists a well-defined braided monoidal functor from $\AlgH$ to $\AlgK$ sending the elementary morphisms of $\AlgH$ to the corresponding morphisms of $\AlgK$.

The inverse functor from $\AlgK $ to $\AlgH$ is defined by sending the integral form and element in $\AlgK$ to the ones shown in Table~\ref{table-Habiro-int/fig} in $\AlgH$. Then, the only non-trivial relations to be checked are the integral relations, which are satisfied by Proposition~\ref{H/thm}.
\end{proof}

Notice that, in his original definition \cite{Ke01}, Kerler used the non-degeneracy of the copairing instead of the integral axioms, but he also showed that these axioms are interchangeable.

\FloatBarrier
\section{Topological categories}
\label{topology/sec}

\subsection{The category \texorpdfstring{$\KT$}{KT} of Kirby tangles}
\label{KT/sec}

Our main object of interest will be the category of oriented \dmnsnl{4} relative \hndlbds{2} (see Subsection~\ref{4HB/sec}) modulo \qvlnc{2}, which is an equivalence relation generated by slides and cancellations of $1$-handles and $2$-handles. Following \cite{Ki89,GS99,BP11}, morphisms in this category will be described in terms of a particular class of tangles, called admissible Kirby tangles (compare with Definition~\ref{kirby-admtangle/def} below), considered up to \dfrmtns{2}, which implement the above handle moves. In this section, we will discuss the general notion of Kirby tangle, which will be further restricted in Subsection~\ref{4KT/sec} to the notion of admissible Kirby tangle, in order to represent \dmnsnl{4} \hndlbds{2}. 

We start by fixing the following notation. For any integer $k \geqs 0$, we set
\[
E_k := \{ e_{k,1}, e_{k,2}, \dots, e_{k,k} \} \subset [0,1]^2,
\]
with the $k$ points $e_{k,i}$ uniformly distributed along 
$\left] 0,1 \right[ \times \{ 1/2 \}$. 
In particular, $E_0 = \emptyset$.

\begin{definition}
\label{kirby-tangle/def}
Given two integers $k,\ell \geqs 0$ such that $k + \ell$ is even, a \textsl{Kirby tangle} from $E_k$ to $E_\ell$ consists of the following data:
\begin{itemize}
\item[\(a)]
a collection of $m \geqs 0$ dotted unknots $U_1, U_2, \dots, U_m$, together with disjoint flat spanning disks $D_1, D_2, \dots, D_m$ embedded into $\left] 0,1 \right[^3$;
\item[\(b)]
an undotted tangle properly and smoothly embedded into $[0,1]^3$, which is transverse to the spanning disks, and which consists of a link $L = L_1 \cup L_2 \cup \dots \cup L_n$ formed by $n \geqs 0$ closed components, and of $(k+\ell)/2$ arcs whose endpoints belong to $(E_k \times \{0\}) \cup (E_\ell \times \{1\})$, all endowed with the blackboard framing with respect to the projection $[0,1]^3 \to [0,1]^2$ that forgets the second coordinate.
\end{itemize}
\end{definition}

\begin{definition}
\label{kt-equivalence/def}
Two Kirby tangles are said to be \textsl{\qvlnt{2}} if they are related by a finite sequence of the following operations, called \textsl{\dfrmtns{2}}:
\begin{itemize}
\item[\(a)]
performing an ambient isotopy of the Kirby tangle in $[0,1]^3$ that fixes the boundary\footnote{Notice that the isotopy can move both undotted and dotted components together with their spanning disks while preserving all their intersections.};
\item[\(b)]
pushing an arc of any undotted (possibly open) component $C$ through the disk $D$ spanned by any dotted unknot $U$ in such a way that two opposite transverse intersection points between $C$ and $D$ appear/disappear;
\item[\(c)]
adding/deleting a dotted unknot $U$ and an undotted closed component $C$ such that the disk $D$ spanned by $U$ is pierced only once by $C$ and by no other undotted component;
\item[\(d)]
sliding any (possibly open) undotted component $C$ over any different closed one $C'$, that is, replacing $C$ by a (blackboard parallel) band connected sum of itself with a parallel copy of $C'$.
\end{itemize}
\end{definition}

Next, \qvlnc{2} classes of Kirby tangles can be organized as the morphisms of a monoidal category, as specified by the following definition.

\begin{definition}
\label{KT/def}
We denote by $\KT$ the monoidal category whose objects are the sets $E_k$ for $k \geqs 0$, and whose morphisms from $E_k$ to $E_\ell$ are \qvlnc{2} classes of Kirby tangles from $E_k$ to $E_\ell$. 

The composition $T' \circ T$ of two morphisms $T : E_k \to E_\ell$ and $T' : E_{k'} \to E_{\ell'}$ with $\ell = k'$ is given by vertical juxtaposition, with $T'$ on top of $T$, and by rescaling the third coordinate of a factor $1/2$.

The tensor product, denoted $\sqcup$, is given by horizontal juxtaposition, followed by a suitable reparameterization of the first coordinate, in such a way that
\[
 E_k \sqcup E_{k'} = E_{k + k'}
\] 
on the level of objects. For the tensor product of two morphisms $T : E_k \to E_\ell$ and $T' : E_{k'} \to E_{\ell'}$, the reparameterization of the first coordinate depends on the third one, in order to simultaneously realize the above equality at both the source and the target level, and to get in this way a Kirby tangle from $E_{k + k'}$ to $E_{\ell + \ell'}$ representing $T \sqcup T'$.

For each $k \geqs 0$, the identity $ \id_{E_k}$ is represented by the product $E_k \times [0,1]$, interpreted as a Kirby tangle consisting of $k$ undotted arcs. In particular, the empty Kirby tangle represents $\idone$, since $\one = E_0 = \emptyset$.
\end{definition}

\begin{table}[b]
 \includegraphics{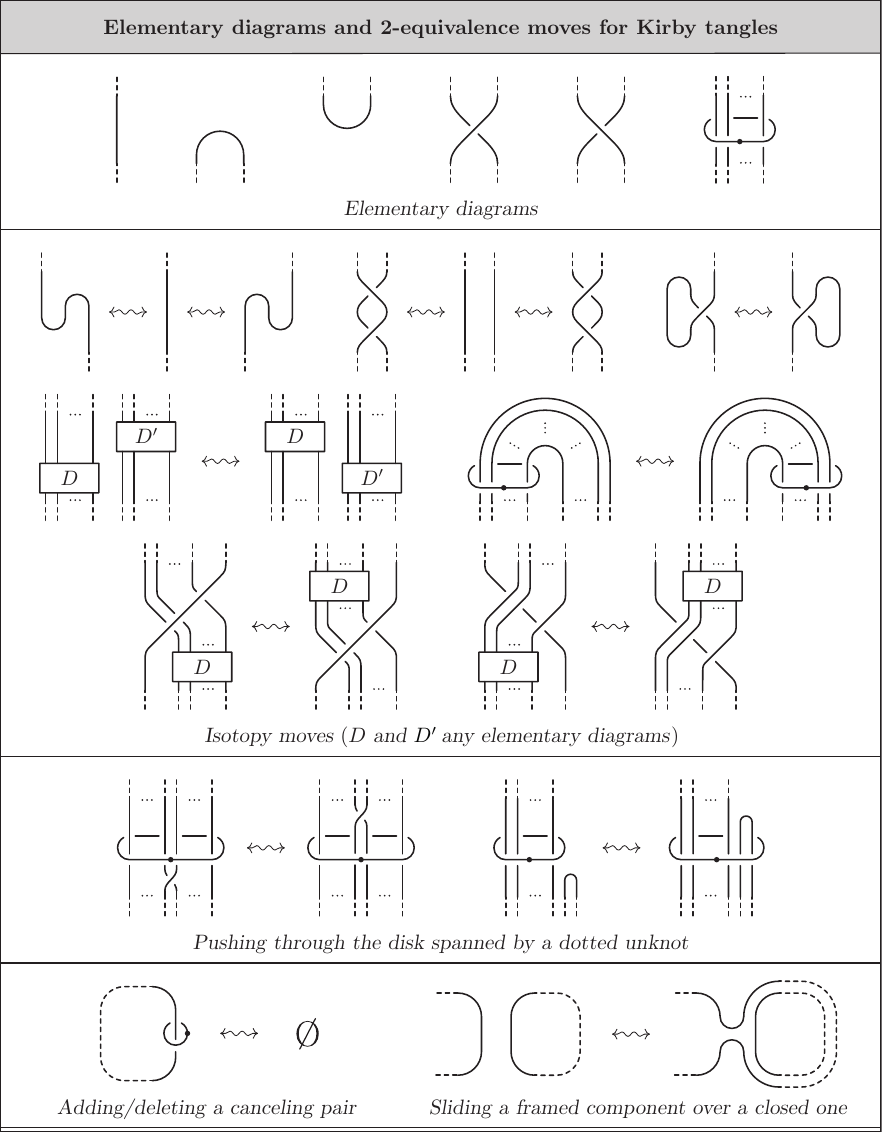}
 \caption{}
 \label{table-Ktangles/fig}
\end{table}

Kirby tangles live in 
$\left] 0,1 \right[^2 \times [0,1]$ 
and will be always represented through their planar diagrams by the projection to the square $\left] 0,1 \right[ \times [0,1]$ that forgets the second coordinate, in such a way that the factor 
$\left] 0,1 \right[^2$ 
projects to $\left] 0,1 \right[$. As usual, we require that the restriction of the projection to the tangle, including both dotted and undotted components, is regular, and that it is injective except for a finite number of transverse double points, which give rise to crossings. We will use the same letter to denote both a Kirby tangle and its plane projection.

In the following, we will need to consider particular planar diagrams of Kirby tangles whose projection satisfies an extra regularity property, as specified by the next definition.

\begin{definition}
\label{strictly-regular/def}
Given a Kirby tangle $T$ as in Definition~\ref{kirby-tangle/def}, we say a planar diagram of $T$ is \textsl{strictly regular} if the disks $D_1, \dots, D_m$ spanned by the dotted unknots project bijectively onto disjoint planar disks, and if the projection of the undotted tangle intersects each of such disks as presented on the top right figure in Table~\ref{table-Ktangles/fig}. 
\end{definition}

All the planar diagrams we have drawn until now are strictly regular, but using strictly regular diagrams to represent admissible Kirby tangles sometimes makes pictures quite heavy. In the following, when this will not cause confusion, we will often draw planar diagrams that are not strictly regular. However, we will always keep the condition that the disks $D_1, \dots, D_m$ project bijectively onto disjoint planar disks.

The next proposition provides a presentation of the monoidal category $\KT$ in terms of the generators and relations represented in Table~\ref{table-Ktangles/fig}. Here, the isotopy moves correspond to those ambient isotopies of Kirby tangles in $[0,1]^3$ that preserve the intersections between the undotted components and the disks spanned by the dotted unknots in the standard form shown as the rightmost elementary diagram, while the pushing-through moves are needed to relax this last condition. On the other hand, the diagram operations on the bottom correspond to operations \(c) and \(d) in Definition~\ref{kt-equivalence/def}.

\begin{proposition}
\label{K-pres/thm} 
Up to ambient isotopy in $[0,1]^3$, any Kirby tangle $T \in \KT$ can be expressed as a composition of tensor products of the elementary diagrams in Table~\ref{table-Ktangles/fig} that yields a strictly regular planar diagram of $T$. Moreover, any two strictly regular planar diagrams expressed in this way represent\break \qvlnt{2} Kirby tangles if and only if, up to planar isotopy preserving the expression as composition of tensor products, they are related by a finite sequence of the isotopy moves and the diagram operations in the same Table~\ref{table-Ktangles/fig}. 
\end{proposition}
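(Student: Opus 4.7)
The plan is to prove the two parts of the statement separately, treating existence of a decomposition first and then the completeness of the list of moves. Throughout, we exploit a standard Morse-theoretic argument on the height function $(x_1,x_2,x_3)\mapsto x_3$ in $[0,1]^3$, adapted to handle the extra structure provided by the dotted unknots and their spanning disks.

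For existence, I would start from an arbitrary Kirby tangle $T$ and apply a small ambient isotopy to arrange that the projection to $]0,1[\times[0,1]$ is regular. First I would isotope each dotted unknot so that its spanning disk is horizontal and the undotted strands pierce it transversely in the standard configuration depicted in the rightmost piece of Table~\ref{table-Ktangles/fig}; this can always be achieved, up to ambient isotopy preserving intersections with the disk, because the disk is flat. Then I would perturb the height function so that the critical points of $T$ (maxima, minima, crossings of undotted components, and the standard pieces around each dotted disk) occur at distinct levels. Reading the tangle level by level gives an expression as a composition of tensor products of the elementary diagrams. Strict regularity then follows from the normal form chosen around each dotted disk.

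For the characterization of $2$-equivalence, the forward direction (\emph{if}) is the easy half: each isotopy move and each diagram operation in Table~\ref{table-Ktangles/fig} is, by inspection, an instance of one of the four operations (a)--(d) in Definition~\ref{kt-equivalence/def}. For the converse, I would break the argument into two stages. First, I would show that two strictly regular planar diagrams representing the \emph{same} Kirby tangle (that is, related only by operation (a)) are connected by a finite sequence of the isotopy moves and the pushing-through moves in the top of Table~\ref{table-Ktangles/fig}. This is a Reidemeister-type result in the spirit of the classical tangle calculus, enlarged to cover the new generators for the dotted components: one interpolates between the two diagrams by a generic one-parameter family of projections, and at each critical moment one identifies the singularity with one of the listed moves (ordinary Reidemeister moves and their framed analogues for undotted arcs and crossings, and the pushing-through moves whenever an undotted strand crosses a dotted spanning disk). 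Second, I would observe that operations (b), (c), (d) in Definition~\ref{kt-equivalence/def} correspond respectively to the pushing-through moves, the cancellation of a $1/2$-handle pair, and the $2$-handle slide, all of which appear in Table~\ref{table-Ktangles/fig}.

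The main obstacle I expect is the Reidemeister-type theorem in the first stage of the converse: one must verify that every generic transition between strictly regular projections of the same tangle is accounted for by a move in the listed family. The difficulty is bookkeeping rather than conceptual, since the nontrivial novelty compared to the usual framed tangle case is the interaction of undotted strands with the spanning disks of dotted unknots. One must show that every elementary degeneration of the projection near a spanning disk (a crossing between the dotted unknot and an undotted strand moving across the disk, or a cusp appearing on an undotted arc piercing the disk) can be resolved using the pushing-through moves together with standard isotopy. Once this case analysis is complete, combining the two stages yields the ``only if'' direction and hence the desired presentation.
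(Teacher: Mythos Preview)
Your outline is correct in spirit and would work, but the paper takes a more economical route for the ``only if'' direction. Rather than carrying out the Reidemeister-type case analysis for a generic one-parameter family of projections (your Stage~1), the paper observes that, once the pushing-through moves are available, one may assume the spanning disks move rigidly and their intersections with undotted strands are preserved throughout the isotopy; a further reduction using the move that passes a dotted component across a multiple cap lets one assume the disk orientation induced by the projection is preserved. At that point the standard dotted-disk piece can be treated as a \emph{coupon} with equal numbers of incoming and outgoing edges, and the paper simply invokes Turaev's calculus for ribbon graphs with coupons (\cite[Chapter~I, Lemma~3.4]{Tu94}), checking that the relations in that lemma are generated by the moves in Table~\ref{table-Ktangles/fig}.

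The trade-off: your direct approach is self-contained but requires you to enumerate and verify every degeneration near a spanning disk (including the cup-from-above case and the orientation-reversing case, which the paper singles out explicitly), whereas the paper outsources the bulk of the isotopy analysis to a known result and only has to justify the two reductions that make the coupon interpretation legitimate.
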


\begin{proof}
The first part of the statement concerning generators immediately follows from a standard trans\-versality argument. On the other hand, all the moves and operations in Table~\ref{table-Ktangles/fig} clearly represent \dfrmtns{2} of Kirby tangles, so we only need to prove that they are sufficient to realize any \qvlnc{2}. Since operations \(c) and \(d) in Definition~\ref{kt-equivalence/def} correspond to the last two moves in Table~\ref{table-Ktangles/fig}, we are left to prove that the remaining moves in that table can generate any isotopy of Kirby tangles. 

Modulo the pushing-through moves in Table~\ref{table-Ktangles/fig}, we can assume that, during the isotopy, the disks spanned by the dotted unknots are rigidly moved in space, and that the intersections between the disks spanned by the dotted unknots and the undotted components are preserved, as in point \(a) of Definition~\ref{kt-equivalence/def}. Actually, this would require also the move where a cup is pushed through the disk spanned by a dotted unknot from above, but up to the isotopy moves this is equivalent to the second pushing-through move in Table~\ref{table-Ktangles/fig}, where a cap is pushed through that disk from below. 

Furthermore, modulo the isotopy move where a dotted component passes from one side of a multiple cap to the other, we can also assume that, at the end of the isotopy, each disk spanned by a dotted unknot is sent into its image in such a way that the orientation induced by the plane projection of the diagram is preserved.

These assumptions allow us to consider the last elementary diagram in Table~\ref{table-Ktangles/fig} as a coupon with the same number of incoming and outgoing edges, and hence to apply \cite[Chapter~I, Lemma~3.4]{Tu94}. Then, it is enough to observe that the relations in that lemma can be generated by the moves in Table~\ref{table-Ktangles/fig}.
\end{proof}

We conclude this subsection with a simple proposition, which reduces the slide operation in Table~\ref{table-Ktangles/fig} to a special case. This will be useful to prove our main theorem.

\begin{proposition}
\label{2-equiv-special/thm}
In a Kirby tangle, any slide of a (possibly open) undotted component over a closed one can be realized, up to isotopy and addition/deletion of canceling pairs, by a sequence of slides over undotted components that form at most one self-crossing, and hence are unknots with framing $0$ or $\pm 1$.
\end{proposition}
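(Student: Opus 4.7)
The plan is to argue by induction on the number $n$ of self-crossings of the closed component $C'$ in a fixed planar diagram of the given Kirby tangle. The base case $n \leqs 1$ is immediate: in that case $C'$ is already an unknot with framing $0$ (if $n = 0$) or $\pm 1$ (if $n = 1$), so the single slide of $C$ over $C'$ is already of the required form.

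For the inductive step $n \geqs 2$, pick a self-crossing $c$ of $C'$. In a region of the diagram far from the rest of the tangle, add a canceling pair $(U,D)$ consisting of a $0$-framed undotted unknot $U$ linked once with a dotted unknot $D$; by the last diagram operation in Table~\ref{table-Ktangles/fig}, this addition is a $2$-deformation. Using planar isotopies, together with a sequence of handle slides of $U$ along auxiliary bands, reposition $U$ so that it runs parallel to a short arc of $C'$ containing one of the two strands meeting at $c$, while still piercing the spanning disk of $D$ exactly once. In this way $U$ is replaced by a new undotted closed component $\widetilde{U}$ that absorbs the portion of $C'$ near $c$, and the canceling condition between $\widetilde{U}$ and $D$ is preserved throughout.

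The key geometric observation is that, since handle slides correspond to band sums of framed curves, the single slide of $C$ over the original $C'$ can be rewritten, up to planar isotopy and the isotopy moves of Table~\ref{table-Ktangles/fig}, as the concatenation of a slide of $C$ over $\widetilde{U}$ with a slide of $C$ over the residual piece of $C'$, that is, the closed component obtained from $C'$ by splitting off $\widetilde{U}$ along the auxiliary bands. Both $\widetilde{U}$ and this residual piece admit planar diagrams with strictly fewer than $n$ self-crossings, so the inductive hypothesis applied to each of them expresses the two new slides as sequences of slides over unknots with framing $0$ or $\pm 1$. Once all sliding is carried out, the canceling pair obtained from $(U,D)$ is removed by the inverse $2$-deformation, yielding the desired decomposition of the original slide.

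The main technical obstacle is the precise setup at the chosen self-crossing $c$: one has to select the auxiliary bands so that (i) the slides of $U$ over $C'$ used to produce $\widetilde{U}$ are indeed $2$-deformations that preserve the cancelling condition with $D$, (ii) the decomposition of the original slide into the two new slides holds at the level of band sums of framed curves, and (iii) both $\widetilde{U}$ and the residual component have strictly fewer self-crossings than $C'$. This will require a careful local analysis at $c$, using the pushing-through moves and isotopy moves of Table~\ref{table-Ktangles/fig}, together with a bookkeeping of the blackboard framings introduced by the band sums.
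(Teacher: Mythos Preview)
Your overall strategy---induction on the number of self-crossings, breaking the component apart via added canceling pairs---matches the paper's, but the core mechanism in your inductive step is not coherent as written. You add a canceling pair $(U,D)$ and then ``reposition $U$'' by isotopy and unspecified handle slides so that it ``absorbs the portion of $C'$ near $c$''. But handle slides do not transfer arcs from one component to another: sliding $U$ over $C'$ replaces $U$ by a band sum of $U$ with a \emph{parallel copy} of $C'$, producing a $\widetilde{U}$ with at least as many self-crossings as $C'$, while $C'$ itself is unchanged. So after your operations there is no ``residual piece of $C'$'' with fewer self-crossings, and the decomposition of the original slide into two smaller slides has no meaning. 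Your final paragraph lists (i)--(iii) as technicalities to be checked, but the problem is earlier: there is no candidate construction on which to perform those checks.

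The paper fixes this with an explicit local modification at the chosen self-crossing (Figure~\ref{cutting-proof/fig}): it inserts \emph{two} canceling $1/2$-pairs and replaces $C$ by \emph{three} closed components $C'$, $C''$, $C'''$, arranged so that the original $C$ is recovered by sliding $C'$ successively over $C''$ and $C'''$ and then deleting the two pairs. Each of $C'$, $C''$, $C'''$ has strictly fewer self-crossings than $C$, so induction applies to slides over each. The point is that the splitting of $C$ is achieved not by moving an auxiliary unknot around, but by directly drawing the three pieces and checking that suitable slides reassemble them into $C$; this is the concrete local picture your argument is missing.
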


\begin{figure}[b]
 \includegraphics{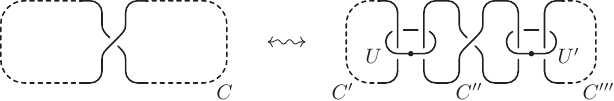}
 \caption{Proof of Proposition~\ref{2-equiv-special/thm}.}
 \label{cutting-proof/fig}
\end{figure}

\begin{proof}
Consider a slide over a closed undotted component $C$, and proceed by induction on the number $c \geqs 0$ of self-crossings of $C$. If $c \leqs 1$, there is nothing to prove. So assume $c > 1$, and look at any self-crossing of $C$. Here, we modify the diagram as indicated in Figure~\ref{cutting-proof/fig}. The original diagram on the left-hand side can be obtained from the one on the right-hand side by sliding $C'$, first over $C''$ and then over $C'''$, and by deleting in sequence two canceling $1/2$-pairs. Since both $C''$ and $C'''$ have less than $c$ self-crossings, we can realize the slides over them by using the inductive hypothesis. To complete the proof, it is enough to observe that, up to the modification in the figure, a slide over $C$ is the same as a sequence of slides over $C'$, $C''$, and $C'''$, and hence we can use the inductive hypothesis once again, since also $C'$ has less than $c$ self-crossing.
\end{proof}

\subsection{The category \texorpdfstring{$\KTf$}{KT} of admissible Kirby tangles}
\label{4KT/sec}

We introduce admissible Kirby tangles, which, as it will be shown in Subsection~\ref{4HB/sec}, are exactly the Kirby tangles that actually represent relative 4-dimensional \hndlbds{2}.

\begin{definition}
\label{kirby-admtangle/def}
A Kirby tangle from $E_k$ to $E_\ell$ as in Definition~\ref{kirby-tangle/def} is said to be \textsl{admissible} if the following properties hold:
\begin{itemize}
\item[\(a)] both $k$ and $\ell$ are even, say $k = 2s$ and $\ell = 2t$;
\item[\(b)] the open components of the undotted tangle consist of $s$ arcs $A_{1,0}, A_{2,0}, \dots, A_{s,0}$ such that the endpoints of $A_{i,0}$ are $(e_{2s,2i-1},0)$ and $(e_{2s,2i},0)$ in $E_{2s} \times \{0\}$ for each $i=1, 2, \dots, s$, and $t$ arcs $A_{1,1}, A_{2,1}, \dots, A_{t,1}$ such that the endpoints of $A_{j,1}$ are $(e_{2t,2j-1},1)$ and $(e_{2t,2j},1)$ in $E_{2t} \times \{1\}$ for each $j=1, 2, \dots, t$.
\end{itemize}
In particular, in an admissible Kirby tangle, no undotted arc connects a point at level 0 to one at level 1 (compare with \cite{MP92, KL01}). Two admissible Kirby tangles are 2-equivalent if they are 2-equivalent as Kirby tangles.
\end{definition}

\begin{figure}[htb]
 \includegraphics{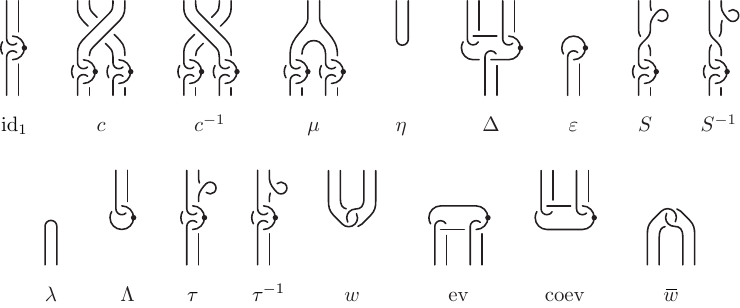}
 \caption{Some relevant morphisms in $\KTf$.}
 \label{KT-morph/fig}
\end{figure}

\begin{proposition}
\label{4KT/thm}
\qvlnc{2} classes of admissible Kirby tangles form a category $\KTf$ whose objects are the sets $E_{2s}$ with $s \geqs 0$, and whose morphisms from $E_{2s}$ to $E_{2t}$ are \qvlnc{2} classes of admissible Kirby tangles. The composition in $\KTf$ is induced by the one in $\KT$ (see Definition~\ref{KT/def}), while the identity morphism $\id_s$ of $E_{2s}$ is defined inductively as follows: $\id_0$ is the empty diagram, $\id_1$ is the first diagram in Figure~\ref{KT-morph/fig}, and $\id_s = \id_1 \sqcup \id_{s-1}$ for any $s > 1$. 

$\KTf$ has a braided monoidal structure whose tensor product is induced by the one of $\KT$ (see Definition~\ref{KT/def}), and whose tensor unit is $\one = E_0$; the elementary braidings $c_{1,1} = c : E_4\to E_4$ and $c_{1,1}^{-1} = c^{-1} : E_4\to E_4$ are presented in the second and third diagrams in Figure~\ref{KT-morph/fig}, while $c_{s,s'} : E_{2(s+s')} \to E_{2(s+s')}$ for $s+s'>2$ is obtained inductively using the relations in Definition~\ref{braided-cat/def} (see Table~\ref{table-braided/fig}).
\end{proposition}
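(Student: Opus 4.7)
The plan is to verify each ingredient of the claim in turn, noting that much of the work is simply to transport structure from $\KT$ (Definition~\ref{KT/def}) along the inclusion of admissible tangles and check compatibility with the admissibility condition of Definition~\ref{kirby-admtangle/def}. First, I would observe that admissibility is a condition purely on the endpoints of the undotted open components at the boundary levels $[0,1]^2\times\{0\}$ and $[0,1]^2\times\{1\}$, so it is preserved under ambient isotopy fixing the boundary and under operations \(b)--\(d) in Definition~\ref{kt-equivalence/def} (none of which alter the boundary). Hence $2$-equivalence preserves admissibility, and the equivalence classes are well defined.

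Second, I would check closure of admissible tangles under the structural operations. For composition $T'\circ T$ in $\KT$, when the target of $T$ agrees with the source of $T'$ (both equal to $E_{2t}$), the $t$ arcs of $T$ ending at $E_{2t}\times\{1\}$ and the $t$ arcs of $T'$ starting at $E_{2t}\times\{0\}$ match up pointwise after the vertical juxtaposition, producing $s$ arcs from $E_{2s}\times\{0\}$ and $t'$ arcs from $E_{2t'}\times\{1\}$ with the required endpoint pattern. For the tensor product $T \sqcup T'$, horizontal juxtaposition preserves the pairing of consecutive endpoints because the reparametrization of the first coordinate sends $E_{2s}\sqcup E_{2s'}$ to $E_{2(s+s')}$ and pairs $(e_{2(s+s'),2i-1},e_{2(s+s'),2i})$ still correspond to pairs of endpoints of the same arc. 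The identity $\id_s$, being a disjoint union of $s$ vertical caps-on-top-of-cups connecting consecutive pairs of points at the bottom with consecutive pairs of points at the top, is admissible by inspection, and $\id_s\circ T = T = T\circ\id_s$ follows from the corresponding identity in $\KT$ together with a small isotopy that absorbs the cups and caps against the matching arcs of $T$.

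Third, for the braided monoidal structure I would first define $c_{s,s'}$ inductively, treating each object $E_{2s}$ as the tensor power $E_2^{\sqcup s}$ of the generating object $E_2$, as the proposition prescribes. The heart of the argument is to check naturality of $c$ and the two hexagon identities (see Definition~\ref{braided-cat/def} and Table~\ref{table-braided/fig}) at the level of admissible Kirby tangles modulo $2$-equivalence. By Proposition~\ref{K-pres/thm}, every admissible $T$ admits a strictly regular planar diagram built from the generators in Table~\ref{table-Ktangles/fig}, so naturality reduces to checking one commutation relation per generator; each such relation follows from the planar isotopy moves of Table~\ref{table-Ktangles/fig}, combined, when generators cross the strands of $c_{s,s'}$, with second Reidemeister moves for the undotted strands and the pushing-through moves for dotted components. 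The hexagon identities at the generating pair $(s,s')=(1,1)$ reduce to a direct check on Figure~\ref{KT-morph/fig}; the general case then follows by the inductive definition.

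The main obstacle will be naturality of the braiding with respect to the generator representing a $1$-handle (the last elementary diagram in Table~\ref{table-Ktangles/fig}), since passing the dotted unknot and its spanning disk through a block of crossings requires the pushing-through moves and a careful bookkeeping of the transversal intersections between the disk $D_i$ and the undotted strands; everything else is a routine verification. Once naturality and the hexagons are established for the generating braiding $c=c_{1,1}$, the extension to $c_{s,s'}$ via the hexagon formulas of Definition~\ref{braided-cat/def} is automatic and yields the desired braided monoidal structure on $\KTf$.
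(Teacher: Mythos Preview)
There is a genuine gap in your treatment of the identity morphisms, and it is precisely the point the paper isolates as the only nontrivial one. You write that $\id_s$ is ``a disjoint union of $s$ vertical caps-on-top-of-cups'' and that $\id_s\circ T = T = T\circ\id_s$ ``follows from the corresponding identity in $\KT$ together with a small isotopy''. Both claims are incorrect. As Remark~\ref{KTbis/rmk} makes explicit, the identity of $E_{2s}$ in $\KT$ (namely $2s$ parallel vertical strands) is \emph{not} an admissible Kirby tangle for $s>0$, since admissibility forbids undotted arcs running from level $0$ to level $1$. Consequently $\KTf$ is not a subcategory of $\KT$, and nothing about $\id_s$ can be inherited from the identity in $\KT$. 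The morphism $\id_1$ of Figure~\ref{KT-morph/fig} instead consists of a dotted unknot together with two open undotted arcs, one with both endpoints at the bottom and one with both endpoints at the top, each passing through the spanning disk.

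Showing that this $\id_s$ really acts as an identity is not an isotopy: when one forms $T\circ\id_s$, the upper open arcs of $\id_s$ close up against the bottom arcs of $T$, and one must then \emph{slide} the lower open arcs of $\id_s$ over these newly closed components and \emph{cancel} the resulting $1/2$-handle pairs. This is exactly the argument the paper gives, and it genuinely uses operations \(c) and \(d) of Definition~\ref{kt-equivalence/def}, not merely \(a). Your verification of closure under composition and tensor product, and your outline for the braiding axioms, are reasonable (and the paper treats them as routine), but you have misidentified the main obstacle: it is the identity axiom, not naturality of the braiding.
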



\begin{proof}
We only need to show that $\id_s$ are indeed identity morphisms. In other words, for any admissible Kirby tangle $T$ from $E_{2s}$ to $E_{2t}$, both $T \circ \id_s$ and $\id_t \circ T $ are 2-equivalent to $T$. To see this, it is enough to observe that in $T \circ \id_s$ the upper undotted components of $\id_s$ get closed and we can slide the lower open components over the closed ones and then cancel them with the dotted components; a symmetric argument works for the top part of ${\id_t} \circ T$. 
\end{proof}

\begin{remark}\label{KTbis/rmk}
Since 2-equivalence preserves admissibility, morphisms from $E_{2s}$ to $E_{2t}$ in $\KTf$ form a subset of the set of morphisms with same source and target in $\KT$. Therefore, we have a set-theoretic inclusion of $\KTf$ in $\KT$ at the level of both objects and morphisms, and this inclusion respects compositions and products. However, the identity of $E_{2s}$ in $\KT$ is not represented by an admissible Kirby tangle for $s > 0$, and hence it is not a morphism of $\KTf$, so $\KTf$ is not a subcategory of $\KT$. Indeed, $\KTf$ is rather the full monoidal subcategory of the Karoubi envelope of $\KT$ generated by the idempotent morphism $\id_1$.
\end{remark}


\subsection{The category \texorpdfstring{$\RHB$}{4HB} of relative 4-dimensional 2-handlebodies}
\label{4HB/sec}

We review now the notion of an oriented \dmnsnl{4} relative \hndlbd{2} built over a connected \mnfld{3} with (possibly empty) boundary. For the basic definitions about handle decompositions we refer to \cite{GS99}, while a detailed discussion of the specific topic mentioned above can be found in \cite[Subsections~2.1 \& 2.2]{BP11}, where the notion is actually considered in the more general context of multiple $0$-handles.

\begin{definition}
\label{handlebody/def}
Given a compact connected oriented \mnfld{3} $M$ with (possibly empty) boundary, an oriented \dmnsnl{4} {\sl relative \hndlbd{2}} built on $M$ is an oriented smooth \mnfld{4} with a given handle decomposition
\[
W = W_0 \cup_{i=1}^m H^1_i \cup_{j=1}^n H^2_j,
\]
where $W_0 = M \times [0,1] \subset W$ is a smooth collar of $M \times \{0\}$ with product orientation, $W_1 = W_0 \cup_{i=1}^m H^1_i \subset W$ is a smooth submanifold obtained by attaching the 1-handles $H^1_i = B^1 \times B^3$ to the interior of the \textit{front boundary} $\partial_+ W_0 = M \times \{1\}$, and finally $W = W_1 \cup_{j=1}^n H^2_j$ is obtained by attaching the 2-handles $H^2_j = B^2 \times B^2$ to the interior of the \textit{front boundary} $\partial_+ W_1 = \partial W_1 \smallsetminus \partial \big( M \times \left[ 0,1 \right[ \big)$.

By identifying $M$ with $M \times \{ 0 \} \subset W$, we think of it as a smooth submanifold of $\partial W$, and we call the family of handles forming $W$ starting from $M \times [0,1]$ a \textsl{relative \hndlbd{2} decomposition} of the pair $(W,M)$.
\end{definition}

We remark that this definition reduces to the standard one when $M$ is a closed $3$-manifold (compare with \cite[Definition~4.2.1]{GS99}). In particular, for $M \cong S^3$, we can fill $S^3 \cong S^3 \times \{ 0 \}$ with $B^4$ and get in this way the notion of an (absolute) connected oriented \dmnsnl{4} \hndlbd{2}, by thinking of $B^4$ as the starting 0-handle.

For a handlebody decomposition of $(W,M)$ as above, the connectedness of $M$ and the orientability of $W$ imply that there is a unique way to attach the 1-handles, up to ambient isotopy of their attaching balls in $\partial_+ W_0$, which does not change the diffeomophism type of the pair $(W,M)$. On the other hand, the 2-handles can be specified by a framed link in $\partial_+ W_1$, whose $j$th component uniquely determines up to isotopy an embedding $S^1 \times B^2 \to \partial_+ W_1$ giving the attaching map of a single 2-handle $H^2_j$, once it is identified with $B^2 \times B^2$. In this case too, an ambient isotopy in $\partial_+ W_1$ of the framed link representing the 2-handles does not affect the diffeomorphism type of $(W,M)$.

\begin{definition}
\label{2-equivalent/def}
Two oriented \dmnsnl{4} relative \hndlbds{2} $W$ and $W'$ built on the same compact connected oriented 3-manifold $M$ are said to be \textsl{\qvlnt{2}} if the relative \hndlbd{2} decompositions of $(W,M)$ and $(W',M)$ are related by a \textsl{$2$-deformation}, meaning a finite sequence of the following operations:
\begin{itemize}
\item[\(a)]
isotoping the attaching maps of the handles;
\item[\(b)]
adding/deleting a canceling pair consisting of a 1-handle and a 2-handle;
\item[\(c)]
sliding a 2-handle over another one.
\end{itemize}
\end{definition}

It is worth noticing that also the operation of sliding a 1-handle over another one is admitted, as it can be obtained from \(b) and \(c), see for instance \cite[Figure~2.2.11]{BP11}.

We already observed that the operations of type \(a) preserve the diffeomorphism type of the handlebody, and it is easy to see that the same holds for the those of type \(b) and \(c). Hence, if two oriented \dmnsnl{4} relative \hndlbds{2} are \qvlnt{2}, then they are diffeomorphic.

Viceversa, whether diffeomorphic oriented \dmnsnl{4} relative \hndlbds{2} are always \qvlnt{2} is an open question, which is expected to have negative answer (see \cite[Section~I.6]{Ki89} and \cite[Section~5.1]{GS99}). A list of \dmnsnl{4} \hndlbds{2} which are diffeomorphic but conjecturally not \qvlnt{2} can be found in \cite{Go91}.

On the other hand, it is known that homeomophic oriented \dmnsnl{4} relative \hndlbds{2} are not necessarily diffeomorphic. See \cite[Section~9.1]{Ak16} for examples of such exotic handlebodies.

In the following, we will focus on the special case when $M = M_{s,t} \cong M_s \bcsum M_t$ is the boundary connected sum of two (absolute) connected oriented \dmnsnl{3} \hndlbds{1}
\[
M_s \cong H^0 \cup_{i=1}^s H^1_i 
\text{ \ and \ } 
M_t \cong H^0 \cup_{i=1}^t H^1_i,
\]
with $s,t \geqs 0$. We assume that $M_s$ and $M_t$ are canonically realized inside $\R^3$, by identifying $H^0$ with $[0,1]^3$ and attaching each 1-handle $H^1_i$ to $\left] 0,1 \right[^2 \times \{1\}$. So, we can set 
\[
M_{s,t} = (M_s \times \{0\}) \cup
([0,1]^2 \times \{0\} \times [0,1]) \cup (M_t \times \{1\}) \subset \R^4,
\] 
as depicted on the left-hand side of Figure~\ref{cobordism01/fig}. Then, we consider a canonical identification
\[
M_{s,t} \times [0,1] \cong (M_s \times [0,0.1]) \cup ([0,1]^3 \times [0.1,0.9]) \cup (M_t \times [0.9,1]) \subset \R^4
\]
such that $M_{s,t}$ corresponds to $M_{s,t} \times \{0\}$ (see the right-hand side of Figure~\ref{cobordism01/fig}). Notice that, since $M_{s,t}$ is a subset of $\R^4$, the cylinder $M_{s,t} \times [0,1]$ is defined as a subset of $\R^5$. Under the canonical identification represented on the right-hand side of Figure~\ref{cobordism01/fig}, the last coordinate of $M_{s,t} \times [0,1]$ can no longer be interpreted as the height in the picture, but rather as a parametrization of the thickness of the cylinder. In particular, $M_{s,t} \times \{0\}$ corresponds to the union of top, back, and bottom face, while $M_{s,t} \times \{1\}$ corresponds to the intersection between the front face and the strip $\R^3 \times [0.1,0.9]$.

\begin{figure}[htb]
 \centering
 \includegraphics{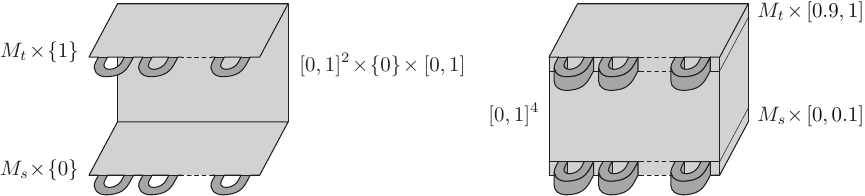}
 \caption{Canonical realization of $M_{s,t}$ and $M_{s,t} \times [0,1]$ in $\R^4$.}
 \label{cobordism01/fig}
\end{figure}

The $2$-equivalence classes of oriented \dmnsnl{4} relative \hndlbds{2} built over the $3$-manifolds $M_{s,t}$ with $s,t \geqs 0$ form a monoidal category $\RHB$.

\begin{definition}
\label{4HB/def}
We denote by $\RHB$ the monoidal category whose objects are connected oriented \dmnsnl{3} \hndlbds{1} $M_s$ for $s \geqs 0$, and whose morphisms from $M_s$ to $M_t$ are $2$-equivalence classes of oriented \dmnsnl{4} relative \hndlbds{2} built on $M_{s,t}$. 

The composition of two morphisms $W = (W,M_{s,t})$ and $W' = (W',M_{s',t'})$ in $\RHB$ with $t = s'$ is obtained by a taking their vertical juxtaposition, with $W'$ on top of $W$, by gluing the two morphisms (identifying canonically the target of the first with the source of the second), and then by rescaling by a factor $1/2$, that is,
\[
W' \circ W \cong (W \cup_{M_t \times \{ 1 \} = M_{s'} \times \{0\}} W', M_{s,t'}),
\]
with $M_{s,t'}$ canonically contained in $M_{s,t} \cup_{M_t \times \{1\} = M_{s'} \times \{0\}} M_{s',t'}$, and with handlebody decomposition consisting of all the handles of $W$ and $W'$ plus the 1-handles deriving from the thickening of $M_t = M_{s'}$.

The tensor product, denoted by $\bcsum$, is given by horizontal juxtaposition, from left to right. For two objects $M_s$ and $M_{s'}$ it corresponds to the boundary connected sum $M_s \bcsum M_{s'}$, which is canonically identified with $M_{s + s'}$, while for two morphisms $W = (W,M_{s,t})$ and $W' = (W',M_{s',t'})$ it corresponds to the boundary connected sum of pairs, that is,
\[
W \bcsum W' \cong (W \bcsum W',M_{s,t} \bcsum M_{s',t'} \cong M_{s + s',t + t'}),
\]
with $M_{s + s',t + t'}$ canonically identified to $M_{s,t} \bcsum M_{s',t'}$, and handlebody decomposition consisting of all the handles of $W$ and $W'$.

For each $s \geqs 0$, the identity $\id_{M_s}$ is represented by the product $M_s \times [0,1]$ with the natural handlebody decomposition. In particular, $\idone = M_0 \times [0,1]$, since $\one = M_0$.
\end{definition}

\begin{remark}\label{4HB-skeleton/rmk}
$\RHB$ is a skeleton of a category whose objects are arbitrary connected oriented \dmnsnl{3} \hndlbds{1} $M$ equipped with an orientation-preserving embedding $f : D^2 \hookrightarrow \partial M$, and whose morphisms are 2-equivalence classes of \dmnsnl{4} \hndlbds{2} $W$ built on $\overline{M} \bcsum M'$, with boundary connected sum performed along the images of $f : D^2 \hookrightarrow \partial M$ and $f' : D^2 \hookrightarrow \partial M'$. It is convenient however to restrict our attention to the standard models for objects we are considering here.
\end{remark}

Finally, we state and sketch the proof of the equivalence of the categories $\RHB$ and $\KTf$. The reader can find all details, illustrated with adequate figures, in \cite[Sections~2.2. \& 2.3]{BP11}.

\begin{proposition}[{\cite[Propositions~2.2.8 \& 2.3.1]{BP11}}]
\label{4KT-equiv/thm}
There is an equivalence  monoidal functor $\cal T:\RHB\to \KTf $ that sends the handlebody $M_{s,t} \times [0,1]$ to the admissible Kirby tangle presented in Figure \ref{cobordism4/fig}.
\end{proposition}

\begin{figure}[htb]
 \centering
 \includegraphics{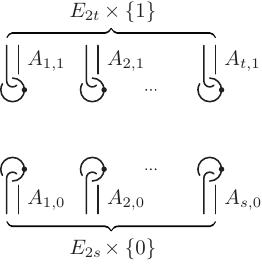}
 \caption{ $\cal T(M_{s,t} \times [0,1])$.}
 \label{cobordism4/fig}
\end{figure}

\begin{proof}
First of all, we observe that $M_{s,t} \times [0,1]$ consists of $B^4=[0,1]^4$ with $s + t$ \dmnsnl{4} 1-handles attached to it, $s$ on the bottom part of the front face $[0,1]^2 \times \{ 1 \} \times [0,1]$ and $t$ on the top part of the same front face (Figure \ref{cobordism01/fig}). Then, for any \dmnsnl{4} \hndlbd{2} 
\[
 W = (W,M_{s,t})= W_0 \cup_{i=1}^m H^1_i \cup_{j=1}^n H^2_j=B^4\cup_{i=1}^{m+s+t} H^1_i \cup_{j=1}^n H^2_j,
\]
the Kirby tangle $\cal T(W)$ is obtained in the following way. Up to isotopy, we can assume that both of the attaching balls of each 1-handle $H^1_i$ are contained in a local chart $A_i \cong \R^3$ of the front face of $B^4$, and we can think of $H^1_i$ as the result of removing from $B^4$ a complementary 2-handle living inside a collar of $A_i$ in $B^4$, whose attaching map into $A_i$ is determined by a trivially framed unknot $U_i \subset A_i$. A dotted unframed version of the unknot $U_i$, together with a spanning disk $D_i \subset A_i$ of it, is taken to represent $H^1_i$ in the so called dot notation. Observe that the unknots and the spanning disks corresponding to the 1-handles of $M_{s,t} \times [0,1]$ can (and should) be chosen in a standard way, so that they project to the spanning disks of the dotted components of the tangle in Figure~\ref{cobordism4/fig}. Then, by considering the standard open arcs $A_{i,j}$ appearing in the same picture, we obtain a morphism from $E_{2s}$ to $E_{2t}$.

Once all the 1-handles are represented in the dot notation, with the disks $D_i$ taken to be pairwise disjoint, the framed link representing the attaching maps of the 2-handles can be drawn on the front face of $B^4$, with each transverse intersection between a framed component and a disk $D_i$ corresponding to a passage of that component through the 1-handle $H^1_i$. 
Moreover,  we can assume that all the undotted components, including the open ones, are endowed with the blackboard framing, since any framing can be reduced to the blackboard one by adding some positive or negative kinks. 

Now, by projecting to $[0,1]^2$, we get a Kirby tangle representation of $W$ as in Figure~\ref{cobordism02/fig}.

\begin{figure}[htb]
 \centering
 \includegraphics{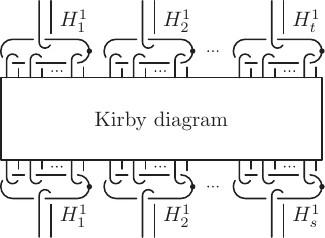}
 \caption{The Kirby tangle of a relative handlebody built on $M_{s,t}$.}
 \label{cobordism02/fig}
\end{figure}


\begin{figure}[htb]
 \centering
 \includegraphics{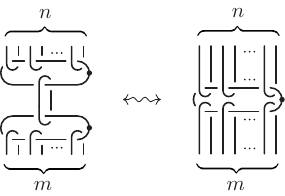}
 \caption{$\cal T:\RHB\to \KTf $ preserves compositions.}
 \label{cobordism05/fig}
\end{figure}

We observe that the map $\cal T$ is well-defined on 2-equivalence classes. First of all, an isotopy of the attaching maps of the handles translates o an isotopy of the corresponding Kirby tangle. Moreover, any operation of type \(b) in Definition~\ref{2-equivalent/def} can be realized  in  $\KTf $ by adding/deleting the dotted and undotted components corresponding to the canceling 1-handle and 2-handle in question, respectively (as in Definition~\ref{kt-equivalence/def} \(c)). On the other hand, any operation of type \(c) in Definition~\ref{2-equivalent/def} can be realized by replacing the undotted component representing the 2-handle to be slided by its band connected sum with a parallel copy of the undotted component representing the 2-handle over which the slide is performed (as in Definition~\ref{kt-equivalence/def} \(d)). 

Since the handlebody decomposition of the composition of $(W,M_{s,t})$ and $(W,M_{s',t'})$, with $s'=t$, consists of all the handles of $W$ and $W'$ plus the 1-handles resulting from the gluing of the thickenings of $M_t$ and $M_{s'}$, the proof that $\cal T$ preserves compositions reduces to verifying the identity in Figure~\ref{cobordism05/fig}. This follows by sliding the lower arcs over the closed undotted component, and then deleting the resulting canceling pair. 

It is straightforward to check that every admissible Kirby tangle is in the image of $\cal T$, and that the functor is invertible and preserves identities and monoidal structures.
\end{proof}


\subsection{3-dimensional relative cobordisms}
\label{cobordisms/sec}

For any $s \geqs 0$, let $F_s$ denote the connected oriented surface of genus $s$ with connected non-empty boundary, canonically realized in $\R^3$ as the \textit{front boundary} $\partial_+ M_s$ of the \dmnsnl{3} handlebody $M_s \subset \R^3$ considered in Subsection~\ref{4HB/sec}, given by
\[
\partial_+ M_s = 
\partial {M_s} \smallsetminus \partial \big( [0,1]^2 \times \left[ 0,1 \right[ \big).
\]
We remark that $\partial F_s = (\partial [0,1]^2) \times \{ 1 \} \cong S^1$ does not depend on $s$, hence it is the same for every $s \geqs 0$.
Then, for any $s,t \geqs 0$, we can consider the connected closed surface of genus $s+t$ given by
\[
F_{s,t} = \partial M_{s,t} = (F_s \times \{0\}) \cup ((\partial [0,1]^2) \times \hrssh) \cup (F_t \times \{1\}) \subset \R^4,
\]
oriented according to the identifications $F_t \times \{1\} \cong F_t$ and $- F_s \times \{0\} \cong - F_s$, where 
\[
 \hrssh = (\partial[0,1]^2) \smallsetminus \left] 0,1 \right[ \times \{1\} = ([0,1] \times \{0\}) \cup (\{0\} \times [0,1]) \cup ([0,1] \times \{1\})
\]
is a piece-wise linear arc embedded into $\R^2$ (notice that $(\partial [0,1]^2) \times \hrssh$ is represented as a pair of horseshoe-shaped arcs yielding the side boundary of $M_{s,t}$ in left-hand part of Figure~\ref{cobordism01/fig}).

By an oriented \dmnsnl{3} relative cobordism, we mean an oriented cobordism between the compact connected oriented surfaces $F_s$ and $F_t$ which is relative to the common boundary $\partial F_s = \partial F_t$ in the sense of the following definition.

\begin{definition}
\label{cobordism/def}
An oriented \dmnsnl{3} \textsl{relative cobordism} from $F_s$ to $F_t$, with $s,t \geqs 0$, is a compact connected oriented 3-manifold $M$ whose boundary coincides with $F_{s,t}$, that is, $\partial M = F_{s,t}$.

Two relative cobordisms $M$ and $M'$ from $F_s$ to $F_t$ are said to be \textsl{equivalent} if there exists a homeomorphism $h : M \to M'$ that coincides with the identity on the common boundary $\partial M = \partial M' = F_{s,t}$.
\end{definition}

According to this definition, if $W$ is an oriented relative \dmnsnl{4} handlebody built on $M_{s,t}$, then its \textsl{front boundary}
\[
\partial_+ W = \partial W \smallsetminus \partial \big( M_{s,t} \times \left[ 0,1 \right[ \big)
\]
is a relative cobordism from $F_s$ to $F_t$, since $\partial (\partial_+ W) = F_{s,t} \times \{1\}$ can be canonically identified with $F_{s,t}$.

In particular, the front boundary 
\[
\partial_+ (M_{s,t} \times [0,1]) = M_{s,t} \times \{1\}
\]
of the trivial handlebody $M_{s,t} \times [0,1]$ (with no handles) is a relative cobordism from $F_s$ to $F_t$ that can be canonically identified with $M_{s,t}$. See the left-hand part of Figure~\ref{cobordism31/fig} for an ``ironed-out'' picture of $\partial_+ (M_{s,t} \times [0,1])$, to be compared with the ``horseshoe'' version of $M_{s,t}$ represented in the left-hand part of Figure~\ref{cobordism01/fig}.
This can be considered as a basic relative cobordism from which any other relative cobordism between $F_s$ and $F_t$ can be obtained by surgery. Since attaching handles to a \dmnsnl{4} relative handlebody induces surgery on its front boundary, this is a immediate consequence of the following extension of the Lickorish-Rokhlin-Wallace's theorem about the surgery presentation of closed 3-manifolds (see \cite{KL01}).

\pagebreak

\begin{figure}[htb]
 \centering
 \includegraphics{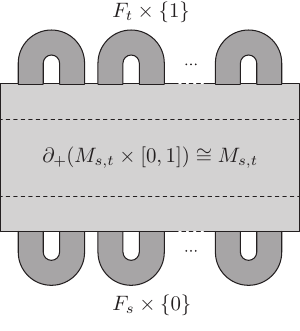}
 \caption{The relative cobordism $\partial_+ (M_{s,t} \times [0,1])$  corresponding to the Kirby tangle in Figure \ref{cobordism4/fig}.}
 \label{cobordism31/fig}
\end{figure}

\begin{proposition}
\label{surgery-pres/thm}
Any \dmnsnl{3} relative cobordism $M$ from $F_s$ to $F_t$ is homeomorphic to the front boundary $\partial_+ W$ of a \dmnsnl{4} relative \hndlbd{2} $W$ built over $M_{s,t}$, hence, up to homeomorphism, it can be obtained by surgery on $M_{s,t}$. Moreover, $W$ can be assumed to have only 2-handles, so only 2-surgery is needed to realize $M$ starting from $M_{s,t}$. 
\end{proposition}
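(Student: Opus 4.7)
The plan is to derive this proposition from a relative version of the Lickorish--Rokhlin--Wallace surgery theorem, as established in \cite{KL01}, which states that any compact connected oriented \mnfld{3} $M$ with $\partial M = F_{s,t}$ can be obtained from $M_{s,t}$ by Dehn surgery along a framed link in $\Int(M_{s,t})$.

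Granting this surgery presentation, the construction of $W$ is immediate. Let $L = L_1 \cup \dots \cup L_n \subset \Int(M_{s,t})$ be a framed link whose surgery produces $M$ from $M_{s,t}$. Define
\[
W := (M_{s,t} \times [0,1]) \cup H^2_1 \cup \dots \cup H^2_n,
\]
where each \hndl{2} $H^2_j \cong B^2 \times B^2$ is attached to $M_{s,t} \times \{1\}$ along a tubular neighborhood of $L_j \subset M_{s,t} \cong M_{s,t} \times \{1\}$, with framing inherited from that of $L_j$. Since attaching a single 2-handle to a \mnfld{4} along a framed circle in its front boundary performs precisely the Dehn surgery along that circle with that framing, we obtain $\partial_+ W \cong M$. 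By construction, $W$ is a \dmnsnl{4} relative \hndlbd{2} built on $M_{s,t}$ with no handles of index other than $2$, which proves both claims of the proposition simultaneously.

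To sketch the strategy behind the relative surgery theorem from \cite{KL01}, I would use relative Heegaard splittings. Since $M_{s,t}$ is the standard genus-$(s+t)$ \hndlbd{1}, it admits a Heegaard splitting along a closed surface $\Sigma \subset \Int(M_{s,t})$ into two compression bodies $V_{\partial}$ and $V$, with $V_{\partial}$ containing a collar of $F_{s,t}$. The manifold $M$ admits a Heegaard splitting sharing the very same boundary-side compression body $V_{\partial}$, and differing only in the interior-side gluing. After applying the Reidemeister--Singer stabilization theorem in the relative setting (performed entirely within the interior compression bodies), we may assume both splittings have a common Heegaard surface $\Sigma'$ and identical $V_{\partial}$, so that $M$ and $M_{s,t}$ correspond to two gluings of the same pair of compression bodies that differ by an element $\phi$ of the mapping class group of $\Sigma'$ fixing $\partial \Sigma'$ pointwise. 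Lickorish's theorem expresses $\phi$ as a product of Dehn twists along simple closed curves $\gamma_1, \dots, \gamma_n \subset \Sigma'$, and each Dehn twist along $\gamma_i$ is topologically realized by a $\pm 1$-framed Dehn surgery along a parallel pushed-off copy of $\gamma_i$ in the interior-side compression body of $M_{s,t}$. The resulting framed link $L$ lies in $\Int(M_{s,t})$ and produces $M$ upon surgery.

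The main obstacle is the relative version of Reidemeister--Singer: one must perform the stabilizations so that the boundary-side compression body $V_{\partial}$ remains pointwise fixed throughout. This requires each stabilization to be carried out in the interior, through the addition of trivially attached \hndls{1} whose attaching arcs avoid a collar of $F_{s,t}$. This is a delicate but essentially standard adaptation of the classical closed-manifold argument, and the detailed verification is carried out in \cite{KL01}.
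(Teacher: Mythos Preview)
Your proposal is correct and aligns with the paper's treatment: the paper does not prove this proposition at all but simply states it as an extension of the Lickorish--Rokhlin--Wallace theorem, citing \cite{KL01}. You go further than the paper by sketching the Heegaard-splitting argument behind the relative surgery theorem; the paper omits this entirely.
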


Similarly, Kirby calculus relating surgery presentations of homeomorphic closed 3-manifolds can be extended to \dmnsnl{3} relative cobordisms, as stated by the following proposition (see \cite{KL01}).

\begin{proposition}
\label{Kirby-calculus/thm}
Two oriented \dmnsnl{4} relative \hndlbds{2} $W$ and $W'$ have equivalent front boundaries $\partial_+ W$ and $\partial_+ W'$ (as relative cobordisms) if and only if they are related by a finite sequence of the operations \(a), \(b), and \(c) in Definition~\ref{2-equivalent/def}, and of the following further two operations:
\begin{itemize}
\item[\(d)]
replacing a $1$-handle by a trivially attached $2$-handle and vice-versa (handle trading);
\item[\(e)]
adding/deleting a $2$-handle attached along a separate unknot with framing $\pm 1$ (blow-up/down).
\end{itemize}
Moreover, operations \(a)--\(d) suffice to relate $W$ and $W'$ if these have equivalent front boundaries and the same signature $\sigma(W) = \sigma(W')$, since operation \(e) is the unique one that changes the signature of the handlebody by $\pm1$.
\end{proposition}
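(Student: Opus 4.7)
For the ``if'' direction, we verify that each of the listed operations preserves the equivalence class of $\partial_+ W$ as a relative cobordism from $F_s$ to $F_t$. Operations \(a), \(b), and \(c) are $2$-deformations, each of which extends to a diffeomorphism of $W$ fixing the back boundary $M_{s,t}$; hence they preserve the front boundary up to the required equivalence. For \(d), trading a $1$-handle for a trivially attached $0$-framed $2$-handle (or vice versa) alters $W$ by a local modification whose net surgery effect on $\partial_+ W_0$ is trivial, so the front boundary is unchanged. For \(e), adding a $\pm 1$-framed unknotted $2$-handle amounts to a boundary connected sum of $W$ with $\pm\mathbb{CP}^2$, and $\partial(\pm\mathbb{CP}^2 \smallsetminus \Int B^4) \cong S^3$ implies that $\partial_+ W$ is unaffected. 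A direct signature computation shows that only \(e) alters $\sigma(W)$, and by exactly $\pm 1$.

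For the converse, the plan is to reduce to the closed Kirby calculus. Given a homeomorphism $h : \partial_+ W \to \partial_+ W'$ extending the identity on $\partial F_{s,t}$, glue $W$ and $-W'$ along their front boundaries via $h$ to form a $4$-manifold $N = W \cup_h (-W')$ whose boundary is the double of $M_{s,t}$. Capping off this boundary with a standard $4$-dimensional handlebody produces a closed oriented $4$-manifold $\widehat N$ endowed with two relative handle decompositions over the cap: one coming from $W$ and one from $-W'$ read upside-down. The classical Cerf--Kirby theorem for closed oriented $4$-manifolds produces a finite sequence of handle isotopies, cancellations, slides, tradings, and blow-ups relating these two decompositions. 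The crucial technical point is to arrange this sequence so that no move acts on the fixed cap portion of $\widehat N$; once this is done, the sequence acts purely on $W \cup_h (-W')$ and translates into the sought sequence of operations \(a)--\(e) relating $W$ to $W'$.

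For the refined statement on signatures, suppose $\sigma(W) = \sigma(W')$. Since only blow-ups change the signature, and each does so by $\pm 1$, any sequence of moves realizing the $2$-deformation must contain equal numbers of $+1$- and $-1$-blow-ups. Each such canceling pair can be grouped together by commuting it past intervening handle slides, and then replaced by a Hopf-linked pair of unknots via further slides; the Hopf pair is in turn removable through operations \(a)--\(d) alone, since it produces a handlebody summand $S^2 \times S^2$ or $\mathbb{CP}^2 \#\overline{\mathbb{CP}^2}$ whose addition is $2$-equivalent to the trivial one. Iterating eliminates all instances of \(e) from the sequence.

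\textbf{Main obstacle.} The principal difficulty lies in the localization argument of the second paragraph: one must show that the sequence of moves supplied by the closed Kirby theorem on $\widehat N$ can always be rearranged to avoid the cap portion representing the trivial handlebody built on the double of $M_{s,t}$. This calls for a relative version of Cerf theory in the spirit of \cite{KL01}, exploiting the connectedness of $M_{s,t}$ and the flexibility of handle slides in a $4$-manifold with boundary to push any boundary-crossing moves back into the interior of $W$ or $W'$.
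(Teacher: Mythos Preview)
The paper does not give a proof of this proposition at all: it is stated as a known extension of Kirby calculus to relative cobordisms, with the reference ``(see \cite{KL01})'' immediately preceding it, and the text moves on directly to its consequences. So there is no in-paper argument to compare your proposal against.

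As for your outline itself: the ``if'' direction and the signature refinement are fine and standard. For the ``only if'' direction, your capping-and-closing strategy is a reasonable heuristic, but you have correctly flagged the real work as lying in the localization step, and your sketch does not actually carry it out. Gluing $W \cup_h (-W')$ and capping to a closed $4$-manifold $\widehat N$ gives you Cerf/Kirby moves on $\widehat N$, but there is no mechanism offered for confining them away from the cap; ``push boundary-crossing moves back into the interior'' is an aspiration, not an argument. The treatment in \cite{KL01} (and earlier in Kerler's bridged-link paper \cite{Ke98}) proceeds more directly by setting up a relative Kirby calculus from the start rather than by reduction to the closed case, which sidesteps exactly this localization difficulty. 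If you want a self-contained proof, that is the route to follow.
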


\begin{figure}[b]
 \centering
 \includegraphics{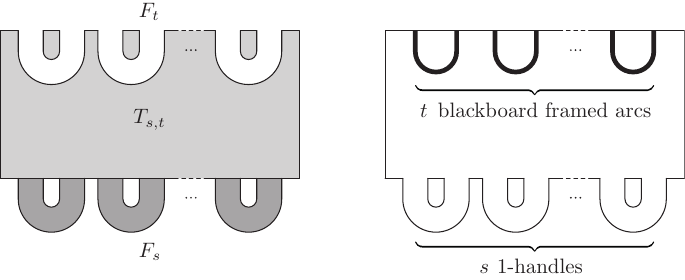}
 \caption{The relative cobordism $T_{s,t}$ and its top-tangle diagram.}
 \label{cobordism32/fig}
\end{figure}

In light of the above proposition, operation \(d) allows us to replace all the 1-handles in any Kirby tangle presentation of a \dmnsnl{4} relative \hndlbds{2} $W$ while preserving both the front boundary $\partial W$ up to homeomorphism and the signature $\sigma(W)$. In this way, any surgery presentation of a \dmnsnl{3} relative cobordism can be changed into one consisting of 2-surgeries only, simply by erasing all the dots from the corresponding Kirby tangle.

We observe that, according to Proposition~\ref{surgery-pres/thm}, any ``consistent'' family of \dmnsnl{3} relative cobordisms from $F_s$ to $F_t$ for all $s,t \geqs 0$ could be chosen, instead of $M_{s,t}$, as the base for the surgery presentation of any such cobordism. For example, this is the case for the relative cobordisms $T_{s,t}$ schematically depicted in Figure~\ref{cobordism32/fig}, which are obtained by attaching $s$ $1$-handles to the bottom face of $[0,1]^3$, and removing open tubular neighborhoods of $t$ arcs whose endpoints lie on the top face. Here, the bottom part of the boundary is canonically identified with $F_s$, while the blackboard framing of the tangle is used to determine an identification of the top part of the boundary with $F_t$.

This alternative choice leads to the top-tangle surgery presentation of \dmnsnl{3} relative cobordisms considered in \cite{BD21}. This is an upside-down version of Habiro's bottom-tangles in handlebodies (see \cite{Ha05,As11}), to which surgery is applied. The reason for the vertical inversion is that we read cobordisms from bottom to top, like in \cite{BP11,BD21}, while in \cite{Ha05,As11} they are read from top to bottom.

For the reader's convenience, in Figure~\ref{TT-morph/fig}, we show the top-tangle presentation of the structure morphisms of $\Algt$.
Here, the thick blackboard framed arcs stand for removed open tubular neighborhoods, as in Figure~\ref{cobordism32/fig}, while the thin blackboard framed closed curves stand for $2$-surgery. They are obtained by performing $2$-surgery on the top-tangle in Figure~\ref{cobordism32/fig}, followed by suitable slidings and cancellations. Observe that most of the cobordisms in the figure can be realized without any $2$-surgery, which means that they embed directly into $\mathbb R^3$.

\begin{figure}[htb]
 \includegraphics{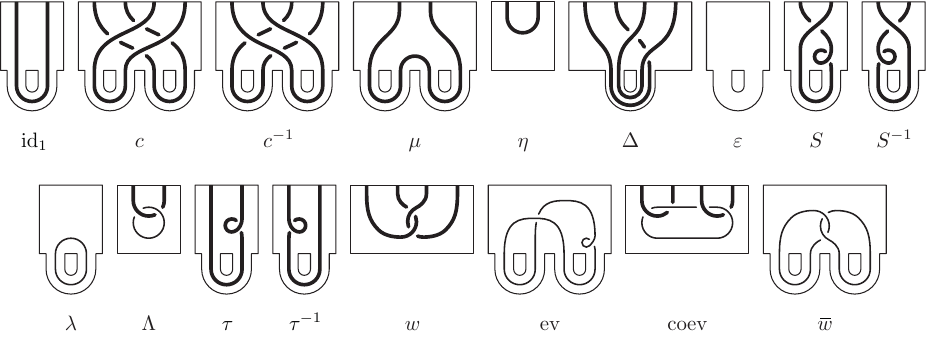}
 \caption{Top-tangle diagrams corresponding to the Kirby diagrams in Figure~\ref{KT-morph/fig}.}
 \label{TT-morph/fig}
\end{figure}

\subsection{The quotient categories \texorpdfstring{$\RCob$}{3Cob} and \texorpdfstring{$\KTt$}{3KT}}
\label{3KT/sec}

We will show that equivalence classes of oriented \dmnsnl{3} relative cobordisms form a monoidal category $\RCob$, which admits a quotient front boundary functor $\partial_+ : \RHB \to \RCob$. This will give rise to a corresponding quotient functor $\partial_+ : \KTf \to \KTt$, once $\RCob$ is shown to be equivalent to the category $\KTt$ of admissible Kirby tangles up to a suitable front boundary equivalence.

\begin{definition}
\label{3Cob/def}
We denote by $\RCob$ the monoidal category whose objects are connected oriented surfaces (with boundary) $F_s$ for $s \geqs 0$, and whose morphisms from $F_s$ to $F_t$ are equivalence classes of \dmnsnl{3} relative cobordisms from $F_s$ to $F_t$, as defined in Subsection~\ref{cobordisms/sec}. 

The composition of two morphisms $M$ from $F_s$ to $F_t$ and $M'$ from $F_{s'}$ to $F_{t'}$ with $t = s'$ is given by vertical juxtaposition, with $M'$ on top of $M$, and by rescaling by a factor $1/2$, which corresponds to gluing the two morphisms by canonically identifying the target of the first with the source of the second, that is,
\[
M' \circ M \cong M \cup_{F_t \times \{1\} = F_{s'} \times \{0\}} M',
\]
with $\partial(M' \circ M) \cong F_{s,t'}$ canonically contained in $F_{s,t} \cup_{F_t \times \{1\} = F_{s'} \times \{0\}} F_{s',t'}$.

The tensor product, denoted by $\bcsum$, is given by horizontal juxtaposition, from left to right, and it corresponds to the boundary connected sum for both the objects and the morphisms, with canonical identifications $F_s \bcsum F_{s'} \cong F_{s + s'}$ for the product of objects, and $\Bd(M \bcsum M') = \Bd M \csum \Bd M' = F_{s,t} \csum F_{s',t'} \cong F_{s+s',t+t'}$ for the product $M \bcsum M'$ of morphisms $M$ from $F_s$ to $F_t$ and $M'$ from $F_{s'}$ to $F_{t'}$.

For each $s \geqs 0$, the identity $\id_{F_s}$ is represented by the product cobordism $F_s \times [0,1]$. In particular, $\idone = F_0 \times [0,1]$, since $\one = F_0$.
\end{definition}

\begin{remark}\label{3Cob-skeleton/rmk}
$\RCob$ is a skeleton of a category whose objects are arbitrary connected oriented surfaces $\varSigma$ with connected non-empty boundary equipped with an orientation-preserving identification $f : S^1 \to \partial \varSigma$, and whose morphisms are equivalence classes of \dmnsnl{3} relative cobordisms $M$ from $\varSigma$ to $\varSigma'$, with boundary identifications agreeing with $f : S^1 \to \partial \varSigma$ and $f' : S^1 \to \partial \varSigma'$. Once again, it is convenient to work with the standard models for objects we are considering here.
\end{remark}

In light Definitions~\ref{4HB/def} and \ref{3Cob/def}, the front boundary operator $\partial_+$ introduced in Subsection~\ref{cobordisms/sec} induces a monoidal functor from $\RHB$ to $\RCob$. In fact, we have the following proposition.

\begin{proposition}
\label{4HBto3Cob/thm}
There is a quotient monoidal functor $\partial: \RHB \to \RCob$ such that $\partial M_s = F_s$ for all $s \geqs 0$, which sends any morphism of $\RHB$ given by the 2-equivalence class of the relative \hndlbd{2} $W$ to the morphism of $\RCob$ given by the equivalence class of its front boundary $\partial_+ W$.
\end{proposition}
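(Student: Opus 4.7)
The plan is to verify the well-definedness, functoriality, monoidality, and surjectivity of $\partial_+$ in sequence; all of these are essentially bookkeeping once the right canonical identifications are pinned down, with the main care needed in the gluing arguments for composition and tensor product.

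First, I would address well-definedness at the level of objects, which is trivial since $\partial_+ M_s = F_s$ is a canonical subsurface of $\partial M_s$ (cf.\ Subsection~\ref{cobordisms/sec}), and $F_{s,t} = \partial(M_{s,t} \times [0,1]) \cap (\partial(M_{s,t}) \times [0,1])$ is canonically identified with $\partial(\partial_+(M_{s,t}\times[0,1]))$. At the level of morphisms, if $W$ is a relative \hndlbd{2} on $M_{s,t}$, then $\partial_+ W$ is a \dmnsnl{3} relative cobordism from $F_s$ to $F_t$ via the canonical identification $\partial(\partial_+ W) = F_{s,t} \times \{1\} \cong F_{s,t}$ described in Subsection~\ref{cobordisms/sec}. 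If $W$ and $W'$ are \qvlnt{2}, then in particular they are diffeomorphic via a diffeomorphism that is the identity on $M_{s,t} \times \{0\}$, hence also on $M_{s,t} \times [0,1]$ up to isotopy rel boundary, and therefore the restriction to the front boundaries gives an equivalence of relative cobordisms in the sense of Definition~\ref{cobordism/def}. So the assignment $[W] \mapsto [\partial_+ W]$ descends to a map between \Hom-sets.

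Next, I would verify functoriality. For the identity, $\partial_+(M_s \times [0,1])$ is, by the canonical identification set up in Subsection~\ref{cobordisms/sec}, the product cobordism $F_s \times [0,1]$, which is $\id_{F_s}$ in $\RCob$. For composition, given $W : M_s \to M_t$ and $W' : M_{s'} \to M_{t'}$ with $t = s'$, the handlebody $W' \circ W$ is obtained by gluing along $M_t \times \{1\} = M_{s'} \times \{0\}$; the front boundary of such a gluing is, by inspection of the local model on both sides of the gluing locus, the gluing of $\partial_+ W$ and $\partial_+ W'$ along $F_t \times \{1\} = F_{s'} \times \{0\}$, which is precisely $(\partial_+ W') \circ (\partial_+ W)$ in $\RCob$. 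The key point here is that the $1$- and $2$-handles of $W$ and $W'$ are attached in the interiors of the respective front boundaries, hence do not interfere with the gluing locus.

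For monoidality, the canonical identification $M_{s,t} \bcsum M_{s',t'} \cong M_{s+s',t+t'}$ used to define $\bcsum$ in $\RHB$ restricts on the front boundary to the identification $F_{s,t} \bcsum F_{s',t'} \cong F_{s+s',t+t'}$ used in $\RCob$, and again the handles live away from the connect-summing disk, so $\partial_+(W \bcsum W') \cong \partial_+ W \bcsum \partial_+ W'$. Finally, the word \emph{quotient} amounts to surjectivity on \Hom-sets, which is exactly the content of Proposition~\ref{surgery-pres/thm}: every \dmnsnl{3} relative cobordism $M$ from $F_s$ to $F_t$ arises as $\partial_+ W$ for some relative \hndlbd{2} $W$ on $M_{s,t}$. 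The main (mild) obstacle in this whole argument is not any single step, but rather keeping track of the several canonical identifications of $F_{s,t}$, $M_{s,t}$, and their products with $[0,1]$ so that composition and tensor product are \emph{strictly} preserved rather than merely up to natural isomorphism; this is handled by the explicit coordinate choices made in Figure~\ref{cobordism01/fig} and in Subsection~\ref{handlebodies/sec}.
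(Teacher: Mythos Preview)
Your proposal is correct and follows essentially the same approach as the paper's proof: both verify well-definedness (via the fact that \dfrmtns{2} preserve the diffeomorphism type rel $M_{s,t}$, which the paper phrases as the ``if'' part of Proposition~\ref{Kirby-calculus/thm}), functoriality, monoidality, and surjectivity on morphisms via Proposition~\ref{surgery-pres/thm}. The one point the paper makes explicit that you gloss over is that $\partial_+(W' \circ W)$ and $\partial_+(W') \circ \partial_+(W)$ are not literally identical but differ by a canonical collar of the middle surface $F_t = F_{s'}$ (coming from the extra thickening \hndls{1} of $M_t$ in the composed handlebody decomposition), so they are only \emph{equivalent} as relative cobordisms; your phrase ``precisely $(\partial_+ W') \circ (\partial_+ W)$'' should be softened accordingly, but this does not affect the argument since morphisms in $\RCob$ are equivalence classes.
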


\begin{proof}
The claim that $\partial_+$ is well-defined as a monoidal functor immediately follows from the definition of the front boundary of a \dmnsnl{4} relative \hndlbd{2}, the ``if'' part of Proposition~\ref{Kirby-calculus/thm}, and the fact that vertical and horizontal juxtaposition of \dmnsnl{4} relative \hndlbds{2} restrict to analogous operations on their front boundaries. Notice that, for all $W = (W,M_{s,t})$ and $W' = (W',M_{s',t'})$ with $t=s'$, the front boundary $\partial_+ (W' \circ W)$ and the composition $\partial_+ (W') \circ \partial_+(W)$ differ only by a canonical collar of the middle surface $F_{t} = F_{s'}$, and so they are equivalent. On the other hand, the functor $\partial_+$ is trivially surjective on objects, while Proposition~\ref{surgery-pres/thm} implies its surjectivity on morphisms.
\end{proof}

Now, based on the front boundary equivalence moves shown Table~\ref{table-Kirby-moves/fig}, which provide a Kirby tangle interpretation of operations \(d) and \(e) in Proposition~\ref{Kirby-calculus/thm}, we can define the category $\KTt$, which is the diagrammatic counterpart of $\RCob$.

\begin{table}[htb]
 \includegraphics{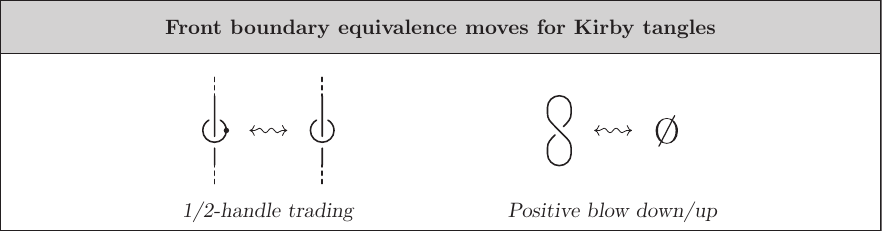}
 \caption{}
 \label{table-Kirby-moves/fig}
\end{table}

\begin{definition}
\label{fb-equivalence/def}
Two admissible Kirby tangles are said to be \textsl{front boundary equivalent} if they are related by a finite sequence of the moves and the operations in Tables~\ref{table-Ktangles/fig} and \ref{table-Kirby-moves/fig}.
\end{definition}

Actually, the relations in Table~\ref{table-Kirby-moves/fig} imply any $1/2$-handle trading and negative blow-up/down, as specified by the next proposition (compare with \cite{Ki89} and \cite{GS99}).

\begin{proposition}[{\cite[Lemma~5.2.1]{BP11}}]
\label{fb-equivalence/thm}
Modulo $1/2$-handle cancellation and $2$-handle sliding, any $1/2$-handle trading can be reduced to one presented on the left-hand side of Table~\ref{table-Kirby-moves/fig}. Moreover, modulo $2$-handle sliding and $1/2$-handle trading, positive and negative blow-up/down are inverse to one another.
\end{proposition}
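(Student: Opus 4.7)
The plan is to address the two assertions separately, in each case giving an explicit diagrammatic reduction.

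For the first assertion, I would start by observing that the local version of $1/2$-handle trading depicted on the left of Table~\ref{table-Kirby-moves/fig} applies to an isolated $1$-handle whose spanning disk is not pierced by any undotted strand. A general $1/2$-handle trading may involve a $1$-handle (a dotted unknot $U$ with spanning disk $D$) through which several undotted strands of the Kirby tangle pass. The reduction strategy is to introduce, next to $U$, an auxiliary canceling pair consisting of a fresh dotted unknot $U'$ with a $0$-framed meridional $2$-handle $V'$ (this is allowed since adding such a pair is the inverse of the operation on the second row of Table~\ref{table-Ktangles/fig}). Then, one at a time, each undotted strand piercing $D$ is rerouted to pass through the spanning disk of $U'$ instead, by a $2$-handle slide over $V'$. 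After all such slides, $U$ is isolated and the local trading on the left of Table~\ref{table-Kirby-moves/fig} can be applied. A further sequence of $2$-handle slides and a $1/2$-handle cancellation, undoing the auxiliary pair, brings the resulting diagram into the form of the original general trading.

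For the second assertion, what must be shown is that in the presence of $2$-handle sliding and $1/2$-handle trading, the move of adding a separated $(+1)$-framed unknot is equivalent to the move of adding a separated $(-1)$-framed unknot (with opposite sign of the corresponding signature shift being absorbed by a symmetric argument). The plan is to start from a separated $(+1)$-framed unknot $U$ and use a $1/2$-handle trading (reading the local move of Table~\ref{table-Kirby-moves/fig} in the appropriate direction) to replace $U$ by a dotted unknot $V$ carrying, as a by-product of its non-zero framing, a single linked $\pm 1$ kink on a nearby undotted arc. A careful sequence of $2$-handle slides transfers this kink across $V$, and a second $1/2$-handle trading converts the configuration back into a separated unknot, now with framing $-1$. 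Running the same argument in reverse gives the other direction.

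The main obstacle in this program is the first part: carrying out the rerouting of an arbitrary bunch of strands through the auxiliary canceling pair while keeping rigorous control of framings, linking numbers, and relative orientations, and then verifying that after the local trading the diagram can indeed be simplified by further slides and a single cancellation to the prescribed general form. This amounts to a finite but delicate chase of Kirby diagrams. Once this is in hand, the second assertion follows by a comparatively short computation, since the conversion between $(+1)$- and $(-1)$-framed separated unknots is essentially a two-step application of the local trading established in the first part, together with a standard framing-transfer slide.
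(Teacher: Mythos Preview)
The paper does not give its own proof of this proposition; it simply cites \cite[Lemma~5.2.1]{BP11}. So there is nothing in the paper to compare your argument against. Evaluating your sketch on its own merits, however, both parts contain a genuine gap.

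For the first assertion, the rerouting step does not work as stated. If $V'$ is only a meridian of the auxiliary dotted circle $U'$ (and does not pass through $U$), then sliding a strand $s$ over $V'$ band-sums $s$ with a parallel copy of $V'$, which adds a passage of $s$ through $U'$ but does nothing to remove the original passage of $s$ through $U$. If instead you route $V'$ through $U$ as well (so that the slide can cancel the passage of $s$ through $U$), then after all the $s_i$ have been rerouted the strand $V'$ itself still pierces the disk of $U$, so $U$ is not isolated and the local trading move cannot be applied to it. Cancelling the pair $(U,V')$ at that stage just leaves you with the dotted circle $U'$ carrying the $s_i$, i.e.\ the original configuration with $U$ renamed $U'$, and no $0$-framed circle has been produced. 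Your claim ``after all such slides, $U$ is isolated'' therefore fails in either reading, and the argument does not actually invoke the local trading move anywhere that it is applicable.

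For the second assertion, handle trading exchanges a dotted circle with a $0$-framed undotted circle in the same position; it is not defined for a $(\pm 1)$-framed unknot. Your step ``use a $1/2$-handle trading \dots\ to replace $U$ by a dotted unknot $V$'' with $U$ a $(+1)$-framed unknot is thus not a legal move. A correct argument must first use a slide to convert the $(+1)$-framed unknot into a $0$-framed circle linking something (so that trading becomes available), rather than trading it directly.
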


\begin{definition}
\label{3KT/def}
We denote by $\KTt$ the quotient category of $\KTf$ with respect to the front boundary equivalence relations presented in Table~\ref{table-Kirby-moves/fig}. Then $\KTt$ inherits the structure of a braided monoidal category making the quotient functor $\partial: \KTf \to \KTt$ into a braided monoidal functor.
\end{definition}

As an immediate consequence of the above definitions and of Proposition~\ref{fb-equivalence/thm}, we have the following proposition.

\begin{proposition}[{\cite[Proposition~5.2.2]{BP11}}]
\label{3KT/thm}
The maps sending any morphism of $\KTt$ given by the front boundary equivalence class of a Kirby tangle $T$ to the morphism of $\RCob$ given by the equivalence class of the relative cobordism represented by $T$ defines an equivalence of braided monoidal categories $\KTt \cong \RCob$. Furthermore, the following diagram commutes:
\[
\begin{tikzpicture}
 \node (P0) at (0,0) {$\KTf$};
 \node (P1) at (2.25,0) {$\KTt$};
 \node (P2) at (0,-1.5) {$\RHB$};
 \node (P3) at (2.25,-1.5) {$\RCob$};
 \draw
 (P0) edge[->] node[above] {$\textstyle \partial_+$} (P1)
 (P0) edge[->] node[left, xshift=-0.25ex] {$\textstyle \cong$} (P2)
 (P1) edge[->] node[right, xshift=0.25ex] {$\textstyle \cong$} (P3)
 (P2) edge[->] node[below] {$\textstyle \partial_+$} (P3);
\end{tikzpicture}
\]
\end{proposition}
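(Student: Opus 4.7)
The plan is to descend the equivalence $\KTf \cong \RHB$ of Proposition~\ref{4KT-equiv/thm} through the quotient front boundary functor $\partial_+: \RHB \to \RCob$ of Proposition~\ref{4HBto3Cob/thm}. Composing these yields a monoidal functor $\Psi: \KTf \to \RCob$, and the first step is to verify that $\Psi$ factors through the quotient $\partial_+ : \KTf \to \KTt$. For this, I would observe that the two moves in Table~\ref{table-Kirby-moves/fig} realize at the level of admissible Kirby tangles, respectively, a $1/2$-handle trading and a positive blow-up, which are precisely the operations \(d) and \(e) of Proposition~\ref{Kirby-calculus/thm}. Since these operations preserve the front boundary up to equivalence, $\Psi$ descends to a well-defined braided monoidal functor $\bar\Psi : \KTt \to \RCob$, whose existence automatically yields the commutativity of the square in the statement.

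Next, I would prove that $\bar\Psi$ is an equivalence by checking essential surjectivity, fullness, and faithfulness. Essential surjectivity is immediate since $\bar\Psi(E_{2s}) = \partial_+(M_s) = F_s$. Fullness follows from Proposition~\ref{surgery-pres/thm}: every \dmnsnl{3} relative cobordism from $F_s$ to $F_t$ is homeomorphic (as a relative cobordism) to the front boundary of some \dmnsnl{4} relative \hndlbd{2} built on $M_{s,t}$, which in turn is represented by an admissible Kirby tangle via $\KTf \cong \RHB$.

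The main obstacle is faithfulness. If admissible Kirby tangles $T, T'$ satisfy $\bar\Psi([T]) = \bar\Psi([T'])$, then the associated handlebodies $W, W'$ have equivalent front boundaries, so by Proposition~\ref{Kirby-calculus/thm} they are related by a finite sequence of operations \(a)--\(e). Operations \(a), \(b), \(c) constitute \qvlnc{2} in $\RHB$, which under $\KTf \cong \RHB$ corresponds to the moves and operations of Table~\ref{table-Ktangles/fig}, hence are already quotiented out in $\KTf$. The remaining operations \(d) and \(e) are handled via Proposition~\ref{fb-equivalence/thm}, which reduces every $1/2$-handle trading to the specific move on the left of Table~\ref{table-Kirby-moves/fig} modulo $1/2$-handle cancellation and $2$-handle sliding, and also shows that positive and negative blow-ups are interchangeable modulo the already-available moves, so that the single positive blow-up on the right of the table suffices. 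Consequently $T$ and $T'$ are front boundary equivalent, proving faithfulness of $\bar\Psi$.

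Finally, the braided monoidal structure is preserved essentially by construction: the tensor product in both $\KTt$ and $\RCob$ is induced by boundary connected sum of tangles and cobordisms respectively, and $\partial_+$ clearly commutes with this operation since it is performed outside the interior of the handles. The braiding of $\KTt$ descends from the crossing morphism of $\KTf$ shown in Figure~\ref{KT-morph/fig}, whose image under $\bar\Psi$ is the cobordism obtained by attaching a thickened $2$-surgery along the crossing, which is exactly the standard braiding of once-punctured tori in $\RCob$ under boundary connected sum. With all of the above in place, the commutativity of the square is built into the definition of $\bar\Psi$ as the factorization of $\partial_+ \circ (\KTf \cong \RHB)$ through the quotient $\partial_+: \KTf \to \KTt$, completing the proof.
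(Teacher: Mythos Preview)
Your proposal is correct and follows essentially the same route the paper indicates: the paper does not give a detailed proof but simply states that the result is ``an immediate consequence of the above definitions and of Proposition~\ref{fb-equivalence/thm}'', and what you have written is precisely the unpacking of that claim. Your use of Proposition~\ref{4KT-equiv/thm} for the underlying equivalence, Proposition~\ref{surgery-pres/thm} for fullness, Proposition~\ref{Kirby-calculus/thm} for faithfulness, and Proposition~\ref{fb-equivalence/thm} to reduce all tradings and blow-ups to the two moves of Table~\ref{table-Kirby-moves/fig} is exactly what the paper has in mind.
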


\FloatBarrier
\section{Algebraic presentation of \texorpdfstring{$\RHB$}{4HB} and \texorpdfstring{$\RCob$}{3Cob}}\label{equivalence/sec}

\label{maintheorems/sec}

This section is devoted to the proof of Theorem~\ref{thm:main}, which provides an algebraic presentation of $\RHB \cong \KTf$. We start by recalling the definition of the functor $\Phi:\Algf\to \KTf$. 
The inverse functor $\barPhi:\KTf\to\Algf$ is constructed in Subsection~\ref{FK/sec}, and its independence of many auxiliary choices is shown in subsequent subsections. A crucial role in the proof is played by a subcategory $\AlgD$ and its labeled version $\AlgL$ as well as a natural transformation $\Theta$ (Subsection~\ref{AlgD/sec}) and its labeled version $\Theta^{\rm L}_j$ (Subsection~\ref{AlgL/sec}). The proof of Theorem~\ref{thm:main} is given in Subsection~\ref{Psi/sec}.

\subsection{The functor \texorpdfstring{$\Phi : \Algf \to \KTf$}{from 4Alg to 4KT}}
\label{Phi/sec}

It was first shown in \cite{CY94} that the category $\RCob$ of \dmnsnl{3} cobordisms contains a braided Hopf algebra, the punctured torus. It was later shown in \cite{Ke01} that this braided Hopf algebra, together with a ribbon element and an integral, generates $\RCob$, or equivalently the category $\KTt$ of admissible framed tangles (see also \cite{Ha05} for a similar statement concerning the category of bottom tangles in handlebodies). We recall here a generalization of these results established in \cite{BP11}, where it is proved that the solid torus satisfies axioms~\hrel{r8} and \hrel{r9} in $\RHB$, and is thus a BP Hopf algebra.

\begin{theorem}[{\cite[Theorem~4.3.1]{BP11}}]\label{phi/thm}
There exists a braided monoidal functor $\Phi: \Algf \to \KTf$ that sends $H$ to $E_2$ (see Proposition~\ref{4KT/thm}) and each structure morphism of $H$ to the corresponding Kirby tangle represented in Figure~\ref{phi01/fig}.
\end{theorem}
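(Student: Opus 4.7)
The plan is to invoke the Universal Property~\ref{univ_prop_BP/thm}: since $\Algf$ is the braided monoidal category freely generated by a BP Hopf algebra, to produce the functor $\Phi$ it suffices to exhibit the object $E_2 \in \KTf$ as a BP Hopf algebra whose structure morphisms are precisely the Kirby tangles prescribed in Figure~\ref{phi01/fig}, up to $2$-equivalence. Concretely, I would first fix notation by declaring $\Phi(H^n) = E_{2n}$ under the canonical identification, and then assign to each generating morphism $\prodH$, $\unitH$, $\coprH$, $\counH$, $\antipH$, $\antipH^{\pm 1}$, $\intfH$, $\inteH$, $\ribmorH^{\pm 1}$, $\copairH$ the diagram given in Figure~\ref{phi01/fig}. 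Extending $\Phi$ uniquely to all of $\Algf$ using tensor products, compositions, identities, and braidings is then automatic.

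The verification then proceeds axiom by axiom. I would first check the Hopf algebra axioms \hrel{a1}--\hrel{a8} and \hrel{s1-1'}--\hrel{s2-3} in Table~\ref{table-Hopf/fig}; these all translate to easy isotopies of Kirby tangles, with the antipode axioms \hrel{s1-1'} requiring one $1/2$-handle cancellation each. Next come the integral axioms \hrel{i1}--\hrel{i5}: these reduce to sliding the undotted circle representing $\intfH$ (respectively $\inteH$) over an adjacent $2$-handle and then cancelling a $1/2$-pair, together with the observation that the loops defining $\intfH$ and $\inteH$ are unknotted and $0$-framed, which makes the $\antipH$-invariance relations \hrel{i4} and \hrel{i5} visible. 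The ribbon axioms \hrel{r3}, \hrel{r4}, \hrel{r5}, and \hrel{r7} are again straightforward isotopies combined with a slide move. Axiom~\hrel{r6}, which expresses $\copairH$ as $(\ribmorH \otimes \ribmorH) \circ \coprH \circ \ribmorH^{-1} \circ \unitH$, is verified by comparing the two Kirby tangles and observing they differ by kinks that cancel in pairs.

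The main obstacles are the two $R$-matrix-type axioms \hrel{r8} and \hrel{r9}, whose topological meaning is that the monodromy $\monH$ corresponds to a Hopf link clasp and that the braiding of two $2$-handles of $E_2$ can be rebuilt from two clasps and antipodes attached through comodule structure maps. For \hrel{r8}, I would expand both sides into their strictly regular Kirby diagrams, exhibit a $2$-handle slide that converts the left-hand diagram into the right-hand one, and absorb the residual framing via the ribbon kinks. For \hrel{r9}, the verification is the most delicate step: one must realize a single crossing of two undotted strands as a composition of two clasps (one on each side) and antipode loops, which matches the topological identity that a Hopf-pairing encircling followed by its inverse implements a full braid; this can be established directly by $2$-handle slides and $1/2$-cancellations, or, alternatively, by appealing to the equivalent reformulations \hrel{d13-13'} or \hrel{p13} in Proposition~\ref{BP-prop3/thm}, which express the braiding in a form that is more transparent to verify diagrammatically. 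Once \hrel{r8} and \hrel{r9} have been checked, the universal property delivers $\Phi$, and the fact that $\Phi$ preserves braidings follows because the braiding of $\KTf$ was defined in Proposition~\ref{4KT/thm} as the image of $\braid$ under a formally identical braided structure.
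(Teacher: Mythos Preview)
Your approach is correct and matches the paper's: verify each BP Hopf algebra axiom on the Kirby-tangle side and invoke the universal property of $\Algf$. Two small calibrations are worth noting. First, you undersell the bialgebra axiom \hrel{a5}: it is not a plain isotopy but requires handle slides and is singled out in the paper with its own figure, as is \hrel{s1-1'}; several other Hopf algebra and integral axioms (e.g.\ \hrel{a1}, \hrel{a3}, \hrel{i1}, \hrel{i2}, \hrel{i5}) also need handle slides rather than just isotopies or cancellations. Second, your alternative route to \hrel{r9} via the equivalent forms \hrel{p13} or \hrel{d13} is legitimate, since those equivalences (Propositions~\ref{BP-prop3/thm} and \ref{adjoint/thm}) rely only on the axioms already verified at that stage; the paper instead checks \hrel{r9} directly in a figure.
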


\begin{figure}[htb]
 \centering
 \includegraphics{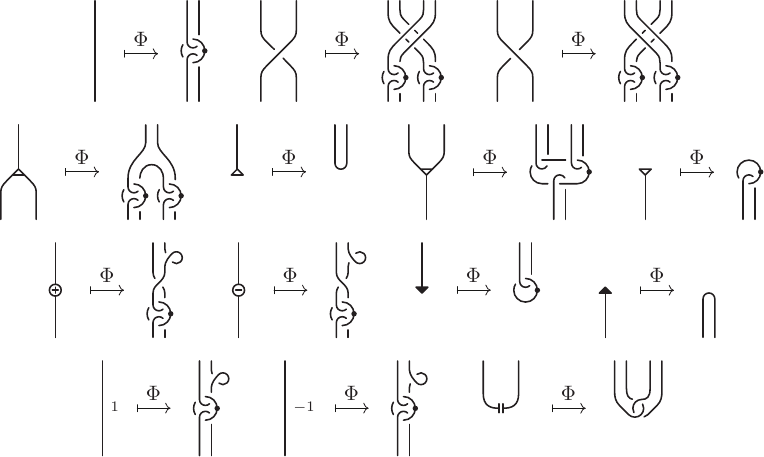}
 \caption{The functor $\Phi : \Algf \to \KTf$.}
 \label{phi01/fig}
\end{figure}

Notice that the image of the copairing $\copairH$ is the rotation along a horizontal axis in $\R^3$ of the bottom tangle defining Lyubashenko's pairing in \cite{L94}. 

For completeness, we present the proof.

\begin{proof}[Proof of Theorem~\ref{phi/thm}]
We have to check that the images under $\Phi$ of the structure morphisms of $H$ satisfy the axioms of a BP Hopf algebra in Tables~\ref{table-Hopf/fig} and \ref{table-BPHopf/fig}.
For most of them this is quite straightforward. In particular, the proof reduces to an isotopy for the braid axioms, and to the removal of canceling 1/2-pairs for axioms~\hrel{a4-4'}, \hrel{a6}, \hrel{a8}, \hrel{i3}, and \hrel{i4}, while some handle slides are also required for axioms~\hrel{a1}, \hrel{a2-2'}, \hrel{a3}, \hrel{a7}, \hrel{s2-3}, \hrel{i1}, \hrel{i2},  \hrel{i5}, \hrel[r3]{r3-4-5}.

Axioms~\hrel{a5} and \hrel{s1} are proved in Figures~\ref{phi04/fig} and \ref{phi05/fig} respectively, while the proof of axiom~\hrel{s1'} is obtained by rotating the diagrams in Figure~\ref{phi05/fig} of an angle $\pi$ along the vertical axis. 
Finally, the ribbon axioms \hrel[r6]{r6-7-8-9} are shown respectively in Figures~\ref{phi03/fig}, \ref{phi06/fig}, \ref{phi07/fig}, and \ref{phi08/fig}.
\end{proof}

\begin{figure}[htb]
 \centering
 \includegraphics{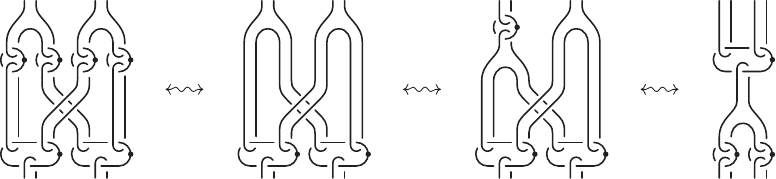}
 \caption{The definition of $\Phi$ is compatible with axiom~\hrel{a5}.}
 \label{phi04/fig}
\end{figure}

\begin{figure}[htb]
 \centering
 \includegraphics{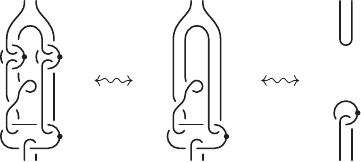}
 \caption{The definition of $\Phi$ is compatible with axiom~\hrel{s1-1'}.}
 \label{phi05/fig}
\end{figure}

\begin{figure}[htb]
 \centering
 \includegraphics{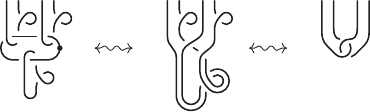}
 \caption{The definition of $\Phi$ is compatible with axiom~\hrel{r6}.}
 \label{phi03/fig}
\end{figure}

\begin{figure}[htb]
 \centering
 \includegraphics{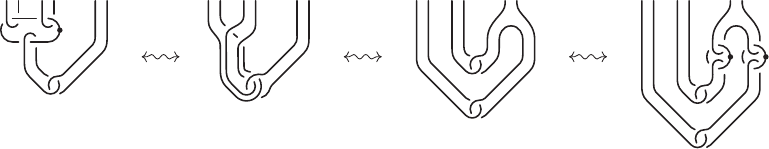}
 \caption{The definition of $\Phi$ is compatible with axiom~\hrel{r7}.}
 \label{phi06/fig}
\end{figure}

\begin{figure}[htb]
 \centering
 \includegraphics{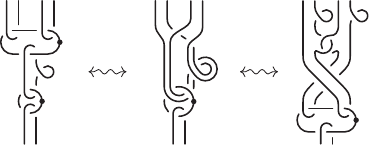}
 \caption{The definition of $\Phi$ is compatible with axiom~\hrel{r8}.}
 \label{phi07/fig}
\end{figure}

\begin{figure}[htb]
 \centering
 \includegraphics{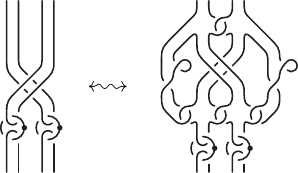}
 \caption{The definition of $\Phi$ is compatible with axiom~\hrel{r9}.}
 \label{phi08/fig}
\end{figure}

We observe that the images under $\Phi$ of the evaluation, the coevaluation, and the adjoint action are equivalent, in $\KTf$, to the tangles represented in Figure~\ref{phi02/fig}. The proof is straightforward and left to the reader. 

\begin{figure}[htb]
 \centering
 \includegraphics{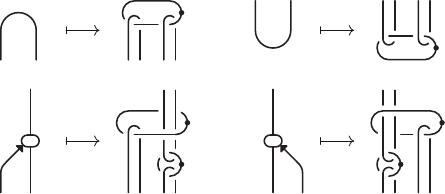}
 \caption{Images under $\Phi$ of $\ev$, $\coev$, $\ad$, and $\ad'$.}
 \label{phi02/fig}
\end{figure}

\subsection{The subcategory \texorpdfstring{$\AlgD$}{TAlg} of \texorpdfstring{$\Algf$}{4Alg}}
\label{AlgD/sec}

In this subsection, we define a monoidal subcategory $\AlgD$ of $\Algf$ whose morphisms are sent by $\Phi$ to a family of special two-level Kirby tangles. $\AlgD$ will play an essential role in the definition of the inverse functor $\barPhi$ of $\Phi$ in Subsection~\ref{FK/sec}, where the image of any Kirby tangle in $\KTf$ under $\barPhi$ will be defined as some sort of closure of morphisms in $\AlgD$. We will show below that $\AlgD$ has some interesting algebraic properties. In particular, it admits two ribbon structures. 
Moreover, there exist two families of morphisms in $\Algf$ that intertwine all morphisms in $\AlgD$, and whose images under $\Phi$ are given by $1$-handles which embrace the upper/lower level of the tangle.

\begin{table}[b]
 \centering
 \includegraphics{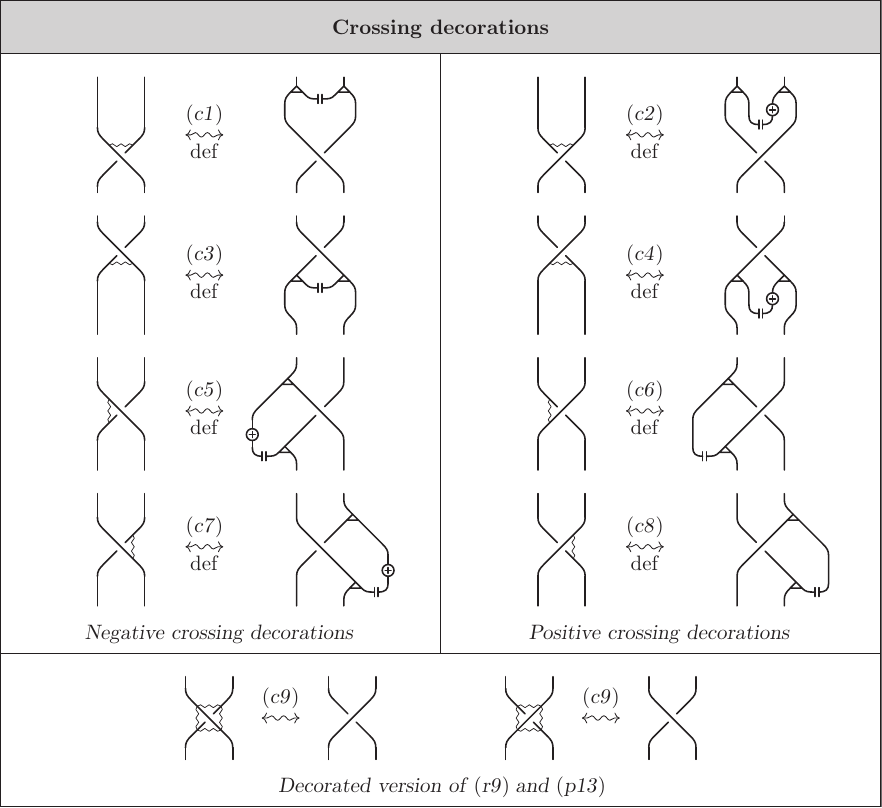}
 \caption{}
 \label{table-decorated/fig}
 \label{E:c1} \label{E:c2} \label{E:c3} \label{E:c4} 
 \label{E:c5} \label{E:c6} \label{E:c7} \label{E:c8}
 \label{E:c9}
\end{table}

We start by introducing in Table~\ref{table-decorated/fig} a compact notation for certain decorations (featuring copairings) of the braiding morphisms $c^{\pm 1}$ of $\Algf$. A decoration of a crossing is a wavy line attached to the two edges which form the crossing, and it is entirely contained in one of the four regions that make up the complement of the crossing inside a circular neighborhood in the projection plane. In particular, we have four possible decorations for both positive and negative crossings, and the relations~\hrel{c1}--\hrel{c8} in Table~\ref{table-decorated/fig} define these decorations as morphisms in $\Algf$.

Observe that a decoration, which is a wavy line attached to arbitrary edges, doesn't have a meaning on its own; it acquires one only in a neighborhood of a crossing, and it has to appear in one of the forms shown in Table~\ref{table-decorated/fig}. In particular, to a crossing we can attach at most four decorations.
To understand the meaning of decorations, we observe that the image of $c$ (respectively, of $c^{-1}$) under $\Phi$ consists of four positive (respectively, negative) crossings between two double strands (see Figure~\ref{phi01/fig}), and adding a decoration corresponds to inverting one of these four crossings (see Figure~\ref{phi08/fig}). Notice that inverting all four of them transforms $\Phi(c)$ into $\Phi(c^{-1})$, or the other way round. The algebraic versions of these moves are the relations shown in the bottom section of Table~\ref{table-decorated/fig}, which are the representation of axiom~\hrel{r9} in Table~\ref{E:r9} and relation~\hrel{p13} in Table~\ref{E:p13} in terms of decorated crossings. Moreover, these two relations can be generalized as stated in the following proposition.

\begin{proposition}\label{basic-decorated-move/thm}
Any decorated crossing is equivalent to the opposite crossing with complementary decorations. We will denote by \hrel{c9} this general class of relations.
\end{proposition}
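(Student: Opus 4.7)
The plan is to reduce the general statement to the two extreme cases, axiom~\hrel{r9} ($S = \emptyset$) and relation~\hrel{p13} ($S$ consisting of all four corners), which have already been established in Tables~\ref{table-BPHopf/fig} and~\ref{table-BPHopf-prop3/fig}. Since applying the claim twice returns the original configuration, the asserted identity \textit{``positive crossing with decorations at $S$ equals negative crossing with decorations at $S^c$''} is self-dual under $S \leftrightarrow S^c$, so it is enough to establish the cases $|S|=1$ and $|S|=2$; moreover, by first applying the symmetry functor $\sym$ of Proposition~\ref{symmetry/thm} and the horizontal/vertical reflections afforded by the rotating crossing move of Figure~\ref{rotating-crossing/fig}, each cardinality reduces to only one or two essentially distinct configurations of decoration positions.

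For the case $|S|=1$, say $S = \{\mathrm{NW}\}$, I would start from the positive crossing decorated at NW, expand the decoration using~\hrel{c1} so that an explicit copairing $\copairH$ sits on the incoming strands, and then apply~\hrel{r9} locally to the remainder of the crossing to rewrite it as a negative crossing decorated at all four corners. The ``doubling'' at NW that results, namely two copairings stacked along the NW strands, can then be collapsed using the coassociativity-type identity~\hrel{r7} for the copairing, the naturality of the braiding, and the antipode relations~\hrel{s1-1'}; the outcome is a negative crossing with only the three decorations at $\mathrm{NE}, \mathrm{SW}, \mathrm{SE}$, as desired. The case $|S|=2$ is then handled by applying this single-corner transformation a second time to one of the three remaining corners and renaming, using the symmetries mentioned above to cover both diagonal and adjacent pairs.

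As an alternative organization, one may prefer to phrase the argument as a single inductive lemma: for any $p \notin S$, if the statement holds for $S$ then it holds for $S \cup \{p\}$, with the inductive step being exactly the copairing manipulation described above. Either way, the proofs are diagrammatic and entirely analogous in structure to the verifications already carried out in Figures~\ref{phi07/fig} and~\ref{phi08/fig} for the images under~$\Phi$, except that they take place in $\Algf$ and rely on~\hrel{r7}, \hrel{r8}, \hrel{r9}, and the Hopf algebra axioms rather than on handle slides.

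The main obstacle is keeping the copairing bookkeeping precise at the interface between the preserved decoration at $p$ and the freshly inverted crossing: the decoration at position $p$ on a positive crossing is \emph{not} literally the same as the decoration at $p$ on the opposite crossing, so the word ``cancellation'' above hides a genuine diagrammatic identity that needs the full strength of axiom~\hrel{r7} together with the interplay between $\copairH$, $\coprH$, and $\prodH$ encoded in Proposition~\ref{BP-prop21/thm}. I would therefore expect the paper to present the $|S|=1$ case explicitly in a figure and to indicate that the remaining cases follow by iteration and by invoking the symmetries of $\Algf$.
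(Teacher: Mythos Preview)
Your approach is correct but considerably more laborious than the paper's. The paper observes that relations~\hrel{p5} and~\hrel{p6} (the inverse-monodromy identities in Table~\ref{table-BPHopf-prop21/fig}) already encode exactly the cancellation you are trying to perform by hand: a decoration on a positive crossing at position $p$ composed with a decoration on the opposite crossing at the same position $p$ yields the undecorated opposite crossing. Once this is available, the proof is a single line: apply~\hrel{r9} (or~\hrel{p13}) to the underlying crossing to produce the opposite crossing with all four decorations, and then use~\hrel{p5}--\hrel{p6} to cancel the doubled decorations at the positions in $S$, leaving precisely the complementary set $S^c$.

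Your route---expanding the decoration via~\hrel{c1}, applying~\hrel{r9}, and then collapsing with~\hrel{r7} and~\hrel{s1-1'}---amounts to re-deriving~\hrel{p5}--\hrel{p6} in situ (indeed, the paper proves~\hrel{p5} from~\hrel{r7} and~\hrel{s1'} in Figure~\ref{BP-4-2-7/fig}). The symmetry reductions and the case split on $|S|$ are then unnecessary overhead: with~\hrel{p5}--\hrel{p6} in hand, all sixteen configurations are handled uniformly, without needing to invoke $\sym$ or the rotating-crossing move. So nothing in your argument is wrong, but you should recognize that the ``genuine diagrammatic identity'' you flag in your last paragraph is precisely~\hrel{p5}--\hrel{p6}, already sitting in the toolbox.
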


\begin{proof}
The statement follows from axiom~\hrel{r9} in Table~\ref{E:r9} and relation~\hrel{p13} in Table~\ref{E:p13}, by applying relations~\hrel{p5-5'} and \hrel{p6-6'} in Table~\ref{E:p4}.
\end{proof}

We will focus now on studying the properties of the decorated crossings $X$, $\hat{X}$, $Y$, and $\hat{Y}$ defined in the top two lines of Table~\ref{defnXY/fig}, which will play an important role in the definition of $\barPhi$ in the next subsection. 

\begin{table}[htb]
 \centering
 \includegraphics{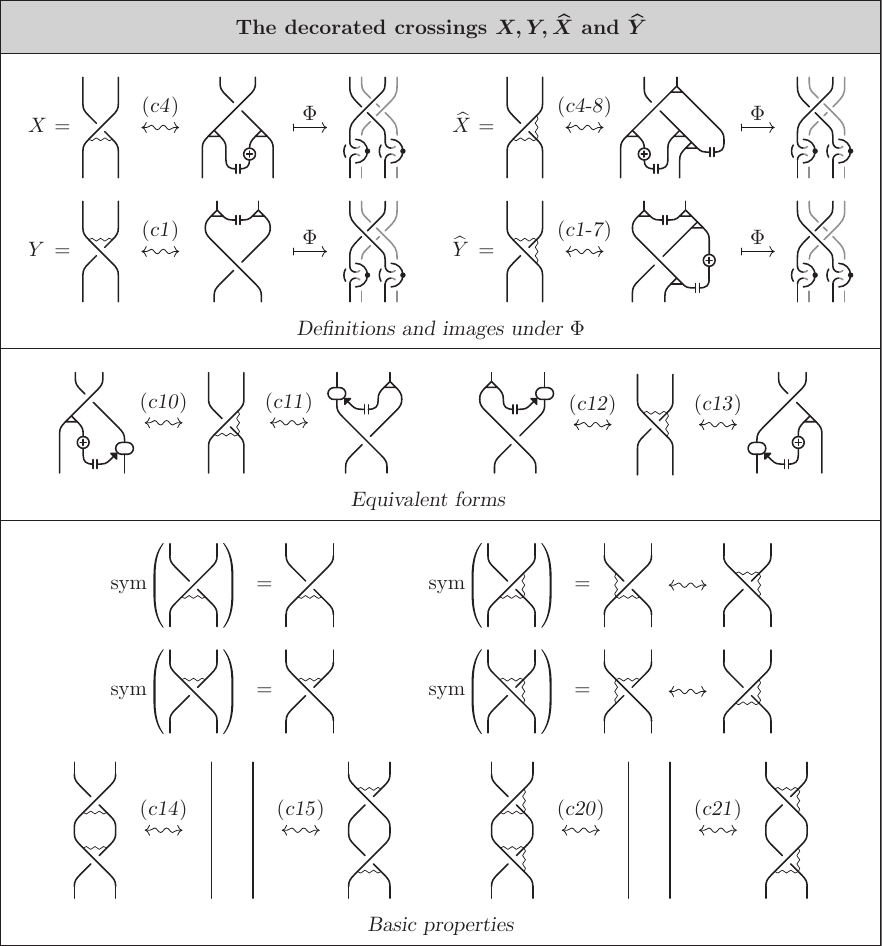}
 \caption{}
 \label{defnXY/fig}
 \label{c4-8} \label{c1-7}
 \label{E:c10} \label{E:c11} \label{E:c12} \label{E:c13} 
\end{table}

\begin{lemma}\label{decorated-sym-inv/thm}
$\hat{X}$ (respectively, $\hat{Y}$) can be presented in the equivalent forms \hrel[c10]{c10-11} (respectively, \hrel[c12]{c12-13}) represented in Table~\ref{defnXY/fig}. Moreover, we have the following identities (see Table~\ref{defnXY/fig})\footnote{Identities~\hrel{c14-15} and \hrel{c20-21} are part of the defining properties of the two braided structures of the category $\AlgD$ in Table~\ref{table-decorated-moves/fig}, which is where their natural enumeration comes from.}: 
\begin{gather*}
 \sym(X) = X, \quad \sym(Y) = Y, 
 \\
 \sym(\hat{X}) = \hat{Y}, \quad \sym(\hat{Y}) = \hat{X},
\\
 X\circ Y=Y\circ X=\id_2,
 \tag*{\hrel{c14-15}}
 \\
 \hat X\circ \hat Y=\hat Y\circ \hat X=\id_2.
 \tag*{\hrel{c20-21}}
\end{gather*}
In particular $Y = X^{-1}$ and $\hat{Y} = \hat{X}^{-1}$.
\end{lemma}
 
\begin{proof}
 The identities \hrel{c14-15} and \hrel{c20-21} follow from the properties of the copairing, the adjoint action, and the antipode, as indicated in Figure~\ref{isotopy1/fig}, while \hrel[c10]{c10-11}, \hrel[c12]{c12-13} and the identities involving the symmetry functor are proved in Figure~\ref{kirby-hopf04/fig}.
\end{proof}

\begin{figure}[hbt]
 \centering
 \includegraphics{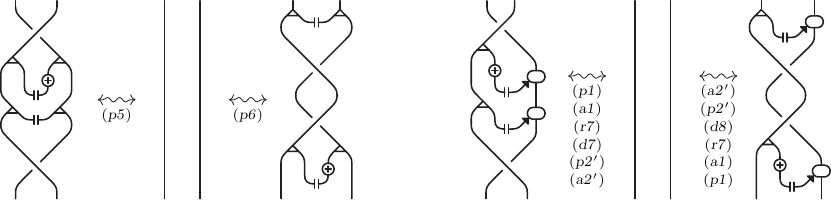}
\caption{Proof of \hrel{c14-15} and \hrel{c20-21}. 
}
 \label{isotopy1/fig}
\end{figure}

\begin{figure}[htb]
 \centering
 \includegraphics{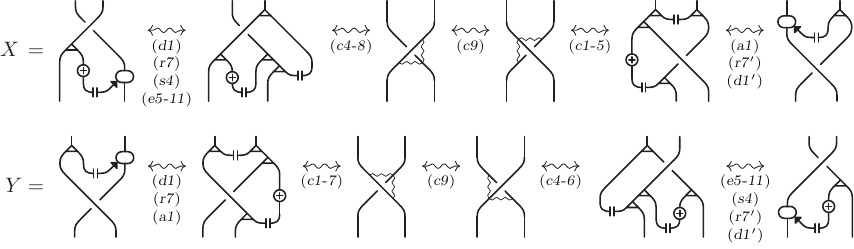}
 \caption{Equivalent forms of $\hat{X}$ and $\hat{Y}$.}
 \label{kirby-hopf04/fig}
\end{figure}


\begin{remark}\label{TAlg/rmk} Before going further, we would like to explain the intuition behind the definitions and constructions in the present subsection. Notice that the images of $X$, $\hat X$, $Y$, and $\hat Y$ under $\Phi$, represented in Table~\ref{defnXY/fig}, share the following properties.
\begin{itemize}
\item[\(a)] The intersection of the projection plane with the Kirby tangle coincides with some of the disks spanned by the dotted components, and the intersections with such disks divide the undotted components, and in particular each open component, into two parts: one which stays above and one which stays below the projection plane, represented respectively in black and gray.
\item[\(b)] In the projection plane, the lower arc of each undotted component projects onto the right of the upper one.
\end{itemize}
A Kirby tangle that is equivalent to one satisfying properties (a) and (b) above for some choice of dotted components in (a) will be called a \textit{two-level} Kirby tangle. For example, $\id$ (see Figure~\ref{KT-morph/fig}) is a two-level Kirby tangle, since the spanning disk of the single dotted component can be chosen to lie in the projection plane in such a way that it divides each open component into two parts satisfying conditions (a) and (b) above. Moreover, the composition and the tensor product of two two-level tangles is again a two-level tangle. Therefore, two-level Kirby tangles form a monoidal subcategory $\TTH$ of $\KTf$.

Notice that the images under $\Phi$ of $\coprH$, $\counH$, $\inteH$, $\ribmorH$, $\ev$, and $\coev$ (see Figures~\ref{phi01/fig} and \ref{phi02/fig}) belong to this subcategory, while one can see that the images of $\prodH$, $\unitH$, $\antipH$, $c^{\pm 1}$, and $\copairH$ do not. On the other hand, 
any closed Kirby tangle can be thought of as a two-level tangle by
assigning the whole tangle to the upper or to the lower level.
In the remainder of this subsection, our goal is to define and study a subcategory of $\Algf$ that is an algebraic analog of $\TTH$.
\end{remark}

We first define two families of morphisms $\Theta_k$ and $\Theta'_k$, with $k \geqs 0$, designed to provide an algebraic analogue of a dotted component that embraces either the lower (gray) or the upper (black) strands of a two-level tangle. 

\begin{definition}\label{theta/defn}
For every $k \geqs 0$, the morphism $\Theta_k : H^k \otimes H \to H^k$ in $\Algf$ is recursively defined by the following identities (compare with Figure~\ref{theta/fig} ):
\begin{gather*}
 \Theta_0 = \counH, \quad \Theta_1 = \prodH,
 \\
 \Theta_k = (\Theta_1 \otimes \Theta_{k-1}) \circ (\id \otimes \braid_{k-1,1} \circ \id) \circ (\id_{k-1} \otimes \coprH). 
\end{gather*}
Define also $\Theta'_k = \sym(\Theta_k) : H \otimes H^k \to H^k$ to be the symmetric morphism (see Proposition~\ref{symmetry/thm}).

We denote by $\Theta$ the collection of morphisms $\{ \Theta_k \mid k \in \N \}$, and similarly by $\Theta'$ the collection $\{\Theta'_k \mid k \in \N \}$.
\end{definition}

\begin{figure}[ht]
 \centering
 \includegraphics{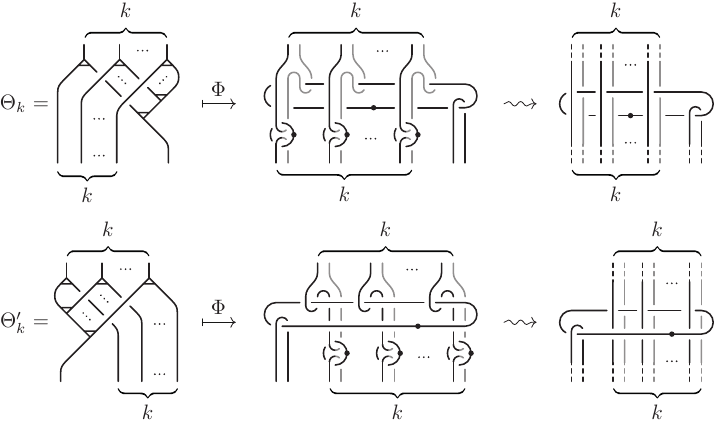}
 \caption{The morphisms $\Theta_k$ and $\Theta_k'$ and their images under $\Phi$, $k \geqs 0$.}
 \label{theta/fig}
\end{figure}

For every $k \geqs 0$, we also introduce the morphisms\footnote{$\Theta'$ and $U'$ are just auxiliary morphisms which will be used to simplify some proofs, while $\Theta$ and $U$ will play an essential role in the following. This is why we swap the prime in the notation.}
\[
U_k' = \Theta_k \circ (\id_k \otimes \inteH) \mbox{ and }
U_k = \Theta_k' \circ (\inteH \otimes \id_k),
\]
see Figure~\ref{defnU/fig}. Notice that the image under $\Phi$ of $U_k$ (respectively, $U_k'$) is a two-level Kirby tangle that consists of a single dotted component embracing the strand in the upper (respectively, lower) level of an identity tangle.

We are now ready to define the algebraic analog of $\TTH$.

\begin{figure}[htb]
 \centering
 \includegraphics{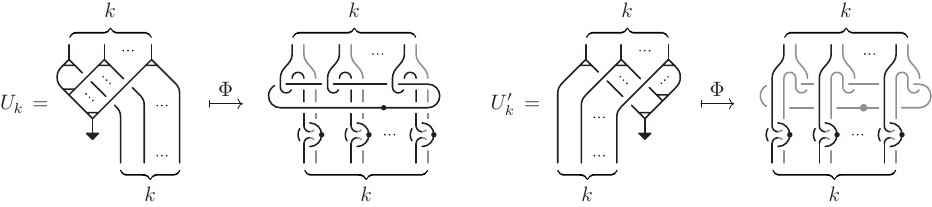}
 \caption{The morphisms $U_k$ and $U_k'$ and their images under $\Phi$.}
 \label{defnU/fig}
\end{figure}

\begin{definition}\label{AlgD/def} 
We denote by $\AlgD$ the monoidal subcategory of $\Algf$ generated by the morphisms $\coprH$, $\counH$, $\inteH$, $\ev $, $\ribmorH$, $X$, $\hat X$, $Y$, $\hat Y$, $U_k$, and $U_k'$ defined in Tables~\ref{table-Hopf/fig}, \ref{table-BPHopf/fig}, and \ref{defnXY/fig}, and Figure~\ref{defnU/fig}. 
\end{definition}

 Notice that $\coev$ and $\tilde{\prodH}$ belong to $\AlgD$, since they are compositions of morphisms in $\AlgD$.

 Observe also that $\Phi$ sends every generator of $\AlgD$ to a morphism of $\TTH$. Nevertheless, we do not claim that $\Phi$ restricts to a full or faithful functor from $\AlgD$ to $\TTH$, even if it may as well be true. Indeed, in the present context, the subcategory $\TTH$ was introduced only to provide motivation for the definition of $\AlgD$.

As an immediate consequence of Lemma~\ref{decorated-sym-inv/thm} and Definition~\ref{AlgD/def}, we have the following corollary. 

\begin{corollary}\label{symTAlg/thm}
The subcategory $\AlgD$ of $\Algf$ is invariant under the action of the symmetry functor $\sym : \Algf \to \Algf$ defined in Proposition~\ref{symmetry/thm}.
\end{corollary}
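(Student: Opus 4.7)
The plan is to verify that $\sym$ carries each generator of $\AlgD$ listed in Definition \ref{AlgD/def} to a morphism already in $\AlgD$. Since $\sym$ is a (covariant) functor that preserves composition, and is only anti-monoidal in the sense that it reverses tensor products (both of whose factors remain in $\AlgD$), this checking on generators suffices to conclude $\sym(\AlgD) \subseteq \AlgD$. The reverse inclusion follows from the involutivity of $\sym$, yielding invariance.

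The verification splits into three blocks, each handled by a result that is already available in the excerpt. For the generators coming from the Hopf algebra structure, namely $\coprH$, $\counH$, $\inteH$, and $\ribmorH$, Proposition \ref{symmetry/thm} gives $\sym(g)=g$ directly, since $\sym$ fixes every structure morphism of $\Algf$. The invariance $\sym(\ev)=\ev$ follows from the defining expression $\ev = \intfH \circ \prodH \circ (\id \otimes \antipH)$, the anti-monoidal property, and the symmetric reformulation \hrel{e4-4'} in Table \ref{table-BPHopf-prop1/fig}. The behavior on the decorated crossings is precisely the content of Lemma \ref{decorated-sym-inv/thm}: $\sym$ fixes $X$ and $Y$ and swaps $\hat X$ with $\hat Y$, so the set $\{X,Y,\hat X,\hat Y\}$ is preserved.

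For the last family of generators, we use $\Theta_k' = \sym(\Theta_k)$ from Definition \ref{theta/defn} (together with involutivity, which also gives $\sym(\Theta_k')=\Theta_k$), the fact that $\sym$ fixes $\inteH$ and $\id_k$, and the anti-monoidal property, to compute
\[
\sym(U_k') = \sym\bigl(\Theta_k \circ (\id_k \otimes \inteH)\bigr) = \Theta_k' \circ (\inteH \otimes \id_k) = U_k,
\]
and symmetrically $\sym(U_k)=U_k'$. Hence $\sym$ swaps the two families $\{U_k\}_{k\in\N}$ and $\{U_k'\}_{k\in\N}$.

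No real obstacle is expected here: the corollary is a short bookkeeping consequence of the anti-monoidal property of $\sym$, combined with Proposition \ref{symmetry/thm} and Lemma \ref{decorated-sym-inv/thm}. The only point that deserves a moment of care is remembering that $\sym$ reverses the order inside a tensor product when applied to composite expressions such as the ones defining $\ev$, $U_k$, and $U_k'$; in each case, however, the reversal is absorbed either by a pre-existing symmetry relation (for $\ev$) or by the very definition matching $U_k$ with $U_k'$.
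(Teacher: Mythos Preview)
Your proof is correct and follows the same approach as the paper, which simply records the corollary as ``an immediate consequence of Lemma~\ref{decorated-sym-inv/thm} and Definition~\ref{AlgD/def}'' without writing out the generator-by-generator check. You have made explicit exactly what the paper leaves implicit: that $\sym$ fixes the structure morphisms $\coprH,\counH,\inteH,\ribmorH,\ev$, permutes $\{X,Y,\hat X,\hat Y\}$ by Lemma~\ref{decorated-sym-inv/thm}, and swaps $U_k$ with $U_k'$ by their very definitions via $\Theta_k' = \sym(\Theta_k)$.
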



We introduce below a generalized move that consists in sliding the product morphism through a decorated crossing. The move is illustrated in Figure~\ref{a1-decor/fig}, where we show examples with a decorated crossing featuring two decorations. The case of a single decoration can be obtained from these by simply deleting the same decoration on both sides.

\begin{figure}[hbt]
 \centering
 \includegraphics{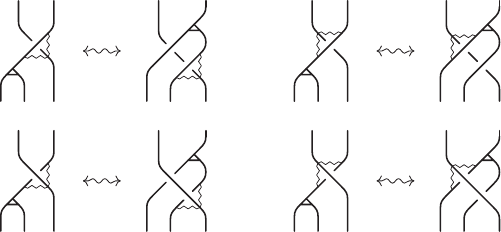}
 \caption{Examples of move~\hrel{A} over a decorated crossing with two decorations.}
 \label{a1-decor/fig}
\end{figure}


\begin{lemma}\label{a1-TAlg/thm} \label{E:A}
A product morphism can be slid through a decorated crossing at the price of creating a second ordinary crossing of the same sign, provided that the decorated crossing has at most two decorations attached to the same side of the outgoing strand of the product morphism. Such generalized move will be denoted by \rel{A}.
\end{lemma}

\begin{proof}
Notice that, as defined by relations~\hrel{c1} to \hrel{c8} in Table~\ref{table-decorated/fig}, every crossing decoration is attached through a pair of product morphisms to the pair of strands involved in the crossing. So, the statement follows directly from the associativity relation~\hrel{a1} and the naturality of the braiding. 
\end{proof}




\begin{lemma}\label{theta/thm}
If $\iota : \AlgD \hookrightarrow \Algf$ denotes the inclusion functor, then $\Theta : \iota \otimes H \Rightarrow \iota$ and $\Theta' : H \otimes \iota \Rightarrow \iota$ define natural transformations, meaning that, for every morphism $F : H^s \to H^t$ in $\AlgD$, we have (see Figure \ref{theta-moves/fig})
\begin{align*}
 \Theta_t \circ (F \otimes \id) &= F \circ \Theta_s, 
 \tag*{\hrel{t1}}
 \\
 \Theta'_t \circ (\id \otimes F) &= F \circ \Theta'_s.
 \tag*{\hrel{t1'}}
\end{align*}
 
\begin{figure}[hbt]
 \centering
 \includegraphics{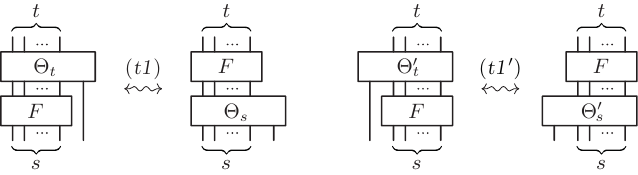}
 \caption{Naturality of $\Theta$ and $\Theta'$.}
 \label{theta-moves/fig} 
 \label{E:t1-1'} \label{E:t1} \label{E:t1'}
\end{figure}
\end{lemma}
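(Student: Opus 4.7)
The plan is to reduce the two statements \hrel{t1} and \hrel{t1'} to one: by Proposition~\ref{symmetry/thm} and Corollary~\ref{symTAlg/thm}, the symmetry functor $\sym$ maps $\AlgD$ to itself and intertwines $\Theta_k$ with $\Theta_k'$, so applying $\sym$ to both sides of \hrel{t1} produces \hrel{t1'}. Hence I only need to prove \hrel{t1} for every morphism $F$ in $\AlgD$.

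Next I would reduce this to a check on the generators of $\AlgD$ listed in Definition~\ref{AlgD/def}. Closure of \hrel{t1} under composition is immediate: if $G\circ F$ is defined with $F$ and $G$ both satisfying \hrel{t1}, then
\[
\Theta_u\circ((G\circ F)\otimes\id)=\Theta_u\circ(G\otimes\id)\circ(F\otimes\id)=G\circ\Theta_t\circ(F\otimes\id)=G\circ F\circ\Theta_s.
\]
Closure under tensor products requires an auxiliary ``distributivity'' identity
\[
\Theta_{k+l}=(\Theta_k\otimes\Theta_l)\circ(\id_k\otimes\braid_{l,1}\otimes\id)\circ(\id_{k+l}\otimes\coprH),
\]
which I would establish by induction on $l$ from the recursive Definition~\ref{theta/defn}, using coassociativity \hrel{a3} and the naturality of the braiding. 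With this identity in hand, \hrel{t1} for $F_1\otimes F_2$ follows from \hrel{t1} for each factor by sliding $F_1$ and $F_2$ past the middle braiding (naturality), then applying the individual naturalities.

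It remains to verify \hrel{t1} on each generator of $\AlgD$. The ``pure Hopf'' generators are quick: for $F=\counH$ it reduces to \hrel{a6}; for $F=\coprH$ it is exactly \hrel{a5} matched against the recursive formula for $\Theta_2$; for $F=\inteH$ it is the integral axiom \hrel{i2}; for $F=\ribmorH$ it follows from \hrel{r5} together with \hrel{r4} (after using the $\coprH$ inside $\Theta_2$). The evaluation $\ev$ requires sliding a $1$-handle across a $2$-handle algebraically, and will use \hrel{e1}, the antipode property \hrel{s1-1'}, and the definition of $\Theta_2$; the morphisms $U_k,U_k'$ are handled by combining the outcome for $\inteH$ with naturality under tensor products, or directly from the integral properties \hrel{i1'}, \hrel{i2'}. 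The decorated crossings $X,\hat X,Y,\hat Y$ are the substantive case: the identity for $X$ is a manipulation of the copairing against $\Theta_2$ on both sides that uses \hrel{r7} together with associativity and \hrel{d12} (i.e., the reformulation of \hrel{r8} from Proposition~\ref{adjoint/thm}); the case of $\hat X$ is analogous, relying on \hrel{d12} and the equivalent forms of $\hat X$ established in Lemma~\ref{decorated-sym-inv/thm}. Finally, the cases $Y$ and $\hat Y$ follow at once from those of $X$ and $\hat X$ via \hrel{c14-15} and \hrel{c20-21}, since \hrel{t1} is closed under taking inverses.

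The principal obstacle will be the decorated crossings: commuting $\Theta_2$ past $X$ (respectively $\hat X$) demands moving the extra $H$-strand, which the $\coprH$ in $\Theta_2$ distributes onto both outputs, across the copairing and the braiding. I expect to carry this out by splitting the copairing using \hrel{r7}, absorbing one factor via \hrel{d12}, and reassembling with \hrel{p5-6} or \hrel{c9}. Once these diagrammatic identities are recorded, the rest of the verifications are essentially bookkeeping against the axioms in Tables~\ref{table-Hopf/fig} and \ref{table-BPHopf/fig}.
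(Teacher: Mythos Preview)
Your proposal is correct and follows essentially the same route as the paper: reduce \hrel{t1'} to \hrel{t1} via $\sym$, reduce to generators using coassociativity (your distributivity identity for $\Theta_{k+l}$ is exactly what the paper invokes implicitly via \hrel{a3}), and then check each generator of $\AlgD$. Two small remarks: the case $F=\ribmorH$ involves only $\Theta_1=\prodH$ and follows from \hrel{r5} alone (no $\coprH$ or \hrel{r4} is needed), and for the decorated crossings the paper actually handles $X$ via \hrel{p8'} and \hrel{r7}, then derives $\hat X$ by first using the already-proved naturality for $Y=X^{-1}$ together with \hrel{d10}, \hrel{d12}, and \hrel{c14}---a slightly slicker ordering than treating $X$ and $\hat X$ in parallel.
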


\begin{proof}
According to  Corollary~\ref{symTAlg/thm} and Proposition~\ref{symmetry/thm}, the statement for $\Theta'$ can be derived from the one for $\Theta$ by applying the functor $\sym$. For what concerns $\Theta$, the coassociativity relation \hrel{a3} implies that, if the statement is true for two morphisms, then it is true for their product as well. Therefore, it is enough to show that \hrel{t1} holds whenever $F$ is one of the generating morphisms of $\AlgD$. For $F = \coprH, \counH, \inteH, \ribmorH$, the statement follows directly from \hrel{a5}, \hrel{a6}, \hrel{i2}, and \hrel{r5}, while for $F = \ev, U_k, U_k'$ it is shown in Figure~\ref{theta1/fig}. Then, in the first two lines of Figure~\ref{theta2/fig}, we prove \hrel{t1} for $F = X$, which in turn implies \hrel{t1} for $F = Y$, since, thanks to Lemma~\ref{decorated-sym-inv/thm}, $Y = X^{-1}$. Finally, in the last line of Figure~\ref{theta2/fig}, by using \hrel{t1} for $Y$, we prove the statement for $F = \hat X$, which implies \hrel{t1} for $F = \hat Y = \hat X^{-1}$.
\end{proof}

\begin{figure}[htb]
 \centering
 \includegraphics{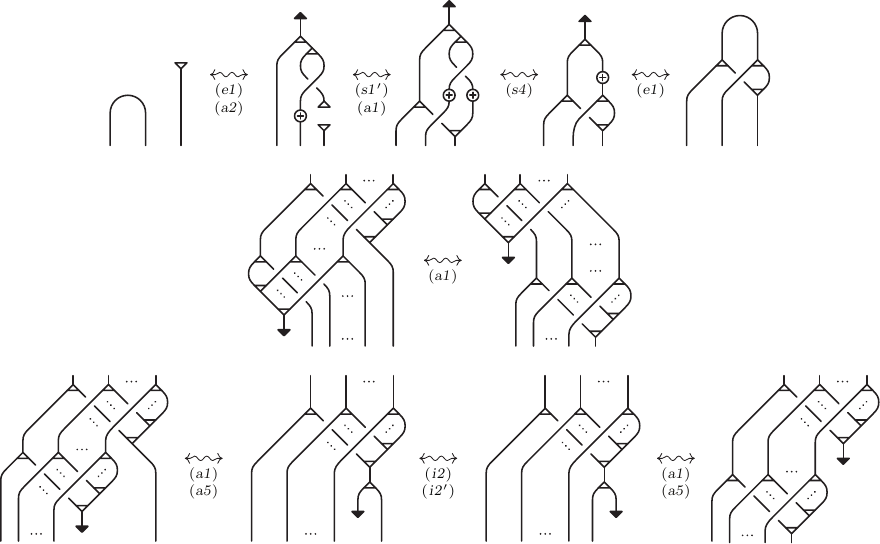}
 \caption{Proof of relation~\hrel{t1} for $F = \ev, U_k, U_k'$.}
 \label{theta1/fig}
\end{figure}

\begin{figure}[htb]
 \centering
 \includegraphics{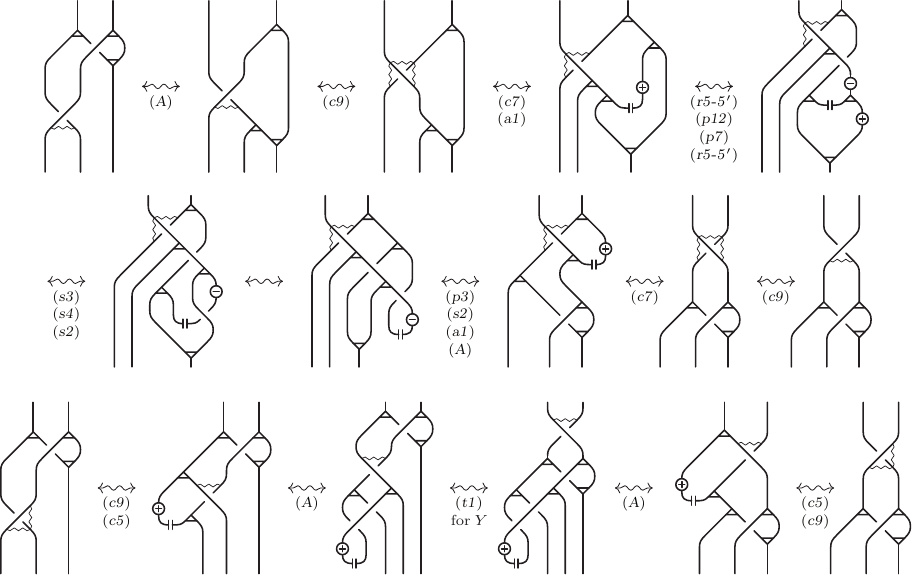}
 \caption{Proof of relation~\hrel{t1} for $F = X, \hat X$.}
 \label{theta2/fig}
\end{figure}

\FloatBarrier
\clearpage

\begin{theorem}\label{decorated-moves/thm}
The relations in Tables~\ref{table-decorated-moves/fig} and \ref{table-decorated-moves-3/fig} are satisfied in $\AlgD$. In particular, $\AlgD$ admits two distinct ribbon structures, in which braiding morphisms (and their inverses) are given by $X, X^{-1} = Y : H \otimes H \to H \otimes H$ and by $\hat X,\hat X^{-1} = \hat Y : H \otimes H \to H \otimes H$, respectively.
\end{theorem}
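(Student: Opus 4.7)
The plan is to establish the relations in Tables~\ref{table-decorated-moves/fig} and \ref{table-decorated-moves-3/fig} one at a time, and then read off the two ribbon structures as a formal consequence. Before starting the case analysis, I would exploit two reductions that cut the workload in half. First, by Corollary~\ref{symTAlg/thm}, the symmetry functor $\sym$ restricts to an anti-monoidal involution of $\AlgD$, and by Lemma~\ref{decorated-sym-inv/thm} it swaps $X \leftrightarrow X$, $Y \leftrightarrow Y$, $\hat X \leftrightarrow \hat Y$; hence every relation involving the hatted crossings comes in a symmetric pair, and the two ribbon structures on $\AlgD$ are exchanged by $\sym$. Second, the identities $Y = X^{-1}$ and $\hat Y = \hat X^{-1}$ already established in Lemma~\ref{decorated-sym-inv/thm} reduce Reidemeister-II--type relations to definitional statements, and allow us to deduce each relation for $Y$ or $\hat Y$ by inverting the corresponding relation for $X$ or $\hat X$.

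Second, I would prove naturality of $X$ (and, by the same arguments, of $\hat X$) with respect to every generator of $\AlgD$ listed in Definition~\ref{AlgD/def}. For each generator $F \in \{ \coprH, \counH, \inteH, \ev, \ribmorH, U_k, U_k' \}$, the desired identity amounts to sliding a decorated crossing past $F$; this is a direct diagrammatic computation using axiom~\hrel{r8} (equivalently, its adjoint-action form~\hrel{d12-12'} from Table~\ref{table-adjoint-prop/fig}), the coassociativity and counitality axioms~\hrel{a3} and \hrel{a4-4'}, the ribbon axioms~\hrel{r3}--\hrel{r5}, the copairing coproduct axiom~\hrel{r7}, and the integral identities~\hrel{i1}--\hrel{i5}. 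For the cases $F = U_k, U_k'$, I would first prove the statement for $k=0,1$ and then induct on $k$ using the recursive definition of $\Theta_k$, $\Theta_k'$ in Definition~\ref{theta/defn}, combined with the naturality result of Lemma~\ref{theta/thm} (which does almost all of the work, since $U_k$ and $U_k'$ are obtained from $\Theta_k$, $\Theta_k'$ by capping off with $\inteH$).

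Third, I would establish the Yang--Baxter (hexagon-like) relations among $X$, $\hat X$, $Y$, $\hat Y$. These express the naturality of $X$ with respect to $X$ itself (and similarly for $\hat X$), together with the cross-compatibility between $X$ and $\hat X$ needed for the existence of the two independent ribbon structures. Algebraically, these are precisely the instances of axiom~\hrel{r9} (equivalently \hrel{p13}, or the reformulations \hrel{d13-13'}, \hrel{d14-14'}) together with the general decoration swap of Proposition~\ref{basic-decorated-move/thm}. Once these hexagon identities and the naturality with respect to all generators are in place, the axioms of a braided monoidal category (Definition~\ref{braided-cat/def}) are formally satisfied by $(\AlgD, X)$ and by $(\AlgD, \hat X)$, since every morphism in $\AlgD$ is a composition of tensor products of the generators for which naturality has been verified.

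Finally, for the ribbon/pivotal part, I would use that $\ev$ is a generator of $\AlgD$ and $\coev = \coprH \circ \inteH$ lies in $\AlgD$, and these satisfy the zigzag identities in Proposition~\ref{autonomous/thm}; together with naturality of the braiding and the twist built from $\ribmorH$ (via the formula of Proposition~\ref{autonomous/thm}), this promotes each of $(\AlgD, X)$ and $(\AlgD, \hat X)$ to a ribbon category in the sense of Definition~\ref{ribbon-cat/def}. I expect the main obstacle to be the naturality of the decorated crossings with respect to $U_k$ and $U_k'$, since those morphisms algebraically implement $1$-handles embracing an entire row of strands and their interaction with decorated crossings requires careful manipulation of the copairing and ribbon morphism via \hrel{r8}; the inductive scheme on $k$ using Lemma~\ref{theta/thm} should, however, reduce this to the base cases treated in Figures~\ref{theta1/fig} and \ref{theta2/fig}.
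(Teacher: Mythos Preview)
Your plan is sound in outline, but the paper takes a shorter and rather different path. Rather than verifying naturality of $X$ and $\hat X$ with respect to each generator of $\AlgD$ by hand, the paper exploits two naturality results already in place: relation~\hrel{t1'} (naturality of $\Theta'$, Lemma~\ref{theta/thm}) and relation~\hrel{d10} (the adjoint action intertwines every morphism of $\Algf$, Proposition~\ref{intertwining/thm}). From these, relations~\hrel{c18}, \hrel{c25}, and \hrel{u2} fall out of~\hrel{t1'} (applied with $F$ a decorated crossing, respectively $F$ built from $U_k$), relation~\hrel{c24} falls out of~\hrel{d10} (applied to the adjoint-action form of $\hat X$), and \hrel{c17}, \hrel{c19} follow by symmetry; only \hrel{c16}, \hrel{c22}, \hrel{c23}, and \hrel{u1} require separate short diagrammatic arguments. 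This is more economical than your generator-by-generator check, because $\Theta'$ and $\cadjH$ each package many individual naturality statements into a single already-proved one. In particular, you invoke Lemma~\ref{theta/thm} only for the $U_k$ case, whereas the paper uses it as the main engine for the $\coprH$-sliding relations as well.

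One point in your proposal needs correction: your remark that the two ribbon structures share ``the twist built from $\ribmorH$ via Proposition~\ref{autonomous/thm}'' is not right. Relations~\hrel{c16}--\hrel{c17} and \hrel{c22}--\hrel{c23} are Reidemeister-I--type kink identities, and they compute \emph{different} twists for the two structures (for instance, one decorated kink is trivial while another equals $\ribmorH^{-2}$, as is used later in the proof of Proposition~\ref{sliding/thm}). Each twist is determined by its own braiding $X$ or $\hat X$ through the usual formula involving $\ev$, $\coev$, and that braiding, not by the ambient ribbon morphism of $\Algf$. Your scheme would eventually produce these kink identities as outputs, but you should not expect them both to coincide with $\ribmorH$.
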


\begin{table}[htb]
 \centering
 \includegraphics{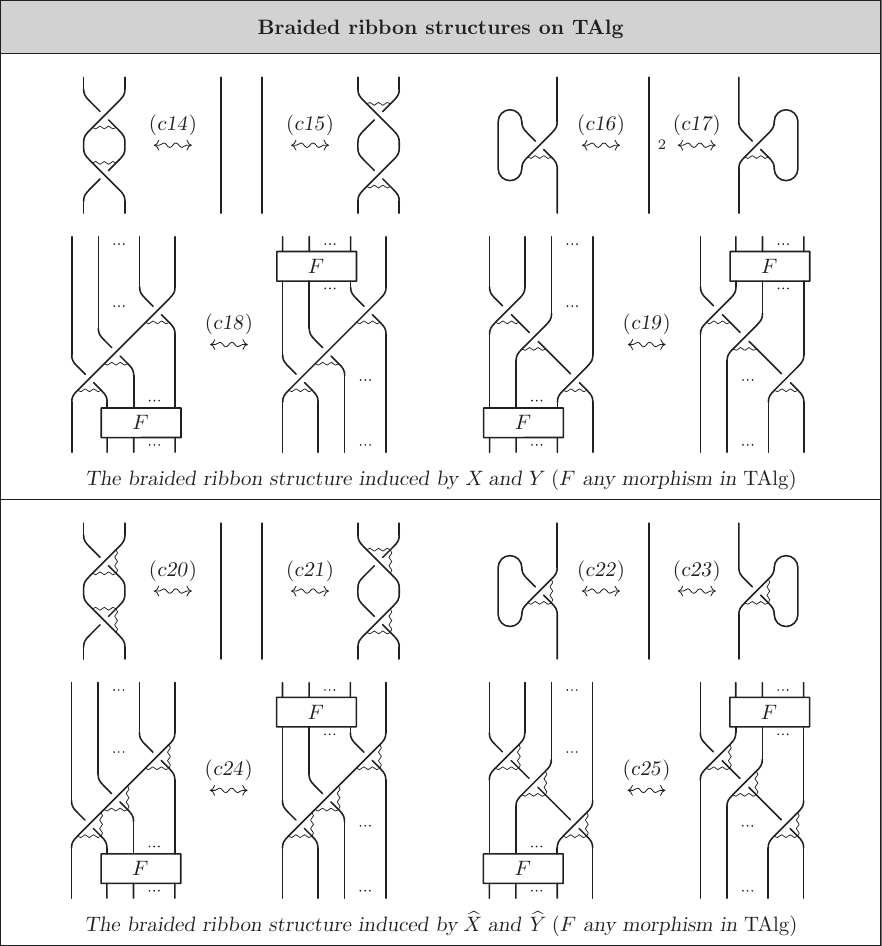}
 \caption{}
 \label{table-decorated-moves/fig}
 \label{E:c14-15} \label{E:c14} \label{E:c15} \label{E:c16} \label{E:c17}
 \label{E:c18} \label{E:c19} \label{E:c20-21} \label{E:c20} \label{E:c21}
 \label{E:c22} \label{E:c23} \label{E:c24} \label{E:c25}
\end{table}

\begin{table}[htb]
 \centering
 \includegraphics{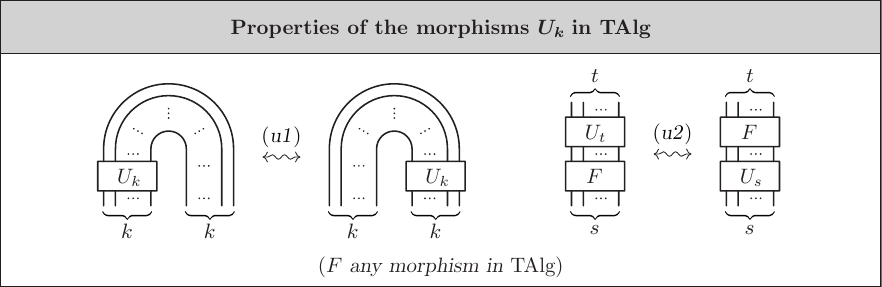}
 \caption{}
 \label{table-decorated-moves-3/fig}
 \label{E:u1} \label{E:u2}
\end{table}

\FloatBarrier

\begin{proof}
The evaluation and coevaluation morphisms of $\AlgD$ are induced by the ones of $\Algf$. Relations~\hrel{c14-15} and \hrel{c20-21} have been proved in Lemma~\ref{decorated-sym-inv/thm}. Relations~\hrel{c16}, \hrel{c22}, and \hrel{c23} are proved (in this order) in Figures~\ref{isotopy2a/fig} and \ref{isotopy2b/fig}, while \hrel{c17} follows by symmetry from \hrel{c16}. Relation~\hrel{c24} follows from \hrel{d10} and Figure~\ref{isotopy3a/fig}, while relations~\hrel{c18} and \hrel{c25} follow from \hrel{t1'} and from Figure~\ref{isotopy3b/fig}. Then, \hrel{c19} follows by symmetry from \hrel{c18}. Relation~\hrel{u1} is proved in Figure~\ref{isotopy4/fig}, while \hrel{u2} is a direct consequence of \hrel{t1'}.
\end{proof}

\begin{figure}[htb]
 \centering
 \includegraphics{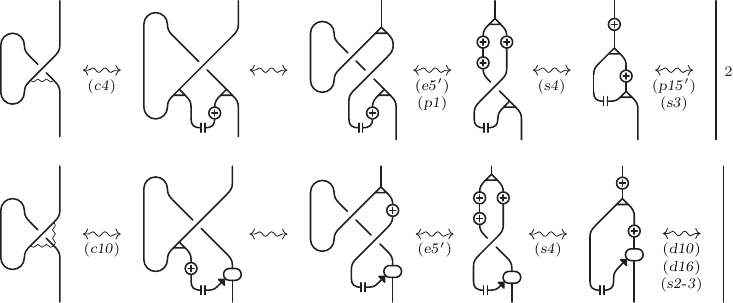}
 \caption{Proof of relations~\hrel{c16} and \hrel{c22}.}
 \label{isotopy2a/fig}
\end{figure}

\begin{figure}[htb]
 \centering
 \includegraphics{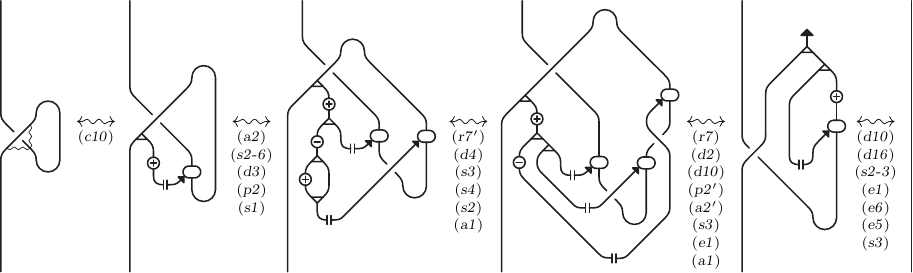}
 \caption{Proof of relation~\hrel{c23}.}
 \label{isotopy2b/fig}
\end{figure}

\begin{figure}[htb]
 \centering
 \includegraphics{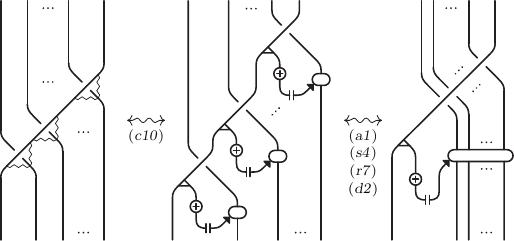}
 \caption{Proof of relation~\hrel{c24}.}
 \label{isotopy3a/fig}
\end{figure}

\begin{figure}[htb]
 \centering
 \includegraphics{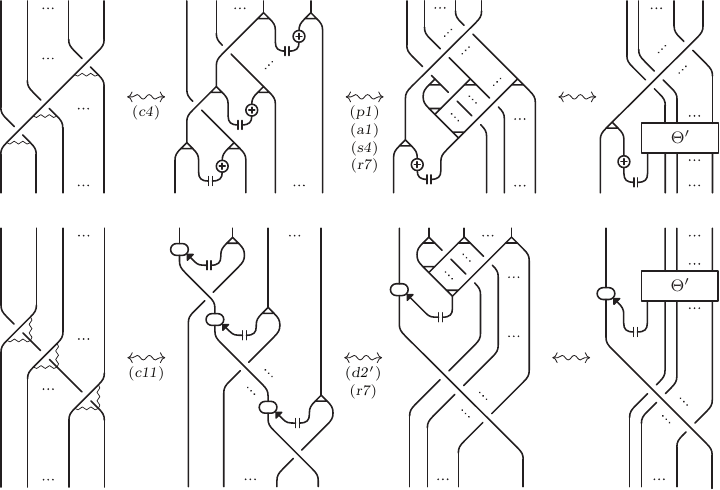}
 \caption{Proof of relations~\hrel{c18} and \hrel{c25}.}
 \label{isotopy3b/fig}
\end{figure}

\begin{figure}[htb]
 \centering
 \includegraphics{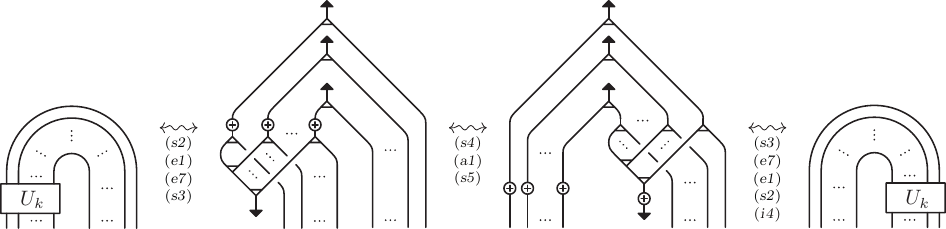}
 \caption{Proof of relation~\hrel{u1}.}
 \label{isotopy4/fig}
\end{figure}

\FloatBarrier

\begin{definition}\label{gen-istop-dec/def} \label{E:K} \label{E:Khat}
We say that two diagrams representing the same morphism in $\AlgD$ are related by a \textit{generalized isotopy move of type~\rel{K}} if they can be obtained from each other by a sequence of moves~\hrel{e3-3'} in Table~\ref{table-BPHopf-prop1/fig} and moves~\hrel{c14} to \hrel{c19} in Table~\ref{table-decorated-moves/fig}, and we say that they are related by a \textit{generalized isotopy move of type~\rel{\hatK}} if they can be obtained from each other by a sequence of moves~\hrel{e3-3'} in Table~\ref{table-BPHopf-prop1/fig} and moves~\hrel{c20} to \hrel{c25} in Table~\ref{table-decorated-moves/fig}. Notice that we also include in the family of generalized moves of type~\rel{K} (respectively, \rel{\hatK}) the substitution of a negative right or left kink by $\tau^{-2}$ (respectively, by $\id$) and the naturality of the inverse braiding morphism, that is, the analogue of moves~\hrel[c16]{c16-19} (respectively, \hrel[c22]{c22-25}) where $X$ (respectively, $\hat X$) has been replaced by $Y$ (respectively, $\hat Y$).
\end{definition}


\subsection{Bi-ascending states of link diagrams}
\label{bias/sec}

In \cite{BP11}, a key ingredient for inverting $\Phi$ was the notion of vertically trivial state of a link diagram. In the present more algebraic context, based on the presentation of the category $\KTf$ provided by Proposition~\ref{4KT/thm}, it seems convenient to replace that notion with the completely diagrammatic notion of bi-ascending state.

As usual, we represent a link $L \subset \R^3 \subset \R^3 \cup \{\infty\} \cong \bbS^3$ by a planar \textsl{diagram} $D \subset \R^2$ consisting of the orthogonal projection of the link onto $\R^2$, which can be assumed to be self-transverse after a suitable horizontal (that is, height-preserving) isotopy, together with a \textsl{crossing state} for each double point, encoding which arc passes over the other. Such a diagram $D$ uniquely determines the link $L$ up to vertical isotopy. On the other hand, link isotopy can be represented in terms of diagrams by crossing-preserving isotopy in $\R^2$ and Reidemeister moves.

It is well-known that any link diagram $D$ can be transformed into the diagram $D'$ of a trivial link by a suitable sequence of crossing changes, that is, by inverting the state of some of its crossings. We say that $D'$ is a \textsl{trivial state} of $D$.

The simplest trivial states of a link diagram $D$, are given by so-called ascending states (see \cite{Li97}). Bi-ascending states of $D$ form a larger family of trivial states of $D$ satisfying the following crucial property (which does not hold for ascending states): any two bi-ascending states of the same knot diagram can be related by a finite sequence of bi-ascending states, each obtained from the previous by inverting a single crossing (see Proposition~\ref{ba-states/thm} below). Before defining the notion of bi-ascending diagram, we need to introduce some terminology.

Given a diagram $D$ of a link $L = L_1 \cup \dots \cup L_n \subset \R^3$, where each $L_i \subset L$ is a component of $L$, we write $D = D_1 \cup \dots \cup D_n \subset \R^2$, with each $D_i \subset D$ being the subdiagram of $D$ corresponding to $L_i$, and we refer to each $D_i \subset D$ as a \textsl{component} of $D$. Similarly, by an \textsl{arc} $A \subset D$ we mean any part of $D$ corresponding to the projection to an arc in $L$ (not only the arcs ending at two consecutive under-crossings, as usual). Moreover, we say that $A$ is an \textsl{ascending} arc with respect to a given orientation if, at each of its self-crossings, the subarc that comes first passes under the other one.

\begin{definition}\label{ba-diagram/def}
A link diagram $D$ is said to be \textsl{bi-ascending} if it is possible to number its components $D_1, \dots, D_n$ and to choose on each $D_i$ an orientation and two distinct points $p_i$ and $q_i$ away from the crossings of $D$ in such a way that, if we denote by $A_i^\pm$ the two oriented arcs from $p_i$ to $q_i$ in $D_i$ (with the sign $+$ for the arc whose orientation coincides with the chosen one for $D_i$), the following properties hold:
\begin{enumerate}
\item[\(a)] $D_i$ crosses always over $D_j$, for every $1 \leqs i < j \leqs n$;
\item[\(b)] $A_i^+$ crosses always over $A_i^-$, for every $1 \leqs i \leqs n$;
\item[\(c)] $A_i^\pm$ are both ascending arcs, for every $1 \leqs i \leqs n$.
\end{enumerate}
\end{definition}

We note that the crossings of a bi-ascending diagram $D$, as specified in the definition, are compatible with a height function which vertically separates the components, and whose restriction to each component $D_i$ has a single local minimum at $p_i$ and a single local maximum at $q_i$. Therefore, any bi-ascending diagram represents a trivial link. In particular, bi-ascending diagrams whose arcs $A_i^-$ form no crossing coincide with ascending ones.

In the following, we simply refer to a bi-ascending trivial state of a link diagram $D$ as a \textsl{bi-ascending state} of $D$. Given a link diagram $D$, for any choice of the numbering and orientations of its components $D_i$ and of different non-crossing points $p_i$ and $q_i$ along each $D_i$, there is a unique bi-ascending state $D'$ of $D$ which satisfies the properties in the above definition, taking into account the canonical correspondence between the components $D_i$ of $D$ and the components $D_i'$ of $D'$. On the other hand, different choices can lead to the same bi-ascending state.

The next proposition is an analog of \cite[Proposition~1.1.3]{BP11} for bi-ascending states of a diagram.

\begin{proposition}\label{ba-states/thm}
Any two bi-ascending states $D'$ and $D''$ of a link diagram $D$ are related by a finite sequence $D^{(0)}, D^{(1)}, \dots, D^{(k)}$ of bi-ascending states of $D$ such that $D^{(0)} = D'$, $D^{(k)} = D''$ and, for every $1 \leqs i \leqs k$, the state $D^{(i)}$ is obtained from $D^{(i-1)}$ either by changing all the crossings between two vertically adjacent components, or by changing a single self-crossing of one component. Moreover, in the second case, the singular diagram between $D^{(i-1)}$ and $D^{(i)}$ (whose changing crossing has been replaced by a singular point) is a bi-ascending diagram of a trivial singular link. Namely, its components are vertically separated, meaning that they satisfy the property \(a) of Definition \ref{ba-diagram/def}, and are all bi-ascending diagrams of unknots but one, which is the 1-point union of two vertically separated bi-ascending diagrams of unknots.
\end{proposition}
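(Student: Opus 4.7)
The proof plan is to reduce to the single-component case via type-(a) moves and then handle each component through a continuous-deformation argument on the bi-ascending labels, verifying the singular-diagram condition at each type-(b) step. Since the symmetric group on $n$ letters is generated by adjacent transpositions, a sequence of type-(a) moves --- each swapping two vertically adjacent components and flipping all of their mutual crossings --- brings the ordering of the components of $D'$ into agreement with that of $D''$; the intermediate states are bi-ascending with respect to the updated ordering and the unchanged orientations and base points. Because crossings between distinct components are entirely determined by the ordering, the resulting state differs from $D''$ only in self-crossings, and since type-(b) moves affect self-crossings of a single component, the problem reduces to the following single-component sub-claim: any two bi-ascending states of a single knot diagram are connected by single self-crossing flips through bi-ascending states.

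For the sub-claim, I would analyze how the bi-ascending structure depends on the labels $(p_i, q_i, o_i)$ through a case-by-case study. The key observation is that continuously sliding $p_i$ forward, or $q_i$ backward in the orientation $o_i$, past a single strand at a self-crossing $c$ preserves both the current crossing state and its bi-ascendingness with respect to the updated labels: the transfer of one strand between $A_i^+$ and $A_i^-$ is exactly compensated by the corresponding change in ascending order at $c$, as a direct check of all subcases (the other strand of $c$ lying in the old $A_i^+$, in the old $A_i^-$ on the near side of $q_i$, or on the far side) confirms. In contrast, sliding $p_i$ backward or $q_i$ forward past a strand is \emph{obstructed}: bi-ascendingness can be restored only by flipping the affected self-crossing, and this is precisely a type-(b) move between two bi-ascending states. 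Using the free motions, any bi-ascending state of the component can be brought to the form where $A_i^-$ is a tiny arc near $q_i$ containing no crossings, and the state then coincides with an ordinary ascending state of the knot from $p_i$; two such ascending states (from different base points, or different orientations) are connected by single crossing flips obtained by sliding the base point past strands one at a time, with orientation reversal handled by interlacing these flips with free label rearrangements.

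The structural condition on the intermediate singular diagram is then verified as follows. When the self-crossing $c$ of $D_i$ is replaced by a node, the component $D_i$ becomes the one-point union of the two subarcs of $D_i$ separated by $c$; at the moment of the flip one may take $p_i$ and $q_i$ to lie just on either side of the node in the orientation $o_i$, so that each subarc inherits a bi-ascending structure making it an unknot, while the two subarcs are vertically separated by the height function associated with the bi-ascending structure on $D_i$. The remaining components $D_j$ with $j\ne i$ are unchanged bi-ascending unknots, vertically separated from $D_i$ and from one another by the common ordering, exactly matching the singular configuration asserted by the proposition. The main obstacle is the orientation-reversal step in the single-component sub-claim, where one must exhibit an explicit sequence of single self-crossing flips whose intermediate states remain bi-ascending; everything else is a matter of systematic continuous-deformation bookkeeping of the labels $(p_i, q_i, o_i)$.
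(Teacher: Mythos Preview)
Your approach is essentially the same as the paper's: reduce to the single-component case via adjacent transpositions, then connect bi-ascending states of a knot diagram by sliding $p$ and $q$ past crossings one at a time, passing through ascending states to absorb the orientation change. Two points where the paper's treatment is sharper than your sketch:

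\medskip

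(1) Your claim that sliding $p_i$ forward or $q_i$ backward in the orientation is always ``free'' is not correct as stated. Whether the crossing state is preserved when the moving point passes a crossing depends on whether the \emph{other} strand at that crossing lies in $A^+$ or in $A^-$, not on the direction of motion of the moving point. The paper's four-case analysis is organized by this relative position (two cases force a single crossing change, two do not), and you will find your direction-based dichotomy fails once you actually run the subcase check you allude to.

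\medskip

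(2) Orientation reversal, which you flag as the main obstacle, is dispatched in one line in the paper: once you have reduced to an ascending state (your ``$A_i^-$ is a tiny crossing-free arc''), reversing the orientation swaps $A^+$ and $A^-$ and swaps $p$ with $q$, but since there are no crossings between $A^+$ and $A^-$, property~(b) is vacuous and the induced crossing state is literally unchanged. No further flips or interlacing are needed.

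\medskip

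For the singular-diagram clause, the paper does not place both $p_i$ and $q_i$ near the node. Only the moving point is adjacent to the changing crossing; the singular point $s$ itself then serves as one of the two base points for each of the two loops, paired with the surviving $p$ or $q$, and the paper verifies explicitly which loop lies above, and which is ascending versus genuinely bi-ascending.
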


\begin{proof} 
Changing all the crossings between two vertically adjacent components in a bi-ascending state of $D$ has the effect of transposing those components. Then, by iterating this kind of operation, we can permute components as we want. Therefore, we are left to address the case when $D$ is a knot diagram.

Let $D' \subset \R^2$ be a bi-ascending state of a knot diagram $D$. Then, the crossings of $D'$ are uniquely determined by the choice of an orientation on $D$ and two distinct non-crossing points $p$ and $q$ splitting $D$ into two ascending arcs $A^\pm$ from $p$ to $q$ such that $A^+$ is positively oriented and crosses always over $A^-$.

Let us fix for the moment the orientation, and see what happens to the induced bi-ascending state $D'$ when we move one of the points $p$ and $q$ along $D$ while keeping it distinct from the other. The crossings of $D'$ do not change until the moving point passes through a crossing of $D$, in which case we have one of the four situations depicted in Figure~\ref{ba-states/fig}, depending on which is the moving point ($p$ on the left-hand side of the figure, $q$ on the right-hand side) and what is the relative position of the other point along the diagram. As a simple inspection shows, in the two top cases only the crossing which is passed through by the moving point changes in $D''$, while no crossing change occurs in the two bottom cases.

\begin{figure}[htb]
 \centering
 \includegraphics{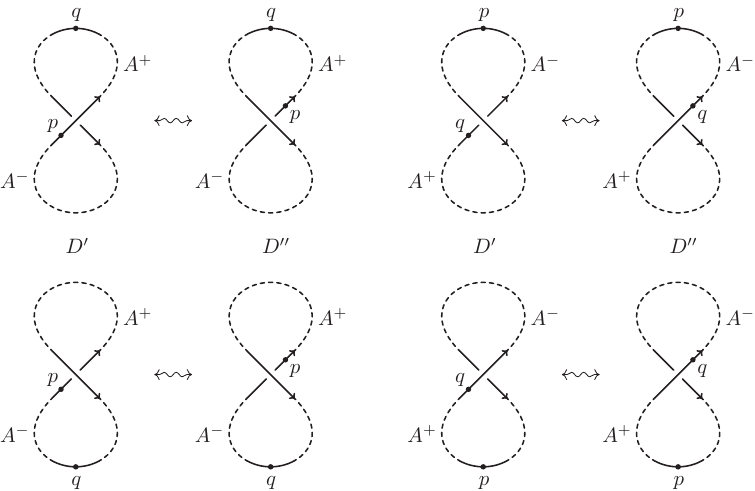}
 \caption{Letting $p$ or $q$ pass through a single crossing of $D$.}
 \label{ba-states/fig}
\end{figure}

This way, we can relate any two bi-ascending states of $D$ determined by the same orientation and by different choices of the points $p$ and $q$. In particular, we can relate any bi-ascending state to an ascending one.

Concerning the orientation of $D$, it is enough to observe that its inversion does not affect the induced state $D'$ when this is an ascending state. In fact, in this case
the interchange of the two arcs $A^+$ and $A^-$ is irrelevant, since there is no crossing between them, and hence property \(b) in Definition \ref{ba-diagram/def} is vacuous.

For the second part of the statement, let $D'$ be a bi-ascending state of $D$, and suppose that we pass from $D'$ to a bi-ascending state $D''$ of $D$ that differs from $D'$ by a single self-crossing change of a single component. We can focus on the changing component and forget the others, that is, we can assume that $D$ is a knot diagram. Moreover, according to Definition~\ref{ba-states/thm} and to the proof of the first part of the statement above, we can also assume that $D'$ and $D''$ are bi-ascending states of $D$ determined by the same orientation and by different choices for the points $p$ and $q$, and that they are related as in the top line of Figure~\ref{ba-states/fig}.

In both cases, once the changing crossing is replaced by a singular point $s$, the resulting loops are easily seen to be bi-ascending, with one always crossing over the other. Namely, if the moving point is $p$, then the upper loop is bi-ascending and determined by $s$ and $q$ with the inherited orientation, while the lower one is ascending and starting from $s$ with the opposite orientation. On the other hand, if the moving point is $q$, then the upper loop is ascending and starting from $s$ with the inherited orientation, while the lower one is bi-ascending and determined by $p$ and $s$ with the opposite orientation.
\end{proof}


\subsection{Definition of the inverse functor \texorpdfstring{$\barPhi: \KTf \to \Algf$}{from 4KT to 4Alg}}
\label{FK/sec}

Given a Kirby tangle $T: E_{2s} \to E_{2t}$ in $\KTf$, we will now explain how to construct a morphism $\barPhi(T): H^s \to H^t$ in $\Algf$ whose image $\Phi(\barPhi(T))$ under the functor $\Phi$ is the \qvlnc{2} class of $T$. The construction depends on some choices, but in the next subsections we will show that different choices lead to equivalent morphisms in $\Algf$, so that $\barPhi(T)$ depends only on the Kirby tangle $T$ up to \dfrmtns{2}. Moreover, the assignment respects compositions and identities, therefore it defines a functor $\barPhi: \KTf \to \Algf$.

\begin{figure}[b]
 \centering
 \includegraphics{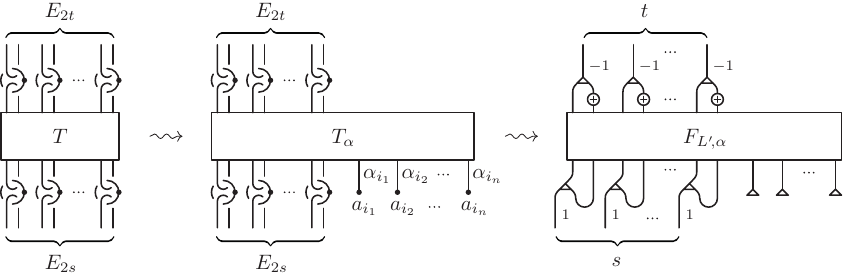}
 \caption{Outline of the construction of $\barPhi(T)$.}
 \label{kirby-hopf01/fig}
\end{figure}

We represent $T$ by a strictly regular planar diagram, which is a composition of tensor products of elementary diagrams in Table~\ref{table-Ktangles/fig}, and, up to composing it on the top and on the bottom with the diagrams in Figure \ref{KT-morph/fig} representing the identity morphisms, we will assume that $T$ is of the form represented in the leftmost diagram of Figure~\ref{kirby-hopf01/fig} (see Proposition~\ref{K-pres/thm} and Remark~\ref{KTbis/rmk}). Let $B_1, \dots, B_m \subset \left] 0,1 \right[^2$ be the planar projections of the disjoint disks spanned by the dotted unknots $U_1, \dots, U_m $ of $T$, and let $L$ be the strictly regular planar subdiagram which represents the blackboard framed link formed by the closed undotted components of $T$. Then, the construction of the morphism $\barPhi(T)$ is achieved by the following steps, illustrated in Figure~\ref{MainExample/fig}):

\begin{enumerate}

\item[\(1)] Choose a numbering $L = L_1 \cup \dots \cup L_n$ of the components of $L$ and, on each component $L_i$, choose both an orientation and a pair of points $p_i$ and $q_i$ inducing a bi-ascending state $L' = L'_1 \cup \dots \cup L'_n$ of $L$. In particular, we require that $L_i'$ crosses always over $L_j'$ for any $1 \leqs i < j \leqs n$, and that the positively oriented ascending arc of $L'_i$ determined by $p_i$ and $q_i$ crosses over the negatively oriented one for any $1 \leqs i \leqs n$ (see Definition~\ref{ba-diagram/def}). Mark with small gray disks $C_1, \dots, C_\ell$ the crossings of $L$ that have to be inverted in order to get $L'$ (see Figure~\ref{kirby-hopf02/fig}).

\begin{figure}[htb]
 \centering
 \includegraphics{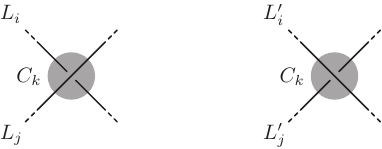}
 \caption{The disk $C_k$ and the diagrams $D$ and $D'$ at a changing crossing, 
 $i \leqs j$.}
 \label{kirby-hopf02/fig} 
\end{figure}

\item[\(2)] Fix $n$ horizontally aligned points $a_1,a_2, \dots, a_n$ on the bottom right of $T$ (their numbering is not required to respect the horizontal order), and choose $n$ embedded arcs $\alpha_i: [0,1] \to [0,1]^2$ such that $\alpha_i(0) = a_i$ and $\alpha_i(1) = b_i \in L_i$. The arcs $\alpha_i$ are required to form regular crossings both with $L$ and among themselves, with crossing states that can be arbitrarily chosen (see the middle diagram in Figure~\ref{kirby-hopf01/fig}). Assume also that each $\alpha_i$ avoids the crossings of $L$, the points $p_i$ and $q_i$, the disks $B_1, \dots, B_m$, and local maxima and minima of $L$ in the plane diagram. Since $L_i'$ is a bi-ascending state of $L_i$, the points $p_i$, $q_i$, and $b_i$ divide $L_i$ in three arcs $L_i = L_i^1 \cup L_i^2 \cup L_i^3$, numbered in such a way that either $b_i=L_i^1 \cap L_i^2$ or $b_i=L_i^2 \cap L_i^3$ and that, if we denote by $(L_i^j)'$ the corresponding arcs of  $L_i'$, then $(L_i^j)'$ crosses always over $(L_i^k)'$ if $j < k$ (see Figure~\ref{choice-alpha/fig}, where the arrows indicate the preferred orientation of the bi-ascending state of $L_i$). For every $1 \leqs i \leqs n$, set $L_{i,\alpha} = L_i \cup \alpha_i$ and $L'_{i,\alpha} = L'_i \cup \alpha_i$. Furthermore, set $\alpha = \{ \alpha_1, \ldots, \alpha_n \}$, $L_{\alpha} = L_{1,\alpha} \cup \dots \cup L_{n,\alpha}$, and $L'_{\alpha} = L'_{1,\alpha} \cup \dots \cup L'_{n,\alpha}$. Then, for every $1 \leqs i \leqs n$, mark with small gray disks as above the crossings formed by the arc $\alpha_i$ and by $L$ in which one of the following things happen:
\begin{enumerate}
\item[\(a)] $\alpha_i$ crosses either under $L_{j,\alpha}$ for $i < j$ or over $L_{j,\alpha}$ for $i > j$;
\item[\(b)] $\alpha_i$ crosses either under $L_i^2 \cup L_i^3$ or over $L_i^1$ with $b_i = L_i^1 \cap L_i^2$;
\item[\(c)] $\alpha_i$ crosses either under $L_i^3$ or over $L_i^1 \cup L_i^2$ with $b_i = L_i^2 \cap L_i^3$.
\end{enumerate}

\begin{figure}[htb]
 \centering
 \includegraphics{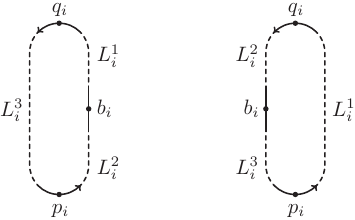}
 \caption{Possible subdivisions of $L_i=L_i^1\cup L_i^2\cup L_i^3$.}
 \label{choice-alpha/fig}
\end{figure}

Finally, without loss of generality, we assume that $\alpha_i$ reaches $L_i$ in a neighborhood of $b_i$ always coming from the bottom right, as shown in Figure~\ref{kirby-hopf03-1/fig} below.

\item[\(3)] Replace each elementary diagram of $L_\alpha$ as indicated by the arrows on the left-hand side of Figures~\ref{kirby-hopf03-1/fig} and \ref{kirby-hopf03-2/fig}, where $f_i = 1 - \wr(L_i')$ with $\wr(L_i')$ denoting the algebraic sum of the signs of all crossings in $L_i'$. In particular, the replacement for a crossing depends on whether it is marked as a changing crossing or not, the image being $X$ or $Y$ in the case of a unmarked crossing, and $\hat X$ or $\hat Y$ otherwise. Replace also the dotted components of $T$ and the identity morphisms lying outside of the $T_\alpha$-labeled box as prescribed by the arrows on the left-hand side of Figure~\ref{kirby-hopf03-3/fig}. Then $\barPhi(T) = \barPhi_{L',\alpha} (T)$ is defined as (see the right-hand side of Figure~\ref{kirby-hopf01/fig})
\[
\barPhi(T) = \barPhi_{L',\alpha}(T) = \Tar^{\otimes t} \circ F_{L',\alpha} \circ \left( \Sou^{\otimes s} \otimes \unitH^{\otimes n}  \right),
\]
where $\Tar = \prodH \circ (\ribmorH^{-1} \otimes \antipH)$, where $\Sou = (\prodH \otimes \ribmorH) \circ (\id \otimes \coev)$ (see Figure~\ref{kirby-hopf03-3/fig}), and where the morphism $F_{L',\alpha}$ belongs to the subcategory $\AlgD$ of $\Algf$.

The notation $\barPhi_{L',\alpha}(T)$ highlights the choice of the bi-ascending state $L'$ and of the arcs $\alpha_i$ for $1 \leqs i \leqs n$, as required by the construction. In Subsection~\ref{barPsi/sec} (see Propositions~\ref{indep-bands/thm}, \ref{vert-exchange/thm}, \ref{vert-st-comp/thm}, and \ref{sliding/thm}) we will show that the \qvlnc{2} class of $\barPhi_{L',\alpha}(T)$ is independent of such choices, and that it only depends on the \qvlnc{2} class of $T$. This will justify the notation $\barPhi(T)$.

\end{enumerate}

\begin{figure}[htbb]
 \centering
 \includegraphics{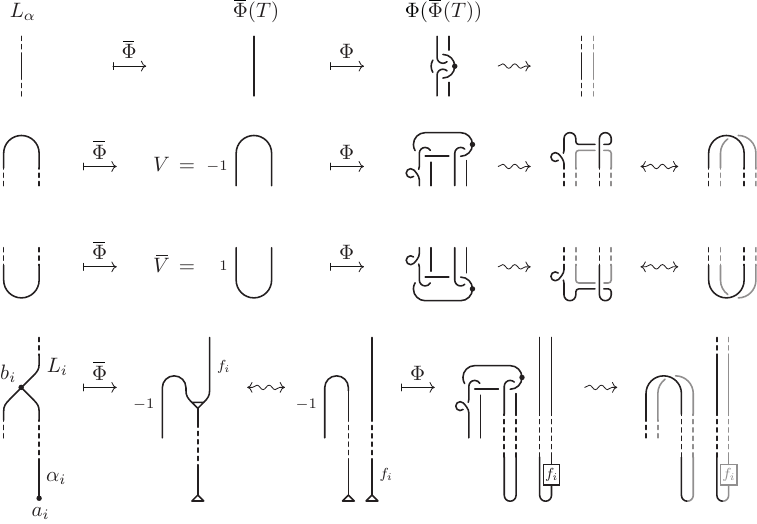}
 \caption{Definition of $\barPhi(T)$ and its image under $\Phi$ -- Part 1 (the box with label $f_i$ stands for $f_i$ kinks).}
 \label{kirby-hopf03-1/fig}
\end{figure}

\begin{figure}[htb]
 \centering
 \includegraphics{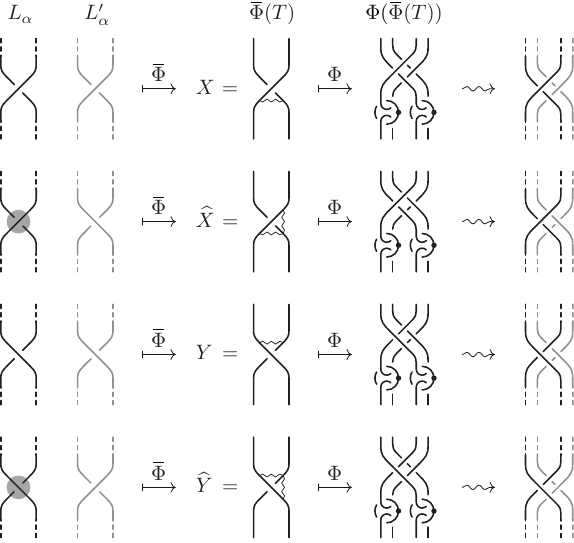}
 \caption{Definition of $\barPhi(T)$ and its image under $\Phi$ -- Part 2.}
 \label{kirby-hopf03-2/fig}
\end{figure}

\begin{figure}[htb]
 \centering
 \includegraphics{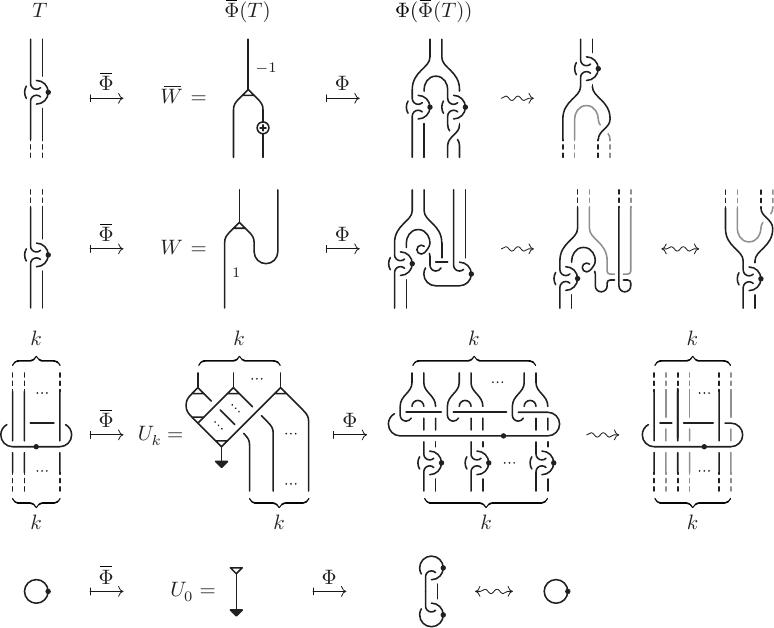}
 \caption{Definition of $\barPhi(T)$ and its image under $\Phi$ -- Part 3 ($k \geqs 1$).
 }
 \label{kirby-hopf03-3/fig}
\end{figure}

\FloatBarrier

Later, in Proposition~\ref{indep-bands/thm}, we will show that the conditions imposed in Step~\(2) on the arcs $\alpha_i$ for $1\leqs i\leqs n$ can be weakened to exclude \(b) and \(c). Nevertheless, considering for now only arcs $\alpha_i$ that satisfy those two conditions as well makes it much easier to see that $\barPhi$ is the inverse of $\Phi$ in the next proposition.
 
\begin{proposition}
\label{full-phi/thm}
$\Phi(\barPhi(T)) = T$ for every Kirby tangle $T$ in $\KTf$.
\end{proposition}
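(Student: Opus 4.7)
The plan is to verify the identity locally, on the explicit decomposition of $T$ used in the construction of $\barPhi(T)$. By design, each elementary piece of the strictly regular diagram of $T$ (dotted unknot, cap, cup, crossing marked or unmarked, identity strand, and the top/bottom building blocks $\Sou$ and $\Tar$) is assigned a morphism in $\Algf$ via the arrows on the left-hand side of Figures \ref{kirby-hopf03-1/fig}, \ref{kirby-hopf03-2/fig}, and \ref{kirby-hopf03-3/fig}. The right-hand sides of those same figures already display the corresponding images under $\Phi$, so the core of the argument is to check by direct inspection that these local images, once stacked and tensored together, reassemble into a Kirby tangle $2$-equivalent to $T$.

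The key observation is that applying $\Phi$ to every elementary morphism in the assignment replaces each single strand of $T_\alpha$ by a pair of parallel strands: a \emph{black} copy (the original arc of $T$) and a \emph{gray} copy (its trivial parallel), with the gray copy projected slightly to the right of the black one, just as in the prototype of Figure \ref{MainExample/fig}. At an unmarked crossing, the morphism $X$ or $Y$ is sent by $\Phi$ to the standard $4$-crossing block of two doubled strands, which is exactly what the doubling of an original crossing of $T$ produces. At a marked crossing, the decorated morphisms $\hat X$ or $\hat Y$ are sent by $\Phi$ to the same $4$-crossing block but with the black/black crossing inverted via a copairing-monodromy; this inversion is precisely the crossing change needed to turn the bi-ascending state $L'$ back into $L$ on the black layer. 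The bands $\alpha_i$ correspond to the insertion of the $\Theta$- and $U_k$-type morphisms, which under $\Phi$ become $2$-handles linking the gray doubles to auxiliary dotted unknots created by the $\unitH$'s in the bottom tensor factor of $\Sou^{\otimes s}\otimes\unitH^{\otimes n}$. The choice of framing exponent $f_i = 1-\wr(L_i')$ on the powers of $\ribmorH^{\pm 1}$ contributed by $\Sou$ produces exactly the number of kinks needed so that, after removing the gray copies, the blackboard framing on $L_i$ matches the one in $T$.

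Once this local picture is in place, the argument finishes with two observations. First, each gray trivial copy $L_i''$ is an unknot that pierces exactly one of the auxiliary dotted unknots produced by $\unitH$, and it does not pierce any other dotted disk; therefore operation (c) of Definition \ref{kt-equivalence/def} allows us to cancel every pair $(L_i'', \text{auxiliary dot})$ simultaneously. Second, the top building blocks $\Tar$ together with the endpoints of the open components in $\Sou$ rebuild the admissibility data at levels $0$ and $1$, matching the source and target of $T$. After performing the $n$ simultaneous $1/2$-cancellations and an ambient isotopy that straightens the $\alpha_i$-bands, what remains is precisely the original strictly regular diagram of $T$, with framings corrected by the $\ribmorH^{f_i}$-kinks so as to restore the original $\wr(L_i)$.

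The main obstacle to executing this plan cleanly is the bookkeeping: one has to track simultaneously the orientation of $L'_i$ (which dictates $X$ versus $Y$, and $\hat X$ versus $\hat Y$), the position of the band attachment point $b_i$ within the partition $L_i = L_i^1\cup L_i^2\cup L_i^3$ from Figure \ref{choice-alpha/fig} (which dictates the marking rules (b) and (c) of Step~(2)), and the signed writhe contribution of the kinks induced on the gray copy by the bi-ascending state. The compatibility of these data is exactly what the decorations of the crossings $\hat X, \hat Y$ and the morphism assignment to caps and cups in Figures \ref{kirby-hopf03-1/fig} and \ref{kirby-hopf03-2/fig} are engineered to guarantee; verifying this compatibility elementwise is the main task, but each verification is a local diagrammatic comparison and uses only the definition of $\Phi$ on the generators of $\Algf$.
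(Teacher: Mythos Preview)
Your overall picture (doubling each strand into a black copy reproducing $L$ and a gray copy reproducing the bi-ascending state $L'$) is the right one, and matches the paper. But the step that is supposed to remove the gray doubles is wrong, and this is a genuine gap.

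You write that each gray copy $L_i''$ is an unknot piercing exactly one auxiliary dotted component coming from $\unitH$, and that you can therefore delete the pair via operation~(c) of Definition~\ref{kt-equivalence/def}. This fails because after applying $\Phi$ the black and gray copies of $L_i$ are \emph{not} separate undotted components: they are band-connected into a single closed curve. Concretely, the image under $\Phi$ of the attaching point $b_i$ (last line of Figure~\ref{kirby-hopf03-1/fig}) and the image of the bottom unit $\unitH$ both merge the black and gray strands, so what you called $L_i''$ is just one arc of a single component whose other arc is the original $L_i$. Operation~(c) requires a \emph{closed} undotted component piercing the dotted disk exactly once and nothing else; a sub-arc of a larger component does not qualify, and cancelling it would delete the black copy as well.

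The paper handles this by a different mechanism. Before applying $\Phi$, it slides the coproduct at $b_i$ all the way down along $\barPhi(\alpha_i)$ (using \hrel{q2}, \hrel{q3}, \hrel{c18}, \hrel{c19}, \hrel{c24}, \hrel{c25}, \hrel{u2}) until it hits the unit, where \hrel{a7} splits it into two units. This turns the single arc $\alpha_i$ into a genuine band $\beta_i$. After $\Phi$, the picture is then: the original component $L_i$, isotoped by pulling a tongue down through $\beta_i$, band-summed (via a short band $\gamma_i$ at the bottom) with its bi-ascending parallel $L'_{\beta,i}$. Since $L'_{\beta,i}$ is bi-ascending it bounds a disk disjoint from everything else, so it can be \emph{isotoped} to a point and the band $\beta_i$ retracted. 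No $1/2$-handle cancellation is used at this stage (the local cancellations that do occur are absorbed into simplifying the image of each elementary piece, not into removing the gray doubles).

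Two smaller points: your description of the marked crossings is inverted (at a marked crossing the \emph{black} layer keeps the crossing of $L$ and the \emph{gray} layer carries the opposite crossing, matching $L'$; that is precisely what makes the gray layer trivial); and the morphisms $\Theta_k$ play no role in the definition of $\barPhi(T)$ itself, only in the later proofs of independence.
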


\begin{proof}
Before applying the functor $\Phi$, we modify $\barPhi(T)$ by sliding the coproduct $\coprH$ that appears in the image of the attaching point of each $\alpha_i$ (see the last line in Figure~\ref{kirby-hopf03-1/fig}) along $\barPhi(\alpha_i)$\, until it reaches the unit at its end, and then apply \hrel{a7} in Table~\ref{table-Hopf/fig} to split $\barPhi(\alpha_i)$ into two parallel arcs (see the first two steps in Figure~\ref{kirby-hopf01-1/fig}). We observe that the operation of sliding along $\coev$ and $\ev$ morphisms transforms $\coprH$ into $\tilde{\prodH}$ and vice-versa, as indicated by the arrows in relations~\hrel{q2} and \hrel{q3} in Table~\ref{table-mu/fig},  
while sliding $\coprH$ and $\tilde{\prodH}$ through decorated crossings uses the naturality of the two braided structures of $\AlgD$ given by relations~\hrel[c18]{c18-19} and \hrel[c24]{c24-25} in Table~\ref{table-decorated-moves/fig}. The resulting morphism, denoted by $F_{L',\beta}$, still lies in the subcategory $\AlgD$ of $\Algf$.

\begin{figure}[htb]
 \centering
 \includegraphics{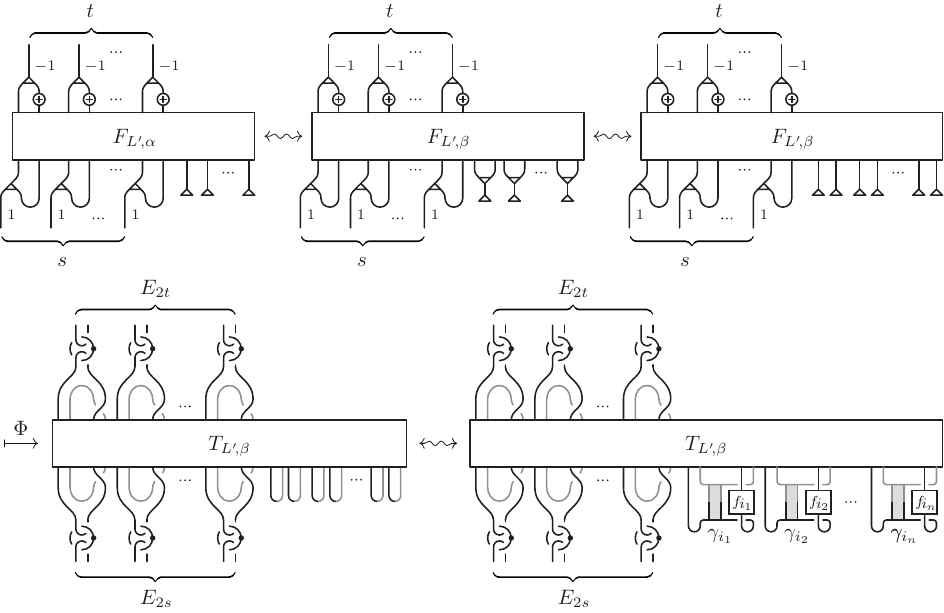}
 \caption{$\Phi \circ \barPhi(T)$.}
 \label{kirby-hopf01-1/fig}
\end{figure}

Since $\Phi$ is a monoidal functor, the morphism $\Phi(\barPhi(T))$ is given by the corresponding composition of tensor products of diagrams represented on the right-hand side of Figures~\ref{kirby-hopf03-1/fig}, \ref{kirby-hopf03-2/fig}, and \ref{kirby-hopf03-3/fig}, where the rightmost diagrams are obtained from the previous ones by \hndl{2} slides and \hndl{1/2} cancellations.

Comparing $T$ and $\Phi(\barPhi(T))$, we observe the following.
\begin{itemize}
\item Each component $L_i$ has been isotoped by pulling a small arc in a neighborhood of $b_i$ all the way down to the bottom-right part of the diagram through a narrow blackboard-parallel band $\beta_i$ obtained by doubling $\alpha_i$. Denote by $L_{\beta} = L_{\beta,1} \cup \dots \cup L_{\beta,n}$ the resulting link diagram. Observe that, in Step~\(2) above, the signed crossings between $\alpha_i$ and $L_{\alpha}$ have been chosen in such a way that, by inverting both them and the signed crossings identified in Step~\(1), we obtain a bi-ascending state $L_\beta'= L_{\beta,1}' \cup \dots \cup L_{\beta,n}'$ of $L_{\beta}$ with respect to the same choice of numbering, orientations, and points $p_i$ and $q_i$.
\item The link $L_\beta$, represented in black in the rightmost diagrams in Figures~\ref{kirby-hopf03-1/fig}, \ref{kirby-hopf03-2/fig}, and \ref{kirby-hopf03-3/fig}, has been ``doubled'' by a copy of the trivial link, represented by the bi-ascending diagram $L'_\beta$ in gray, which lies below the original Kirby tangle $T$. Moreover, we observe that, by definition, $f_i$ compensates the contribution to the (blackboard) framing of $L_{\beta,i} \#_{\gamma_i} L'_{\beta,i}$ due to $\wr(L'_i)$ and to the negative kink near $\gamma_i$, and hence we are left with $\wr(L_{\beta,i} \#_{\gamma_i} L'_{\beta,i}) = \wr(L_{\beta_i}) = \wr(L_i)$, which is the original (blackboard) framing of $L_i$. 
\item Each component $L'_{\beta,i}$ is connected to the corresponding component $L_{\beta,i}$ by a band $\gamma_i$, shown in gray in the bottom-right part of Figure~\ref{kirby-hopf01-1/fig}, that merge the ends of the two copies of $\beta_i$ in $L_{\beta,i}$ and $L'_{\beta,i}$.
\end{itemize}
A three-dimensional view of $\Phi(\barPhi(T))$ in the spacial case when the points $a_i$ are ordered from left to right is presented in Figure~\ref{kirby-hopf05/fig}. Therefore, the Kirby tangle $\Phi(\barPhi(T))$ can be isotoped to the original one $T$ by first contracting all the unknots of $L'_{\beta,i}$, and then retracting the corresponding bands $\beta_i$ one by one.
\end{proof}

\begin{figure}[htb]
 \centering
 \includegraphics{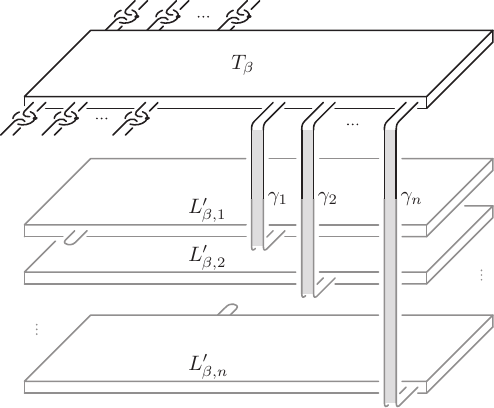}
 \caption{Three-dimensional view of $\Phi(\barPhi(T))$ in the spacial case when the points $a_i$ are ordered from left to right. Notice that, for levels from $1$ to $n$, there is either a single cap on the top (like for level $n$ in this example), or a single cup on the bottom (like for level $1$ in this example), or nothing at all (like for level $2$ in this example), depending on whether the corresponding undotted component of the original tangle $T$, before composing with identities, was either open on the top, or open on the bottom, or closed, respectively.}
 \label{kirby-hopf05/fig}
\end{figure}

\subsection{The category \texorpdfstring{$\AlgL$}{MAlg}}
\label{AlgL/sec}

As we have seen in the previous subsection, $\barPhi(T)$ encodes algebraically a multiple-level Kirby tangle, and in order to prove that $\barPhi$ is a well-defined functor, we need to develop suitable algebraic tools that allow us to work with such structures. The main idea is to consider a category $\AlgL$ that is similar to $\AlgD$, but whose objects and morphisms carry labels. In other words, objects are tensor products $H_{\underline{i}} = H_{i_1} \otimes H_{i_2} \otimes \ldots \otimes H_{i_k}$ with $i_\ell \geqs 0$ for $0 \leqs \ell \leqs k$, while morphisms are labeled versions of the corresponding morphisms of $\Algf$. The images of the morphisms of $\AlgL$ under $\Phi$ satisfy the same conditions~\(a) and \(b) in Remark~\ref{TAlg/rmk} as morphisms of $\AlgD$, but, in addition, the lower (gray) arc of each undotted component of label $i$ stays above the lower (gray) arc of each undotted component of label $j>i$; in other words, labels denote the ``depth'' of those arcs in the corresponding Kirby tangle.

Here is the formal definition.

\begin{definition}\label{AlgL/def}
We denote by $\AlgLfree$ the monoidal category freely generated by objects $H_i$ for $i \geqs 0$ and by morphisms
\begin{gather*}
 \ev_i : H_i \otimes H_i \to \one \mbox{ for } i \geqs 1 ,\\
 \coprH_i : \one \to H_i \otimes H_i \mbox{ for } i \geqs 1,\\
 \counH_i : H_i \to \one \mbox{ for } i \geqs 1,\\
 \inteH_i : \one \to H_i \mbox{ for } i \geqs 1,\\
 \ribmorH_i : H_i \to H_i \mbox{ for } i \geqs 1,\\
 X_{i,j}, \hat Y_{i,j} : H_i \otimes H_j \to H_j \otimes H_i \mbox{ for } 1 \leqs i \leqs j,\\
 \hat X_{i,j}, Y_{i,j} : H_i \otimes H_j \to H_j \otimes H_i \mbox{ for } i \geqs j \geqs 1,\\
 \Sou_i : H_0 \to H_i \otimes H_i \mbox{ for } i \geqs 1,\\
 \Tar_i : H_i \otimes H_i \to H_0 \mbox{ for } i \geqs 1,\\
 U_{\underline{i}} = U_{i_1,\ldots,i_k} : H_{i_1} \otimes \ldots \otimes H_{i_k} \to H_{i_1} \otimes \ldots \otimes H_{i_k} \mbox{ for } k \geqs 1 \mbox{ and } i_1,\ldots,i_k \geqs 1
\end{gather*}

Let $\Forget: \AlgLfree \to \Algf$ denote the forgetful functor that discards labels; in particular, $\Forget(H_i)=H$ for every $i \geqs 0$, $\Forget(U_{i_1,\ldots,i_k})=U_k$, and each of the remaining generating morphisms is sent by $\Forget$ to the morphism of $\Algf$ carrying the same name without indices. Then, we denote by $\AlgL$ the quotient category $\AlgLfree / \ker \Forget$\footnote{If $F : \calC \to \calC'$ is a functor, then, using the terminology of \cite[Section~II.8]{Ma71}, we denote by $\ker F$ the congruence obtained by setting $f \sim g \in \calC(x,y)$ whenever $F(f) = F(g) \in \calC'(F(x),F(y))$.}. 
\end{definition}

The diagrammatic notation for the morphisms in the image of $\Forget$ is introduced in Figure~\ref{image-F/fig}. In particular, we represent $\Forget(F_{\underline{i},\underline{j}})$ by a box that contains in its lower (respectively upper) part the labels of the string in the source (respectively target) of $F_{\underline{i},\underline{j}}$.

\begin{figure}[htb]
 \centering
 \includegraphics{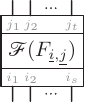}
 \caption{Diagrammatic notation for $\Forget(F_{\underline{i},\underline{j}})$, where $\underline{i}=(i_1,\ldots,i_s)$ and $\underline{j} = (j_1,\ldots,j_t)$.}
 \label{image-F/fig}
\end{figure}

We will now define a family of natural transformations $\Theta^\rmL_k$ for $k \geqs 1$ designed to provide the algebraic analogue of a dotted component that embraces the $k$th level, while passing below the $i$th level, for $0 \leqs i \leqs k-1$, and above the $j$th level, for $j \geqs k+1$.

\begin{definition}\label{theta1/defn}
For all $k \geqs 1$ and $\underline{i} = (i_1,i_2,\ldots,i_\ell)$, with $\ell \geqs 0$ and $i_h \geqs 0$ for every $1 \leqs h \leqs \ell$, the morphisms $\gamma_{\underline{i},k} : H^\ell \otimes H \to H \otimes H^\ell$ and $\Theta^\rmL_{\underline{i},k} : H^\ell \otimes H \to H^\ell$ of $\Algf$ are recursively defined by the following identities:
\begin{gather*}
 \gamma_{\varnothing,k} = \id, \quad \Theta^\rmL_{\varnothing,k} = \counH, \\
 \gamma_{i,k} = \gamma_{(i),k} = 
 \begin{cases}
  \braid & \mbox{ if  } i \leqs k, \\ 
  \hat{X} & \mbox{ if } i > k,
 \end{cases} \quad
 \Theta^\rmL_{i,k} = \Theta^\rmL_{(i),k} =
 \begin{cases}
  \prodH & \mbox{ if } i = k, \\ 
  \id \otimes \counH & \mbox{ if } i \neq k,
 \end{cases} \\
 \gamma_{\underline{i},k} = 
 (\gamma_{i_1,k} \otimes \id_{\ell-1}) \circ (\id \otimes \gamma_{(i_2,\ldots,i_\ell),k}), \\
 \Theta^\rmL_{\underline{i},k} = 
 (\Theta^\rmL_{i_1,k} \otimes \Theta^\rmL_{(i_2,\ldots,i_\ell),k}) 
 \circ (\id \otimes \gamma_{(i_2,\ldots,i_\ell),k} \otimes \id) 
 \circ (\id_\ell \otimes \coprH).
\end{gather*}
Notice that, in order to simplify notation, we use $i$ to denote non only the index $i$, but also the list $(i)$ of length 1.
\end{definition}

The definitions of $\Theta_{\underline{i},k}^\rmL$ and $\gamma_{\underline{i},k}$ are illustrated in Figure~\ref{def-thetaL/fig}. The gray strands drawn inside a box labeled by $\gamma_{\underline{i},k}$ should remind the reader that the corresponding morphism is very closely related to the braiding $c_{\ell,1}$, from which it is obtained by decorating with the pattern of $\hat X$ all crossings involving strands of label $i>k$.

\begin{figure}[htb]
 \centering
 \includegraphics{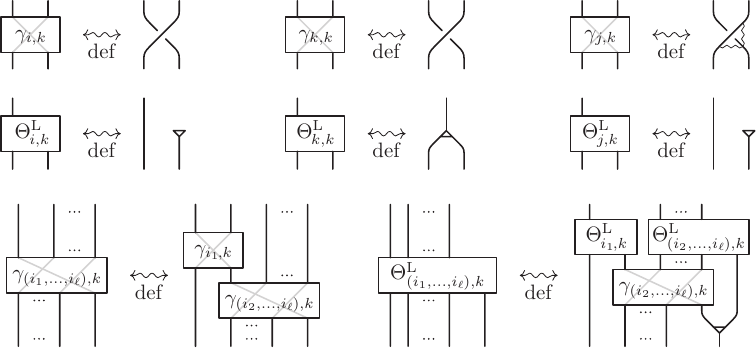}
 \caption{Inductive definition of $\Theta^\rmL_k$ ($i < k < j$).}
 \label{def-thetaL/fig} 
\end{figure}

We denote by $\Theta^\rmL_k$ the collection of morphisms $\{ \Theta^\rmL_{\underline{i},k} \mid \underline{i} \in \N^\ell, \ell \in \N\}$.

\begin{proposition}\label{theta1/thm}
Let $\Forget : \AlgL \to \Algf$ denote the forgetful functor that discards labels.
Then $\Theta^\rmL_k : \Forget \otimes H \Rightarrow \Forget$ defines a natural transformation, meaning that, for every morphism $F_{\underline{i},\underline{j}} : H_{\underline{i}} \to H_{\underline{j}}$ in $\AlgL$, we have (see Figure~\ref{naturality-theta/fig}):
\begin{align*}
 \Theta^\rmL_{\underline{j},k} \circ (\Forget(F_{\underline{i},\underline{j}}) \otimes \id) 
 &= \Forget(F_{\underline{i},\underline{j}}) \circ \Theta^\rmL_{\underline{i},k}.
 \tag*{\hrel{t2}}
\end{align*}
\end{proposition}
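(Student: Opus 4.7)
My plan is to mirror the inductive argument of Lemma~\ref{theta/thm} in the labeled setting. The first step is to observe that the class of morphisms $F_{\underline{i},\underline{j}}$ satisfying \hrel{t2} is closed under composition (trivially) and under tensor product. Closure under tensor product follows by unfolding the recursive definitions of $\Theta^\rmL_{\underline{i},k}$ and $\gamma_{\underline{i},k}$, and then applying the coassociativity axiom \hrel{a3} together with the naturality of the braiding and of the decorated crossings. Hence it suffices to verify \hrel{t2} when $F_{\underline{i},\underline{j}}$ is a generating morphism of $\AlgL$.

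For each generator, the verification proceeds in close analogy with the proof of Lemma~\ref{theta/thm}, but additionally splits according to the comparison between the labels carried by $F_{\underline{i},\underline{j}}$ and the distinguished level $k$. For a single-strand generator whose label $i$ equals $k$, the identity reduces exactly to the corresponding case of \hrel{t1} already established (via \hrel{a5}, \hrel{a6}, \hrel{i2'}, \hrel{r5}, and the arguments of Figure~\ref{theta1/fig}). For a single-strand generator whose label $i$ is different from $k$, the relevant factor of $\Theta^\rmL_{(i),k}$ is the identity tensored with a counit, so that the extra strand passes through $\Forget(F_{\underline{i},\underline{j}})$ as an identity and \hrel{t2} collapses to elementary Hopf algebra identities (\hrel{a6}, \hrel{a8}, and the counit axioms). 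The generators $\Sou_i$ and $\Tar_i$ are handled analogously.

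For the crossings $X_{i,j}, Y_{i,j}, \hat X_{i,j}, \hat Y_{i,j}$, I would replicate the argument of Figure~\ref{theta2/fig}, now with subcases indexed by the three-way comparison between $i$, $j$, and $k$. In each subcase, $\gamma_{(i,j),k}$ specializes to a fixed combination of $\braid$ and $\hat X$, and \hrel{t2} reduces to a corresponding identity among those assembled in Tables~\ref{table-decorated/fig} and \ref{table-decorated-moves/fig}. For the generator $U_{\underline{i}}$, the proof imitates the argument of Figure~\ref{theta1/fig} for $F = U_k$, invoking \hrel{u2} on the strand of level $k$ together with \hrel{u1} and planar isotopy on the remaining strands.

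The main technical burden will be the organized bookkeeping of all label-comparison subcases for the crossings, where the dichotomy $\gamma_{(i),k} = \braid$ for $i \leqs k$ versus $\gamma_{(i),k} = \hat X$ for $i > k$ creates several configurations to examine. However, each individual subcase is an immediate consequence of either a planar isotopy or one of the decorated-crossing identities collected in Theorem~\ref{decorated-moves/thm}, so no argument essentially beyond those already used for Lemma~\ref{theta/thm} is required.
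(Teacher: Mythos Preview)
Your closure-under-tensor-product step has a genuine gap. Unfolding the recursion via Lemma~\ref{theta2/thm}, one sees that passing from \hrel{t2} for $F$ and for $G$ to \hrel{t2} for $F \otimes G$ (or even just for $\id_{H_m} \otimes F$) requires the auxiliary naturality relation
\[
 \gamma_{\underline{j},k} \circ (\Forget(F_{\underline{i},\underline{j}}) \otimes \id)
 = (\id \otimes \Forget(F_{\underline{i},\underline{j}})) \circ \gamma_{\underline{i},k},
\]
which the paper singles out as \rel{g1}. This is \emph{not} what you are calling ``naturality of the braiding and of the decorated crossings'': the morphism $\gamma_{\underline{i},k}$ is a label-dependent mixture of the standard braiding $\braid$ (on strands with label $\leqs k$) and of $\hat X$ (on strands with label $> k$), so neither braiding's naturality alone covers it. Moreover, $\Forget(\Sou_i)$ and $\Forget(\Tar_i)$ contain $\prodH$ and $\antipH$ and therefore do not lie in $\AlgD$, so the naturality of $\hat X$ as a braiding on $\AlgD$ (Theorem~\ref{decorated-moves/thm}) does not apply to them. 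Similarly, for a decorated crossing whose two labels lie on opposite sides of $k$, commuting it past $\gamma$ genuinely mixes the two braidings and is not an instance of either one's naturality.

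The paper's proof addresses exactly this point: after checking \hrel{t2} for each generator, it isolates \rel{g1} as a separate statement and verifies it generator by generator, with the mixed-label cases for decorated crossings and for $\Sou_i$, $\Tar_i$ (with $i > k$) handled explicitly in Figures~\ref{theta5/fig} and \ref{theta6/fig}. Your overall plan becomes correct once you insert this verification of \rel{g1}; without it, the tensor-closure step does not go through, and the subsequent reduction to bare generators is insufficient.
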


\begin{figure}[htb]
 \centering
 \includegraphics{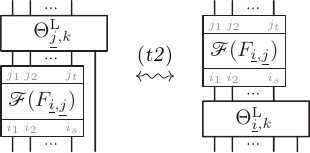}
 \caption{Naturality of $\Theta^\rmL_k$.}
 \label{naturality-theta/fig} \label{E:t2}
\end{figure}

Before giving a proof of Proposition~\ref{theta1/thm}, we present in Figure~\ref{theta-pi/fig} a specific example of the natural transformation $\Theta^\rmL_{\underline{i},k} : \Forget(H_{\underline{i}})\otimes H \to \Forget(H_{\underline{i}})$ and its image under the functor $\Phi$ (notice that, in the diagram appearing on the left-hand side, all counits have been retracted by applying \hrel{a4'}). We point out that, since $\Theta^\rmL_k$ is a natural transformation between functors with source $\AlgL$ and target $\Algf$, it is a collection of morphisms in the target category (which are unlabeled), one for every object in the source category (which is labeled). In other words, $\Theta^\rmL_{\underline{i},k}$ does not really carry labels, but its definition depends on the labeled object $H_{\underline{i}}$. Therefore, the labels attached to the morphisms represented in Figure~\ref{theta-pi/fig} and below indicate that these morphisms are in the image of $\Forget$, but keeping track of labels in pictures will allow us to understand which form of $\Theta^\rmL_k$ we need to use.

\begin{figure}[htb]
 \centering
 \includegraphics{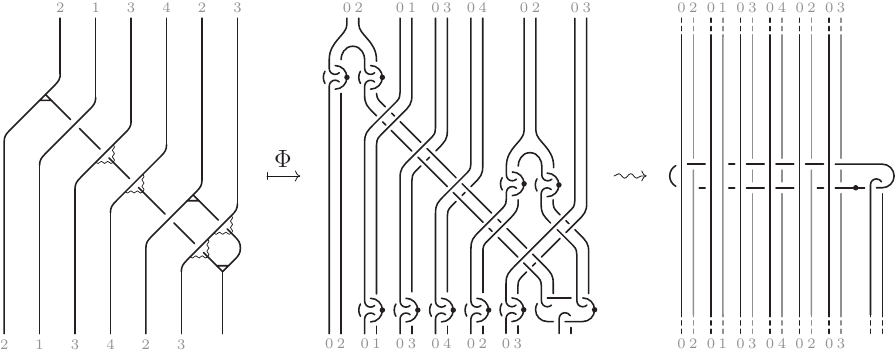}
 \caption{The morphism $\Theta^\rmL_{\underline{i},k}:\Forget(H_{\underline{i}})\otimes H \to \Forget(H_{\underline{i}})$ for $\underline{i} = (2,1,3,4,2,3)$ and $k = 2$, and its image under the functor $\Phi: \Algf \to \KTf$. }
 \label{theta-pi/fig}
\end{figure}

\FloatBarrier

\begin{lemma}\label{theta2/thm}
If $\underline{i} = (i_1, \dots, i_h, i_{h+1}, \dots, i_\ell)$ with $\ell \geqs 1$ and $1 \leqs h < \ell$, then
\[
 \Theta^\rmL_{\underline{i},k} = 
 (\Theta^\rmL_{(i_1, \dots, i_h),k} \otimes \Theta^\rmL_{(i_{h+1},\ldots,i_\ell),k}) 
 \circ (\id_h \otimes \gamma_{(i_{h+1}, \dots, i_\ell),k} \otimes \id) \circ (\id_\ell \otimes \coprH).
\]
\end{lemma}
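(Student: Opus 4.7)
The plan is to proceed by induction on the length $\ell$ of the sequence $\underline{i}$. The base case $\ell = 2$ admits only $h = 1$, and the identity reduces to the defining recursion of $\Theta^\rmL_{\underline{i},k}$ given in Definition~\ref{theta1/defn}. Assume the identity holds for all sequences of length strictly less than some $\ell \geqs 3$. Fix $\underline{i} = (i_1, \ldots, i_\ell)$ and a split position $h$ with $1 \leqs h < \ell$. If $h = 1$, the identity is again the defining recursion, so I may assume $h \geqs 2$.

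First, I would apply the defining recursion to $\Theta^\rmL_{(i_1,\ldots,i_\ell),k}$ to peel off the initial index, writing
\[
\Theta^\rmL_{(i_1,\ldots,i_\ell),k} = (\Theta^\rmL_{(i_1),k} \otimes \Theta^\rmL_{(i_2,\ldots,i_\ell),k}) \circ (\id \otimes \gamma_{(i_2,\ldots,i_\ell),k} \otimes \id) \circ (\id_\ell \otimes \coprH).
\]
Next, I would invoke the inductive hypothesis on the shorter sequence $(i_2,\ldots,i_\ell)$ at split position $h-1$, which rewrites $\Theta^\rmL_{(i_2,\ldots,i_\ell),k}$ as a product of $\Theta^\rmL_{(i_2,\ldots,i_h),k}$ and $\Theta^\rmL_{(i_{h+1},\ldots,i_\ell),k}$, followed by a $\gamma$-block and a coproduct. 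Substituting this into the expression above produces a composition carrying two copies of $\coprH$: one applied to the last strand at the very end and one buried inside the decomposition of $\Theta^\rmL_{(i_2,\ldots,i_\ell),k}$.

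The key manipulation is then to apply coassociativity \hrel{a3} to merge the two coproducts into a single coproduct acting last, and to use the iterated form of the recursive definition of $\gamma$,
\[
\gamma_{(i_2,\ldots,i_\ell),k} = (\gamma_{(i_2,\ldots,i_h),k} \otimes \id_{\ell-h}) \circ (\id_{h-1} \otimes \gamma_{(i_{h+1},\ldots,i_\ell),k}),
\]
(itself obtained by an easy parallel induction on $\ell - h$ from the single-index recursion) to regroup the braiding factors so that $\gamma_{(i_{h+1},\ldots,i_\ell),k}$ acts on the last $\ell - h + 1$ strands. After this reorganization, the subcomposition acting on the first $h + 1$ strands has precisely the form
\[
(\Theta^\rmL_{(i_1),k} \otimes \Theta^\rmL_{(i_2,\ldots,i_h),k}) \circ (\id \otimes \gamma_{(i_2,\ldots,i_h),k} \otimes \id) \circ (\id_h \otimes \coprH),
\]
which by the defining recursion applied to $(i_1,\ldots,i_h)$ equals $\Theta^\rmL_{(i_1,\ldots,i_h),k}$. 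This yields the desired right-hand side for split position $h$.

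The main obstacle I expect is the combinatorial bookkeeping: tracking how the identities and braiding slots interleave inside the nested $\gamma$-morphisms, and verifying that after coassociativity the braiding configurations on both sides are placed in exactly the same tensor positions. Crucially, no $\gamma$-factor needs to be moved past another $\gamma$-factor, so the argument does not invoke any of the Reidemeister-type relations of Table~\ref{table-decorated-moves/fig}; everything reduces to coassociativity and the associativity of the tensor product. Once the positions of the $\gamma$-blocks are correctly identified, the two expressions coincide literally, and the induction closes.
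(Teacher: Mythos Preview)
Your overall strategy is exactly the paper's: induct, peel off the first index via the defining recursion, apply the hypothesis to the shorter tail, decompose $\gamma_{(i_2,\ldots,i_\ell),k}$ as $(\gamma_{(i_2,\ldots,i_h),k}\otimes\id_{\ell-h})\circ(\id_{h-1}\otimes\gamma_{(i_{h+1},\ldots,i_\ell),k})$, and finish with coassociativity and the defining recursion for $\Theta^\rmL_{(i_1,\ldots,i_h),k}$. (The paper phrases the induction on $h$ rather than on $\ell$, but the computation is the same.)

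There is, however, a genuine gap in your claim that ``everything reduces to coassociativity and the associativity of the tensor product'' and that Table~\ref{table-decorated-moves/fig} is not needed. After the two expansions, the left-hand side carries \emph{two} copies of $\gamma_{(i_{h+1},\ldots,i_\ell),k}$, one threading each of the first two legs of the iterated coproduct past strands $i_{h+1},\ldots,i_\ell$. The desired right-hand side carries only \emph{one} such copy, applied to the strand \emph{before} it is coproducted. To pass from one picture to the other you must commute $\coprH$ through $\gamma_{(i_{h+1},\ldots,i_\ell),k}$. For those indices $i_j\leqs k$ the corresponding factor of $\gamma$ is the ambient braiding $c$, and naturality suffices; but for $i_j>k$ the factor is $\hat X$, which is \emph{not} the braiding of $\Algf$, and pushing $\coprH$ past it is precisely relation~\hrel{c24} (with \hrel{c18} handling the companion case). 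The paper invokes these explicitly in its third step. Once you add this one move the argument closes; without it the two sides are not literally equal, contrary to your final sentence.
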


\begin{proof} 
For $h = 1$ and for any $\ell \geqs 1$, the statement is true by definition of $\Theta^\rmL_{\underline{i},k}$. Then the claim follows by induction on $h$. The proof of the inductive step is presented in Figure~\ref{theta3/fig}, where the first step follows from the definition of $\Theta^\rmL_{\underline{i},k}$, while the second step follows from the inductive hypothesis and the decomposition of $\gamma_{(i_2,\dots,i_\ell),k}$ as $(\gamma_{(i_2,\dots,i_h),k} \otimes \id_{\ell-h}) \circ (\id_{h-1} \otimes \gamma_{(i_{h+1},\dots,i_\ell),k})$. Then, for the third step, we apply the coassociativity axiom~\hrel{a3} to collect together the rightmost strands in the sources of the two copies of $\gamma_{(i_{h+1},\dots,i_\ell),k}$, and use the naturality of the braiding of $\AlgD$, and in particular relation~\hrel{c24}, to push the resulting $\coprH$ past them. Finally, we apply once more the defining relation of $\Theta^\rmL_{(i_1,\dots,i_h),k}$.
\end{proof}

\begin{figure}[htb]
 \centering
 \includegraphics{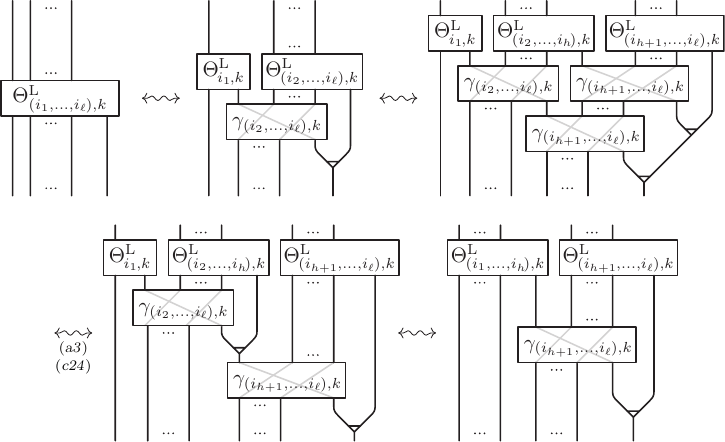}
 \caption{Proof of the inductive step of Lemma~\ref{theta2/thm}.}
 \label{theta3/fig}
\end{figure}

\begin{proof}[Proof of Proposition~\ref{theta1/thm}]
We will first prove the statement in the case where $F_{\underline{i},\underline{j}}$ is a generating morphism of $\AlgL$. We observe that \hrel{t2} holds trivially if none of the edges of $F_{\underline{i},\underline{j}}$ is labeled by $k$, while it follows from \hrel{t1} in Proposition~\ref{theta/thm} if all incoming and outgoing edges of $F_{\underline{i},\underline{j}}$ are labeled by $k$, since in this case both $\Theta^\rmL_{\underline{i},k}$ and $\Theta^\rmL_{\underline{j},k}$ coincide with $\Theta_k$.
Moreover, the proof for $\Tar_i$ is identical to the proof of \hrel{t1} for $V$ in Figure~\ref{theta2/fig}, while \hrel{t2} for $\Sou_i$ follows directly by applying the bialgebra axiom \hrel{a5} and the property of the integral element \hrel{i2'}. On the other hand, when $F_{\underline{i},\underline{i}} = U_{\underline{i}}$, then \hrel{t2} follows directly from the associativity axiom \hrel{a1}.

In order to complete the proof of \hrel{t2} for the generating morphisms of $\AlgL$, it remains to consider the case where $F_{\underline{i},\underline{j}}$ is a decorated crossing, meaning one of the morphisms $X_{i,j}, Y_{i,j}, \hat{X}_{i,j}, \hat{Y}_{i,j}$, with exactly one of the indices $i$ or $j$ equal to $k$. Since, according to Lemma~\ref{decorated-sym-inv/thm}, we have $Y_{i,j} = X_{j,i}^{-1}$ and $\hat Y_{i,j} = \hat X_{j,i}^{-1}$, it is enough to prove the statement for $X_{i,k}$ and $\hat X_{k,i}$ if $i < k$, and for $\hat{X}_{i,k}$ and $X_{k,i}$ if $i > k$. This is done in Figure~\ref{theta4/fig}.

Notice that, in this figure and in the next one, we use move~\hrel{c24} with $F = \prodH,\braid$, which are not morphisms of $\AlgD$. Indeed, even though Theorem~\ref{decorated-moves/thm} establishes relation~\hrel{c24} for every morphism $F$ in $\AlgD$, the same actually holds for every morphism $F$ in $\Algf$ since, according to the proof in Figure~\ref{isotopy3a/fig}, relation~\hrel{c24} reduces to applying relation~\hrel{d10} and the naturality of the braiding $\braid$ of $\Algf$, both of which hold in $\Algf$.

\begin{figure}[htb]
 \centering
 \includegraphics{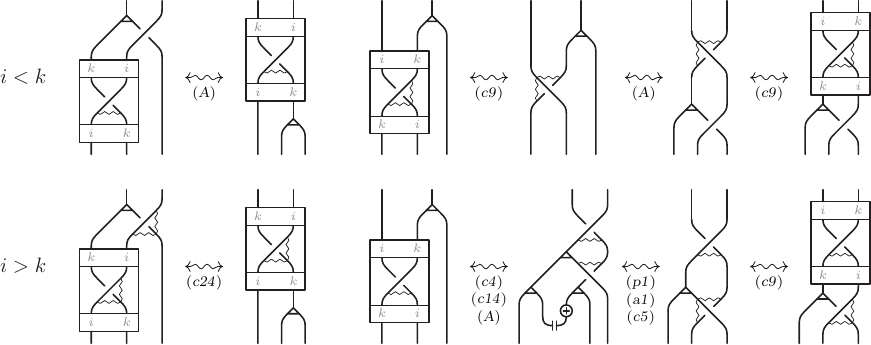}
 \caption{Naturality of $\Theta^\rmL_k$ with respect to decorated crossings with mixed labels (the extended version of move \hrel{c24} for $F = \prodH$ is used).}
 \label{theta4/fig}
\end{figure}

Now, by using an inductive argument on the number of tensor factors and Lemma~\ref{theta2/thm}, the claim will follow for every morphism 
$F_{\underline{i},\underline{j}} : H_{\underline{i}} \to H_{\underline{j}}$ in $\AlgL$ if we can show that
\[
\gamma_{\underline{j},k} \circ (\Forget(F_{\underline{i},\underline{j}}) \otimes \id) 
= (\id \otimes \Forget(F_{\underline{i},\underline{j}})) \circ \gamma_{\underline{i},k}. 
\eqno{\rel{g1}}
\label{E:g1}
\]
every time $F_{\underline{i},\underline{j}}$ is a generating morphism of $\AlgL$. Observe that, if no label of $F_{\underline{i},\underline{j}}$ is strictly greater than $k$, then \hrel{g1} follows from the naturality of the braiding. On the other hand, if all of its labels are strictly greater than $k$, then it follows from \hrel[c4]{c24-25}. 

Therefore, we are left to prove \hrel{g1} for morphisms $F_{\underline{i},\underline{j}}$ in which some of the labels are strictly greater than $k$, and some are not. In this case, $F_{\underline{i},\underline{j}}$ is either a decorated crossing, or $\Sou_i$ or $\Tar_i$ for $i > k$. For what concerns decorated crossings, we observe that, thanks to Lemma~\ref{decorated-sym-inv/thm} once again, it is enough to prove \hrel{g1} for $X_{i,j}$ and $\hat X_{j,i}$ with $i \leqs k < j$. This is done in Figure~\ref{theta5/fig}. The proofs for $\Tar_i$ and $\Sou_i$ with $i > k$ is shown in Figure~\ref{theta6/fig}. 
\end{proof}

\begin{figure}[htb]
 \centering
 \includegraphics{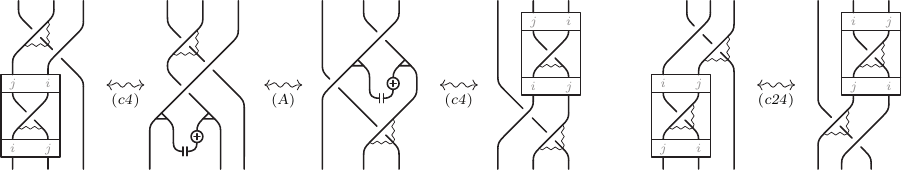}
 \caption{Naturality of $\gamma_{\underline{i},k}$ with respect to decorated crossings with $i \leqs k < j$ (the extended version of move \hrel{c24} for $F = c$ is used).}
 \label{theta5/fig}
\end{figure}

\begin{figure}[htb]
 \centering
 \includegraphics{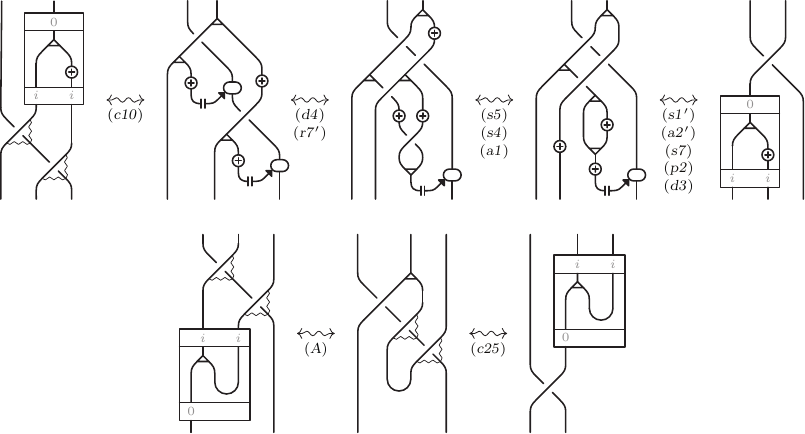}
 \caption{Naturality of $\gamma_{\underline{i},k}$ with respect to the morphisms $\Forget(\Tar_i)$ and $\Forget(\Sou_i)$ with $i > k$.}
 \label{theta6/fig}
\end{figure}

\FloatBarrier

Finally, we define a family of natural transformations $\hat{\Theta}^\rmL_k$ for $k \geqs 0$ designed to provide the algebraic analogue of a dotted component that embraces the $k$th level, while passing below the $i$th level, for $1 \leqs i \leqs k-1$, and above the $j$th level, for $k+1 \leqs j \leqs n$, composed with a positive double braiding between the strands of the $k$th and $(k+1)$st level.

\begin{definition}\label{ttheta1/defn}
For all $k \geqs 1$ and $\underline{i} = (i_1,i_2,\ldots,i_\ell)$, with $\ell \geqs 0$ and $i_h \geqs 0$ for every $1 \leqs h \leqs \ell$, the morphisms $\hat{\gamma}_{\underline{i},k} : H^\ell \otimes H \to H \otimes H^\ell$ and $\hat{\Theta}^\rmL_{\underline{i},k} : H^\ell \otimes H \to H^\ell$ in $\Algf$ are recursively defined by the following identities (see Figure \ref{def-thetahatL/fig}):
\begin{gather*}
 \hat{\gamma}_{\varnothing,k} = \id, \quad \hat{\Theta}^\rmL_{\varnothing,k} = \counH, \\
 \hat{\gamma}_{i,k} = \hat{\gamma}_{(i),k} =
 \begin{cases}
  \braid & \mbox{ if } i \leqs k, \\ 
  X & \mbox{ if } i = k+1, \\
  \hat{X} & \mbox{ if } i > k+1, 
 \end{cases} \quad
 \hat{\Theta}^\rmL_{i,k} = \hat{\Theta}^\rmL_{(i),k} = 
 \begin{cases}
  \prodH & \mbox{ if } i = k, \\
  \id \otimes \counH & \mbox{ if } i \neq k,
 \end{cases} \\
 \hat{\gamma}_{\underline{i},k} = (\hat{\gamma}_{i_1,k} \otimes \id_{\ell-1}) \circ (\id \otimes \hat{\gamma}_{(i_2,\ldots,i_\ell),k}), \\
 \hat{\Theta}^\rmL_{\underline{i},k} = (\hat{\Theta}^\rmL_{i_1,k} \otimes \hat{\Theta}^\rmL_{(i_2,\ldots,i_\ell),k}) \circ (\id \otimes \hat{\gamma}_{(i_2,\ldots,i_\ell),k} \otimes \id) \circ (\id_\ell \otimes \coprH).
\end{gather*}
\end{definition}

We denote by $\hat{\Theta}^\rmL_k$ the collection of morphisms $\{\hat{\Theta}^\rmL_{\underline{i},k} \mid \underline{i} \in \N^\ell, \ell \in \N\}$.
\begin{figure}[htb]
 \centering
 \includegraphics{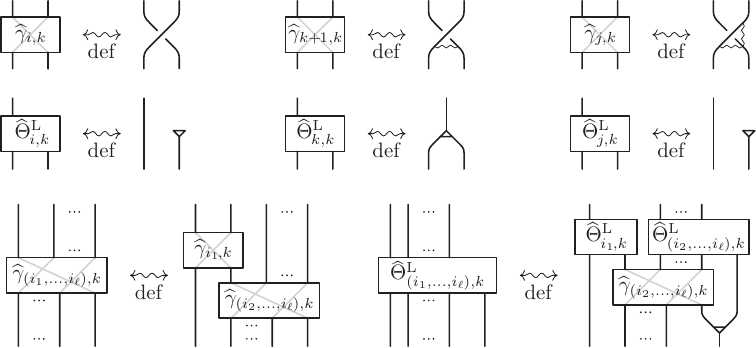}
 \caption{Inductive definition of $\hat{\Theta}^\rmL_k$ ($i < k+1 < j$).}
 \label{def-thetahatL/fig} 
\end{figure}

A specific example of the natural transformation $\hat{\Theta}^\rmL_{\underline{i},k}$ can be found in Figure~\ref{ttheta2/fig}. Observe that the only difference between $\Theta^\rmL_{\underline{i},k}$ and $\hat{\Theta}^\rmL_{\underline{i},k}$ is in the crossing with the $(k+1)$-labeled strand.

\begin{figure}[htb]
 \centering
 \includegraphics{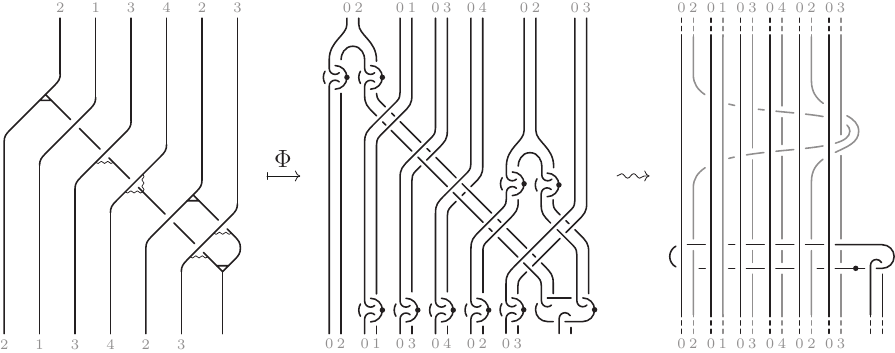}
 \caption{The morphism $\hat{\Theta}^\rmL_{\underline{i},k}:\Forget(H_{\underline{i}}) \otimes H \to \hat{\Forget}_k(H_{\underline{i}})$ for $\underline{i}=(2,1,3,4,2,3)$ and $k=2$.}
 \label{ttheta2/fig}
\end{figure}

\FloatBarrier

\begin{lemma}\label{ttheta1/thm}
Let $\underline{i} = (i_1,\dots i_h, i_{h+1},\dots,i_\ell)$ with $\ell \geqs 1$ and $1 \leqs h < \ell$, then
\[
\hat{\Theta}^\rmL_{\underline{i},k} = (\hat{\Theta}^\rmL_{(i_1,\dots i_h), k} \otimes \hat{\Theta}^\rmL_{(i_{h+1}, \ldots, i_\ell), k}) \circ (\id_h \otimes \hat{\gamma}_{(i_{h+1}, \ldots, i_\ell), k} \otimes \id) \circ (\id_\ell \otimes \coprH).
\]
\end{lemma}

\begin{proof} 
The proof proceeds by induction on $h$, and it is completely analogous to the proof of Lemma~\ref{theta2/thm} (see Figure~\ref{theta3/fig}).
\end{proof}

\begin{proposition}\label{ttheta2/thm}
For every $k \geqs 1$, there exists a unique functor $\hat{\Forget}_k : \AlgL \to \Algf$ that first exchanges the object $H_k$ with $H_{k+1}$ and the morphisms $X_{k,k+1}, \hat{Y}_{k,k+1} : H_k \otimes H_{k+1} \to H_{k+1} \otimes H_k$ with $\hat{X}_{k+1,k}, Y_{k+1,k} : H_{k+1} \otimes H_k \to H_k \otimes H_{k+1}$, respectively, and then discards labels. Furthermore, $\hat{\Theta}^\rmL_k : \Forget \otimes H \Rightarrow \hat{\Forget}_k$ defines a natural transformation, meaning that, for every morphism $F_{\underline{i},\underline{j}} : H_{\underline{i}} \to H_{\underline{j}}$ in $\AlgL$, we have (see Figure~\ref{naturality-theta-hat/fig})
\begin{align*}
 \hat{\Theta}^\rmL_{\underline{j},k} \circ (\Forget(F_{\underline{i},\underline{j}}) \otimes \id) &= \hat{\Forget}_k(F_{\underline{i},\underline{j}}) \circ \hat{\Theta}^\rmL_{\underline{i},k}. \tag*{\hrel{t3}}
\end{align*}
\end{proposition}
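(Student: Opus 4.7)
The plan is to closely parallel the proof of Proposition~\ref{theta1/thm}. First I would establish that $\hat{\Forget}_k$ is well-defined by observing that it agrees with $\Forget$ on every generator of $\AlgL$ except on the four decorated crossings $X_{k,k+1}, \hat{Y}_{k,k+1}, \hat{X}_{k+1,k}, Y_{k+1,k}$. Because the prescribed swap pairs $X \leftrightarrow \hat{X}$ and $Y \leftrightarrow \hat{Y}$ at this specific pair of levels corresponds exactly to interchanging the two ribbon structures of $\AlgD$ provided by Theorem~\ref{decorated-moves/thm}, every relation in $\ker \Forget$ that mentions one of the four morphisms above is transformed into another valid relation. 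Therefore the assignment descends to a functor $\hat{\Forget}_k : \AlgL \to \Algf$, and uniqueness is automatic from the description on generators.

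Next I would prove the naturality identity \hrel{t3} for every generating morphism $F_{\underline{i},\underline{j}}$ of $\AlgL$, following the same case analysis used for \hrel{t2}. If no edge of $F_{\underline{i},\underline{j}}$ carries label $k$ or $k+1$, both sides reduce to the naturality of $\Theta^\rmL_k$ given by \hrel{t2}, since $\hat{\gamma}$ coincides with $\gamma$ away from those two levels. If all edges carry label $k$, then $\hat{\Theta}^\rmL_{\underline{i},k}$ reduces to $\Theta_k$ and \hrel{t3} follows from \hrel{t1}. If all edges carry label $k+1$, the same argument works after replacing the $\hat{X}$ appearing in $\gamma$ by $X$; this is legitimate because $X$ also defines a braiding on $\AlgD$ by Theorem~\ref{decorated-moves/thm}. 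The cases $F_{\underline{i},\underline{j}} = \Sou_i, \Tar_i, U_{\underline{i}}$ are handled as in Proposition~\ref{theta1/thm}, using \hrel{a5}, \hrel{i2'}, and \hrel{a1}. For decorated crossings with mixed labels, the identities $Y_{i,j} = X_{j,i}^{-1}$ and $\hat{Y}_{i,j} = \hat{X}_{j,i}^{-1}$ from Lemma~\ref{decorated-sym-inv/thm} again reduce the work to $X_{i,j}$ and $\hat{X}_{i,j}$, after which the computations of Figure~\ref{theta4/fig} carry over verbatim when the mixed labels lie on the same side of $k$ or $k+1$, and require the symmetric analogue (with $X$ and $\hat{X}$ swapped) when they straddle $k+1$.

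Finally, using Lemma~\ref{ttheta1/thm} in place of Lemma~\ref{theta2/thm}, I would extend naturality from generators to arbitrary morphisms of $\AlgL$ provided I establish the companion identity
\[
\hat{\gamma}_{\underline{j},k} \circ (\Forget(F_{\underline{i},\underline{j}}) \otimes \id) = (\id \otimes \hat{\Forget}_k(F_{\underline{i},\underline{j}})) \circ \hat{\gamma}_{\underline{i},k}
\]
on generators, which is the analogue of \hrel{g1}. The verification follows Figures~\ref{theta5/fig} and \ref{theta6/fig} with the sole modification that whenever $\braid$ or $\hat{X}$ appears as a crossing with the $(k+1)$-labeled strand, it must be replaced by $X$; here again the key ingredients are \hrel[c4]{c24-25}, \hrel{c16}--\hrel{c17}, and naturality of the braiding.

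The main obstacle will be the genuinely new case in \hrel{t3}, namely when $F_{\underline{i},\underline{j}}$ is one of $X_{k,k+1}, \hat{X}_{k+1,k}, Y_{k+1,k}, \hat{Y}_{k,k+1}$: it is precisely at these crossings that $\hat{\Forget}_k$ differs from $\Forget$, so the left-hand side of \hrel{t3} involves an unchanged decorated crossing while the right-hand side carries its swap. Verifying the identity requires a careful Yang--Baxter--type manipulation in which $\hat{\gamma}_{(k,k+1),k} = X \circ (\hat{X} \otimes \id)$ (and its partner $\hat{\gamma}_{(k+1,k),k}$) is transposed past the crossing and then repackaged using \hrel{c22}, \hrel{c23}, \hrel{c9}, and the braided cocommutativity identity~\hrel{h0} from Proposition~\ref{intertwining/thm}. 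Once this hexagon-like identity is checked by a direct diagrammatic computation analogous to Figure~\ref{theta4/fig}, the remainder of the proof is a mechanical adaptation of the $\Theta^\rmL_k$ case.
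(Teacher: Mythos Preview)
Your proposal has two genuine gaps.

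First, your companion identity is stated with $\hat{\Forget}_k(F_{\underline{i},\underline{j}})$ on the right-hand side, but the correct form (the paper's \hrel{g2}) has $\Forget(F_{\underline{i},\underline{j}})$ there, just as in \hrel{g1}. This matters: when you use Lemma~\ref{ttheta1/thm} to extend \hrel{t3} from generators to tensor products, you first commute $\hat{\gamma}$ past the right tensor factor and \emph{then} apply \hrel{t3} to each factor. For that second step you need $\hat{\Theta}^\rmL_{\underline{j},k}\circ(\Forget(H)\otimes\id)$, not $\hat{\Theta}^\rmL_{\underline{j},k}\circ(\hat{\Forget}_k(H)\otimes\id)$. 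Your version does not chain correctly, and in fact it is false for $F_{\underline{i},\underline{j}}=X_{k,k+1}$, since it would force $X=\hat{X}$.

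Second, and more seriously, your argument for the well-definedness of $\hat{\Forget}_k$ does not work. The kernel of $\Forget$ consists of \emph{all} relations that hold in $\Algf$, not just the ribbon-structure relations listed in Table~\ref{table-decorated-moves/fig}. Your claim that the swap $X\leftrightarrow\hat{X}$, $Y\leftrightarrow\hat{Y}$ ``corresponds to interchanging the two ribbon structures of $\AlgD$'' would only help if there were an automorphism of $\Algf$ exchanging $X$ with $\hat{X}$; no such automorphism is available, and in any case you are swapping only at the single level pair $(k,k+1)$, not globally. The paper avoids this problem entirely by reversing the order of the argument: it first proves \hrel{t3} and \hrel{g2} on generators (where $\hat{\Forget}_k$ is just the prescribed assignment, no functoriality needed), extends \hrel{t3} to all morphisms via Lemma~\ref{ttheta1/thm}, and \emph{then} deduces functoriality by setting $\Omega_{\underline{i},k}=\hat{\Theta}^\rmL_{\underline{i},k}\circ(\id_\ell\otimes\unitH)$, constructing an explicit inverse $\Omega_{\underline{i},k}^{-1}$ by an inductive argument, and observing that \hrel{t3} gives $\hat{\Forget}_k(F_{\underline{i},\underline{j}})=\Omega_{\underline{j},k}\circ\Forget(F_{\underline{i},\underline{j}})\circ\Omega_{\underline{i},k}^{-1}$. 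Conjugation by an invertible family automatically yields a well-defined functor, and this is the essential new ingredient you are missing.
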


\begin{figure}[htb]
 \centering
 \includegraphics{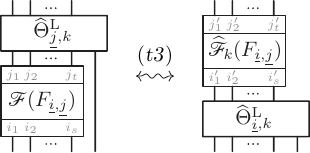}
 \caption{Naturality of $\hat\Theta^\rmL_k$ (the sequences $\underline{i}'$ and $\underline{j}'$ are obtained from $\underline{i}$ and $\underline{j}$, respectively, by exchanging $k$ and $k+1$ at any of their occurrences).}
 \label{naturality-theta-hat/fig} \label{E:t3}
\end{figure}

\begin{proof}
We start by proving that \hrel{t3} holds for any morphism $F_{\underline{i},\underline{j}}$ in $\AlgL$. Lemma~\ref{ttheta1/thm} implies that it is enough to show that \hrel{t3} and
\begin{align*}
 \hat{\gamma}_{\underline{j},k} \circ (\Forget(F_{\underline{i},\underline{j}}) \otimes \id) &= (\id \otimes \Forget(F_{\underline{i},\underline{j}})) \circ \hat{\gamma}_{\underline{i},k}
 \tag*{\rel{g2}}
 \label{E:g2}
\end{align*}
hold every time $F_{\underline{i},\underline{j}}$ is a generating morphism of $\AlgL$.

For what concerns \hrel{t3}, we observe that, if all labels of $F_{\underline{i},\underline{j}}$ are different from $k+1$, then \hrel{t3} reduces to \hrel{t2}, while if all labels are different from $k$, then it becomes trivial by applying \hrel{c18} to the counit $\counH$. Therefore, it is enough to show that \hrel{t3} holds whenever $F_{\underline{i},\underline{j}}$ is a generating morphism with mixed labels featuring at least one label equal to $k+1$ and another equal to $k$. In other words, it is enough to consider $F_{\underline{i},\underline{j}} = U_{\underline{i}}, X_{k,k+1}, \hat X_{k+1,k}, Y_{k+1,k}, \hat Y_{k,k+1}$. The statement for $U_{\underline{i}}$ follows directly from the associativity axiom \hrel{a1}. For what concerns decorated crossings, since $Y_{i,j} = X_{j,i}^{-1}$ and $\hat Y_{i,j} = \hat X_{j,i}^{-1}$ (see Lemma~\ref{decorated-sym-inv/thm}), it is enough to prove \hrel{t3} for $X_{k,k+1}$ and $\hat X_{k+1,k}$. This is done in Figure~\ref{ttheta3/fig}.

\begin{figure}[htb]
 \centering
 \includegraphics{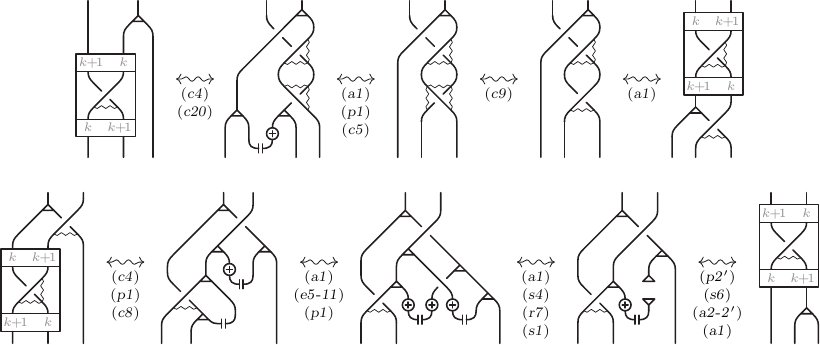}
 \caption{Naturality of $\hat{\Theta}^\rmL_k$ with respect to decorated crossings with mixed labels.}
 \label{ttheta3/fig}
\end{figure}

For what concerns \hrel{g2}, if all labels of $F_{\underline{i},\underline{j}}$ are different from $k+1$, then \hrel{g2} reduces to \hrel{g1}, while if all labels are equal to $k+1$, then it follows from \hrel{c19}. Therefore, it is enough to show that \hrel{g2} holds whenever $F_{\underline{i},\underline{j}}$ is a generating morphism of $\AlgL$ with mixed labels featuring at least one label equal to $k+1$. In other words, it is enough to consider $F_{\underline{i},\underline{j}} = U_{\underline{i}}, \Tar_{k+1}, \Tar_{k+1}$, or a decorated crossing. Once again, the statement for $U_{\underline{i}}$ follows directly from the associativity axiom \hrel{a1}, while the proofs of \hrel{g2} for $\Tar_{k+1}$ and $\Sou_{k+1}$ are analogous to the ones shown in Figure~\ref{theta6/fig}, where in the first line the adjoint action has to be replaced by the product, and in the second line $\hat X$ has to be replaced by $X$. For what concerns decorated crossings, we observe that, thanks to relations~\hrel{c14} and \hrel{c20}, it is enough to prove \hrel{g2} for $X_{i,k+1}$ and $\hat X_{k+1,i}$ with $i \leqs k$ and for $\hat X_{i,k+1}$ and $X_{k+1,i}$ with $i > k+1$. This is done in Figures~\ref{ttheta4-1/fig} and \ref{ttheta4-2/fig}, respectively.

\begin{figure}[htb]
 \centering
 \includegraphics{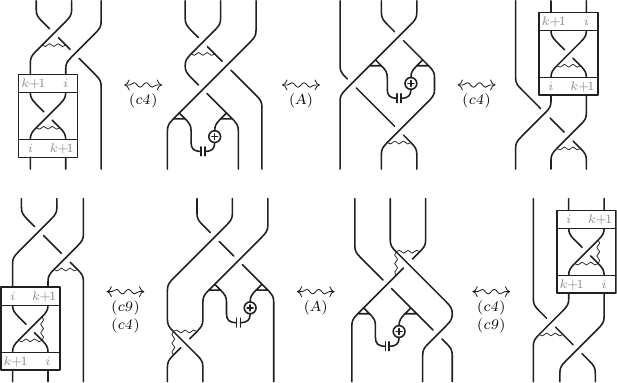}
 \caption{Naturality of $\hat{\gamma}_k$ with respect to decorated crossings with mixed labels, $i \leqs k$.}
 \label{ttheta4-1/fig}
\end{figure}

\begin{figure}[htb]
 \centering
 \includegraphics{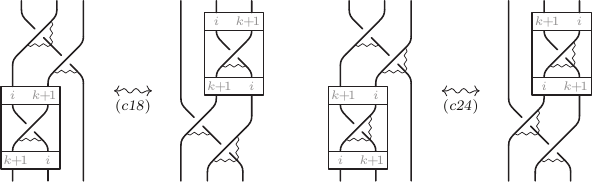}
 \caption{Naturality of $\hat{\gamma}_k$ with respect to decorated crossings with mixed labels, $i > k+1$.}
 \label{ttheta4-2/fig}
\end{figure}

We will show now that \hrel{t3} implies that $\hat{\Forget}_k : \AlgL \to \Algf$ is a functor. In order to see this, consider, for all $\ell \geqs 0$, $k \geqs 1$, and $\underline{i} = (i_1,i_2,\ldots,i_\ell)$ with $i_h \geqs 0$ for all $1 \leqs h \leqs \ell$, the morphisms $\Omega_{\underline{i},k}, \Omega_{\underline{i},k}^{-1}: H^\ell \to H^\ell$ in $\Algf$ defined as 
\begin{gather*}
 \Omega_{\underline{i},k} = \hat{\Theta}^\rmL_{\underline{i},k} \circ (\id_\ell \otimes \unitH),\\
 \Omega_{\underline{i},k}^{-1} = \checkTheta^\rmL_{\underline{i},k} \circ (\id_\ell \otimes \unitH),
\end{gather*}
where $\checkTheta^\rmL_{\underline{i},k}$ is defined recursively, for $\ell \geqs 0$, as follows:
\begin{gather*}
 \checkTheta^\rmL_{\varnothing,k} = \counH, \quad 
 \checkTheta^\rmL_{i,k} = \checkTheta^\rmL_{(i),k} = 
 \begin{cases}
  \prodH \circ (\id \otimes \antipH) & \mbox{ if } i = k, \\
  \id \otimes \counH & \mbox{ if } i \neq k,
 \end{cases} \\
 \checkTheta^\rmL_{\underline{i},k} = (\checkTheta^\rmL_{i_1,k} \otimes \id_{\ell-1}) \circ (\id \otimes \hat{\gamma}_{(i_2,\ldots,i_\ell),k}) \circ (\id \otimes \checkTheta^\rmL_{(i_2,\ldots,i_\ell),k} \otimes \id) \circ (\id_\ell \otimes \coprH).
\end{gather*}

\begin{figure}[b]
 \centering
 \includegraphics{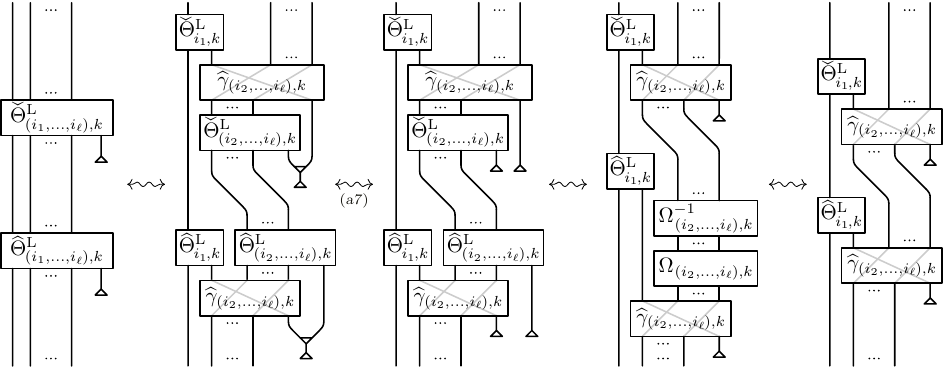}
 \caption{Invertibility of $\Omega_{\underline{i},k}$ -- Part 1, reducing $\Omega_{\underline{i},k}^{-1} \circ \Omega_{\underline{i},k} = \id_\ell$ to \hrel{g3}.}
 \label{Omega1-1/fig}
\end{figure}

\begin{figure}[htb]
 \centering
 \includegraphics{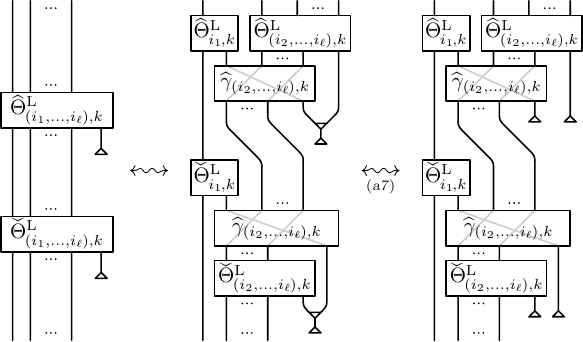}
 \caption{Invertibility of $\Omega_{\underline{i},k}$ -- Part 1, reducing $\Omega_{\underline{i},k}\circ \Omega_{\underline{i},k}^{-1} = \id_\ell$ to \hrel{g4}.}
 \label{Omega1-2/fig}
\end{figure}

\begin{figure}[htb]
 \centering
 \includegraphics{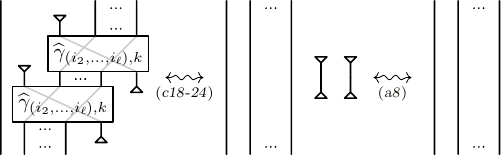}
 \caption{Invertibility of $\Omega_{\underline{i},k}$ -- Part 2, establishing \hrel{g3} and \hrel{g4} when $i_1 \neq k$.}
 \label{Omega2/fig}
\end{figure}

\begin{figure}[htb]
 \centering
 \includegraphics{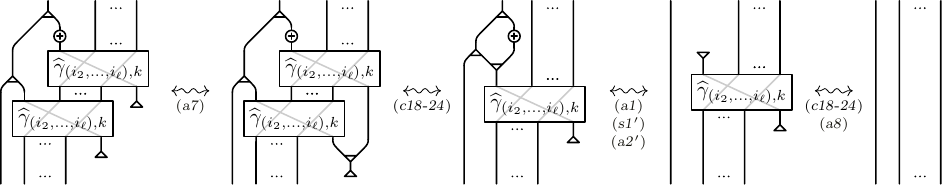}
 \caption{Invertibility of $\Omega_{\underline{i},k}$ -- Part 2, establishing \hrel{g3} when $i_1 = k$ (establishing \hrel{g4} requires using \hrel{s1} instead of \hrel{s1'}).}
 \label{Omega3/fig}
\end{figure}

By induction on $\ell \geqs 0$, we can see that $\Omega_{\underline{i},k}^{-1}$ is the inverse of $\Omega_{\underline{i},k}$. Indeed, for $\ell = 1$, the statement follows by definition. Then, the inductive step is proved in Figures~\ref{Omega1-1/fig}--\ref{Omega3/fig}. In particular, in Figures~\ref{Omega1-1/fig} and \ref{Omega1-2/fig}, it is shown that, up to the inductive hypotheses, the identities $\Omega_{\underline{i},k}^{-1}\circ \Omega_{\underline{i},k} = \id_\ell$ and $\Omega_{\underline{i},k}\circ \Omega_{\underline{i},k}^{-1} = \id_\ell$ reduce to
\begin{align*}
 (\checkTheta^\rmL_{i_1,k} \otimes \id_{\ell-1}) \circ (\id \otimes \hat{\gamma}_{(i_2,\dots,i_\ell),k}) \circ (\id_\ell \otimes \unitH) \circ (\hat{\Theta}^\rmL_{i_1,k} \otimes \id_{\ell-1}) \circ (\id \otimes \hat{\gamma}_{(i_2,\dots,i_\ell),k}) \circ (\id_\ell \otimes \unitH) &= \id_\ell,
 \tag*{\rel{g3}}
 \label{E:g3} \\
 (\hat{\Theta}^\rmL_{i_1,k} \otimes \id_{\ell-1}) \circ (\id \otimes \hat{\gamma}_{(i_2,\dots,i_\ell),k}) \circ (\id_\ell \otimes \unitH) \circ (\checkTheta^\rmL_{i_1,k} \otimes \id_{\ell-1}) \circ (\id \otimes \hat{\gamma}_{(i_2,\dots,i_\ell),k}) \circ (\id_\ell \otimes \unitH) &= \id_\ell,
 \tag*{\rel{g4}}
 \label{E:g4}
\end{align*}
respectively. Equations~\hrel{g3} and \hrel{g4} are proved in Figures~\ref{Omega2/fig} and \ref{Omega3/fig}. Now, \rel{t3} implies that, for every morphism $F_{\underline{i},\underline{j}} : H_{\underline{i}} \to H_{\underline{j}}$ in $\AlgL$, we have
$
\Omega_{\underline{j},k} \circ \Forget(F_{\underline{i},\underline{j}}) = \hat\Forget_k(F_{\underline{i},\underline{j}}) \circ \Omega_{\underline{i},k} 
$, and therefore
\[
 \hat\Forget_k(F_{\underline{i},\underline{j}}) = \Omega_{\underline{j},k} \circ \Forget(F_{\underline{i},\underline{j}}) \circ \Omega_{\underline{i},k}^{-1}.
\] 
 Since $\Forget$ is a functor, the last identity implies the functoriality of $\hat\Forget_k$, while $\Omega_{\underline{j},k}$ defines a natural equivalence between them.
\end{proof}

\FloatBarrier


\subsection{Invariance of \texorpdfstring{$\barPhi(T)$}{the inverse functor}}
\label{barPsi/sec}

Let $T : E_{2s} \to E_{2t}$ be a tangle in $\KTf$ presented by a strictly regular planar diagram of the form represented in the leftmost part of Figure~\ref{kirby-hopf01/fig}, and let $L$ be the subdiagram which represents the blackboard framed link formed by the closed undotted components of $T$.

The construction of the morphism $\barPhi(T) = \barPhi_{L',\alpha}(T)$ in $\Algf$ presented in Subsection~\ref{FK/sec} required the following choices:
\begin{enumerate}
\item a numbering of the components $L_i$ of $L = L_1 \cup \dots \cup L_n$, an orientation of each component $L_i$, and two points $p_i$ and $q_i$ in $L_i$, all inducing a bi-ascending state $L' = L'_1 \cup \dots \cup L'_n$ of $L$, as in Step~\(1) in Subsection~\ref{FK/sec};
\item a family $\alpha = \{ \alpha_1, \ldots, \alpha_n \}$ of embedded arcs $\alpha_i : [0,1] \to [0,1]^2$ that satisfy $\alpha_i(0) = a_i$ and $\alpha_i(1) = b_i\in L_i$, as well as the conditions listed in Step~\(2) in Subsection~\ref{FK/sec}.
\end{enumerate}

We are going to prove now that $\barPhi(T)$ is independent of such choices, and that it is invariant under \dfrmtns{2} of $T$. We will use the notations introduced in Subsection~\ref{FK/sec}. In particular, $L_{i,\alpha} = L_i \cup \alpha_i$ and $L'_{i,\alpha} = L'_i \cup \alpha_i$ for every $1 \leqs i \leqs n$, with $L_{\alpha} = L_{1,\alpha} \cup \dots \cup L_{n,\alpha}$ and $L'_{\alpha} = L'_{1,\alpha} \cup \dots \cup L'_{n,\alpha}$.

\begin{proposition}\label{labeled-kirby-hopf/thm} There exists a morphism $G_{L',\alpha}: H_0^{\otimes s} \otimes H_{(i_1,i_2,\dots,i_n)} \to H_0^{\otimes t}$ in $\AlgL$, with $\{i_1,i_2,\dots,i_n\}=\{1,2,\dots,n\}$, such that
\[
\barPhi_{L',\alpha}(T) = \Forget(G_{L',\alpha}) \circ (\id_s \otimes \unitH^{\otimes n}),
\]
where $\Forget : \AlgL \to \Algf$ is the forgetful functor which discards labels.
\end{proposition}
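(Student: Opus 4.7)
The plan is to construct $G_{L',\alpha}$ by lifting the defining decomposition of $\barPhi_{L',\alpha}(T) = \Tar^{\otimes t} \circ F_{L',\alpha} \circ (\Sou^{\otimes s} \otimes \unitH^{\otimes n})$ directly to the labeled category $\AlgL$. Since this decomposition is a composition of tensor products of morphisms of $\Algf$ that are either $\Sou$, $\Tar$, or generators of $\AlgD$ (none of which is the bare unit $\unitH$), and since each such morphism admits a family of labeled counterparts in $\AlgL$, the task reduces to assigning a consistent label $i \geqs 0$ to every edge of the graphical presentation so that every elementary piece becomes a valid generator of $\AlgL$.

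First I would assign labels as follows: every strand meeting the source or target boundary of $\barPhi_{L',\alpha}(T)$ gets label $0$; every strand in the image of a closed component $L_{i,\alpha}$ for $1 \leqs i \leqs n$ gets label $i$, matching the bi-ascending numbering; every strand in the image of an open arc of $T$ gets a distinct positive label larger than $n$, chosen so as to be compatible with the crossings involving open arcs. The permutation $(i_1, i_2, \ldots, i_n)$ appearing in the source of $G_{L',\alpha}$ is then forced by the horizontal order of the attaching points $a_1, \ldots, a_n$ along the bottom of the diagram: the $k$th unit from the left corresponds to the component $L_{i_k}$ whose band $\alpha_{i_k}$ terminates at that point. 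Next, each elementary piece in the decomposition of $\barPhi_{L',\alpha}(T)$ is replaced by its labeled counterpart: the tangle endpoints give rise to labeled $\Sou_i$ and $\Tar_j$; interior morphisms $\coprH, \counH, \inteH, \ev, \ribmorH$ become $\coprH_i, \counH_i, \inteH_i, \ev_i, \ribmorH_i$ where $i$ is the label of the strand on which they act; and each decorated crossing $X, Y, \hat X, \hat Y$ becomes $X_{i,j}, Y_{i,j}, \hat X_{i,j}, \hat Y_{i,j}$ at a crossing between strands of labels $i$ and $j$. The resulting composition defines $G_{L',\alpha}$ as a morphism in $\AlgLfree$, and hence in the quotient $\AlgL = \AlgLfree / \ker\Forget$.

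The main verification — which is also the most delicate step of the plan — is that each labeled decorated crossing invoked above satisfies the inequality required by Definition~\ref{AlgL/def}, namely $i \leqs j$ for $X_{i,j}$ and $\hat Y_{i,j}$, and $i \geqs j$ for $\hat X_{i,j}$ and $Y_{i,j}$. For crossings between components of $L_\alpha$, the bi-ascending property of $L'_\alpha$ combined with the convention that marked crossings are precisely those to be inverted ensures that the upper strand of an unmarked crossing carries the smaller label (yielding $X_{i,j}$ with $i \leqs j$ or $Y_{i,j}$ with $i \geqs j$, according to the sign), while the upper strand of a marked crossing carries the larger label (yielding $\hat X_{i,j}$ with $i \geqs j$ or $\hat Y_{i,j}$ with $i \leqs j$); self-crossings of a single component have $i = j$ and pose no obstruction. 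For crossings involving open arcs, whose labels can be freely prescribed from $\{n+1, n+2, \ldots\}$, one can order the labels so that at each such crossing the required inequality is met. Once these case-by-case checks are completed, the identity $\Forget(G_{L',\alpha}) \circ (\id_s \otimes \unitH^{\otimes n}) = \barPhi_{L',\alpha}(T)$ holds by construction, since $\Forget$ sends each labeled generator to the corresponding unlabeled one and the composition tree of $G_{L',\alpha}$ mirrors the one defining $\barPhi_{L',\alpha}(T)$.
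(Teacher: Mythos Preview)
Your approach is the same as the paper's: lift the decomposition of $\Tar^{\otimes t} \circ F_{L',\alpha} \circ (\Sou^{\otimes s} \otimes \id_n)$ to $\AlgL$ by labeling each strand with the index of the component of $L_\alpha$ it belongs to (and with $0$ on the source of each $\Sou$ and the target of each $\Tar$), then observe that the bi-ascending convention forces every decorated crossing to satisfy the inequality required by Definition~\ref{AlgL/def}. The paper records this verification by pointing to Figure~\ref{kirby-MAlg/fig}; your case analysis of marked versus unmarked crossings is exactly the content of that figure.

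There is, however, a confusion in your labeling scheme. You propose to give ``every strand in the image of an open arc of $T$'' a fresh label larger than $n$, and then to argue separately about crossings involving such strands. But there are no such strands. Recall that, before the construction of $\barPhi_{L',\alpha}(T)$ begins, the tangle $T$ has already been composed on top and bottom with identity morphisms of $\KTf$ so as to put it in the standard form of Figure~\ref{kirby-hopf01/fig}; after this composition, \emph{every} undotted component is closed, and the link $L = L_1 \cup \dots \cup L_n$ already contains the formerly open arcs. The strands emanating from $\Sou$ and entering $\Tar$ are themselves pieces of some $L_i$ and carry the label $i \in \{1,\dots,n\}$; only the single strand below each $\Sou$ (respectively above each $\Tar$) carries the label $0$, and these strands participate in no crossings. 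So your extra labels $>n$ and the accompanying case analysis should simply be deleted; once they are, your argument coincides with the paper's.
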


\begin{proof}
Recall that, by definition (see the right-hand side of Figure~\ref{kirby-hopf01/fig}),
\[
\barPhi_{L',\alpha}(T) = \Tar^{\otimes t} \circ F_{L',\alpha} \circ \left( \Sou^{\otimes s} \otimes \id_n \right) \circ (\id_s \otimes \unitH^{\otimes n}),
\]
where the morphism $\Tar^{\otimes t} \circ F_{L',\alpha} \circ \left( \Sou^{\otimes s} \otimes \id_n \right)$ is assembled using the images of the elementary tangles making up $T$, as presented in the second column of Figures~\ref{kirby-hopf03-1/fig}, \ref{kirby-hopf03-2/fig}, and \ref{kirby-hopf03-3/fig}, with the exception of the unit morphisms that are images of the ends $a_i$ for $1 \leqs i \leqs n$.
 
\begin{figure}[htb]
 \centering
 \includegraphics{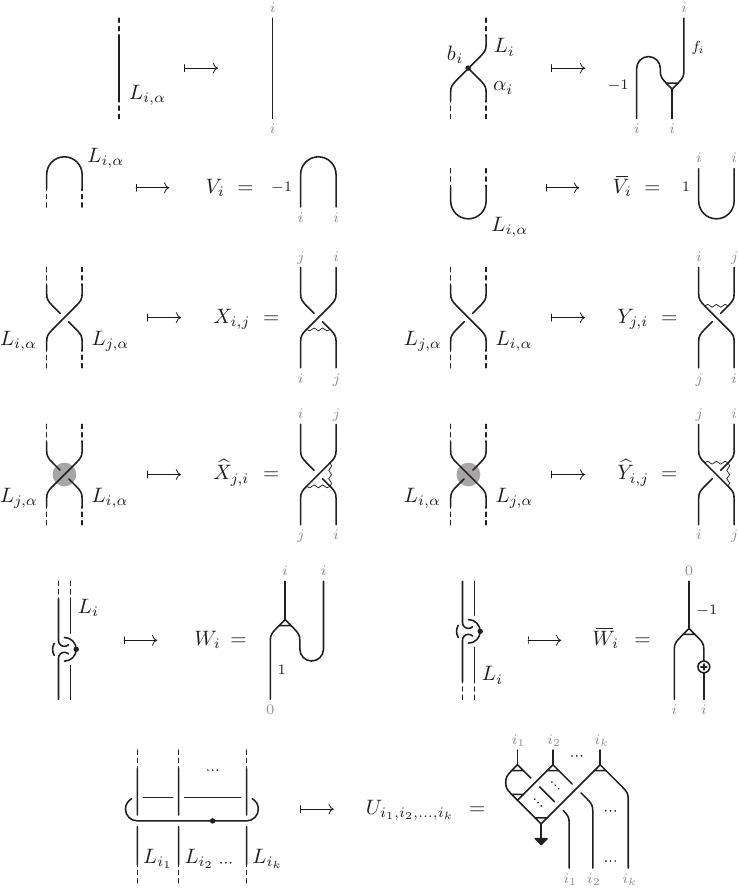}
 \caption{Construction of the morphism $G_{L',\alpha}: H_0^{\otimes s} \otimes H_{(1,2,\dots,n)} \to H_0^{\otimes t}$ in $\AlgL$, with $1 \leqs i \leqs j \leqs n$.}
 \label{kirby-MAlg/fig}
\end{figure}

Then, we only need to show that there exists a morphism $G_{L',\alpha}: H_0^{\otimes s} \otimes H_{(i_1,i_2,\dots,i_n)} \to H_0^{\otimes t}$ in $\AlgL$ such that $\Forget(G_{L',\alpha}) = \Tar^{\otimes t} \circ F_{L',\alpha} \circ \left( \Sou^{\otimes s} \otimes \id_{n} \right)$. We can obtain $G_{L',\alpha}$ simply by attaching labels to the morphisms appearing in the decomposition of $\Tar^{\otimes t} \circ F_{L',\alpha} \circ \left( \Sou^{\otimes s} \otimes \id_n \right)$, making sure that the assignment is compatible with the definition of the category $\AlgL$, see Definition~\ref{AlgL/def}. This is done in Figure~\ref{kirby-MAlg/fig}, where we label by $0$ the source of $\Sou$ and the target of $\Tar$, and where we label the remaining morphisms according to the numbering of the components of $L_\alpha$. Observe that, since the number attached to a component of the link corresponds to its depth in the bi-ascending state $L'$ of $L$, the decorated crossings which appear are exactly the ones in the definition of the category $\AlgL$, see Definition~\ref{AlgL/def}. Hence, $G_{L',\alpha}$ is a morphism in $\AlgL$, as required.
\end{proof}

\begin{remark}\label{rib-weight/rmk} 
Notice that, since all decorations at crossings are attached to the $i$-labeled strands by product morphisms, and since the same is true for the $0$-labeled strands at $W_i$ and $\bar W_i$, the ribbon morphisms in the image of $L_{i,\alpha}$ can be slid along the images of the $i$-labeled strands and collected together, by using moves~\hrel[r5]{r5-5'}. Let $|A|$ denote the number of morphisms of type $A$ in the decomposition of $\Forget(G_{L',\alpha})$. Then, for the total ribbon weight $r_i$ of the image of $L_{i,\alpha}$, in $\Forget(G_{L',\alpha})$ we obtain
\[
 r_i=(|\bar V_i|+|W_i|)-(|V_i|+|\bar W_i|)+f_i-1=- \wr(L_i'),
\]
where we have used that $|\bar V_i|+|W_i|$ (respectively, $|V_i|+|\bar W_i|$) equals the number of caps (respectively, cups) of $L_i$, and in a closed component, these two numbers coincide.
\end{remark}

\begin{proposition}\label{indep-bands/thm} 
For a fixed choice of the bi-ascending state $L'$, and hence of the numbering of the components of $L$, the morphism $\barPhi_{L',\alpha}(T)$ of $\Algf$ does not depend on the choice of the family $\alpha$ of embedded arcs $\alpha_i : [0,1] \to [0,1]^2$ for $1 \leqs i \leqs n$. Moreover, every arc $\alpha_i$ can be chosen to intersect the component $L_i$ to which it is attached in an arbitrary way, provided it still crosses below $L_{j,\alpha}$ for $j < i$ and above $L_{j,\alpha}$ for $j > i$. In other words, the conditions on $\alpha_i$ in Step~\(2) of Subsection~\ref{FK/sec} can be weakened to exclude \(b) and \(c). 
\end{proposition}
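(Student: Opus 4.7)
The strategy is to relate any two admissible band systems $\alpha$ and $\alpha'$ by a finite sequence of elementary moves, and to verify invariance at each step by means of the naturality properties established in Subsection~\ref{AlgL/sec}. By Proposition~\ref{labeled-kirby-hopf/thm}, we may work with the labeled morphism $G_{L',\alpha}$ in $\AlgL$, so that the free endpoint $a_i$ of every band $\alpha_i$ corresponds to a strand of label $i$ carrying a $\unitH$ at its bottom. Moving this endpoint around the diagram then translates, on the algebraic side, into repeated application of \hrel{t2}, \hrel{t3}, \hrel{g1}, and \hrel{g2}, with the $\unitH$ plugged in at the dangling $H$ slot.

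First I would list the local moves connecting $\alpha$ to $\alpha'$. After a generic planar isotopy of $\alpha$ (away from $L$, the bands $\alpha_j$ for $j\ne i$, and the disks $B_m$), which leaves $G_{L',\alpha}$ literally unchanged, one is reduced to the following cases: (i) $\alpha_i$ crossing a strand of $L_{\alpha}$ via a triangle move across a crossing of $L_\alpha$, (ii) $\alpha_i$ sweeping past a local extremum (cup/cap) of $L$, (iii) $\alpha_i$ being pushed through a dotted disk $B_m$, (iv) the attaching point $b_i$ sliding along $L_i$ past a crossing, past a local extremum, or past one of the special points $p_i$, $q_i$, $p_j$, $q_j$, $b_j$, and (v) a simultaneous change of the crossing state of $\alpha_i$ with $L_\alpha$ and of the marking at that crossing. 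Cases (i)--(iii) are handled directly by the naturality relations: at each elementary tangle met by the endpoint of $\alpha_i$, \hrel{t3} (or \hrel{t2}, depending on whether the strand passes below or above) combined with \hrel{g1}--\hrel{g2} shows that postcomposing with $\hat\Theta^{\rm L}_i$ (respectively $\Theta^{\rm L}_i$) and then plugging a $\unitH$ gives back the same morphism. Case (iv) is reduced to the previous ones together with \hrel{u1}, \hrel{u2}, and the defining identities for $U_k$, $\Theta^{\rm L}_i$, and $\hat\Theta^{\rm L}_i$ applied at $b_i$.

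Case (v), which also covers the weakening of conditions (b) and (c) in Step~(2), is governed by Proposition~\ref{basic-decorated-move/thm}: the relation \hrel{c9} says that any decorated crossing coincides with its opposite carrying the complementary decorations. Unpacking the prescriptions in Step~(3), one checks that inverting the crossing state of $\alpha_i$ with a strand of $L_\alpha$, and simultaneously toggling whether the crossing should be marked, exactly interchanges the assignment between the pairs $(X,\hat Y)$ and $(Y,\hat X)$. Consequently the list of marking conditions \(a)--\(c) in Step~(2) can be dropped in favor of \(a) alone: any crossing state of $\alpha_i$ with $L_i$ is admissible, provided that the marking is updated by \hrel{c9}, and this leaves $\barPhi_{L',\alpha}(T)$ unchanged.

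The main technical obstacle will be case (iv) when $b_i$ passes through $p_i$ or $q_i$, since the splitting $L_i = L_i^1 \cup L_i^2 \cup L_i^3$ and hence the pattern of markings along $\alpha_i$ changes type. To deal with this I would first reformulate the assignment in Step~(3) intrinsically: the data of the arc $\alpha_i$ determine, up to the naturality of $\hat\Theta^{\rm L}_i$, a single morphism $\hat\Theta^{\rm L}_{\underline{i},i}$ applied at $b_i$ and composed with $\unitH$; the markings prescribed by Step~(2) are precisely those forced by pushing this $\hat\Theta^{\rm L}_{\underline{i},i}$ up along $\alpha_i$ via \hrel{t3}. Under a move of $b_i$ through $p_i$ or $q_i$, the intrinsic formula for $\hat\Theta^{\rm L}_{\underline{i},i}\circ(\id\otimes\unitH)$ is manifestly unchanged, and the equivalence of the two prescribed marking patterns becomes a direct application of \hrel{c9}. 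Once this intrinsic reformulation is in place, the independence of the morphism from the whole band system $\alpha$, together with the relaxation of conditions (b) and (c), follows uniformly.
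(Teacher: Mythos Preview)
Your local-moves strategy has a genuine gap in case~(v). Relation~\hrel{c9} says that a crossing carrying a given set of decorations equals the \emph{opposite} crossing carrying the \emph{complementary} set; since $X$, $Y$, $\hat X$, $\hat Y$ each carry exactly one decoration, \hrel{c9} sends each of them to a crossing with three decorations, not to another member of the list. In particular \hrel{c9} does not yield $X=\hat Y$ or $Y=\hat X$, and in fact these identities are false in $\Algf$: by Lemma~\ref{decorated-sym-inv/thm} they would force $\hat X=X$, whereas relations~\hrel{c16} and \hrel{c22} show that an $X$-kink and an $\hat X$-kink differ by a nontrivial power of $\ribmorH$. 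Hence flipping a crossing of $\alpha_i$ with $L_\alpha$ while updating the marking genuinely changes the local contribution, and the invariance you need cannot be established crossing by crossing. Your appeal to $\hat\Theta^\rmL_i$ is also misplaced: that transformation is designed for the exchange of adjacent levels (it is what drives Proposition~\ref{vert-exchange/thm}), not for moving a band.

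The paper's argument is global and sidesteps this obstruction. Given a second arc $\hat\alpha_k$ (required only to satisfy condition~\(a), which already absorbs the relaxation of \(b) and \(c)), one attaches \emph{both} $\alpha_k$ and $\hat\alpha_k$ to $L_k$ at once and builds a labeled morphism $G_{L',\alpha,\hat\alpha_k}$ in $\AlgL$ with two $k$-labeled inputs. The crucial device is the integral element: plugging $\inteH$ (not $\unitH$) at the end of either extra arc and sliding it up via \hrel{c18}, \hrel{c19}, \hrel{c24}, \hrel{c25} and the $\inteH/\counH$ duality collapses that arc, recovering $\Forget(G_{L',\alpha})$ or $\Forget(G_{L',\hat\alpha})$ respectively. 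What remains is to show that swapping $\unitH$ and $\inteH$ between the two $k$-labeled inputs leaves the result unchanged; here one reads $\id_t\otimes\counH$ at the top as $\Theta^\rmL_{(0,\dots,0),k}$ and pushes it through the entire diagram by the naturality~\hrel{t2}, after which a short computation at the bottom finishes the proof. The integral element is the missing idea in your outline.
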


\begin{proof}
In order to see that $\barPhi_{L',\alpha}(T)$ is independent of the choice of the family of arcs $\alpha$, we have to show that $\barPhi_{L',\alpha}(T) = \barPhi_{L',\balpha}(T)$ for any other family of arcs $\balpha$ satisfying the same conditions required in Step~\(2), except for \(b) and \(c). We can do that by assuming the additional hypothesis that $\alpha$ intersects $\balpha$ regularly and, in particular, that $\balpha_i(1) = \hat b_i \neq b_i = \alpha_i(1)$ for every $1 \leqs i \leqs n$. In fact, if this were not the case, we could always consider a third family $\bbalpha$ of arcs satisfying such additional hypothesis with respect to both $\alpha$ and $\balpha$, and then show that $\barPhi_{L',\alpha}(T) = \barPhi_{L',\bbalpha}(T) = \barPhi_{L',\balpha}(T)$.


Therefore, we can proceed to replace the arcs of $\alpha$ with those of $\balpha$ one at a time. In other words, it is enough to show that $\barPhi_{L',\alpha}(T) = \barPhi_{L',\balpha}(T)$ whenever $\barPhi_{L',\balpha}(T)$ is the morphism obtained by replacing the arc $\alpha_k$ with an arc $\balpha_k : [0,1] \to [0,1]^2$ that satisfies the conditions in Step~\(2) of Subsection~\ref{FK/sec} except for \(b) and \(c), and by keeping every other arc $\alpha_i$ with $i \neq k$ fixed.

The main idea behind the proof is the following. We consider the diagram $T_{\alpha,\balpha_k} = T_\alpha \cup \balpha_k = T \cup_{j=1}^n \alpha_j \cup \balpha_k$ in which both arcs $\alpha_k$ and $\balpha_k$ are simultaneously attached to $L_k$, and we choose the crossing state for the crossings between $\alpha_k$ and $\balpha_k$ in an arbitrary way. We can assume for instance that $\balpha_k$ crosses always over $\alpha_k$, and we do not mark these crossings. Under the conditions listed above, $T_{\alpha,\balpha_k}$ has only regular intersections, and we can associate to it a morphism in $\Algf$ following the same rules used in the definition of $\barPhi$. Then, we will show (see equations~\hrel{v1} and \hrel{v2} below) that if, in this last morphism, the unit $\unitH$ in the image of $\hat a_k$ (respectively $a_k$) is replaced by the integral element $\inteH$, then the resulting morphism is equivalent to $\barPhi_{L',\alpha}(T)$ (respectively $\barPhi_{L',\balpha}(T)$). Finally, we will show that the two morphisms that are obtained by exchanging $\unitH$ and $\inteH$, which are associated to $a_k$ and $\hat a_k$, are \qvlnt{2} in $\Algf$ (see equation~\hrel{v3} below). This last step will require the use of the natural transformation $\Theta^\rmL_k$, which means that we will interpret the essential part of the above morphisms as the image under the forgetful functor of a labeled morphism in $\AlgD$. Here are the details.


By Proposition~\ref{labeled-kirby-hopf/thm}, there exist morphisms 
\[
G_{L',\alpha} : H_0^{\otimes s} \otimes H_{(i_1,\dots,i_{\ell-1}, i_\ell=k, i_{\ell+1},\dots, i_n)} \to H_0^{\otimes t}
\]
and
\[
G_{L',\balpha} : H_0^{\otimes s} \otimes H_{(i_1,\dots,i_{\ell-1},i_{\ell+1},\dots,i_h, k, i_{h+1},\dots, i_n)} \to H_0^{\otimes t}
\]
in $\AlgL$ such that
\[
\barPhi_{L',\alpha}(T) = \Forget(G_{L',\alpha}) \circ (\id_s \otimes \unitH^{\otimes n})
\text{ and }
\barPhi_{L',\balpha}(T) = \Forget(G_{L',\balpha}) \circ (\id_s \otimes \unitH^{\otimes n}).
\]
Then $\barPhi_{L',\alpha}(T) = \barPhi_{L',\balpha}(T)$ will follow if we can find a third morphism 
\[
G_{L',\alpha,\balpha_k} : H_0^{\otimes s} \otimes H_{(i_1,\dots, i_{\ell-1},k,i_{\ell+1},\dots,i_h,k,i_{h+1} ,\dots,i_n)} \to H_0^{\otimes t}
\]
in $\AlgL$ such that 
\begin{gather*}
 \Forget(G_{L',\alpha}) = \Forget(G_{L',\alpha,\balpha_k}) \circ (\id_{s+h} \otimes \inteH \otimes \id_{n-h-1}),
 \tag*{\rel{v1}} \label{E:v1}
 \\
 \Forget(G_{L',\balpha}) = \Forget(G_{L',\alpha,\balpha_k}) \circ (\id_{s+\ell-1} \otimes \inteH \otimes  \id_{n-\ell}), 
 \tag*{\rel{v2}} \label{E:v2}
 \\
 \begin{aligned}
  \Forget(G_{L',\alpha,\balpha_k}) \circ (\id_{s+\ell-1} \otimes \inteH \otimes \id_{h-\ell} \otimes \unitH \otimes \id_{n-h-1}) \kern 5cm \\
  \kern5cm = \Forget(G_{L',\alpha,\balpha_k}) \circ (\id_{s+\ell-1} \otimes \unitH \otimes \id_{h-\ell} \otimes \inteH \otimes \id_{n-h-1}).
 \end{aligned}
 \tag*{\rel{v3}}
 \label{E:v3}
\end{gather*}

In order to construct $G_{L',\alpha,\balpha_k}$, consider the diagram $T_{\alpha,\balpha_k}$.
The morphism $G_{L',\alpha,\balpha_k}$ is obtained by associating to the elementary morphisms making up $T_{\alpha,\balpha_k}$ the morphisms of $\AlgL$ listed in Figures~\ref{kirby-MAlg/fig} and \ref{second-arc/fig}-\(a). Notice that the edges in the image of $\hat b_k$ shown in Figure~\ref{second-arc/fig}-\(a) are not weighted by the ribbon morphism, as opposed to the ones corresponding to $b_k$. The global form of the morphism $\Forget(G_{L',\alpha,\balpha_k})$ is represented in Figure~\ref{second-arc/fig}-\(b).

\begin{figure}[htb]
 \centering
 \includegraphics{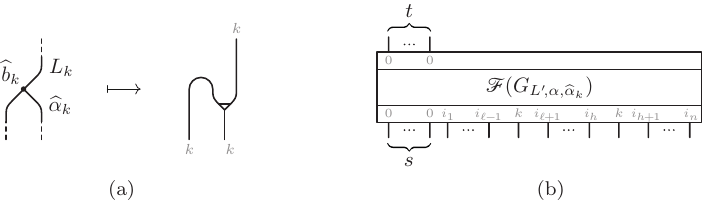}
 \caption{Image of $b_k'$ in $\AlgL$ and global form of $\Forget(G_{L',\alpha,\balpha_k})$.}
 \label{second-arc/fig}
\end{figure}

Consider now the morphism $\Forget(G_{L',\alpha,\balpha_k}) \circ (\id_{s+h} \otimes \inteH \otimes \id_{n-h-1})$ of $\Algf$ obtained by composing the image of $\balpha_k$ with the integral element $\inteH$ (see the second diagram in Figure~\ref{two-arcs/fig}). Since $\inteH$ belongs to $\AlgD$, and since the image of $\balpha_k$ is made up entirely of decorated crossings of type $X$, $\hat X$, $Y$, and $\hat Y$, which also belong to $\AlgD$, and of $\ev$ and $\coev$ morphisms, we can apply relations \hrel{c18}, \hrel{c19}, \hrel{c24}, and \hrel{c25} in Table~\ref{table-decorated-moves/fig} and the duality between $\counH$ and $\inteH$ in Table~\ref{table-BPHopf-prop1/fig} to pull up $\inteH$ towards the image of $\hat b_k$, thus obtaining $\Forget(G_{L',\alpha})$ (see the last two steps in Figure~\ref{two-arcs/fig}). This proves \hrel{v1}, and the proof of \hrel{v2} is completely analogous.

\begin{figure}[htb]
 \centering
 \includegraphics{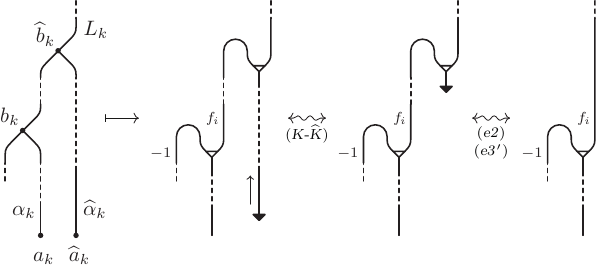}
 \caption{Independence of the choice of $\alpha_k$: proof of \hrel{v1}.}
 \label{two-arcs/fig}
\end{figure}

The proof of \hrel{v3} is shown in Figure~\ref{indep-bands/fig}. Here, in the second diagram, $\gamma^{-1}_{(i_{h+1},\dots, i_n),k}$ is the inverse of the morphism $\gamma_{(i_{h+1},\dots, i_n),k} : H^{n-h} \otimes H \to H \otimes H^{n-h}$ (see Definition~\ref{theta1/defn}), and can be represented as a composition of tensor products of identities, inverse braidings $c^{-1}$, and decorated crossings of type $\hat Y$. To implement this first step, we are using the fact that the counit $\counH$ belongs to $\AlgD$, and can thus be pulled up to the top-right using the naturality of the braided structures of $\Algf$ and $\AlgD$. Then, the top part of the second diagram can be interpreted as $\id_t \otimes \counH = \Theta^\rmL_{(0,\dots,0),k}$, which allows us to use, in the second step, the naturality property \hrel{t2} of $\Theta^\rmL_k$ to intertwine it with $\Forget(G_{L',\alpha,\balpha_k})$. 
\end{proof}

\begin{figure}[htb]
 \centering
 \includegraphics{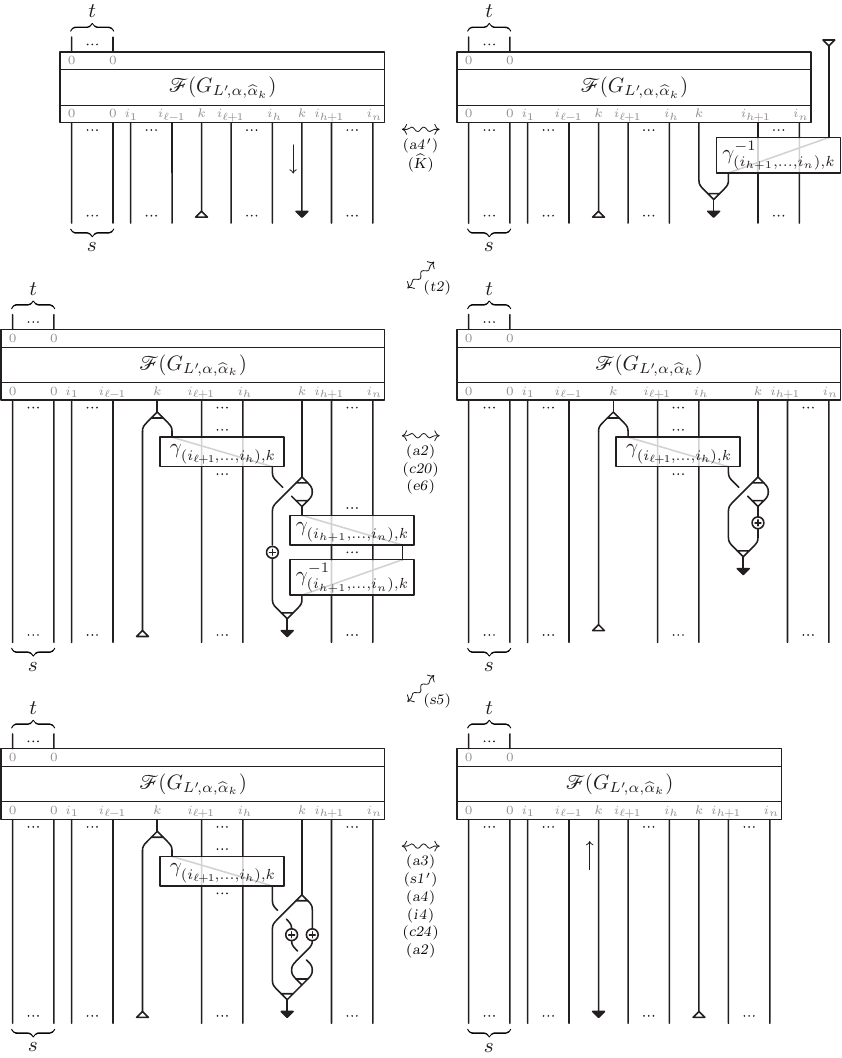}
 \caption{Independence of the choice of $\alpha_k$: proof of \hrel{v3}.}
 \label{indep-bands/fig}
\end{figure}

\FloatBarrier

Since $\barPhi_{L',\alpha}(T)$ is independent of the choice of the arcs $\alpha_i$ for all $1 \leqs i \leqs n$, from now we will denote this morphism simply as $\barPhi_{L'}(T)$.

\begin{proposition}\label{vert-exchange/thm} 
The morphism $\barPhi_{L'}(T)$ does not depend on the numbering of the components of $L$, that is, on the vertical order of the components of the bi-ascending state $L'$.
\end{proposition}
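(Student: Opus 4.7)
The plan is to reduce to a single adjacent transposition of labels and then to invoke the naturality of $\hat{\Theta}^\rmL_k$ established in Proposition~\ref{ttheta2/thm}.

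First, since every permutation of $\{1, \dots, n\}$ factors as a product of adjacent transpositions, it suffices to prove $\barPhi_{L'}(T) = \barPhi_{L''}(T)$ when $L''$ is the bi-ascending state obtained from $L'$ by interchanging the labels of two vertically adjacent components $L_k$ and $L_{k+1}$. Under this swap, the markings of precisely the crossings between $L_k$ and $L_{k+1}$ are flipped, while all other crossings are unaffected. By Proposition~\ref{indep-bands/thm}, I may take $\alpha'' = \alpha$ with $\alpha_k$ and $\alpha_{k+1}$ exchanged, and then Proposition~\ref{labeled-kirby-hopf/thm} yields labeled morphisms $G_{L',\alpha}$ and $G_{L'',\alpha''}$ of $\AlgL$ satisfying
\[
\barPhi_{L'}(T) = \Forget(G_{L',\alpha}) \circ (\id_s \otimes \unitH^{\otimes n}),\quad
\barPhi_{L''}(T) = \Forget(G_{L'',\alpha''}) \circ (\id_s \otimes \unitH^{\otimes n}).
\]

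The key observation, which follows directly from the labeling prescription in Figure~\ref{kirby-MAlg/fig}, is that
\[
\Forget(G_{L'',\alpha''}) = \hat{\Forget}_k(G_{L',\alpha}).
\]
Indeed, interchanging two vertically adjacent components in the bi-ascending order simultaneously exchanges the labels $k \leftrightarrow k+1$ at every strand of $G_{L',\alpha}$ and flips the marking on each crossing between the two components being swapped, which is exactly the exchange $X_{k,k+1} \leftrightarrow \hat{X}_{k+1,k}$ and $\hat{Y}_{k,k+1} \leftrightarrow Y_{k+1,k}$ defining $\hat{\Forget}_k$. The proof of Proposition~\ref{ttheta2/thm} then supplies the identity $\hat{\Forget}_k(G_{L',\alpha}) = \Omega_{\underline{0},k} \circ \Forget(G_{L',\alpha}) \circ \Omega_{\underline{i},k}^{-1}$, where $\underline{i}$ and $\underline{0}$ are the source and target label sequences of $G_{L',\alpha}$ respectively. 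A short computation using only~\hrel{a7} and~\hrel{a8} shows that $\Omega_{\underline{0},k} = \id_t$.

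It therefore remains to prove the key identity
\[
\Omega_{\underline{i},k} \circ (\id_s \otimes \unitH^{\otimes n}) = \id_s \otimes \unitH^{\otimes n},
\]
from which the conclusion $\barPhi_{L'}(T) = \barPhi_{L''}(T)$ is immediate. I would establish it by induction on the length of $\underline{i}$, using the recursive definition of $\hat{\Theta}^\rmL_{\underline{i},k}$: the rightmost $\unitH$ inside $\Omega_{\underline{i},k}$ passes through $\coprH$ via~\hrel{a7} to produce two units, and each factor $\hat{\Theta}^\rmL_{(i_h),k}$ collapses a pair of units to a single one via either~\hrel{a2-2'} (when $i_h = k$) or~\hrel{a8} (when $i_h \neq k$). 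The main obstacle is verifying that the intertwiner $\hat{\gamma}_{\underline{i},k}$, built from ordinary braidings and from the decorated crossings $X$ and $\hat{X}$, sends pairs of units to pairs of units. This reduces to checking that $X \circ (\unitH \otimes \unitH) = \hat{X} \circ (\unitH \otimes \unitH) = \unitH \otimes \unitH$, which in turn follows by unpacking the definitions from Table~\ref{defnXY/fig} and applying~\hrel{d6} (triviality of the adjoint action on the unit) together with~\hrel{a7}.
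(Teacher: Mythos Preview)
Your overall strategy matches the paper's: reduce to an adjacent transposition, recognize $\barPhi_{L''}(T)$ as $\hat{\Forget}_k(G_{L',\alpha}) \circ (\id_s \otimes \unitH^{\otimes n})$, and invoke Proposition~\ref{ttheta2/thm}. Passing through the conjugation formula $\hat{\Forget}_k = \Omega_{\underline{0},k} \circ \Forget \circ \Omega_{\underline{i},k}^{-1}$ is a legitimate variant of the paper's direct use of \hrel{t3}.

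The gap is in your final inductive step. To run the induction you need $\hat{\gamma}_{(j),k} \circ (\unitH \otimes \unitH) = \unitH \otimes \unitH$ for every label $j$ in $\underline{i}$, in particular for $j = k+1$, where $\hat{\gamma}_{(k+1),k} = X$. Your citation of \hrel{d6} is off (that relation concerns the adjoint action on the \emph{product}; the unit statements are \hrel{d3} and \hrel{d5}), but the real issue is that the adjoint-action rewritings \hrel{c10}--\hrel{c13} in Table~\ref{defnXY/fig} are stated only for $\hat X$ and $\hat Y$, not for $X$ and $Y$. Since $\unitH \notin \AlgD$, you also cannot appeal to naturality of $X$ as a braiding on $\AlgD$; and if you unwind the copairing decoration defining $X$ and feed in units, the copairing does not collapse to $\unitH \otimes \unitH$ by \hrel{a7} alone. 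So the identity $X \circ (\unitH \otimes \unitH) = \unitH \otimes \unitH$ is not justified as written.

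The paper avoids this. After ordering the endpoints so that the bottom labels read $1,\dots,n$ (via Proposition~\ref{indep-bands/thm}), it inserts the trivial bubble $\counH \circ \unitH$ between strands $k$ and $k+1$ and drags the \emph{counit} to the far right using the naturality of the two $\AlgD$-braidings with respect to $\counH$, which \emph{is} a morphism of $\AlgD$. This produces $(\id_t \otimes \counH) \circ (\Forget(G_{L',\alpha}) \otimes \id)$ on top, and since $\id_t \otimes \counH = \hat\Theta^\rmL_{(0,\dots,0),k}$, a single application of \hrel{t3} converts $\Forget$ into $\hat\Forget_k$; the bottom then simplifies using only Hopf-algebra unit and counit identities, without ever evaluating $X$ or $\hat X$ on a pair of units.
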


\begin{proof}
In order to show that $\barPhi_{L'}(T)$ is independent of the numbering of the components of $L$, it is enough to show that $\barPhi_{L'}(T) = \barPhi_{L''}(T)$ when $L''$ is obtained from $L'$ by exchanging the order of two consecutive components $L_k$ and $L_{k+1}$, for some $1 \leqs k \leqs n-1$. This implies that $L''$ is obtained from $L'$ by setting $L_{k+1}'' = L_k'$, $L_{k}'' = L_{k+1}'$, and by inverting all crossings between these two components, while $L_{i}'' = L_i'$ for every $i \neq k,k+1$. Then, according to Proposition~\ref{labeled-kirby-hopf/thm} and to the definition of the functor $\hat\Forget_k$ in Proposition~\ref{ttheta2/thm}, $\barPhi_{L'}(T) = \Forget(G_{L',\alpha}) \circ (\id_s \otimes \unitH^{\otimes n})$, while 
$\barPhi_{L''}(T) = \hat\Forget_k(G_{L',\alpha}) \circ (\id_s \otimes \unitH^{\otimes n})$.
Hence, the statement will follow if we show that
\[
\Forget(G_{L'}) \circ (\id_s \otimes \unitH^{\otimes n}) = \hat\Forget_k(G_{L'}) \circ (\id_s \otimes  \unitH^{\otimes n}).
\]
This is done in Figure~\ref{indep-numbering/fig}, where we have assumed that the endpoints $a_1, a_2, \dots, a_n$ of $\alpha_1, \alpha_2, \dots, \alpha_n$ have been positioned in the lower right corner of the diagram in increasing order from the left to the right, as it is allowed by Proposition~\ref{indep-bands/thm}. In the first step in Figure~\ref{indep-numbering/fig}, we insert $\counH \circ \unitH$ between the $k$th and the $(k+1)$st strand, and pull the counit $\counH$ through the $n-k$ vertical strands to its right using the naturality of the two braided structures of $\AlgD$. Since $\id_t \otimes \counH = \hat\Theta^\rmL_{(0,\dots,0),k}$, in the second step we use the naturality property \hrel{t3} of $\hat{\Theta}^\rmL_{k}$ (see Proposition~\ref{ttheta2/thm}) to intertwine it with $\Forget(G_{L',\alpha})$, thus obtaining $\hat\Forget_k(G_{L',\alpha})$.
\end{proof}

\begin{figure}[htb]
 \centering
 \includegraphics{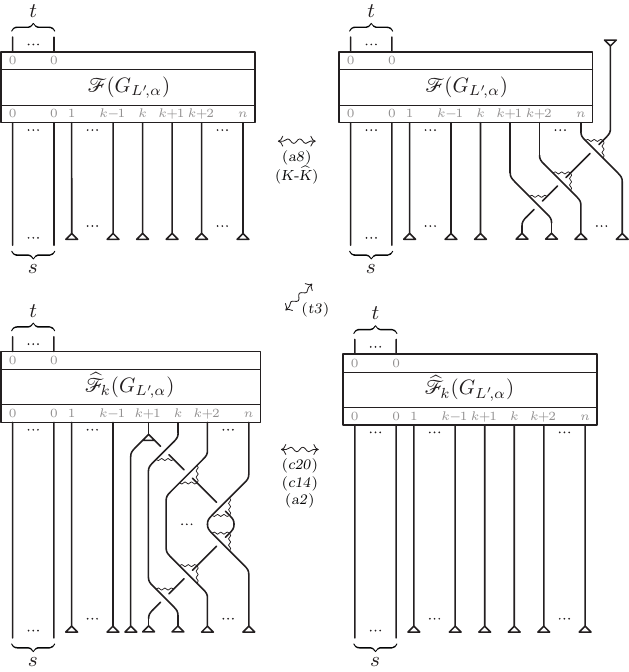}
 \caption{Proof of the independence of the choice of numbering of the components of $L$.}
 \label{indep-numbering/fig}
\end{figure}

\FloatBarrier

In order to prove that $\barPhi_{L'}(T)$ is independent of the choice of the bi-ascending state of the single components of $L'$, we need the following lemma.

\begin{lemma}\label{alg-cutting/thm}
Let $T = T_2 \circ T_1$ be a tangle of the form represented on the left-hand side of Figure~\ref{cutting-lemma1/fig}, where two adjacent strands belonging to the same component $L_n$ of the undotted link $L = L_1 \cup \dots \cup L_n$ of $T$ are joined by a flat band $\delta$. 
Assume that surgering $L_n$ along $\delta$ yields two different components $\hat L_n$ and $\hat L_{n+1}$ of the undotted link $\hat L = L_1 \cup \dots \cup L_{n-1} \cup \hat L_n \cup \hat L_{n+1}$ of a new tangle $\hat T$, where an extra dotted component is added to encircle $\delta$, as shown on the right-hand side of Figure~\ref{cutting-lemma1/fig}. Assume also that $L' = L'_1 \cup \dots \cup L'_n$ and $\hat L' = L'_1 \cup \dots \cup L'_{n-1} \cup \hat L'_n \cup \hat L'_{n+1}$ are bi-ascending states of $L$ and $\hat L$, respectively, whose components are vertically ordered according to the numbering, and such that surgering $L'_n$ along $\delta$ gives the two components $\hat L'_n$ and $\hat L'_{n+1}$. In other words, $L'$ and $\hat L'$ are obtained by inverting the same crossings in $L$ and $\hat L$, respectively. Then $\barPhi_{L'}(T) = \barPhi_{\hat L'}(\hat T)$.
\end{lemma}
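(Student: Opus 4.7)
The plan is to express both morphisms as images under the forgetful functor $\Forget : \AlgL \to \Algf$ of suitable labeled morphisms in $\AlgL$, and then to use the naturality of the transformation $\Theta^\rmL_k$ established in Proposition~\ref{theta1/thm} to reduce the desired equality to a purely local identity around the band $\delta$.

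First, by exploiting Propositions~\ref{indep-bands/thm} and \ref{vert-exchange/thm}, I would align the two setups as closely as possible: the arcs $\alpha_i$ attached to $L_i = L_i'$ for $1 \leqs i < n$ are chosen identically in $T$ and $\hat T$, and the arcs attached to the components obtained from $L_n$ are all placed inside a small disk neighborhood $N$ of the band $\delta$ (using Proposition~\ref{indep-bands/thm} to move the attaching points freely on the corresponding component). Then, by Proposition~\ref{labeled-kirby-hopf/thm}, one may write $\barPhi_{L'}(T) = \Forget(G_{L'}) \circ (\id_s \otimes \unitH^{\otimes n})$ and $\barPhi_{\hat L'}(\hat T) = \Forget(G_{\hat L'}) \circ (\id_s \otimes \unitH^{\otimes n+1})$ for labeled morphisms $G_{L'}, G_{\hat L'}$ in $\AlgL$, which agree outside of $N$.

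The second step is to identify the local contributions inside $N$. In $G_{L'}$, the two strands of the $n$-labeled component $L_n$ are joined at the band $\delta$ by an $\ev$-like cap, together with the attachment of $\alpha_n$ through a $\coprH$ and a $\unitH$ at its base. In $G_{\hat L'}$, the same region contains, instead, two separate $\ev$-like caps, one for $\hat L_n$ at label $n$ and one for $\hat L_{n+1}$ at label $n+1$, with their own $\coprH$ attachments and unit bases, plus a morphism of type $U$ from Figure~\ref{defnU/fig} coming from the dotted component $D$ (which lives in $\AlgD$). The required local identity thus amounts to showing that this $U$-decorated split configuration collapses onto the unsplit single-cap configuration.

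Third, I would establish this local identity by combining the naturality property \hrel{t2} of $\Theta^\rmL_k$, which transports the $U$-contribution of $D$ to a convenient location, with relations \hrel{u1} and \hrel{u2} from Table~\ref{table-decorated-moves-3/fig}, the Hopf algebra axiom \hrel{a7} (so that $\coprH \circ \unitH = \unitH \otimes \unitH$), and the unit axiom \hrel{a2-2'}. These moves together merge the two unit bases of $\alpha_n'$ and $\alpha_{n+1}'$ into the single unit base of $\alpha_n$, and collapse the $U$-decorated pair of caps into a single cap at label $n$. The main obstacle will be keeping careful track of the labels in $\AlgL$, since in passing from $L'$ to $\hat L'$ the single level-$n$ strand is split into two strands at labels $n$ and $n+1$, and the $U$-morphism derived from $D$ effectively implements the reverse of this splitting on the algebraic side; the technical heart will be matching the data of the decorated crossings in $\AlgL$ between $G_{L'}$ and $G_{\hat L'}$ through the intermediate $\Theta^\rmL_k$-naturality moves.
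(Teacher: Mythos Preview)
Your overall reduction — localize near $\delta$ by choosing all relevant arcs inside a small neighborhood, and then compare the two sides there — is exactly the paper's strategy. However, the execution you sketch diverges from the paper in two ways, one of which is a genuine gap.

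First, the paper does \emph{not} invoke $\AlgL$ or $\Theta^\rmL_k$ at all for this lemma. It simply chooses $\alpha_n$ and $\alpha_{n+1}$ for $\hat T$ to be two \emph{parallel} arcs near $\delta$ (crossing over the vertical strands of $\hat L_{n+1}$ and under all others), takes the restriction of this same $\alpha_n$ as the arc for $L_n$ in $T$, and then carries out a short direct diagrammatic computation. The extra dotted component becomes a $U$-type morphism and is absorbed using the elementary relations you list; no label-tracking in $\AlgL$ and no $\Theta^\rmL_k$-naturality are needed, because once the arcs are parallel the two pictures already agree outside the band and the local identity is visible by hand. Your proposed use of $\Theta^\rmL_k$ to ``transport the $U$-contribution of $D$'' is unnecessary machinery, and the ``main obstacle'' of matching labeled decorated crossings that you anticipate simply does not arise with the paper's choice of arcs.

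Second, and more importantly, you omit the ribbon-weight bookkeeping. Each attaching point $b_i$ carries a factor $\ribmorH^{f_i}$ with $f_i = 1 - \wr(L_i')$. Since surgering along the flat band $\delta$ does not create or destroy crossings, one has $\wr(L_n') = \wr(\hat L_n') + \wr(\hat L_{n+1}')$, hence $\hat f_n + \hat f_{n+1} = f_n + 1$. This extra $+1$ in the total ribbon exponent on the $\hat T$ side must be cancelled against the contribution of the new $U$/cap configuration; in the paper this is precisely the content of the last step of the computation. Your list of relations (\hrel{u1}, \hrel{u2}, \hrel{a7}, \hrel{a2-2'}) cannot by themselves close the argument without accounting for this discrepancy, so as written the local identity you aim for would be off by a power of $\ribmorH$.
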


\begin{figure}[htb]
 \centering
 \includegraphics{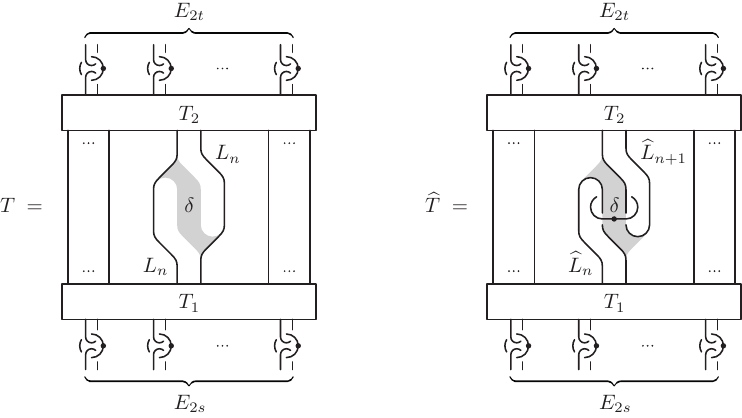}
 \caption{Cutting the component $L_n$.}
 \label{cutting-lemma1/fig}
\end{figure}

\begin{proof}
Choose a set of arcs $\alpha_1, \dots, \alpha_{n+1}$ for $\hat L$ that is consistent with the requirements in Step~\(2) of Subsection~\ref{FK/sec} except for \(b) and \(c), as allowed by Proposition~\ref{indep-bands/thm}). This yields the diagrams $T_\alpha$ and $\hat T_\alpha$ shown in Figure~\ref{cutting-lemma2/fig}, where $\alpha_n$ and $\alpha_{n+1}$ are two parallel arcs that cross over the vertical strands belonging to $\hat L_{n+1}$ and under all the others. In particular, they form the same sequence of crossing states along the pair of gray boxes.

\begin{figure}[htb]
 \centering
 \includegraphics{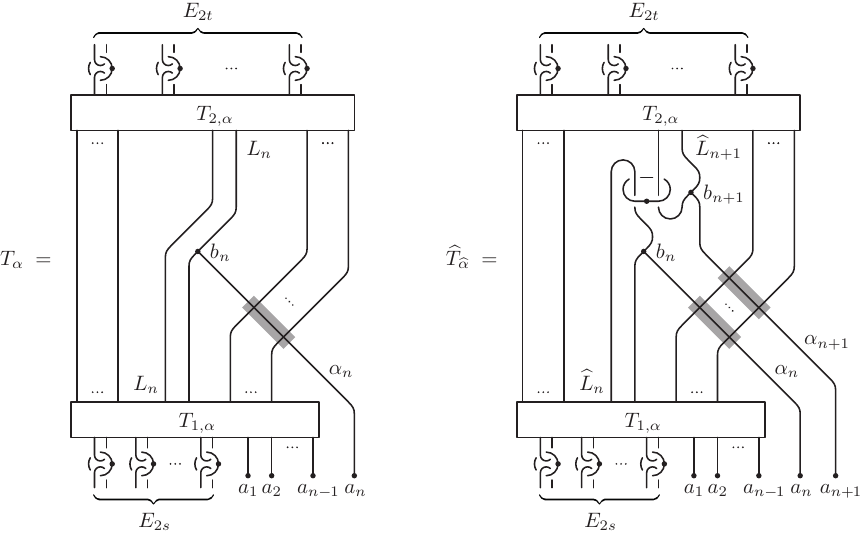}
 \caption{Choice of the arcs $\alpha_n$ and $\alpha_{n+1}$ in the proof of Lemma \ref{alg-cutting/thm}: $\alpha_n$ and $\alpha_{n+1}$ cross over the vertical strings which belong to $\hat L_{n+1}$ and under all the others.}
 \label{cutting-lemma2/fig}
\end{figure}

Now, the equality $\barPhi_{\hat L'}(\hat T) = \barPhi_{L'}(T)$ is proved in Figure~\ref{cutting-lemma3/fig}. In the first step of the figure, after canceling together the two ribbon morphisms lying on the zig-zag strand, we slide down the morphism $U_2$ and the two copairings until they separate the two unit morphisms at the bottom. In the second step, we use \hrel{a2'} to absorb two of the units, and then again \hrel[c18]{c18-24} to slide $\Delta \circ \Lambda$ back up. In the last step, by using \hrel[r5]{r5-5'}, we slide the ribbon weights $\hat f_n-1$ and $\hat f_{n+1}$ along the image of $L_n$ to collect them together, and then use the fact that $\hat f_{n} + \hat f_{n+1} = 2 - \wr(\hat L_{n}) - \wr(\hat L_{n+1}) = 2 - \wr(L_{n}) = 1 + f_n$. Finally, using \hrel{a7}, we collect together the two units and slide the resulting coproduct morphism up.
\end{proof}

\begin{figure}[htb]
 \centering
 \includegraphics{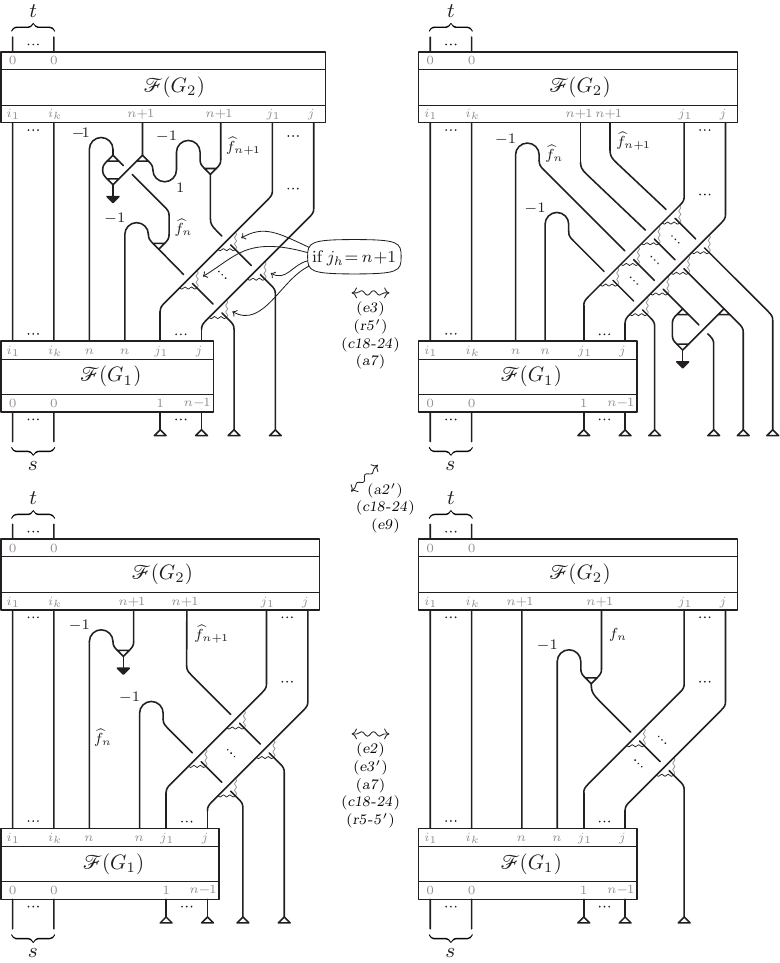}
 \caption{Proof of Lemma~\ref{alg-cutting/thm}.}
 \label{cutting-lemma3/fig}
\end{figure}

\FloatBarrier

\begin{remark}\label{cutting/rmk}
We observe that replacing $T=T_2\circ T_1$ by $\hat T$, as described in Lemma~\ref{alg-cutting/thm}, is a \dfrmtn{2}. Indeed, one can go back by sliding $\hat L_n$ over $\hat L_{n+1}$, and then by canceling the extra \hndl{1} with $\hat L_{n+1}$.
\end{remark}

\begin{proposition}\label{vert-st-comp/thm}
The morphism $\barPhi_{L'}(T)$ does not depend on the choice of the bi-ascending state of the single components of $L'$. 
\end{proposition}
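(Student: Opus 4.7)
The plan is to reduce the statement, via Proposition~\ref{ba-states/thm}, to a single local move on bi-ascending states: any two bi-ascending states of $L$ are connected by a finite sequence of bi-ascending states in which consecutive terms differ either by reversing all crossings between two vertically adjacent components, or by flipping a single self-crossing of one component. Transitions of the first type correspond to exchanging the vertical order of two consecutive components, and are already handled by Proposition~\ref{vert-exchange/thm}. Hence, it suffices to establish invariance of $\barPhi_{L'}(T)$ under a transition of the second type.

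Assume therefore that $L''$ is obtained from $L'$ by flipping a single self-crossing of one of its components, say $L_k$. By the second part of Proposition~\ref{ba-states/thm}, the intermediate singular diagram $L^\sharp$, obtained by replacing the flipped crossing with a singular point $s$, is a bi-ascending singular diagram whose components are vertically separated; all of its components are bi-ascending unknots, except $L_k$, which is the $1$-point union at $s$ of two vertically separated bi-ascending unknots $L_k^{\mathrm{up}}$ and $L_k^{\mathrm{down}}$. The key idea will be to resolve $s$ by a band surgery that splits $L_k$ into these two unknots, and to exploit the fact that, after this cutting, the two original bi-ascending states $L'$ and $L''$ correspond to the same bi-ascending state of the cut diagram up to a swap of the vertical order of the two new components.

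Concretely, after a local isotopy near the changing crossing, one can place a short flat band $\delta$ joining two vertically adjacent strands of $L_k$ in such a way that the associated band surgery realizes precisely the smoothing of $s$ that separates $L_k^{\mathrm{up}}$ from $L_k^{\mathrm{down}}$. Up to composing $T$ with identity morphisms on the top and on the bottom, and up to a renumbering of the components (which by Proposition~\ref{vert-exchange/thm} does not affect $\barPhi_{L'}(T)$), one can bring $T$ into the form $T = T_2 \circ T_1$ required by Lemma~\ref{alg-cutting/thm}. The resulting cut tangle $\hat T$ has undotted link $\hat L$ in which $L_k$ has been replaced by two components $\hat L_k$ and $\hat L_{k+1}$ corresponding to $L_k^{\mathrm{up}}$ and $L_k^{\mathrm{down}}$, while all the other crossings of $L$ are preserved. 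The bi-ascending states $\hat L'$ and $\hat L''$ of $\hat L$ induced respectively by $L'$ and $L''$ invert the same set of remaining crossings and differ only in the vertical ordering of $\hat L_k$ and $\hat L_{k+1}$. Applying Lemma~\ref{alg-cutting/thm} twice, together with Proposition~\ref{vert-exchange/thm} in between, will then yield
\[
\barPhi_{L'}(T) = \barPhi_{\hat L'}(\hat T) = \barPhi_{\hat L''}(\hat T) = \barPhi_{L''}(T),
\]
as desired.

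The main technical difficulty I foresee lies in the geometric setup of the band surgery: one must verify that, for any self-crossing of $L_k$ flipped between $L'$ and $L''$, there exists, after a local isotopy that does not alter the bi-ascending structure elsewhere, a flat band $\delta$ compatible with the configuration required by Lemma~\ref{alg-cutting/thm}, whose surgery realizes the splitting of $L_k^{\mathrm{up}}$ from $L_k^{\mathrm{down}}$. The vertical separation of the two loops at $s$, guaranteed by Proposition~\ref{ba-states/thm}, will play an essential role here: it uniquely determines the correct smoothing direction and ensures that the cut diagram $\hat L$ is genuinely obtained as a band surgery of $L$ in the precise sense required by Lemma~\ref{alg-cutting/thm}.
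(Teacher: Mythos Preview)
Your overall reduction --- via Proposition~\ref{ba-states/thm} to a single self-crossing flip on the last component, followed by cutting that component with Lemma~\ref{alg-cutting/thm} --- is exactly the paper's. The gap is in the final step.

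The band surgery of Lemma~\ref{alg-cutting/thm} is performed on two \emph{adjacent parallel} strands and does not smooth any existing crossing. With the band $\delta$ placed, as in the paper, immediately above the changing crossing, that crossing survives in $\hat L$: the surgery caps off the two strands rising from it, so the changing crossing together with the surgery arc becomes a small kink, a \emph{self}-crossing of one of the two new components rather than a mutual crossing between them. Consequently the induced states $\hat L'$ and $\hat L''$ carry the \emph{same} vertical ordering of $\hat L_n$ and $\hat L_{n+1}$ and differ only at this kink. Proposition~\ref{vert-exchange/thm}, which swaps the order of two components by flipping \emph{all} of their mutual crossings, is therefore inapplicable: the two cut states are simply not related by a component swap. (Note also that if you instead place the band between strands coming from different loops, the surgery fails to disconnect $L_n$ at all, so the hypothesis of Lemma~\ref{alg-cutting/thm} is not met.)

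The paper finishes with a direct local algebraic computation: outside a small box containing the changing crossing and the new dotted component, the images of $\hat L'$ and $\hat L''$ under $\barPhi$ agree tautologically, and inside the box the required identity in $\Algf$ is verified by hand using the decorated-crossing calculus and the morphism $U$ coming from the new $1$-handle. This explicit local check is the missing ingredient in your plan; the appeal to Proposition~\ref{vert-exchange/thm} cannot substitute for it.
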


\begin{proof}
Using Proposition~\ref{vert-exchange/thm}, we can assume that the component whose bi-ascending state we want to change is the last one, $L_n$. According to Proposition~\ref{ba-states/thm}, it is enough to prove that $\barPhi_{L'}(T) = \barPhi_{L''}(T)$ whenever the bi-ascending state $L''$ is obtained from $L'$ by a single crossing change in $L_n'$.

Notice that $\barPhi_{L'}(T)$ is invariant under the planar isotopy moves in Figure~\ref{rotating-crossing/fig}, which rotate crossings and hold in any braided rigid monoidal category, as a consequence of the planar isotopy moves in Table~\ref{table-rigid/fig} and of the naturality of the braiding. Indeed, by definition, the images under $\barPhi$ of all crossings of $T_\alpha$ are in the subcategory $\AlgD$ which, according to Theorem~\ref{decorated-moves/thm}, is a braided rigid monoidal category whose rigid structure is induced by the morphisms $\ev$ and $\coev$. Therefore, we can assume that the changing crossing is oriented in one of the two ways shown in the top line in Figure~\ref{proof-biasc1/fig}.

\begin{figure}[htb]
 \centering
 \includegraphics{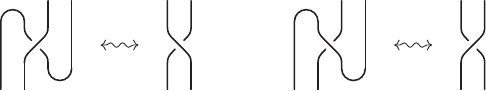}
 \caption{Rotating a crossing in a braided rigid monoidal category.}
 \label{rotating-crossing/fig}
\end{figure}

\begin{figure}[htb]
 \centering
 \includegraphics{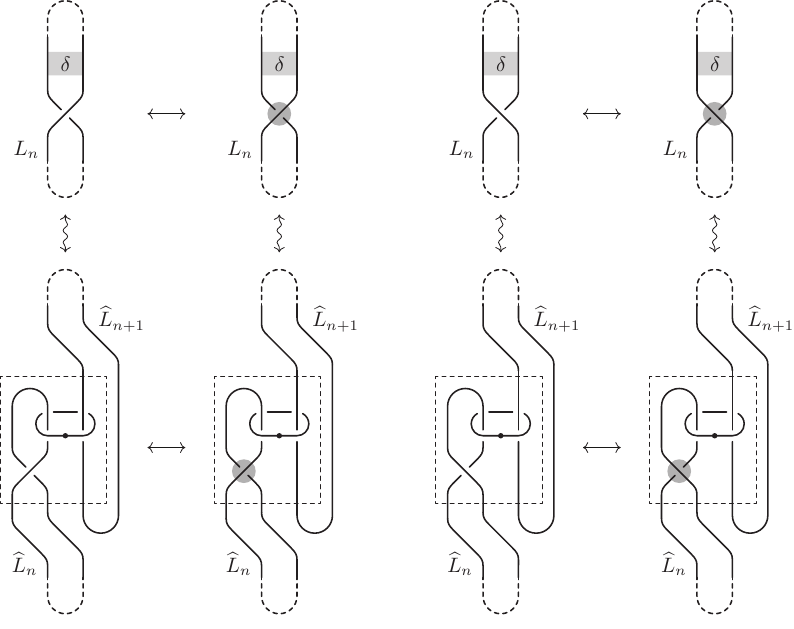}
 \caption{Orienting the changing crossing and cutting the component $L_n$.}
 \label{proof-biasc1/fig}
\end{figure}

In both cases, we cut the component $L_n$ by performing a surgery on it, as described in Lemma~\ref{alg-cutting/thm} and in Figure~\ref{cutting-lemma1/fig}, along a band $\delta$ which is located immediately above the changing crossing, as shown in Figure~\ref{proof-biasc1/fig}. We obtain this way the two new components $\hat L_n$ and $\hat L_{n+1}$. According to the second part of Proposition~\ref{ba-states/thm}, we can assume that $\hat L_n$ and $\hat L_{n+1}$ are vertically separated unknots. Actually, Proposition~\ref{ba-states/thm} tells us that this is true for the two components obtained by cutting $L_n$ at the changing crossing, but since there is no other crossing inside the dashed boxes in Figure~\ref{proof-biasc1/fig}, we are free to vertically isotope $\hat L_n$ and $\hat L_{n+1}$ inside those boxes in such a way that the same holds for them.

\begin{figure}[b]
 \centering
 \includegraphics{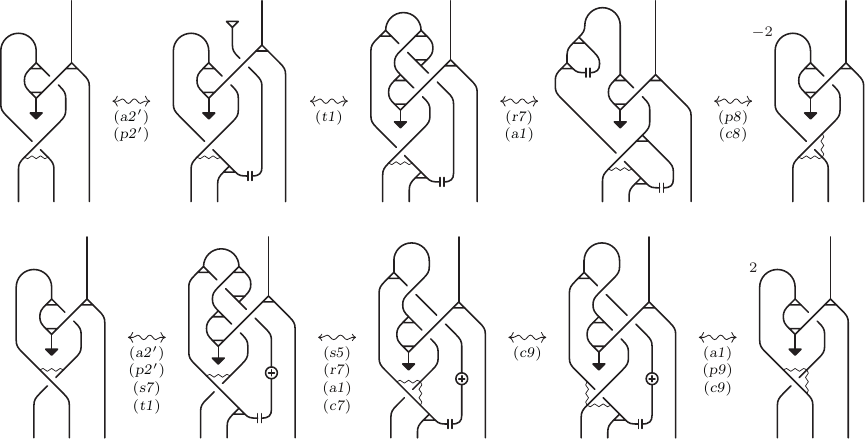}
 \caption{Proof of the invariance of $\barPhi$ under change of crossing.
 }
 \label{proof-biasc2/fig}
\end{figure}

Then, $L'$ and $\hat L'$ satisfy the hypotheses of Lemma~\ref{alg-cutting/thm}, and hence we have $\barPhi_{L'}(T) = \barPhi_{\hat L'}(\hat T)$ and $\barPhi_{L''}(T) = \barPhi_{\hat L''}(\hat T)$. So, we are left to prove that $\barPhi_{\hat L'}(\hat T) = \barPhi_{\hat L''}(\hat T)$. This is done in Figure~\ref{proof-biasc2/fig}, where only the parts of the images corresponding to the parts of the diagrams inside the dashed rectangles in Figure~\ref{proof-biasc1/fig} are compared, since the rest is fixed. Notice that, in the first case (first line in Figure~\ref{proof-biasc2/fig}), the move  increases $\wr(\hat L_n')$ by $2$, so the extra ribbon weight $-2$ ensures that the total ribbon weight $r_n$ of the image of $\hat L_n$ remains equal to $-\wr(\hat L_n')$. Analogously, in the second case (second line in Figure~\ref{proof-biasc2/fig}), the move reduces $\wr(\hat L_n')$ by $2$, so the extra ribbon weight $2$ ensures once again that $r_n$ remains equal to $-\wr(\hat L_n')$.
\end{proof}

It is left to show that $\barPhi$ is invariant under the \qvlnc{2} moves in Table~\ref{table-Ktangles/fig}.

\begin{proposition}\label{sliding/thm}
The morphism $\barPhi(T)$ depends only on the \qvlnc{2} class of $T$ in $4\KT$.
\end{proposition}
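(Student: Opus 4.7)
The plan is to verify invariance of $\barPhi(T)$ under each of the local moves in Table~\ref{table-Ktangles/fig}---the planar isotopy moves, the pushing-through moves, the $1/2$-handle cancellation, and the $2$-handle slide. Combined with the earlier independence results on the bi-ascending state (Proposition~\ref{vert-st-comp/thm}), on the numbering of the components of $L$ (Proposition~\ref{vert-exchange/thm}), and on the choice of bands $\alpha$ (Proposition~\ref{indep-bands/thm}), this suffices by Proposition~\ref{K-pres/thm}. The freedom in those choices is crucial: for each move we pick auxiliary data on the two sides so that the comparison reduces to a local identity in $\AlgD$ or in $\AlgL$.

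For the isotopy moves, I would arrange the numbering and the bi-ascending state so that the local diagram is built from generators of $\AlgD$. Since Theorem~\ref{decorated-moves/thm} endows $\AlgD$ with two compatible ribbon structures (with braidings $X,Y$ and $\hat X, \hat Y$), the framed Reidemeister-type moves become instances of relations~\hrel{c14-15}--\hrel{c25} and of Table~\ref{table-decorated-moves-3/fig}. The framing factor $f_i = 1 - \wr(L_i')$ absorbs every change in the writhe, while local maxima and minima are handled by the duality in $\AlgD$. For the pushing-through moves, I would choose the arc $\alpha_i$ attached to the moving component so as to avoid the disk in question; the algebraic difference then consists only of a pair $\counH \circ \coprH$ inserted on one strand at the two new intersections with the dotted disk, which collapses by the counit axiom~\hrel{a4-4'}, with the naturality of $\Theta$ (Proposition~\ref{theta/thm}) used to push this insertion past the rest of $F_{L',\alpha}$.

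The $1/2$-handle cancellation is similar in spirit but relies on the integral axioms. After choosing $\alpha$ to avoid the cancelling dotted disk, the cancelled pair contributes exactly a $\coprH \circ \inteH$ on the strand corresponding to the removed undotted component, composed with a $\counH$ coming from its closure; axioms~\hrel{i2'} and~\hrel{a7} reduce this to $\inteH$, and the normalization $\intfH \circ \inteH = \idone$ together with the framing factor $f_n$ accounts for the framing of the removed unknot.

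The main obstacle is the $2$-handle slide, which genuinely changes the underlying link $L$ and so forces a change of bi-ascending state. By Proposition~\ref{2-equiv-special/thm}, it suffices to consider slides over closed undotted unknots $L_n$ of framing $0$ or $\pm 1$, and by Proposition~\ref{vert-exchange/thm} we may take $L_n$ to be the last component of the chosen bi-ascending state. Choosing bands on both sides to coincide away from the slide region, the slide replaces one level of the slid component by that level embracing a trivial copy of $L_n$. The key point is that this embracing loop is transparent: if the slid component is closed, this follows from the naturality of $\Theta$ (Proposition~\ref{theta/thm}), and if it is open, from the naturality of $\Theta^\rmL_n$ and $\hat\Theta^\rmL_n$ (Propositions~\ref{theta1/thm} and~\ref{ttheta2/thm}). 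The contribution of the unknot $L_n$ itself, together with its framing factor $f_n \in \{0,1,2\}$, cancels by relations~\hrel{u1}--\hrel{u2} and $\intfH \circ \inteH = \idone$. The intermediate crossing changes needed to reach a bi-ascending state compatible with the post-slide configuration are tracked via Propositions~\ref{ba-states/thm} and~\ref{vert-st-comp/thm}.
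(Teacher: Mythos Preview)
Your treatment of the isotopy moves is essentially correct and matches the paper. The descriptions of the pushing-through move and of $1/2$-cancellation are garbled (the former is a direct application of the naturality property \hrel{t1'} of $\Theta'$, since $U_k = \Theta'_k \circ (\inteH \otimes \id_k)$; the latter uses \hrel{i2} to split $U_1$ as $\inteH \circ \counH$ and then slides $\counH$ and its dual $\inteH$ along the whole component via \hrel{c18}--\hrel{c25} and \hrel{u2} until one reaches $\counH \circ \unitH = \idone$). These are recoverable.

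The genuine gap is in the $2$-handle slide. Your proposed mechanism---treating the parallel copy of $L_n$ as an ``embracing loop'' made transparent by the naturality of $\Theta$, $\Theta^\rmL_n$, or $\hat\Theta^\rmL_n$---does not match the geometry. Those natural transformations model a dotted component (a $1$-handle) encircling a bundle of strands; their naturality lets you slide such a $1$-handle past morphisms of $\AlgD$ or $\AlgL$. A $2$-handle slide, by contrast, replaces $L_j$ by a band-connected sum with a \emph{parallel copy} of $L_i$: nothing is encircled, rather the diagram of $L_i$ is duplicated. There is no morphism of the form $\Theta_{\underline{i},k}$ to which naturality applies, and the contribution of the new parallel strand cannot be cancelled by relations of type \hrel{u1}--\hrel{u2}.

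The paper's argument uses a completely different tool that you do not mention: the Frobenius-type product $\tilde\prodH$ of Proposition~\ref{mu/thm}. One first splits the unit at the end of the band $\alpha_j$ via \hrel{a7}, producing a local $\tilde\prodH$ attached to the image of $L_i$; then one propagates $\tilde\prodH$ (and its dual $\coprH$) around the entire image of $L_i$ using relations \hrel{q2}--\hrel{q4'} together with \hrel{c18}, \hrel{c19}, \hrel{c24}, \hrel{c25}, and \hrel{u2}. This step literally doubles every elementary piece of the image of $L_i$, producing the image of the slid diagram. The three framings $0,\pm 1$ of the unknot $L_i$ are then handled separately, with the kink contributions absorbed via \hrel{c16}--\hrel{c17} and \hrel{c22}--\hrel{c23}. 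Without this Frobenius sliding mechanism (or an equivalent replacement), your argument for the slide does not go through.
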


\begin{proof} 
In order to see that $\barPhi(T)$ is invariant under the isotopy moves presented in Table~\ref{table-Ktangles/fig}, we observe that, using Propositions~\ref{vert-exchange/thm} and \ref{vert-st-comp/thm}, the bi-ascending state $L'$ can be chosen so that the image under $\barPhi$ of the isotopy move we are interested in reduces to one of the identities in Tables~\ref{table-decorated-moves/fig} and \ref{table-decorated-moves-3/fig}. On the other hand, the invariance under the pushing-through move in Table~\ref{table-Ktangles/fig} reduces to \hrel{t1'} in Figure~\ref{theta-moves/fig}.

The proof of the invariance under \hndl{1/2} cancellation of an undotted component $L_i$ with a dotted meridian is illustrated in Figure~\ref{proof-cancel/fig}. We start by using the integral axiom \hrel{i2} to express multiplication by $\inteH$ as the composition $\inteH \circ \counH$. Then, since $\counH$ and $\inteH$ belong to the subcategory $\AlgD$, and since they are dual to each other with respect to $\ev$ and $\coev$, we use moves \hrel{c18}, \hrel{c19}, \hrel{c24}, \hrel{c25}, and \hrel{u2} in Tables~\ref{table-decorated-moves/fig}  and \ref{table-decorated-moves-3/fig} to slide them along the image of the undotted component $L_i$ until it is transformed in the composition $\counH \circ \unitH$, which is removed through relation~\hrel{a8}.

\begin{figure}[htb]
 \centering
 \includegraphics{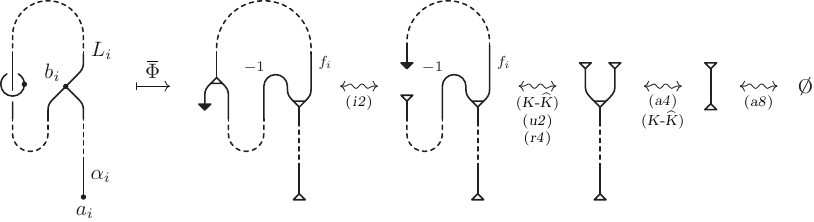}
 \caption{Invariance of $\barPhi(T)$ under \hndl{1/2} cancellation.}
 \label{proof-cancel/fig}
\end{figure}

It remains to prove that $\barPhi(T)$ is invariant under \hndl{2} slide of a component $L_j$ over another component $L_i$, that is, under the replacement of $L_j$ by the band connected sum of $L_j$ and a parallel copy $L_i^{\hbox{\tiny$\parallel$}}$ of $L_i$. Since we have already proved the invariance of $\barPhi(T)$ under isotopy and \hndl{1/2} cancellations, we can assume, thanks to Proposition~\ref{2-equiv-special/thm}, that $L_i$ has at most one self-crossing, and that the components $L_i$ and $L_j$ and the sliding band $\beta$ have one of the forms outlined on the left-hand sides of Figures~\ref{proof-sliding0-1/fig}, \ref{proof-slidingpos-1/fig}, and \ref{proof-slidingneg-1/fig} below. In these pictures, using the independence of $\barPhi(T)$ of the choice of the bi-ascending state, we have assumed that, in the first two cases, it is $L_j = L_2$ that slides over $L_i = L_1$, while, in the third case, it is $L_j = L_1$ that slides over $L_i = L_2$. We are also assuming that the visible part of the diagram has been pulled down outside the box $T_\alpha$ (see Figure~\ref{kirby-hopf01/fig}), except for the dashed lines which interact with the rest of the diagram inside $T_\alpha$. In particular, the dashed part of $L_i$ cannot form self-crossings, but it can cross the dashed part of $L_j$, which can also form self-crossings. 

We consider the three cases separately, starting from the one where $L_i$ has no self-crossings. In this case, Figure~\ref{proof-sliding0-1/fig} shows the two tangles before and after the slide, together with suitable choices for arcs $\alpha_i = \alpha_1$ and $\alpha_j = \alpha_2$ and for the data determining bi-ascending 
states of $L_i = L_1$ and $L_j = L_2$.  Notice that, before the slide, both $L_1$ and $L_2$ are actually in ascending states, while, after the slide, $L_2$ is in bi-ascending state, with positive arc $A_2^+$ given by $L_1^\parallel$.

\begin{figure}[htb]
 \centering
 \includegraphics{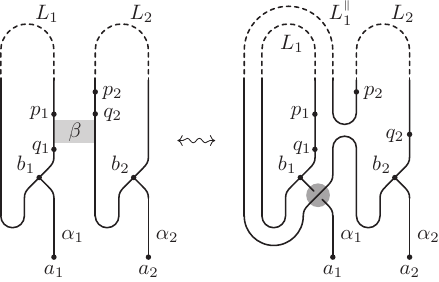}
 \caption{Sliding $L_2$ over $L_1$, for $L_1$ with no self-crossings. 
 }
 \label{proof-sliding0-1/fig}
\end{figure}

The fact that the images under $\barPhi$ of the two tangles in Figure~\ref{proof-sliding0-1/fig} are  the same is shown in Figure~\ref{proof-sliding0-2/fig}, where $r_2$ is the total ribbon weight of the image of $L_2$, while the total ribbon weight of the image of $L_1$ is $r_1=0$ (see Remark~\ref{rib-weight/rmk}). In the first step in Figure~\ref{proof-sliding0-2/fig}, we apply \hrel{a7} to split the image of $\alpha_2$ into two units, and then attach the left one to the image of $L_{\alpha,1}$, thus obtaining the morphism $\tilde{\prodH}$ highlighted inside the dashed box in the second diagram of the figure. We recall that $\tilde{\prodH}$ is in $\AlgD$, therefore we can use its properties in Table~\ref{table-mu/fig}, as well as the naturality of the two braided structures of $\AlgD$ (relations~\hrel[c18]{c18-19} and \hrel[c24]{c24-25} in Table~\ref{table-decorated-moves/fig}) and \hrel{u2} in Table~\ref{table-decorated-moves-3/fig}, to slide $\tilde{\prodH}$ all around the dashed arc, thus creating a parallel copy of the arc. Notice that, as indicated by the arrows in relations~\hrel{q2} and \hrel{q3} in Table~\ref{table-mu/fig}, when the product $\tilde{\prodH}$ passes through $\ev$, it turns into the coproduct $\coprH$, and when $\coprH$ passes through $\coev$, it turns back into $\tilde{\prodH}$. This means that, in the process of sliding, the morphism $\tilde{\prodH}$ always moves upwards, while the morphism $\coprH$ always moves downwards. In particular, when we arrive to the left end of the dashed arc, we get the copy of $\coprH$ highlighted inside the dashed box in the third diagram of the figure. Moreover, when $\tilde{\prodH}$ or $\coprH$ slide through a crossing, two crossings of the same type are created, while, when they slide through a morphism $U_k$, they turn it into $U_{k+1}$. Therefore, we have indeed doubled the dashed arc.

\begin{figure}[htb]
 \centering
 \includegraphics{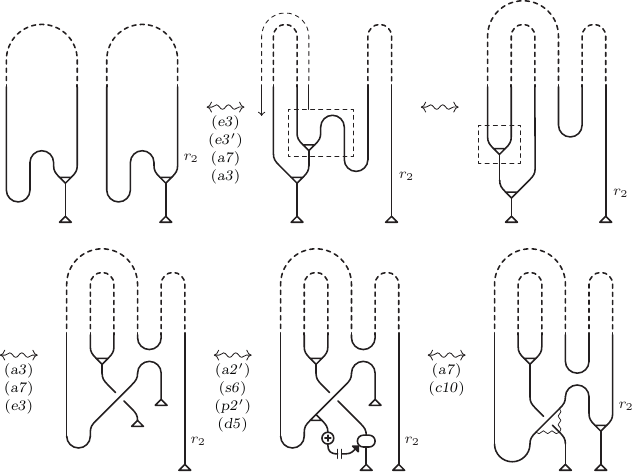}
 \caption{Proof of the invariance of $\barPhi(T)$ under the slide of $L_2$ over $L_1$, for $L_1$ with no self-crossings.}
 \label{proof-sliding0-2/fig}
\end{figure}

Now, we pass to the second case, when $L_i$ forms a positive self-crossing. The two tangles before and after the slide, together with suitable choices for the bi-ascending states and the arcs $\alpha$, are given by the first two diagrams in Figure~\ref{proof-slidingpos-1/fig}, while the third is an equivalent form of the second, up to isotopy. The proof that the images of the first and third diagrams under $\barPhi$ are the same in $\Algf$ is presented in Figure~\ref{proof-slidingpos-2/fig}, where the first diagram has been obtained by applying relation~\hrel{c22} to replace the decorated kink in the image of $L_i = L_1$ by the identity morphism, then observing that the total ribbon weight of the image of $L_1$ is  $r_1=-{\rm wr}(L_1')=1$, and finally repeating the first step in Figure~\ref{proof-sliding0-2/fig}.

\begin{figure}[htb]
 \centering
 \includegraphics{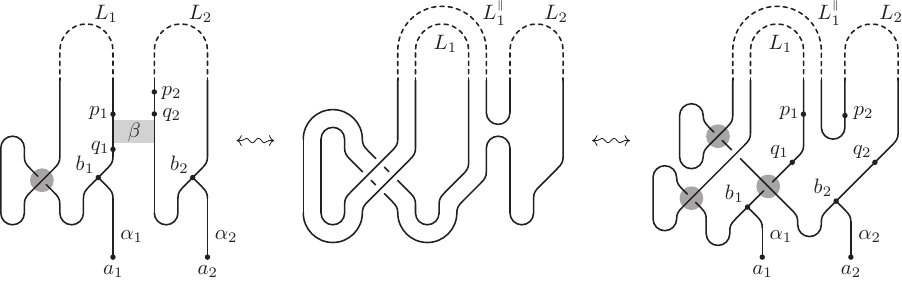}
 \caption{Sliding $L_2$ over $L_1$, for $L_1$ with one positive self-crossing.}
 \label{proof-slidingpos-1/fig}
\end{figure}

\begin{figure}[htb]
 \centering
 \includegraphics{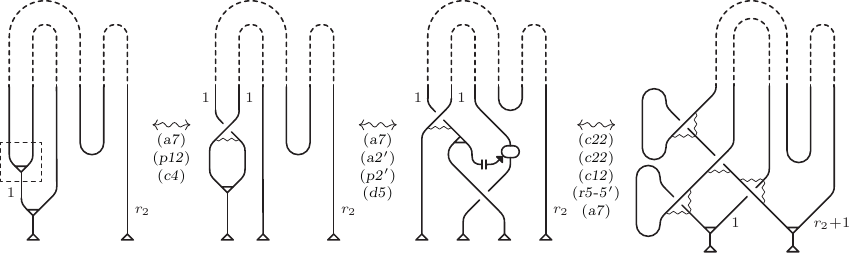}
 \caption{Proof of the invariance of $\barPhi(T)$ under the slide of $L_2$ over $L_1$, for $L_1$ with one positive self-crossing.}
 \label{proof-slidingpos-2/fig}
\end{figure}

In the third case, we slide $L_j = L_1$ over $L_i = L_2$, and assume that $L_2$ forms a single negative self-crossing. The result of the slide is presented in Figure~\ref{proof-slidingneg-1/fig}, where the third diagram is again an equivalent form of the second, up to isotopy.

\begin{figure}[htb]
 \centering
 \includegraphics{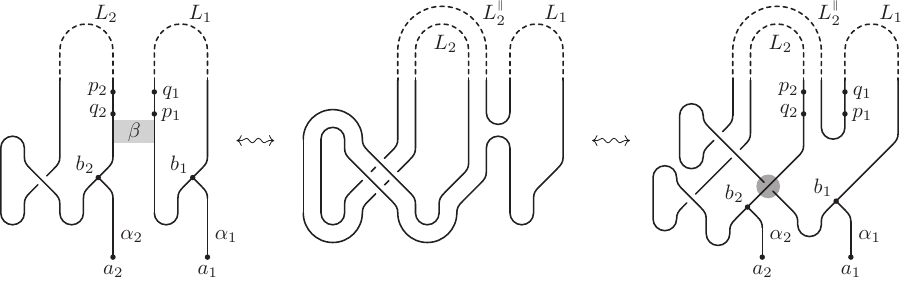}
 \caption{Sliding $L_1$ over $L_2$, for $L_2$ with one negative self-crossing.}
 \label{proof-slidingneg-1/fig}
\end{figure}

\begin{figure}[htb]
 \centering
 \includegraphics{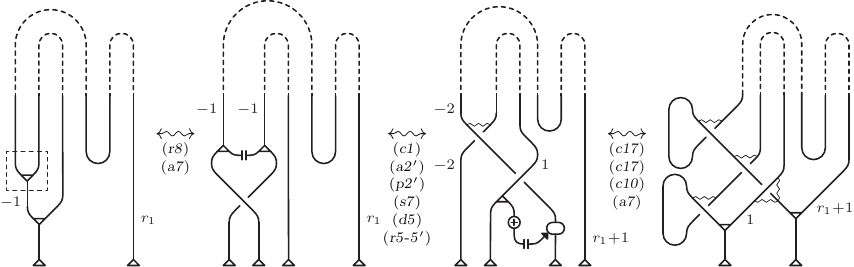}
 \caption{Proof of the invariance of $\barPhi(T)$ under the slide of $L_2$ over $L_1$, for $L_1$ with one negative self-crossing.}
 \label{proof-slidingneg-2/fig}
\end{figure}

In Figure~\ref{proof-slidingneg-2/fig}, we prove that the images of the first and third diagrams under $\barPhi$ are the same in $\Algf$. Notice that, in this case, as a consequence of relations \hrel[c16]{c16-17}, the decorated kink in the image of $L_2$ is equal to $\tau^{-2}$, and hence the total ribbon weight of the component is  $r_2=-{\rm wr}(L_2')=-1$. Then, the first diagram in Figure~\ref{proof-slidingneg-2/fig} has been obtained by repeating the first step in Figure~\ref{proof-sliding0-2/fig}.
\end{proof}

\subsection{Proof of Theorem~\ref{thm:main}}
\label{Psi/sec}

In this subsection, we prove one of the main results of this paper, which can be rephrased as follows.

\begin{theorem}\label{equivalence/thm}
The map $T \to \barPhi(T)$ extends to a braided monoidal functor $\barPhi : 4\KT \to \Algf$ such that $\barPhi \circ \Phi = \id_{\Algf}$ and $\Phi \circ \barPhi = \id_{4\KT}$. In particular, $\barPhi$ and $\Phi$ are equivalences of braided monoidal categories.
\end{theorem}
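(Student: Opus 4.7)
The plan is to assemble the preceding results into the statement that $\barPhi$ defines a functor and then to verify that it is inverse to $\Phi$. The well-definedness of the map $T \mapsto \barPhi(T)$ on \qvlnc{2} classes is essentially already settled: Propositions~\ref{indep-bands/thm}, \ref{vert-exchange/thm}, and \ref{vert-st-comp/thm} give independence of the auxiliary choices (bands $\alpha$, vertical ordering of the bi-ascending state, and the bi-ascending state of each component), while Proposition~\ref{sliding/thm} ensures invariance under all \dfrmtns{2}. So $\barPhi$ is well-defined on morphisms of $\KTf$, and it sends $E_{2s}$ to $H^{s}$ on objects.

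It remains to check functoriality and compatibility with the braided monoidal structure. For identities, I would take a strictly regular diagram of $\id_{E_{2s}}$ consisting of $s$ pairs of parallel arcs (with no closed undotted and no dotted components), so that the integral unit insertions in Step~(3) of Subsection~\ref{FK/sec} disappear; a direct computation then reduces $\barPhi(\id_{E_{2s}})$ to $(\Tar \circ \Sou)^{\otimes s}$, which equals $\id_{s}$ after applying \hrel{s1'}, \hrel{r2}, and \hrel{a2-2'}. For composition, given $T \colon E_{2s} \to E_{2t}$ and $T' \colon E_{2t} \to E_{2u}$, I would choose a bi-ascending state of the undotted link of $T' \circ T$ that orders the components of $T$ below those of $T'$, choose the arcs $\alpha$ so that the arcs attached to components of $T'$ descend without crossing the strands of $T$, and then exploit the independence results together with the fact that at the interface level $E_{2t}$ the open strands of $T'_{\alpha}$ get plugged into the $\Tar$--$\Sou$ cap of the image of $T$, which collapses to the identity. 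For tensor products, the same argument applied side-by-side gives $\barPhi(T \sqcup T') = \barPhi(T) \otimes \barPhi(T')$, while for braidings a direct evaluation of $\barPhi$ on the diagrams in Figure~\ref{KT-morph/fig} (with the tautological bi-ascending state, no closed components) yields the braiding $c_{s,s'}$ in $\Algf$.

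Once $\barPhi \colon \KTf \to \Algf$ is shown to be a braided monoidal functor, there remain the two composition identities. Proposition~\ref{full-phi/thm} already gives $\Phi \circ \barPhi = \id_{\KTf}$. For the reverse identity $\barPhi \circ \Phi = \id_{\Algf}$, since $\Phi$ is a braided monoidal functor out of a category presented by generators and relations (the BP Hopf algebra generators), it suffices to check $\barPhi(\Phi(F)) = F$ for each generating structure morphism $F$ in $\{\prodH, \unitH, \coprH, \counH, \antipH, \antipH^{-1}, \intfH, \inteH, \ribmorH, \ribmorH^{\pm 1}, \copairH, c^{\pm 1}\}$ and identities. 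For each such generator, the Kirby tangle $\Phi(F)$ in Figure~\ref{phi01/fig} has at most one closed undotted component (often none), and admits an essentially canonical bi-ascending state; a short calculation using the Hopf algebra axioms together with the definitions $\Sou = (\prodH \otimes \ribmorH) \circ (\id \otimes \coev)$ and $\Tar = \prodH \circ (\ribmorH^{-1} \otimes \antipH)$ recovers $F$.

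The genuinely delicate step will be the composition property: the arcs $\alpha$ chosen on $T$ and on $T'$ separately must be reconciled into a single family on $T' \circ T$, and the closure via $\Tar^{\otimes t} \otimes \Sou^{\otimes t}$ inserted at the interface must be shown to cancel. The technical device here is the naturality of the transformation $\Theta$ from Lemma~\ref{theta/thm} combined with the fact that $\Tar \circ \Sou = \id$, plus the relations \hrel{u1} and \hrel{u2} in Table~\ref{table-decorated-moves-3/fig} to slide the attaching points of the $\alpha$-arcs across the interface. I expect the rest of the verifications (identities, tensor products, braidings, and inversion on generators) to reduce to bookkeeping with the relations in Tables~\ref{table-decorated-moves/fig} and \ref{table-decorated-moves-3/fig}, but the composition step is where all the independence results must be used simultaneously.
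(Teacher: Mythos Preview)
Your overall structure matches the paper's: invoke the well-definedness results, verify functoriality (identities, composition, tensor product), cite Proposition~\ref{full-phi/thm} for $\Phi\circ\barPhi=\id$, and check $\barPhi\circ\Phi=\id$ on generators. The last two points are handled essentially as you describe.

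There is, however, a concrete error in your treatment of identities. You propose to compute $\barPhi(\id_{E_{2s}})$ on ``$s$ pairs of parallel arcs with no closed undotted and no dotted components''. That diagram is \emph{not} an admissible Kirby tangle: Definition~\ref{kirby-admtangle/def}(b) forbids any undotted arc joining a point at level~$0$ to one at level~$1$, and Remark~\ref{KTbis/rmk} explicitly says that the $\KT$-identity on $E_{2s}$ is not a morphism of $\KTf$. The actual identity $\id_s$ in $\KTf$ (Figure~\ref{KT-morph/fig}) carries a dotted unknot and open undotted arcs threading through it, and after placing it in the standard form of Figure~\ref{kirby-hopf01/fig} one obtains closed undotted components. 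So your reduction to $(\Tar\circ\Sou)^{\otimes s}$ is based on a false premise; the paper's computation (Figure~\ref{identity/fig}) genuinely involves the $U_k$ morphisms coming from the dotted component and uses \hrel{t1}/\hrel{t1'}-type moves to collapse them.

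Your composition argument is also too optimistic. The closed undotted components of $T'\circ T$ that arise from gluing open arcs of $T$ to open arcs of $T'$ across the interface $E_{2t}$ are precisely the delicate ones: for those components there is no ``interface $\Tar$--$\Sou$ cap that collapses to the identity'', since $\Tar$ and $\Sou$ are attached to the outer boundary of the composite, not to the seam. The paper's actual mechanism (Figures~\ref{composition1/fig}--\ref{composition4/fig}) is different from what you sketch: one chooses the arcs $\alpha_{n-t+1},\dots,\alpha_n$ for the glued components in a very specific parallel configuration, retracts the units through decorated $\hat X$-crossings (Figure~\ref{composition3/fig}), and then performs a ``doubling'' move (Figure~\ref{composition4/fig}) together with a writhe bookkeeping identity $f_{n-t+i}-1=f_{1,n-t+i}+f_{2,i}-2$ to split each glued component's contribution into the two pieces belonging to $\barPhi(T_1)$ and $\barPhi(T_2)$. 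The natural transformation $\Theta$ is not the tool used here; rather it is the adjoint-action structure of $\hat X$ (relations \hrel{c24}--\hrel{c25}) that lets units be retracted and reinserted.
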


\begin{proof}
Since we have already proved in Proposition~\ref{sliding/thm} that $\barPhi(T)$ depends only on the \qvlnc{2} class of the Kirby tangle $T$, in order to show that $\barPhi : 4\KT \to \Algf$ is a well-defined monoidal functor, we only need to prove that it preserves identities, compositions, tensor products, and braidings. The proof that it preserves identities is shown in Figure~\ref{identity/fig}. 

\begin{figure}[htb]
 \centering
 \includegraphics{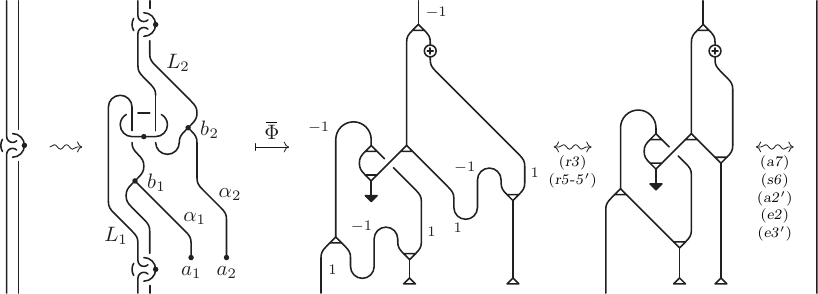}
 \caption{Proof that $\barPhi$ preserves identities.}
 \label{identity/fig}
\end{figure}

Let now $T_1 : E_{2s} \to E_{2t}$ and $T_2 : E_{2t} \to E_{2r}$ be Kirby tangles with $n$ and $m$ undotted components, respectively. Then, the link $L$ of undotted components of their composition $T_2 \circ T_1$ will have exactly $n+m-t$ components. In order to show that $\barPhi(T_2 \circ T_1) = \barPhi(T_2) \circ \barPhi(T_1)$, we make the special choice of bi-ascending state and arcs for $T_2 \circ T_1$ shown in Figure~\ref{composition1/fig}, where the undotted components of $L = L_1 \cup L_2 \cup \dots \cup L_{n+m-t}$ are numbered in such way that:
\begin{itemize}
 \item $L_1, L_2, \dots, L_{n-t}$ are the components of $T_1$ that are not attached to its target;
 \item $L_{n-t+1}, L_{n-t+2}, \dots, L_{n}$ are the components obtained from the gluing of the open components of $T_1$ to the ones of $T_2$ along $E_{2t}$, numbered following the order of the intervals in $E_{2t}$;
 \item $L_{n+1}, L_2, \dots, L_{n+m-t}$ are the components of $T_2$ that are not attached to its source.
\end{itemize}
We observe that the biascending state of each component $L_{n-t+i}$ for $1\leq i\leq t$ is  determined by the choice of points $p_{n-t+i}$ and $q_{n-t+i}$, as indicated in Figure~\ref{composition1/fig}, since the two arcs in which they divide the component never cross each other, while the biascending state of all other components can be chosen arbitrarily. Then, we choose the arcs $\alpha_{n-t+i}$ for $i=1,\dots,t$ in such a way that each $\alpha_{n-t+i}$ forms only positive crossings with the components $L_{n-t+j}$ for $j=i+1,\dots, t$, as represented in Figure~\ref{composition1/fig} 

\begin{figure}[htb]
 \centering
 \includegraphics{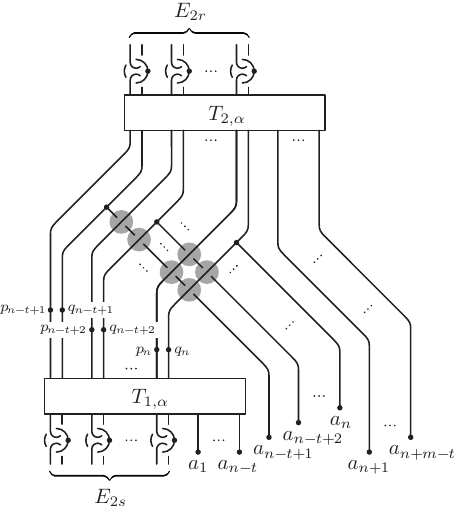}
 \caption{Special choice of bi-ascending state and arcs for $T_2 \circ T_1$.}
 \label{composition1/fig}
\end{figure}

\begin{figure}[htb]
 \centering
 \includegraphics{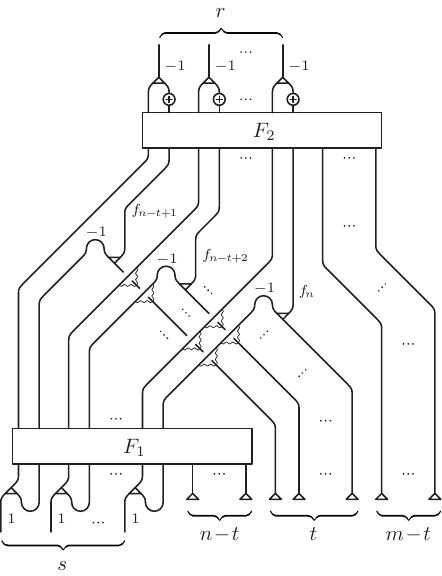}
 \caption{$\barPhi(T_2 \circ T_1)$.}
 \label{composition2/fig}
\end{figure}

\begin{figure}[htb]
 \centering
 \includegraphics{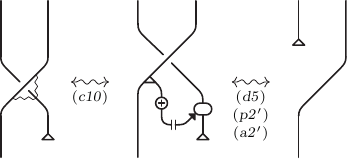}
 \caption{Retracting the images of the arcs $\alpha_{n-t+1}, \dots, \alpha_n$ in $T_2 \circ T_1$.}
 \label{composition3/fig}
\end{figure}

\begin{figure}[b]
 \centering
 \includegraphics{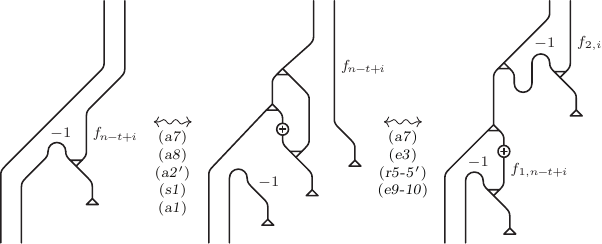}
 \caption{Doubling the retracted images of the arcs $\alpha_{n-t+1}, \dots, \alpha_n$.}
 \label{composition4/fig}
\end{figure}

Then, $\barPhi(T_2 \circ T_1)$ is presented in Figure~\ref{composition2/fig}. In order to see that it is equivalent to $\barPhi(T_2) \circ \barPhi(T_1)$, we first retract the images of the arcs $\alpha_{n-t+i}$ for $i=1,\dots, t$ by passing the identity morphisms through the adjoint morphisms in the decorated crossings of type $\hat X$, as shown in Figure~\ref{composition3/fig} (notice that the move in the figure is actually a special case of the extended version of \hrel{c24}, as discussed in the proof of Proposition~\ref{theta1/thm}). Then, we double the image of each arc through the move shown in Figure~\ref{composition4/fig}, and separate its weight as
\[
f_{n-t+i}-1 = -{\rm wr}(L_{n-t+i}')= -(1-f_{1,n-t+i})-(1-f_{2,i}) = f_{1,n-t+i}+f_{2,i}-2,
\]
where $(1-f_{1,n-t+i})$ is equal to the writhe of the bi-ascending state of the $(n-t+i)$th component of $T_1$, and $(1-f_{2,i})$ is equal to the writhe of the bi-ascending state of the $i$th component of $T_2$ (the numbering and the biascending states of the undotted components of $T_1$ and $T_2$ are induced by the ones of $T_2 \circ T_1$). Finally, by the inverse of the move presented in Figure~\ref{composition3/fig}, we pull down all identity morphisms back to the bottom-right corner of the diagram, thus creating new decorated crossings of type $\hat X$. The resulting diagram is exactly $\barPhi(T_2) \circ \barPhi(T_1)$.

In order to prove the monoidality of the functor $\barPhi$, let $T_1 : E_{2s_1} \to E_{2t_1}$ and $T_2 : E_{2s_2} \to E_{2t_2}$ be morphisms in $\KTf$. Consider $T_1 \sqcup T_2$, and order its undotted components by letting the ones of $T_1$ precede the ones of $T_2$; moreover, choose the arcs $\alpha_i$ by pulling the ones of $T_1$ across the $s_2$ vertical strands connected to the source of $T_2$, forming positive crossings with them. Then, the images under $\barPhi$ of such crossings are decorated crossings of type $\hat X$, and we can transform $\barPhi(T_1\sqcup T_2)$ into $\barPhi(T_1) \otimes \barPhi(T_2)$ by retracting the units at the end of the images of the arcs $\alpha_i$ of $T_1$ through the decorated crossings, by the move presented in Figure~\ref{composition3/fig}.

Finally, we recall that $\Phi \circ \barPhi = \id_{4\KT}$ has been proved in Proposition~\ref{full-phi/thm}, so it remains to prove that $\barPhi \circ \Phi = \id_{\Algf}$. In order to see this, it is enough to show that $\barPhi(\Phi(F)) = F$ when $F$ is a generating morphism of $\Algf$. The proofs for all elementary morphisms, with the exception of $\antipH^{-1}$, $\ribmorH^{-1}$, $\braid$, and $\copairH$ are presented (up to compositions with identity morphisms) in Figures~\ref{proof-inv-prod/fig}--\ref{proof-inv-cointegral/fig}. We observe that, in the second-to-last move of Figure~\ref{proof-inv-coprod/fig} and in the last move of Figure~\ref{proof-inv-antipode/fig}, we have expressed the decorated crossings of type $\hat X$ and $\hat Y$ in terms of the adjoint morphism, and we have intertwined the adjoint and the identity morphisms as we did in Figure~\ref{composition3/fig}. Now, the statements for $\antipH^{-1}$ and $\ribmorH^{-1}$ follow from the ones for $\antipH$ and $\ribmorH$, while the ones for $\braid$ and $\copairH$ follow from relation~\hrel{s8}, axiom~\hrel{r6}, and the fact that $\barPhi$ preserves compositions.
\end{proof}

\begin{figure}[hbt]
 \centering
 \includegraphics{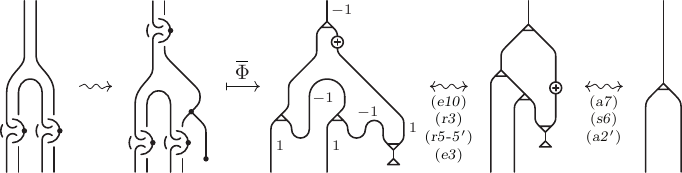}
 \caption{$\barPhi(\Phi(\prodH)) = \prodH$.}
 \label{proof-inv-prod/fig}
\end{figure}

\begin{figure}[hbt]
 \centering
 \includegraphics{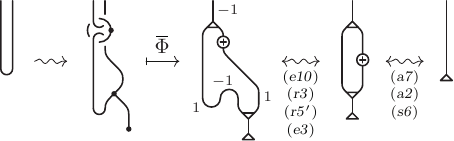}
 \caption{$\barPhi(\Phi(\unitH))=\unitH$.}
 \label{proof-inv-unity/fig}
\end{figure}

\begin{figure}[hbt]
 \centering
 \includegraphics{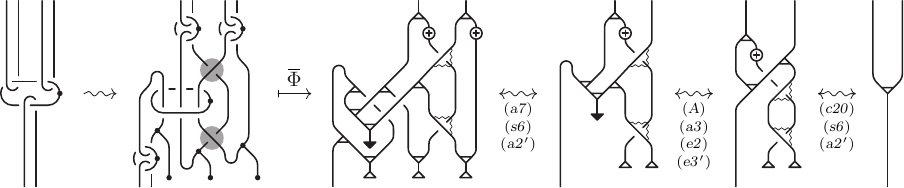}
 \caption{$\barPhi(\Phi(\coprH))=\coprH$.}
 \label{proof-inv-coprod/fig}
\end{figure}

\begin{figure}[hbt]
 \centering
 \includegraphics{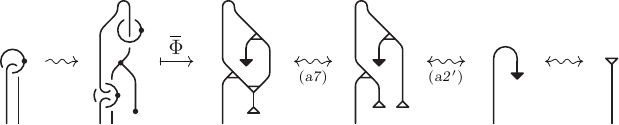}
 \caption{$\barPhi(\Phi(\counH))=\counH$.}
 \label{proof-inv-counit/fig}
\end{figure}

\begin{figure}[hbt]
 \centering
 \includegraphics{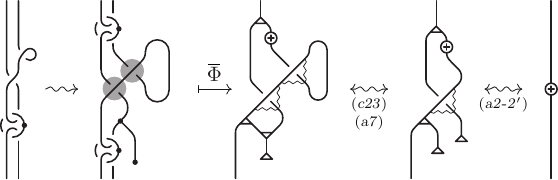}
 \caption{$\barPhi(\Phi(\antipH))=\antipH$.}
 \label{proof-inv-antipode/fig}
\end{figure}

\begin{figure}[hbt]
 \centering
 \includegraphics{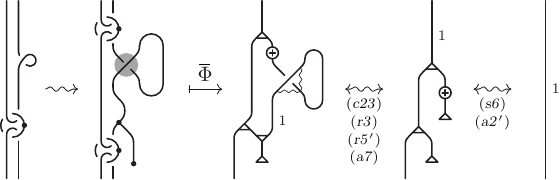}
 \caption{$\barPhi(\Phi(\ribmorH))=\ribmorH$.}
 \label{proof-inv-ribbon/fig}
\end{figure}

\begin{figure}[hbt]
 \centering
 \includegraphics{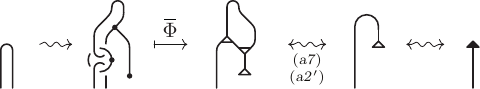}
 \caption{$\barPhi(\Phi(\intfH))=\intfH$.}
 \label{proof-inv-integral/fig}
\end{figure}

\begin{figure}[hbt]
 \centering
 \includegraphics{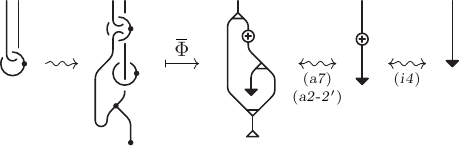}
 \caption{$\barPhi(\Phi(\inteH))=\inteH$.
 }
 \label{proof-inv-cointegral/fig}
\end{figure}

\appendix

\numberwithin{figure}{section}
\numberwithin{table}{section}

\section{Tables.} \label{tables/app}

\begin{table}[hbt]
 \centering
 \includegraphics{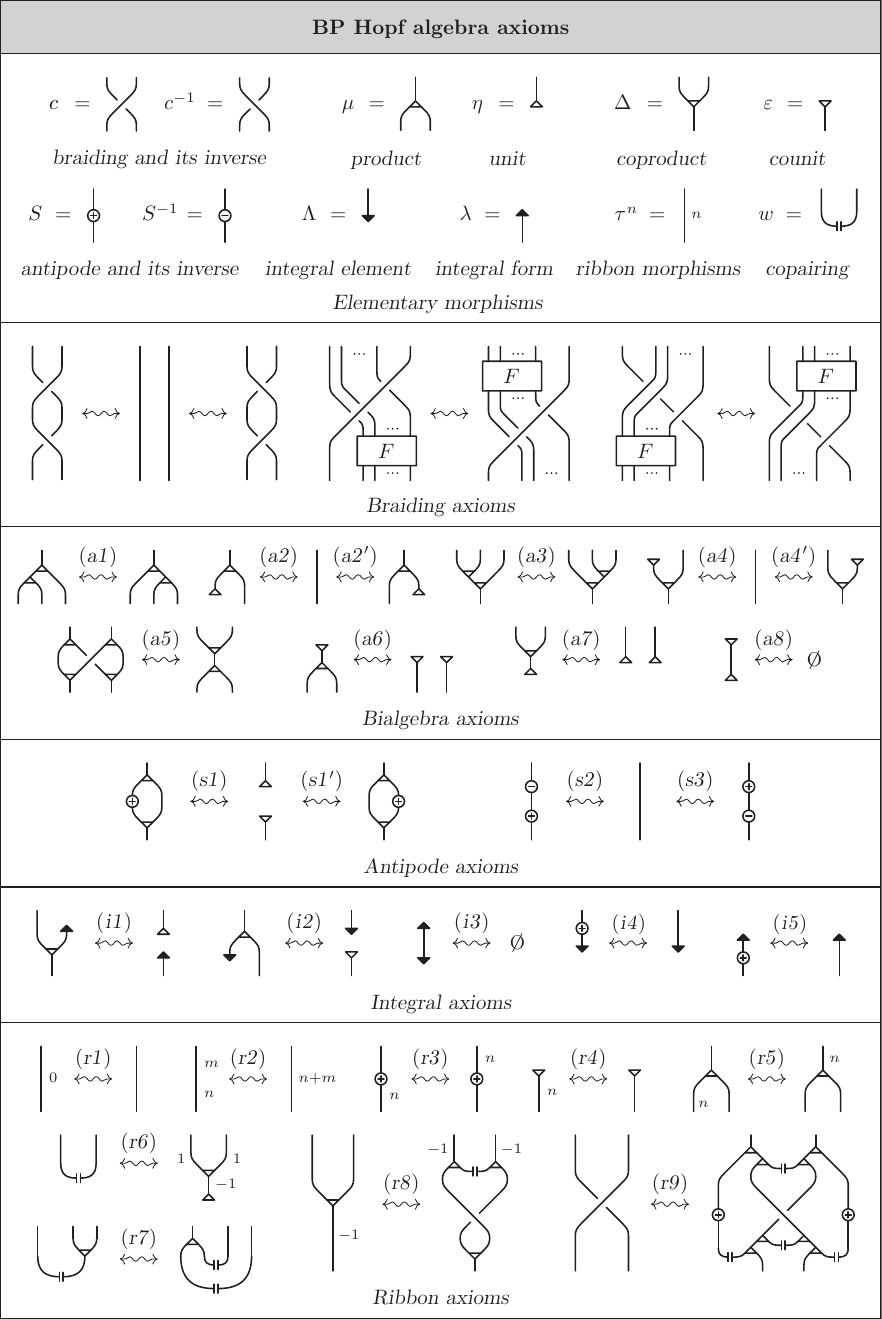}
 \caption{(Compare with Tables~\ref{table-braided/fig}, \ref{table-Hopf/fig}, and \ref{table-BPHopf/fig})}
 \label{table-app-BPHopf/fig}
 \label{E:BPH}
\end{table}

\begin{table}[hbt]
 \centering
 \includegraphics{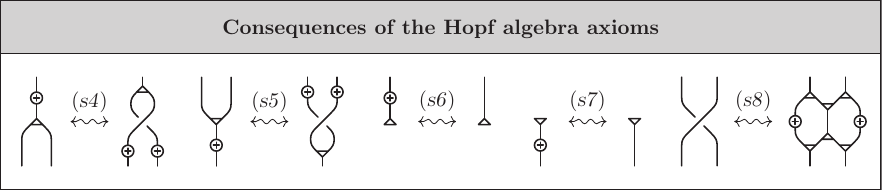}
 \caption{(Compare with Table~\ref{table-Hopf-prop/fig})}
 \label{table-app-antipode/fig}
 \label{E:ant}
\end{table}

\begin{table}[hbt]
 \centering
 \includegraphics{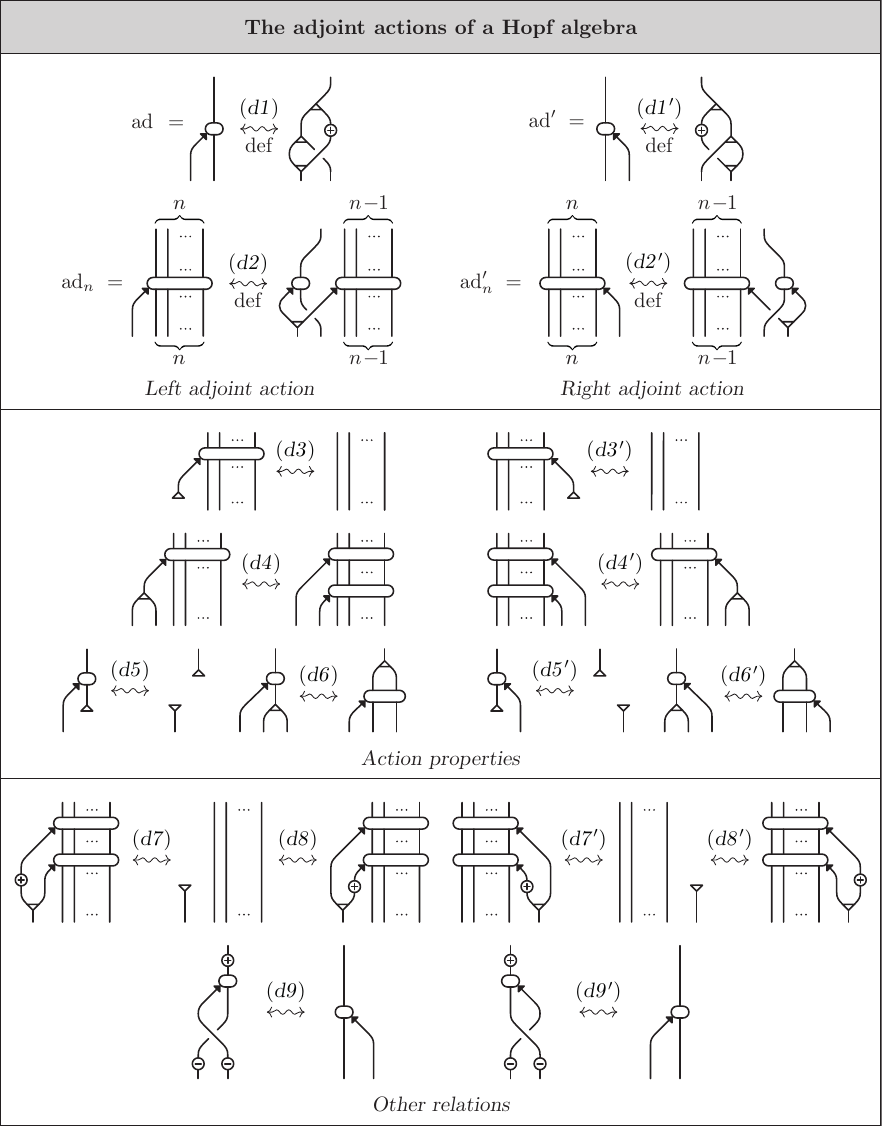}
 \caption{(Compare with Table~\ref{table-adjoint/fig})}
 \label{table-app-adjoint/fig}
 \label{E:adj}
\end{table}

\begin{table}[hbt]
 \centering
 \includegraphics{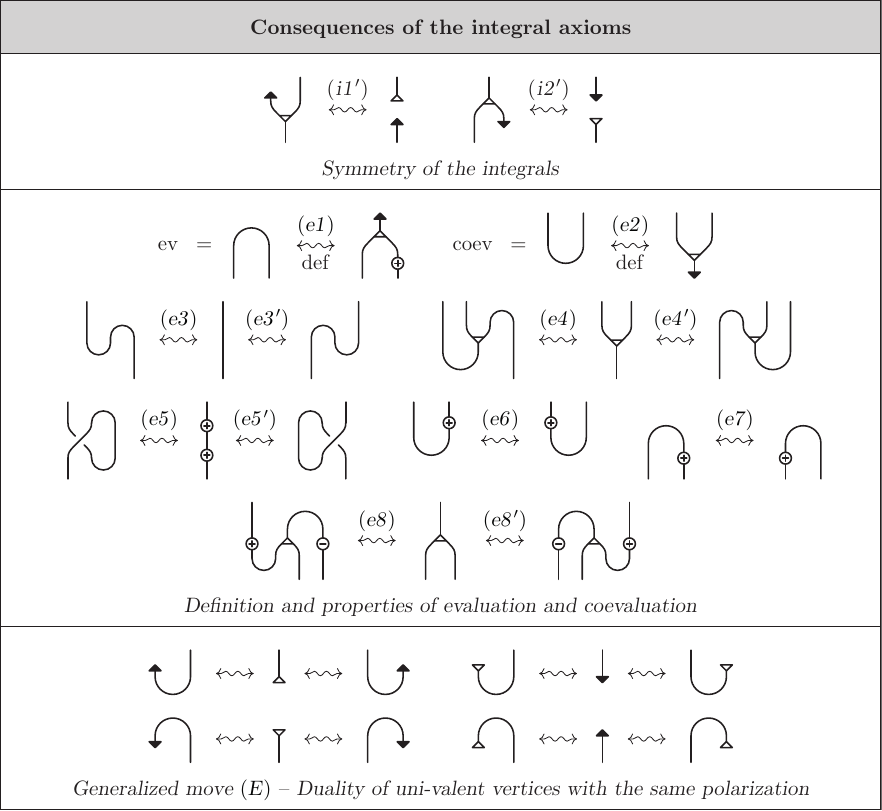}
 \caption{(Compare with Table~\ref{table-BPHopf-prop1/fig})}
 \label{table-app-integral/fig}
 \label{E:int}
\end{table}

\begin{table}[hbt]
 \centering
 \includegraphics{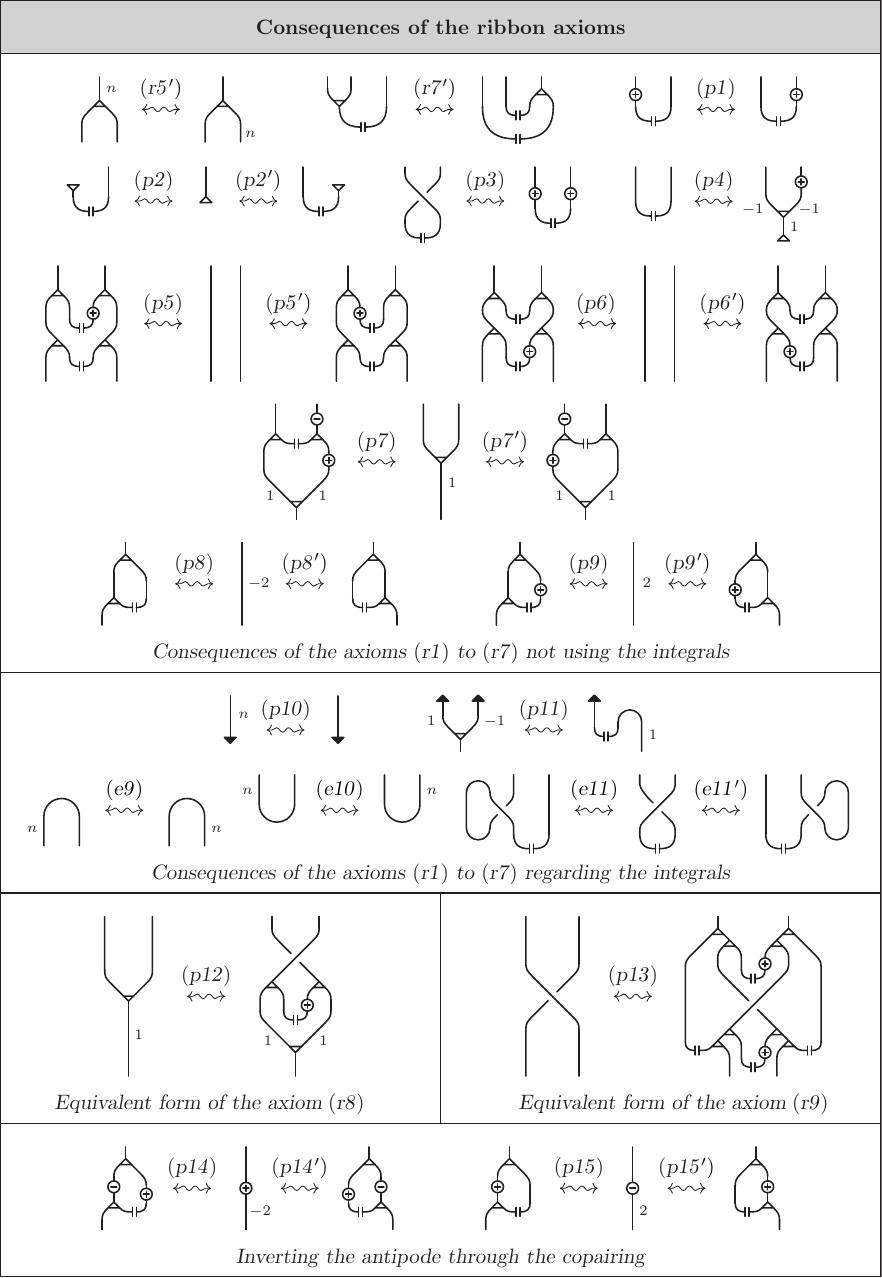}
 \caption{(Compare with Tables~\ref{table-BPHopf-prop21/fig}, \ref{table-BPHopf-prop22/fig}, and \ref{table-BPHopf-prop3/fig})}
 \label{table-app-ribbon/fig}
 \label{E:rib}
\end{table}

\begin{table}[hbt]
 \centering
 \includegraphics{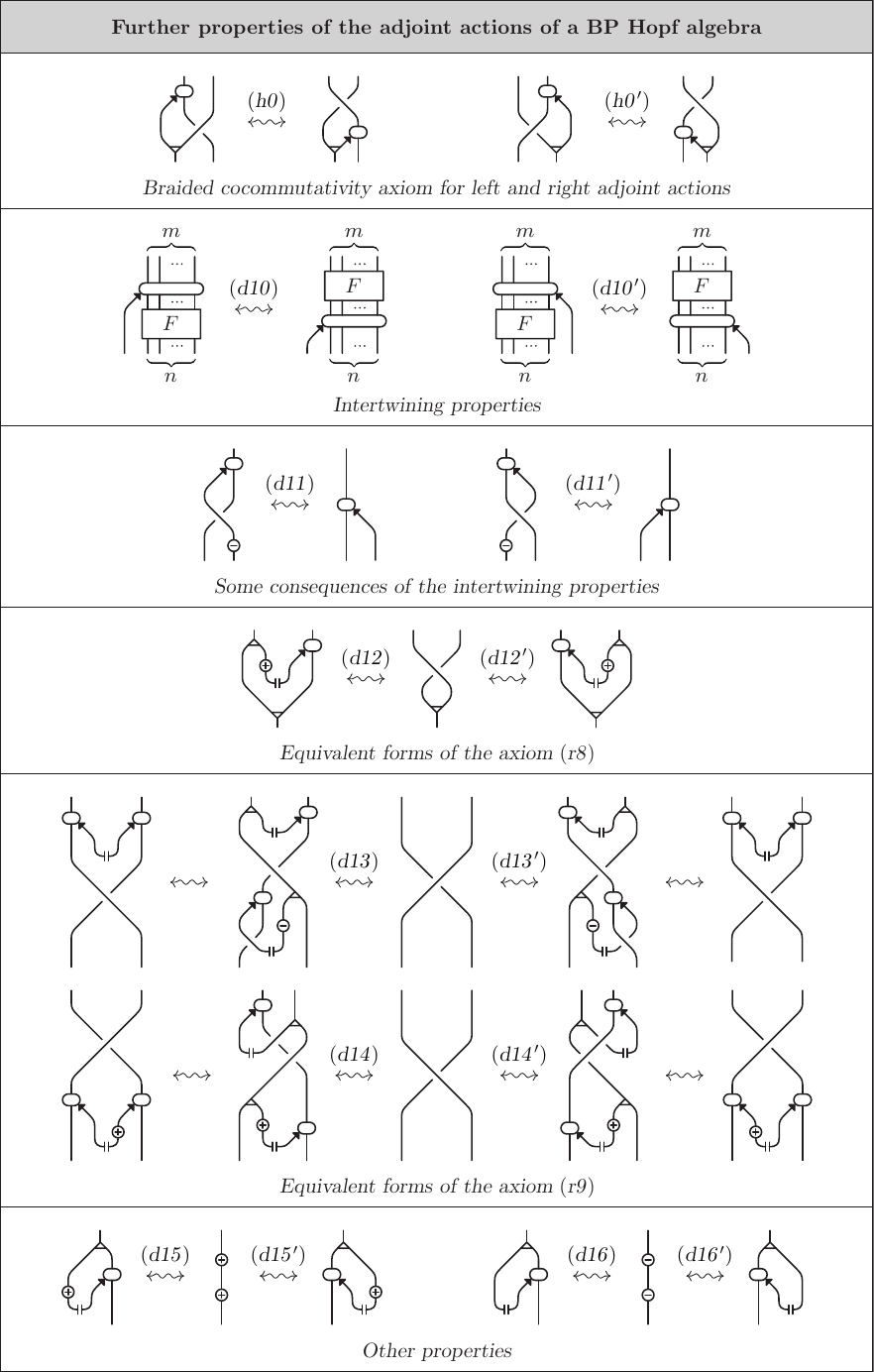}
 \caption{(Compare with Tables~\ref{table-cocommutative/fig} and \ref{table-adjoint-prop/fig})}
 \label{table-app-adjoint-prop/fig}
 \label{E:adjprop}
\end{table}

\begin{table}[htb]
 \centering
 \includegraphics{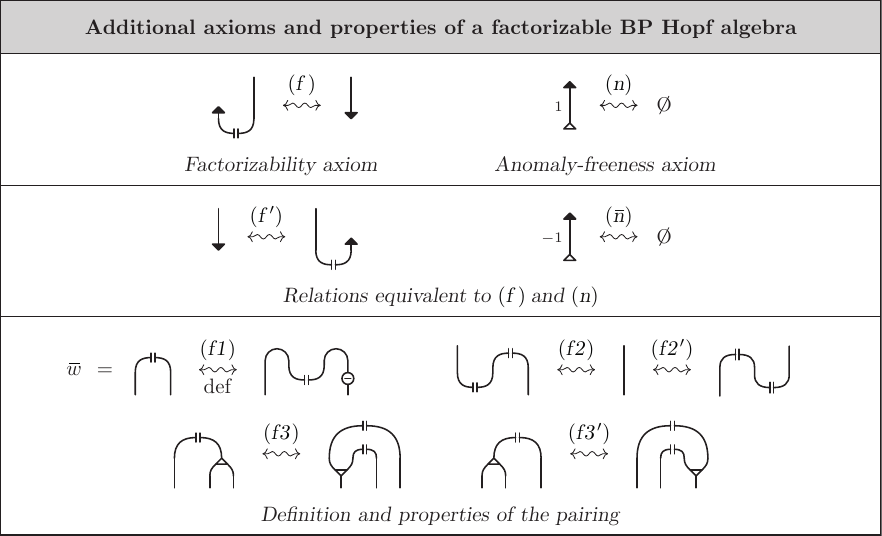}
 \caption{(Compare with Tables~\ref{table-BPHopf3/fig} and \ref{table-BPHopf3-prop/fig})}
 \label{table-app-BPHopf3/fig}
 \label{E:BPH3}
\end{table}

\FloatBarrier

\section{Proofs.} \label{proofs/app}

In this appendix, we give the proofs of the properties of a BP Hopf algebra presented in Tables~\ref{table-app-antipode/fig}, \ref{table-app-integral/fig}, \ref{table-app-ribbon/fig}, and \ref{table-app-BPHopf3/fig}. Some of them, like the ones concerning the antipode in Table~\ref{table-app-antipode/fig} and the symmetry of the integrals in Table~\ref{table-app-integral/fig}, are well-known, and can be found in any basic textbook on Hopf algebras. Also, the rest of the properties have already appeared in the literature. For example, the non-degeneracy of $\ev$ and $\coev$ (relations \hrel[int]{e8-8'}) were proven by Kerler in \cite[Lemma~7]{Ke01}. He also showed\footnote{Kerler's axioms use ribbon elements instead of ribbon morphisms, but the two languages are equivalent.} in \cite[Lemmas~3 \& 4]{Ke01} that relation \hrel[rib]{p4} is equivalent to the ribbon axiom \hrel[BPH]{r6} modulo the Hopf algebra axioms together with the ribbon axioms \hrel[BPH]{r1}--\hrel[BPH]{r5}, and that those ribbon axioms imply \hrel[rib]{p1} and \hrel[rib]{r7'}. The diagrammatic proofs of all relations, with the exception of \hrel[ant]{s8} and \hrel[rib]{p3}, appear in \cite[Propositions/Lemmas~4.1.4, 4.1.5, 4.1.6, 4.1.9, 4.1.10, 4.2.5, 4.2.6, 4.2.7, 4.2.11, 4.2.13]{BP11} in the more general context of a groupoid Hopf algebra. The reason why we present the proofs here is, on the one hand, for the sake of completeness, and, on the other hand, because the equivalence results in Subsection~\ref{HabiroHalgebra/sec} require the precise knowledge of which properties of the algebra follow from which set of axioms.

\numberwithin{figure}{subsection}
\numberwithin{table}{subsection}

\subsection{Consequences of the braided Hopf algebra axioms in Table~\ref{table-app-antipode/fig}}
\label{hopf-proofs/sec}

\begin{proof}[Proof of Proposition~\ref{hopf-prop/thm}]
 We have to show the properties of the antipode in Table~\ref{table-app-antipode/fig} (which coincides with Table~\ref{table-Hopf-prop/fig}). Relations~\hrel[ant]{s4-5}, \hrel[ant]{s6-7}, and \hrel[ant]{s8} are proved in Figures~\ref{BP-proof-s4/fig}, \ref{BP-4-1-3/fig}, \ref{BP-4-1-4a/fig}, and \ref{braiding-proof/fig}\footnote{Note that there is an equivalence functor from the category $\Alg$ to its opposite, which in particular implies that if some relation is satisfied in $\Alg$, then the relation obtained from it by rotating the two diagrams of an angle $\pi$ around an axis perpendicular to the projection plane holds as well. This is how one can obtain \hrel[ant]{s5} from \hrel[ant]{s4} and \hrel[ant]{s7} from \hrel[ant]{s6}.}.
\end{proof}

\begin{figure}[hbt]
 \centering
 \includegraphics{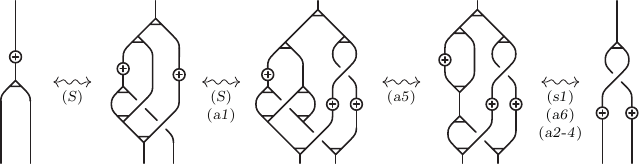}
 \caption{Proof of \hrel[ant]{s4}.}
 \label{BP-proof-s4/fig}
\end{figure}

\begin{figure}[hbt]
 \centering
 \includegraphics{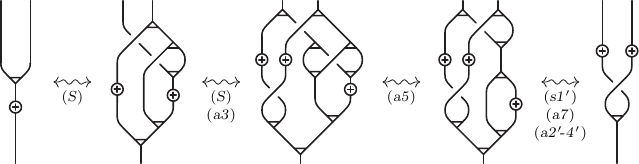}
 \caption{Proof of \hrel[ant]{s5}.}
 \label{BP-4-1-3/fig}
\end{figure}

\begin{figure}[hbt]
 \centering
 \includegraphics{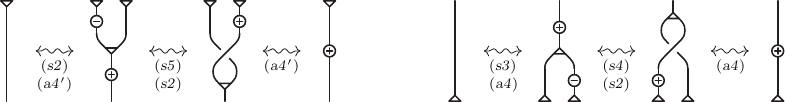}
 \caption{Proof of \hrel[ant]{s6} and \hrel[ant]{s7}.}
 \label{BP-4-1-4a/fig}
\end{figure}

\begin{figure}[htb]
    \centering
    \includegraphics{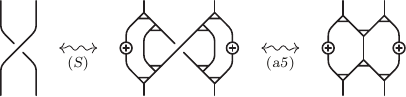}
    \caption{Proof of \hrel[ant]{s8}.}
    \label{braiding-proof/fig}
\end{figure}

\FloatBarrier

\subsection{Consequences of the integral axioms in Table~\ref{table-app-integral/fig}}
\label{integral-proofs/sec}

\begin{lemma}\label{intHopf-sym/thm}
Let $\Alg^\rmU$ be the braided monoidal category freely generated by a braided Hopf algebra with $\antipH$-invariant integral form and element\footnote{Following \cite{BP11}, the superscript $\rmU$ stands for \textit{unimodular}.}. Then, the functor $\sym : \Alg^\rmU \to \Alg^\rmU$ is an involutive braided anti-monoidal equivalence functor that sends $H$ and all elementary morphisms to themselves.
\end{lemma}

\begin{proof}
The statement follows from Propositions~\ref{funt-sym/thm} and \ref{symmetry-alg/thm}, once we observe that, thanks to axioms~\hrel[BPH]{i4-5}, the functor $\sym$ sends all elementary morphisms of $\Alg^\rmU$, including the integral form and element, to themselves.
\end{proof}

\begin{proof}[Proof of Proposition~\ref{BP-prop1/thm}] We will show that the properties in Table~\ref{table-app-integral/fig} (which coincides with Table~\ref{table-BPHopf-prop1/fig}) hold in $\Alg^\rmU$.
Relations~\hrel[int]{i1'} and \hrel[int]{i2'} in Table~\ref{table-app-integral/fig} state the well-known fact that the $\antipH$-invariance of the integral form $\intfH$ and of the integral element $\inteH$ imply that $\intfH$ and $\inteH$ are, respectively, a two-sided integral form and a two-sided integral element. They follow directly from Lemma~\ref{intHopf-sym/thm} by applying the functor $\sym$ to \hrel[int]{i1} and \hrel[int]{i2}, respectively, while in Figures~\ref{BP-4-1-5/fig}, \ref{BP-4-1-5b-new/fig}, \ref{BP-4-1-7/fig}, \ref{BP-4-1-4-new/fig}, \ref{BP-4-1-10/fig}, \ref{BP-4-1-11-new/fig}, and \ref{BP-4-1-11/fig} we prove in order relations~\hrel[int]{e3}, \hrel[int]{e3'}, \hrel[int]{e4}, \hrel[int]{e5'}, \hrel[int]{e6}, \hrel[int]{e7}, and \hrel[int]{e8}. Notice that relations~\hrel[int]{e6-7} imply that $\sym(\ev)=\ev$ and $\sym(\coev)=\coev$.

Then, relations~\hrel[int]{e4'}, \hrel[int]{e5}, and \hrel[int]{e8'} follow from Lemma~\ref{intHopf-sym/thm} by applying $\sym$ to \hrel[int]{e4}, \hrel[int]{e5'}, and \hrel[int]{e8}, respectively. Finally, half of the relations that concern the duality of univalent vertices with the same polarization are proved in Figure~\ref{BP-4-1-12-new/fig}, while the rest follows from them by applying $\sym$.
\end{proof}


\begin{figure}[hbt]
 \centering
 \includegraphics{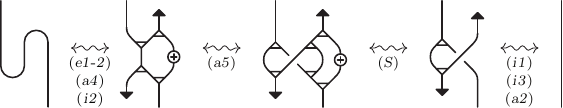}
  \caption{Proof of \hrel[int]{e3}.}
 \label{BP-4-1-5/fig}
\end{figure}

\begin{figure}[hbt]
 \centering
 \includegraphics{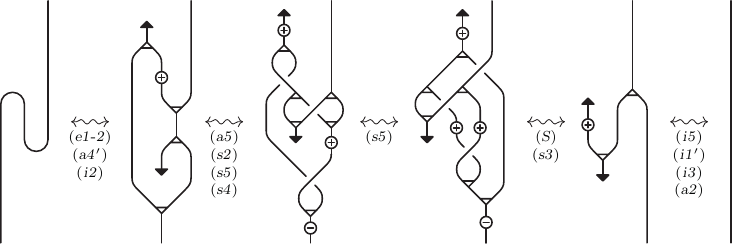}
 \caption{Proof of \hrel[int]{e3'}.}
 \label{BP-4-1-5b-new/fig}
\end{figure}

\begin{figure}[hbt]
 \centering
 \includegraphics{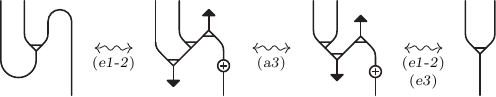}
 \caption{Proof of \hrel[int]{e4}.}
 \label{BP-4-1-7/fig}
\end{figure}

\begin{figure}[hbt]
 \centering
 \includegraphics{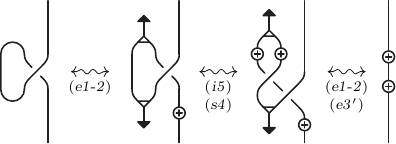}
 \caption{Proof of \hrel[int]{e5'}.}
 \label{BP-4-1-4-new/fig}
\end{figure}


\begin{figure}[hbt]
 \centering
 \includegraphics{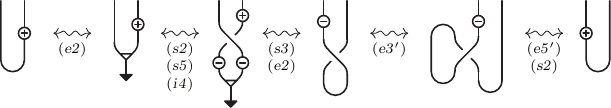}
 \caption{Proof of \hrel[int]{e6}.}
 \label{BP-4-1-10/fig}
\end{figure}

\begin{figure}[hbt]
 \centering
 \includegraphics{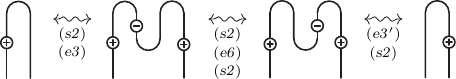}
 \caption{Proof of \hrel[int]{e7}.}
 \label{BP-4-1-11-new/fig}
\end{figure}

\begin{figure}[hbt]
 \centering
 \includegraphics{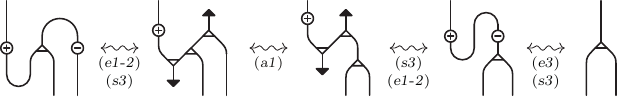}
 \caption{Proof of \hrel[int]{e8}.}
 \label{BP-4-1-11/fig}
\end{figure}

\begin{figure}[hbt]
 \centering
 \includegraphics{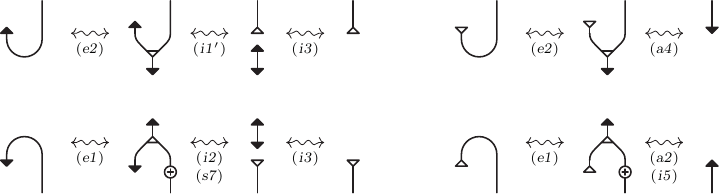}
 \caption{Proof of the generalized move~\rel{E} -- duality of univalent vertices with the same polarization.}
 \label{BP-4-1-12-new/fig}
\end{figure}

\FloatBarrier

It will be useful to introduce a notation for a general class of equivalence moves, which are deduced from \hrel[int]{e5-5'} by applying \hrel[int]{e6-7} and the braiding axioms.





\FloatBarrier

\subsection{Properties of the ribbon structure of a BP Hopf algebra in Table~\ref{table-app-ribbon/fig}}
\label{ribbon-proofs/sec}

\begin{lemma}\label{ribHopf-sym/thm}
Let $\Alg^\rmR$ be the braided monoidal category freely generated by a braided Hopf algebra $H$ with a ribbon morphism $\ribmorH : H \to H$ and a copairing $\copairH : \one \to H \otimes H$ that satisfy axioms \hrel[BPH]{r1}--\hrel[BPH]{r7}. Then, the functor $\sym : \Alg^\rmR \to \Alg^\rmR$ is an involutive braided anti-monoidal equivalence functor that sends $H$ and all elementary morphisms to themselves.
\end{lemma}

\begin{proof}
The statement follows from Propositions~\ref{funt-sym/thm} and \ref{symmetry-alg/thm}, once we show that $\sym$ also sends the ribbon morphism and the copairing to themselves. For the ribbon morphisms, this follows from axiom~\hrel[BPH]{r3}, while, for the copairing, it follows from \hrel[BPH]{r6} (see Figure~\ref{proof-inv-copairing/fig}). 
\end{proof}


\begin{proof}[Proof of Proposition \ref{BP-prop21/thm}]
We will show that the properties in the first section of Table~\ref{table-app-ribbon/fig} (which coincides with Table~\ref{table-BPHopf-prop21/fig}) hold in $\Alg^\rmR$, meaning that they are consequences of the ribbon axioms \hrel[BPH]{r1}--\hrel[BPH]{r7} together with the braided Hopf algebra axioms (without assuming the existence of integrals).

Relations~\hrel[rib]{r5'} and \hrel[rib]{r7'} follow from Lemma~\ref{ribHopf-sym/thm} by applying $\sym$ to \hrel[rib]{r5} and \hrel[rib]{r7}, respectively.

Relation~\hrel[rib]{p3} is equivalent to the invariance of the copairing under $\sym$ (as can be seen by composing the diagrams in Figure~\ref{proof-inv-copairing/fig} with $\braid^{-1}$ on the top).
 
Relation~\hrel[rib]{p2} is obtained by first composing \hrel[BPH]{r6} on the left with the counit, and then applying \hrel[BPH]{r4} and \hrel[BPH]{a4}, while relation~\hrel[rib]{p2'} follows from \hrel[rib]{p2} by applying $\sym$.

Relation~\hrel[rib]{p5} follows from \hrel[BPH]{r7} and \hrel[rib]{p2}, as shown in Figure~\ref{BP-4-2-7/fig}. 

\begin{figure}[htb]
 \centering
 \includegraphics{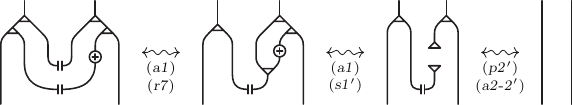}
 \caption{Proof of \hrel[rib]{p5}.}
 \label{BP-4-2-7/fig}
\end{figure}

Relation~\hrel[rib]{p6} is proved in the same way as \hrel[rib]{p5}, using \hrel[BPH]{s1} instead of \hrel[BPH]{s1'}.

Relations~\hrel[rib]{p5'} and \hrel[rib]{p6'} follow from Lemma~\ref{ribHopf-sym/thm} by applying $\sym$ to \hrel[rib]{p5} and \hrel[rib]{p6}, respectively.

Relation~\hrel[rib]{p1} follows from the fact that, according to relations~\hrel[rib]{p5-5'} and \hrel[rib]{p6-6'}, both morphisms
\begin{gather*}
\barmonH = (\prodH \otimes \prodH) \circ (\id \otimes ((\id \otimes \antipH) \circ \copairH) \otimes \id) : H \otimes H \to H \otimes H \\
\barmonH' = (\prodH \otimes \prodH) \circ (\id \otimes ((\antipH \otimes \id) \circ \copairH) \otimes \id) : H \otimes H \to H \otimes H
\end{gather*}
are two-sided inverses of the monodromy morphism $\monH$ (introduced in Definition~\ref{Hr/def}), so they must be the same.

Relation~\hrel[rib]{p4} is proved in Figure~\ref{proof-eq-r10-r6/fig}. 

\begin{figure}[htb]
 \centering
 \includegraphics{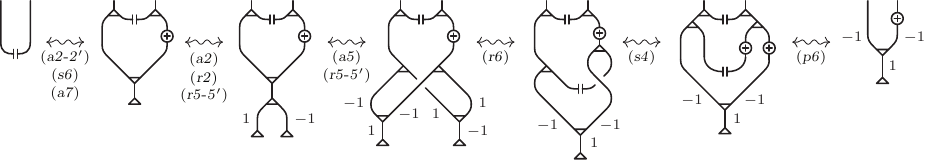}
 \caption{Proof of \hrel[rib]{p4}.}
 \label{proof-eq-r10-r6/fig}
\end{figure}

Relations~\hrel[rib]{p7}, \hrel[rib]{p8}, and \hrel[rib]{p9} are proved in Figures~\ref{BP-4-2-4A/fig}, and \ref{BP-4-2-4B/fig}, while relations~\hrel[rib]{p7'}, \hrel[rib]{p8'}, and \hrel[rib]{p9'} follow from them by applying the functor $\sym$ (Lemma~\ref{ribHopf-sym/thm}).

\begin{figure}[htb]
 \centering
 \includegraphics{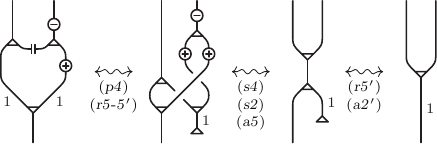}
 \caption{Proof of \hrel[rib]{p7}.}
 \label{BP-4-2-4A/fig}
\end{figure}

\begin{figure}[htb]
 \centering
 \includegraphics{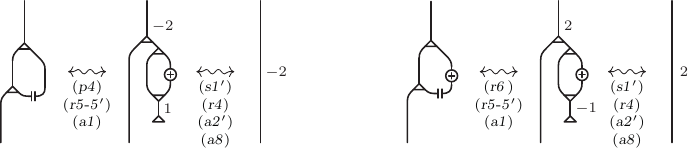}
 \caption{Proofs of \hrel[rib]{p8} and \hrel[rib]{p9}.}
 \label{BP-4-2-4B/fig}
\end{figure}


\end{proof}

\begin{proof}[Proof of Proposition~\ref{BP-prop22/thm}]
We proceed now with the proof of the identities in the second section of Table~\ref{table-app-ribbon/fig} (which coincides with Table~\ref{table-BPHopf-prop22/fig}) concerning the relation between the ribbon structure and the integrals.

Relation~\hrel[rib]{p10} is proved in Figure~\ref{proof-p10-new/fig}. 

\begin{figure}[hbt]
 \centering
 \includegraphics{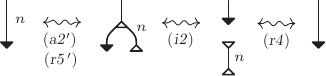}
 \caption{Proof of \hrel[rib]{p10}.}
 \label{proof-p10-new/fig}
\end{figure}

Relation~\hrel[rib]{e9} follows immediately from \hrel[BPH]{r5}, \hrel[rib]{r5'}, and \hrel[rib]{r3}.

Relations~\hrel[rib]{e11} and \hrel{e11'} follow from \hrel[rib]{p3}, \hrel[rib]{p1}, and \hrel[int]{e5-5'}. 

Relations~\hrel[rib]{p11} and \hrel[rib]{e10} are proved in Figures~\ref{BP-4-2-9/fig} and \ref{BP-4-2-5/fig}, respectively. 
\end{proof}

\begin{figure}[hbt]
 \centering
 \includegraphics{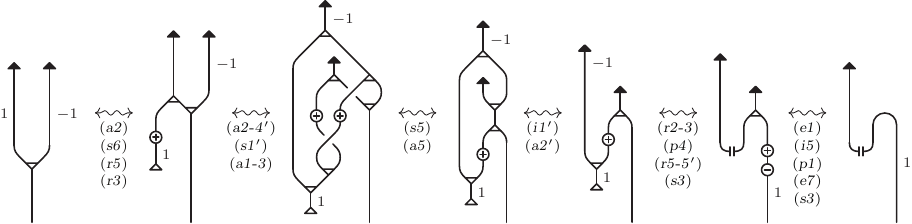}
 \caption{Proof of \hrel[rib]{p11}.}
 \label{BP-4-2-9/fig}
\end{figure}

\begin{figure}[hbt]
 \centering
 \includegraphics{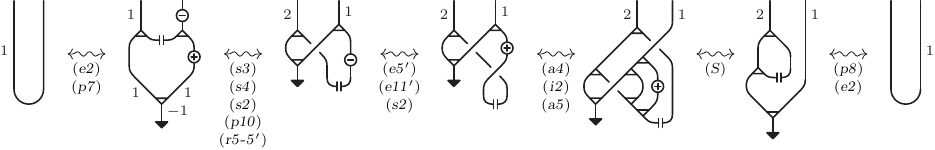}
 \caption{Proof of \hrel[rib]{e10}.}
 \label{BP-4-2-5/fig}
\end{figure}

\begin{proof}[Proof of Proposition \ref{BP-prop3/thm}]
Finally, we show that the properties in the last three sections of Table~\ref{table-app-ribbon/fig} (which coincides with Table~\ref{table-BPHopf-prop3/fig}) hold in $\Algf$. 

The equivalence between relation~\hrel[rib]{p12} and axiom~\hrel[BPH]{r8} follows from the fact that \hrel[rib]{p12} can be obtained by composing both sides of \hrel[BPH]{r8} with the invertible morphisms $\ribmorH$, on the bottom, and $\braid \circ \barmonH \circ (\ribmorH \otimes \ribmorH)$, on the top. Indeed, the right-hand side of such composition is equal to the left-hand side of \hrel[rib]{p12}, while the left-hand side can be transformed into the right-hand side of \hrel[rib]{p12} by applying \hrel[rib]{p5}.

Analogously, modulo the rest of the algebra axioms, relation~\hrel[rib]{p13} is equivalent to axiom~\hrel[BPH]{r9}. Indeed, to see that \hrel[BPH]{r9} implies \hrel[rib]{p13}, it is enough to observe that the diagram on the right-hand side of \hrel[rib]{p13} can be reduced to the single crossing on the left-hand side by applying \hrel[BPH]{r9} at the crossing in the middle, and then  four moves type \hrel[rib]{p5'-5-6} to cancel the corresponding copairings. The opposite argument shows that \hrel[rib]{p13} implies \hrel[BPH]{r9} as well. 

Relations~\hrel[rib]{p14} and \hrel[rib]{p15} are proved in Figures~\ref{BP-4-2-14/fig} and \ref{proof-p15/fig}, while relations~\hrel[rib]{p14'} and \hrel[rib]{p15'} follow by applying $\sym$ to \hrel[rib]{p14} and \hrel[rib]{p15}, respectively (see Proposition~\ref{symmetry/thm}).

\begin{figure}[hbt]
 \centering
 \includegraphics{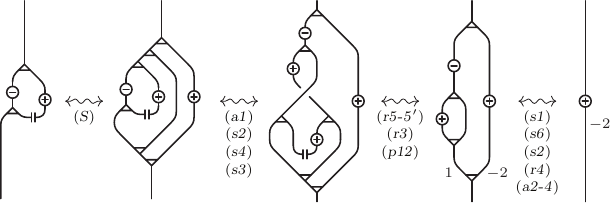}
 \caption{Proof of \hrel[rib]{p14}.}
 \label{BP-4-2-14/fig}
\end{figure}

\begin{figure}[hbt]
 \centering
 \includegraphics{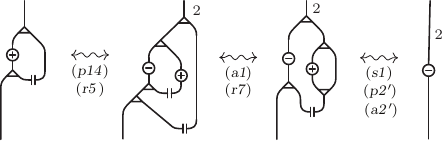}
 \caption{Proof of \hrel[rib]{p15}.}
 \label{proof-p15/fig}
\end{figure}

\end{proof}

\FloatBarrier

\subsection{Properties of a factorizable anomaly free BP Hopf algebra in Table~\ref{table-app-BPHopf3/fig}} \label{BPHopf3-proofs/sec}

We have to show the properties in the last two sections of Table~\ref{table-app-BPHopf3/fig} (which coincides with Table~\ref{table-BPHopf3-prop/fig}). 
Relation~\hrel[BPH3]{f'} is equivalent to \hrel[BPH3]{f} modulo \hrel[BPH]{i5}, \hrel[int]{e5}, and \hrel[rib]{e11} (general isotopy moves). Relation~\hrel[BPH3]{f2} is proved in Figure~\ref{proof-d2-app/fig}, and then relation~\hrel[BPH3]{f2'} follows by symmetry, while, using \hrel[BPH3]{f2-2'}, one can easily derive relations~\hrel[BPH3]{f3-3'} from \hrel[BPH]{r7} and \hrel[rib]{r7'}. Finally, relation~\hrel[BPH3]{\bn} is proved in Figure~\ref{proof-nbar-app/fig}.

\begin{figure}[hbt]
 \centering
 \includegraphics{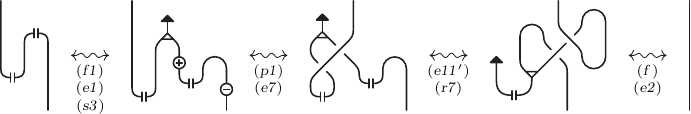}
 \caption{Proof of \hrel[BPH3]{f2}.}
 \label{proof-d2-app/fig}
\end{figure}

\begin{figure}[hbt]
 \centering
 \includegraphics{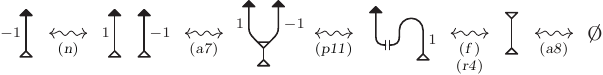}
 \caption{Proof of \hrel[BPH3]{\bn}.}
 \label{proof-nbar-app/fig}
\end{figure}

\end{document}